\DeclarePairedDelimiter\floor{\lfloor}{\rfloor}
\newcommand*{\Scale}[2][4]{\scalebox{#1}{$#2$}}%
\theoremstyle{plain}
 \newtheorem*{theorem*}{Theorem}
\newtheorem{thm}{Theorem}[subsection]
\newtheorem{prop}{Proposition}[subsection]
\newtheorem{defin}{Definition}[subsection]
\newtheorem{lemma}{Lemma}[subsection]
\newtheorem{coro}{Corollary}[subsection]
\newtheorem{remark}{Remark}[subsection]
\newcommand{\OL}{\mathcal{O}_L}
\newcommand{\OK}{\mathcal{O}_K}
\newcommand{\OKa}{\mathcal{O}_{\mathcal{K}}}
\newcommand{\OMa}{\mathcal{O}_{\mathcal{M}}}
\newcommand{\OLa}{\mathcal{O}_{\mathcal{L}}}
\newcommand{\OS}{\mathcal{O}_S}
\newcommand{\Z}{\mathbb{Z}}
\newcommand{\Qp}{\mathbb{Q}_p}
\newcommand{\Tr}{\mathbb{T}_{\La/S}}
\newcommand{\OZ}{\mathcal{O}}
\newcommand{\Ka}{\mathcal{K}}
\newcommand{\Ma}{\mathcal{M}}
\newcommand{\La}{\mathcal{L}}
\newcommand{\Ea}{\mathcal{E}}
\newcommand{\LTT}{L\{\{T_1\}\}\cdots \{\{T_{d-1}\}\}}
\newcommand{\RL}{R_{\La,1}}
\newcommand{\RM}{R_{\Ma,1}}
\newcommand{\RKt}{R_{\Ka_t,1}}
\newcommand{\TT}{\{\{T_1\}\}\cdots \{\{T_{d-1}\}\}}
\newcommand{\ad}{\{a_1,\dots,a_d \}}
\newcommand{\adp}{(a_1,\dots,a_d)}
\newcommand{\ada}{a_1,\dots,a_{d-1} }
\newcommand{\Td}{T_1,\dots,T_{d-1} }
\newcommand{\Tdc}{T_1\cdots T_{d-1} }
\title{The norm residue symbol  for Higher Local Fields }
\author{Jorge Fl\'{o}rez}
\address{Department of Mathematics, Borough of Manhattan Community College, City University of New York, 199 Chambers Street, New York, NY 10007, USA. jflorez@bmcc.cuny.edu}
\subjclass[2010]{11S31 (primary) 	11S70 (secondary)}
\keywords{Higher Local Class Field Theory, Formal Groups, Milnor K-groups}
\begin{document}
\maketitle

%\frontmatter

\begin{abstract}

In this paper we investigate  
the Kummer pairing associated to an arbitrary (one-dimensional) formal group. 
In particular, we obtain formulae describing  the values of the pairing  in terms
of  multidimensional $p$-adic differentiation, the logarithm of the formal group, the generalized trace and the norm on Milnor K-groups. The results are  a generalization to higher-dimensional local fields of 
Kolyvagin's explicit reciprocity laws. In particular, they constitute a generalization of Artin-Hasse, Iwasawa and Wiles reciprocity laws.

\end{abstract}

\tableofcontents

%\section*{Terminology and notation} \addcontentsline{toc}{section}{Terminology and notation}

\section{Introduction}

\subsection{Background}\label{Introduction}

The theory of finding explicit formulations for class field 
theory has a long and extensive history. 
Among the different formulations we highlight  
the reciprocity law of Artin-Hasse \cite{Hasse} for the Hilbert 
symbol $(,)_{p^n}:\ L^{\times}\times L^{\times}\to \langle \zeta_{p^n} \rangle$, 
for $L=\mathbb{Q}_p(\zeta_{p^n})$, i.e.,
\begin{equation}\label{Hasselin}
(u\,,\,\zeta_{p^n})_{p^n}=\zeta_{p_n}^{_{\,\text{Tr}_{L/\Qp}(-\log u)\,/p^n}}, 
\end{equation}
where $p>2$ is a prime number, $\zeta_{p^n}$ is a $p^n$th primitive root of unity, and $u$ is any unit in $L$ such that 
$v_{L}(u-1)>2p^{n-1}$. 
   
   From this formula Iwasawa in \cite{Iwasawa} described the values $(u,w)_{p^n}$, for every principal unit $w$, in terms of $p$-adic differentiation:
\begin{equation}\label{Iwasawa}
(u\,,\,w)_{p^n}=\zeta_{p_n}^{_{\,\text{Tr}_{L/\Qp}(\psi(w)\, \log u)\,/p^n}},\quad \mathrm{where} \quad \psi(w)=-\zeta_{p^n}\, 
w^{-1}\,\frac{dw}{d\pi_{_n}}.
\end{equation}
Here $dw/d\pi_n$ denotes the derivative of any power series $g(x)\in \Z_p[[X]]$, such that $w=g(\pi_n)$, evaluated at the uniformizer $\pi_n:=\zeta_{p^n}-1$; i.e., $g'(\pi_n)$.
Also, it follows that the Hilbert symbol is characterized by the Artin-Hasse formula (\ref{Hasselin}). In other words: the symbol is characterized by its values on the torsion subgroup of $\mathbb{Q}_p(\zeta_{p^n})^{\times}$.

Following the work of Iwasawa, Wiles \cite{Wiles} derived analogous
 formulae to describe the Kummer pairing associated to a Lubin-Tate formal group. This pairing is a generalization of the Hilbert symbol in which the multiplicative structure of the field is replaced by  the formal group structure. Soon after, Kolyvagin \cite{koly} extended the formulae of Wiles to the Kummer pairing associated to an arbitrary formal group (of finite height.) The formulae of Kolyvagin describe the Kummer pairing in terms of $p$-adic derivations.  Kolyvagin's results also subsume those of de Shalit  in \cite{de Shalit}.
 
In this article we generalize the formulae of Kolyvagin to  arbitrary higher local fields (of mixed characteristic).
 Our formulae express the Kummer pairing associated to an arbitrary formal group with values in a higher local field--also called \textit{generalized Kummer pairing}-- in terms of  
  multidimensional $p$-adic derivations, the logarithm of the formal group,
 the generalized trace and the norm of Milnor $K$-groups. Moreover, as in the work of Iwasawa, we show 
how to construct  explicitly the multidimensional $p$-adic derivations from
 an Artin-Hasse type formula for the generalized Kummer pairing (cf. Equation \eqref{Artin-Hasse}). This shows in particular that the generalized Kummer paring is characterized by its values on the torsion points associated to the formal group.
 
 In a subsequent paper  (cf. \cite{Florez2}) we provide a refinement of these formulae to the special case of a  Lubin-Tate formal group. This has important consequences as it  gives an exact generalization of Wiles' reciprocity laws to higher  local fields. As a byproduct we  obtain exact generalizations of the formulae \eqref{Hasselin} (cf. \cite{Florez2} Corollary 5.3.1 ) and \eqref{Iwasawa} (cf. \cite{Florez2} Theorem 5.5.1 ) to higher local fields. These formulae are described in more detail below.

Generalizations of \eqref{Iwasawa} are also given by    Kurihara (cf. \cite{Kurihara} Theorem 4.4) and Zinoviev (cf. \cite{Zinoviev} Theorem 2.2 )  for the \textit{generalized Hilbert symbol} associated to an arbitrary local field.  \cite{Florez2} Theorem 5.5.1 further generalizes \eqref{Iwasawa}  to the Kummer pairing of an arbitrary Lubin-Tate formal group and an important family of higher local fields. In particular, when we take for the Lubin-Tate formal group   the multiplicative formal group $X+Y+XY$ and the higher local field to be $\Qp(\zeta_{p^n})\TT$, \cite{Florez2} Theorem 5.5.1 coincides with the results of  Kurihara and Zoniviev.  
   
   Fukaya  remarkably describes also in \cite{Fukaya}  similar formulae to those of \cite{Florez2} Theorem 5.5.1 for  the Kummer pairing  associated to an arbitrary $p$-divisible group $G$. Furthermore, Fukaya's formulae  encompass also arbitrary higher local fields (containing the $p^n$th torsion group of $G$). However, \cite{Florez2} Theorem 5.5.1 in its specific conditions is sharper than the results in \cite{Fukaya}  for Lubin-Tate formal groups as we explain in more detail below.

We finally point out to a  higher dimensional version of \eqref{Hasselin}  that can be found in Zinoviev's work ( \cite{Zinoviev} Corollary 2.1). The Corollary 5.3.1 in \cite{Florez2}  further extends   \eqref{Hasselin} to
arbitrary  Lubin-Tate formal groups and arbitrary higher local fields, in particular subsuming the formulae of Zinoviev. Moreover, we  prove  stronger results (cf.  Proposition 5.3.3 and Equation (31)  in \cite{Florez2}) which are not found, \textit{a priori}, in any of the formulae in literature. 

It is worth mentioning that the techniques  used here to
 obtain the explicit reciprocity laws were inspired by the work of Kolyvagin in
 \cite{koly}. This allows for a classical and conceptual approach
 to the higher-dimensional reciprocity laws.
\subsection{Description of the formulae}

Let  $F$ be a formal group with coefficients in the ring of integers of the local field $K/\Qp$ of finite height $h$. 
Let $S$ be a local field whose ring of integers $C$ is contained in the endomorphism ring of $F$; if $a\in C$, then $[a]_F(X)=aX+\cdots$ will denote the corresponding endomorphism.
For a fixed  uniformizer $\pi$ of $C$ we let $f:=[\pi]_F$. Let $\kappa_n\,(\simeq (C/\pi^n C)^h)$ be the $\pi^n$th torsion group of $F$ and let $\kappa=\varprojlim \kappa_n\,(\simeq C^h)$ be the Tate module.  We will fix  a basis $\{e^i\}_{1\leq i\leq h}$ for $\kappa$ and let $\{e_n^i\}_{1\leq i\leq h}$ be the corresponding reductions to  the group $\kappa_n$.

 In order to describe our formulae, let $\La\supset K$ be a $d$-dimensional local field containing the torsion group
$\kappa_n$, with ring of integers $\mathcal{O}_{\La}$ and maximal ideal $\mu_{\La}$.  
 We will denote by $F(\mu_{\La})$ the set $\mu_{\La}$ endowed with the group structure from $F$. For $m\geq 1$, we let $\La_m=\La(\kappa_m)$ and also fix a uniformizer $\gamma_m$
for $\La_m$.

We define the Kummer pairing (cf. \S \ref{The pairing (,)})
\[
  (,)_{\La,n}: K_d(\La) \times F(\mu_{\La}) \to \kappa_n \quad \mathrm{by}\quad
           (\alpha, x)\mapsto (\alpha, x)_{\La,n}=\Upsilon_\La(\alpha)(z)\ominus_Fz,
\]
where $K_d(\La)$ is the $d$th 
Milnor $K$-group of $\La$ (cf. \ref{Milnor-K-groups and their norms}), 
$\Upsilon_\La:K_d(\La)\to G_\La^{ab}$  is  
Kato's reciprocity map for $\La$ (cf. \S \ref{The reciprocity map}), $f^{(n)}
(z)=x$ and $\ominus_F$ is the subtraction 
in the formal group $F$. Denote by 
$(,)_{\La,n}^{i}$ the $i$th coordinate of 
$(,)_{\La,n}$ with respect to the 
basis $\{e_n^i\}$ of the group $\kappa_n$.

The main result in this paper is 
the following (cf. Theorem \ref{main-theorem} for the precise formulation).
\begin{theorem*}[Thm \ref{main-theorem}]
Let  $\Ma=\La_t$  for $t>>n$. Then there exists a $d-$dimensional 
derivation $\mathfrak{D}_{\Ma,m}^i$ (cf. Definition \ref{defmult}) such that
\begin{equation}\label{Main-form}
\big(\,N_{\Ma/\La} (\alpha)\,,\, x\,\big)^i_{\La,n}
=\mathbb{T}_{\Ma/S}
\left( \frac{\ \mathfrak{D}^i_{\Ma,m}(\alpha)\ }{a_1\cdots a_d}\ l_F(x)\right)  
\end{equation}
for all $\alpha=\{a_1,\dots,a_d\}\in K_d(\Ma)$ and all $x\in F(\mu_\La)$. 
Here $l_F$ is the formal logarithm, $\mathbb{T}_{\Ma/S}$ is the generalized trace (cf. \S \ref{The generalized trace}) and 
$N_{\Ma/\La}$ is the norm on Milnor 
$K$-groups. 

Moreover, we can give an explicit description for $\mathfrak{D}_{\Ma,m}^i$ as follows. Let  $\mathfrak{e}_t\in \kappa_t$ be any torsion point as in Remark \ref{special-torsion-element}, then
$$\mathfrak{D}_{\Ma,m}^i(\alpha)=- T_1\cdots T_{d-1}\,\frac{ \overline{c}_{\beta:i}}{\,l'(\mathfrak{e}_t)\,\frac{\partial \mathfrak{e}_t}{\partial T_d}\,}\,\det \left[ \frac{\partial a_{\text{k}}} {\partial T_j}\right]_{1\leq k,j\leq d}.$$ 
Here $\frac{\partial } {\partial T_j}$, $j=1,\dots, d$, denotes the partial derivatives of an element in the ring of integers of $\Ma$ with respect to the local uniformizers $T_1,\dots, T_{d-1}$, $T_d=\pi_\Ma$ (cf. Section \ref{Derivations and the module of differentials}). The constant $\overline{c}_{\beta:i}$ is an invariant of the formal group $F$
that is determined by the Artin-Hasse-type formula
 \begin{equation}\label{Artin-Hasse}
 \big(\, \{T_1,\dots, T_{d-1},u\},\, \mathfrak{e}_t\, \big)^i_{\Ma,m}=\mathbb{T}_{\Ma/S}\big(\, \log(u)\, \overline{c}_{\beta:i}\, \big),
 \end{equation}
where $ u\in V_{\Ma,1}=\{u\in \OMa:v_{\Ma}(u-1)>v_{\Ma}(p)/(p-1)\}$ and $\log$ is the usual logarithm.
\end{theorem*}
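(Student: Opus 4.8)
The plan is to reduce the general higher‑dimensional statement to a one‑dimensional computation along the lines of Kolyvagin and Wiles, using the structure of Milnor $K$‑groups. First I would observe that $K_d(\Ma)$ is generated, up to torsion and norms from larger fields, by symbols of the special shape $\{T_1,\dots,T_{d-1},u\}$ with $u$ a principal unit, together with symbols involving the uniformizer $T_d=\pi_\Ma$; this is where the choice $\Ma=\La_t$ with $t\gg n$ matters, since passing to a large enough constant field makes the pairing sufficiently ``divisible'' that only the tame part and these standard generators survive. So the first block of the argument is a normalization/reduction step: using bilinearity of $(,)_{\La,n}$, the projection formula for $N_{\Ma/\La}$ on Milnor $K$‑groups, and compatibility of Kato's reciprocity map with norms (all of which are developed in the earlier sections referenced in the excerpt), I reduce \eqref{Main-form} to checking it on generators $\alpha=\{T_1,\dots,T_{d-1},u\}$ and on $\alpha$ containing $\pi_\Ma$, and I reduce the target torsion point to the distinguished $\mathfrak{e}_t$ of Remark~\ref{special-torsion-element}.

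Next I would establish the key special case, namely that for $\alpha=\{T_1,\dots,T_{d-1},u\}$ and $x=\mathfrak{e}_t$ the pairing is given by the Artin–Hasse‑type formula \eqref{Artin-Hasse}, with $\overline{c}_{\beta:i}$ \emph{defined} by that identity; the content is that the right‑hand side of \eqref{Artin-Hasse} is a well‑defined additive function of $u\in V_{\Ma,1}$ and that it actually computes the $i$th coordinate of the Kummer symbol. This is the heart of the paper and I expect it to be the main obstacle: one must show that the symbol $\big(\{T_1,\dots,T_{d-1},u\},\mathfrak{e}_t\big)^i_{\Ma,m}$ depends on $u$ only through $\log u$ and linearly so, which requires a careful analysis of how Kato's reciprocity map acts on such symbols — presumably by relating the $(d)$‑dimensional symbol to a $1$‑dimensional Kummer symbol over a suitable residue field via the boundary maps in the Milnor $K$‑theory tower, and then invoking the classical one‑dimensional explicit reciprocity law (Artin–Hasse/Iwasawa/Wiles/Kolyvagin) in that residue field. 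Controlling the error terms in these specializations, and checking that the resulting constant is independent of all the auxiliary choices (the uniformizers $T_j$, the lift $g$ of $u$, the point $\mathfrak{e}_t$ within its allowed family), is the delicate part.

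Having the Artin–Hasse formula in hand, I would then propagate it to arbitrary $x\in F(\mu_\La)$ and arbitrary torsion targets. For the target: writing any $z$ with $f^{(n)}(z)=x$ and using that $\mathbb{T}_{\Ma/S}$, $l_F$, and the derivation are all continuous and additive, together with the fact that $\{\mathfrak{e}_t\}$ and its Galois translates generate $\kappa_t$, gives the general $x$ from the special one by a limit/density argument. For the formula for $\mathfrak{D}^i_{\Ma,m}$: I would feed the generator $\alpha=\{T_1,\dots,T_{d-1},u\}$ into the claimed closed form, compute the Jacobian determinant $\det[\partial a_k/\partial T_j]$ — which for this $\alpha$ collapses to $\partial u/\partial T_d$ times the identity block in the $T_1,\dots,T_{d-1}$ coordinates — and check that $\mathbb{T}_{\Ma/S}\!\big(\mathfrak{D}^i_{\Ma,m}(\alpha)\,l_F(x)/(a_1\cdots a_d)\big)$ reduces, after using $l_F(\mathfrak{e}_t)$-type identities and the chain rule relating $\partial u/\partial T_d$ to $(\log u)'$, exactly to the right side of \eqref{Artin-Hasse}; here the factor $l'(\mathfrak{e}_t)\,\partial\mathfrak{e}_t/\partial T_d$ in the denominator is precisely the Jacobian of the change of variables from $z$ to $\mathfrak{e}_t$, so it cancels correctly. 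Finally I would verify that $\mathfrak{D}^i_{\Ma,m}$ so defined is genuinely a $d$‑dimensional derivation in the sense of Definition~\ref{defmult} — additivity and the Leibniz rule follow from the Leibniz rule for each $\partial/\partial T_j$ and the multilinearity/antisymmetry of the determinant — and that both sides of \eqref{Main-form} vanish on the relations defining $K_d$ (Steinberg relations and the norm relations), which reduces to the antisymmetry and the $\{a,1-a\}$ vanishing, the latter again coming from the one‑dimensional explicit reciprocity law. The expected main difficulty, to reiterate, is the compatibility of Kato's higher reciprocity map with the iterated residue maps needed to bootstrap from dimension $1$ to dimension $d$ while keeping precise control of the congruence conditions ($t\gg n$, $u\in V_{\Ma,1}$).
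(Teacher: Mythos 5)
Your outline diverges from the paper's argument in two places where the divergence is not just stylistic but leaves real gaps.

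First, the paper does not reduce to generators of $K_d(\Ma)$ and it does not obtain the Artin--Hasse formula by specializing Kato's map through residue/boundary maps down to a one-dimensional reciprocity law. The engine of the proof is instead a duality statement: the pairing $x\mapsto(\alpha,x)^i_{\Ma,m}$ is shown to be sequentially continuous on $F(\mu_{\Ma,1})$ (Proposition \ref{continuity}), and a Riesz-type representation for higher local fields (Lemma \ref{Riezs2}) then produces the Iwasawa map $\psi^i_{\Ma,m}(\alpha)\in R_{\Ma,1}/\pi^mR_{\Ma,1}$ with $(\alpha,x)^i=\mathbb{T}_{\Ma/S}(\psi^i_{\Ma,m}(\alpha)\,l_F(x))$ for \emph{all} $x$ at once. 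The map $D^i_{\Ma,m}(a_1,\dots,a_d)=\psi^i_{\Ma,m}(\{a_1,\dots,a_d\})a_1\cdots a_d$ is then proved to be a genuine $d$-dimensional derivation; this step rests on writing every element of $\mu_\Ma$ as an infinite formal-group sum of monomials $\gamma^{p^n}T_1^{i_1}\cdots T_{d-1}^{i_{d-1}}\pi_\Ma^k$ (Lemma \ref{Kolyv}, Proposition \ref{formal-sums-rep}) together with the additivity of $D$ along the formal group law (Proposition \ref{relation of D}), and on Proposition \ref{key}, whose proof is a direct manipulation with $t$-normalized series and $p^k$-th powers under the admissibility hypothesis --- not an appeal to the one-dimensional law over a residue field. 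Once $D^i_{\Ma,m}$ is a derivation, the universal property of $\hat{\Omega}^d_{\OK}(\OMa)$ (Proposition \ref{multder}) gives the Jacobian formula for \emph{every} symbol, so no generation statement for $K_d(\Ma)$ is needed; conversely, checking the closed form only on symbols $\{T_1,\dots,T_{d-1},u\}$, as you propose, does not establish it on general $\{a_1,\dots,a_d\}$ without exactly this derivation property, which is absent from your outline.

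Second, your propagation ``from $x=\mathfrak{e}_t$ to arbitrary $x\in F(\mu_\La)$ by a limit/density argument'' does not work: the torsion points are a finite set, not dense in $F(\mu_\La)$, and continuity of the pairing in $x$ cannot recover its values from the torsion values. The logical flow in the paper is the opposite of yours. The representation valid for all $x$ comes first (via $\psi^i_{\Ma,m}$); the Artin--Hasse identity \eqref{invariantes-de-special} is used only at the very end to \emph{evaluate} the single constant $D^i_{\Ma,m}(T_1,\dots,T_{d-1},\pi_\Ma)$ that determines the derivation. Even that step is delicate: one must divide by $\partial\mathfrak{e}_t/\partial T_d$, which is only legitimate modulo the correct power of $\pi$ because the period of $d\mathfrak{e}_t$ in $\Omega_{\OK}(\mathcal{O}_{K_t})$ has valuation at least $t/\varrho-c_1$ (Remark \ref{special-torsion-element}, Lemma \ref{Carota}); this is where the quantitative meaning of $t\gg n$ enters, along with the descent $\text{Tr}_{\Ma/\La}\circ\psi^i_{\Ma,m}=\psi^i_{\La,n}\circ N_{\Ma/\La}$ and the bound on $m$ in Proposition \ref{phi-psi}. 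None of these quantitative controls appear in your sketch, and without them the division by $l'(\mathfrak{e}_t)\,\partial\mathfrak{e}_t/\partial T_d$ in the claimed formula is not even well defined at the stated level $m$.
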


Additionally, we point to the following remarks. First, the bound on $t$ is explicit as in \cite{koly} (cf. Theorem \ref{main-theorem}). Second, the constant $\overline{c}_{\beta:i}$ has a further interpretation  as an invariant coming from the Galois representation associated to the 
Tate-module $\kappa\simeq \varprojlim \kappa_n$ ( cf. Section \ref{Definition of the invariants}). Furthermore, we can give an explicit description of this invariant 
in the important case when $F$ is a Lubin-Tate formal group as it is explained in the theorem below.

We highlight also that when  $F$ is a $p$-divisible group, Benois \cite{Benois} and Fukaya \cite{Fukaya} provide similar formulae for the Kummer pairing and remarkably give a further description of the invariant $\overline{c}_{\beta:i}$  in terms of differentials of the \textit{second kind} associated to the $p$-divisible group and the theory of Fontaine \cite{Fontaine}. On the other hand,  since Fukaya's reciprocity laws  are \`{a} la  Sen \cite{Sen}, one main difference between Fukaya's formulae and \eqref{Main-form} is the following: While Fukaya's have no restrictions on $\alpha$ for the symbol $(\alpha,x)_{\La,n}$, the formulae  \eqref{Main-form} have no restrictions on $x$. However, for a Lubin-Tate formal group we may show that, under certain conditions, the theorem above can lead to sharper results than \cite{Fukaya}, as it is explained below.

Finally, we now describe in more detail the refinement of the formulae \eqref{Main-form} in the case of a Lubin-Tate formal group $F$ that it is  addressed in the subsequent paper  \cite{Florez2}.  Let $f$ be a power series in $\OK[[X]]$ such that $f(X)\equiv \pi X \pmod{\deg2}$ and $f(X)\equiv X^q\pmod{\pi}$ for a uniformizer $\pi$ of $K$. Denote by $F_f$ the Lubin-Tate formal group associated to $f$.  In this case we have that $S=K$, $C=\mathcal{O}_K$ and $h=1$. We  fix a generator $e_f$ of $\kappa$ and let $e_{f,n}$ be the corresponding projection onto $\kappa_n$, for all $n\geq 1$. Additionally, let 
 $\kappa_{\infty}=\cup_{n\geq 1}\kappa_n$. Denote by $\Ka$  the  field $K\TT$, and by  
 $K_n$, $K_{\infty}$, $\Ka_n$ and $\Ka_{\infty}$, respectively, the  fields 
 $K(\kappa_{n})$, $K(\kappa_{\infty})$, $\Ka(\kappa_{n})$ and $\Ka(\kappa_{\infty})$, respectively. In this context, we   show in \cite{Florez2}  the following refinement of the above theorem.
\begin{theorem*}[\cite{Florez2}]
Let $L$ and $\La$  be as above. Let $r$ be maximal and $r'$  minimal  such that $K_r\subset \La\cap K_{\infty}\subset K_{r'}$. 
Take $s\geq \max\{r', n+r+\log_q(e(\La/\Ka_r))\}$; 
here $e(\La/\Ka_r)$ is the ramification index of $\La/\Ka_r$.  Then 
\begin{equation}
(\,N_{\La_s/\La}(\alpha)\,,x\,)_{\La,n}
=\left[\, 
    \mathbb{T}_{\La_s/K}
      \bigg(\, 
         QL_s(\alpha)\, l_{F_f}(x)\,
       \bigg)\,
           \right]_{F_f}(e_{f,n}),
\end{equation}
where
\[
QL_s(\alpha)=\frac{  T_1\cdots T_{d-1} }{\pi^{\,s}\,l'(e_{f,s})\, \frac{\partial e_{f,s}}{\partial T_d}\, }\, \frac{ \det \left[ \frac{\partial a_i} {\partial T_j}\right]_{1\leq i,j \leq d}}{a_1\dots a_d},
\]
for all $x\in F(\mu_\La)$ and all $\alpha=\{a_1,\dots, a_d\}\in  K_d(\La_s)':=\cap _{t\geq s}N_{\La_t/\La_s}\big(\,K_d(\La_t)\,\big)$. Here $T_d$ denotes the uniformizer $\gamma_s$ of $\La_s$.
\end{theorem*}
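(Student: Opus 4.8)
The plan is to obtain this refinement as the Lubin--Tate specialization of Theorem~\ref{main-theorem}, supplemented by two computations particular to this case: an explicit evaluation of the invariant $\overline{c}_{\beta:1}$ and a sharpening of the admissible level. Apply Theorem~\ref{main-theorem} (in its precise form) with $\Ma=\La_s$, $h=1$, $S=K$, $C=\OK$, and with the auxiliary torsion point of Remark~\ref{special-torsion-element} taken to be $e_{f,s}$. Its conclusion in the single coordinate $i=1$ reads
\[
(N_{\La_s/\La}(\alpha),x)^1_{\La,n}=\mathbb{T}_{\La_s/K}\!\left(\frac{\mathfrak{D}^1_{\La_s,m}(\alpha)}{a_1\cdots a_d}\,l_{F_f}(x)\right),
\]
with
\[
\mathfrak{D}^1_{\La_s,m}(\alpha)=-\,T_1\cdots T_{d-1}\,\frac{\overline{c}_{\beta:1}}{\,l'(e_{f,s})\,\frac{\partial e_{f,s}}{\partial T_d}\,}\,\det\!\left[\frac{\partial a_k}{\partial T_j}\right].
\]
Rewriting the left side via the isomorphism $y\mapsto[y]_{F_f}(e_{f,n})$ between $\OK/\pi^n$ and $\kappa_n$ turns it into $(N_{\La_s/\La}(\alpha),x)_{\La,n}$. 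Hence the theorem follows once we show $\overline{c}_{\beta:1}=-\pi^{-s}$ — which makes $\mathfrak{D}^1_{\La_s,m}(\alpha)/(a_1\cdots a_d)$ equal to $QL_s(\alpha)$, in particular erasing the dependence on $m$ — and once we verify that the hypotheses of Theorem~\ref{main-theorem} are met under the stated condition on $s$ and $\alpha$.

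To evaluate the constant I would substitute $\mathfrak{e}_s=e_{f,s}$ into the Artin--Hasse formula~\eqref{Artin-Hasse}, so that its left-hand side becomes the pairing $(\{T_1,\dots,T_{d-1},u\},e_{f,s})^1_{\La_s,m}$ of the tamest possible symbol — the one whose first $d-1$ entries are the local parameters $T_1,\dots,T_{d-1}$ of $\Ka=K\TT$ — against a torsion point. Since $\Ka$ is built from $K$ by iterating the $\{\{\,\cdot\,\}\}$ construction and these parameters are transversal to the arithmetic of $K$, one can peel them off one at a time using the relations in $K_d$, the norm compatibility of the pairing, and the residue maps of Milnor $K$-theory — exactly as the tame symbol lowers dimension — reducing the computation to the one-dimensional Kummer pairing $(u,e_{f,s})$ over a one-dimensional local field that contains $\kappa_s$. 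There the value is governed by the classical explicit reciprocity law of Wiles~\cite{Wiles} for $F_f$ (in the model case $F_f=\mathbb{G}_m$ it is precisely~\eqref{Hasselin}, which reads $(u,e_{f,s})=[\text{Tr}(-\pi^{-s}\log u)]_{F_f}(e_{f,s})$); comparing it with~\eqref{Artin-Hasse} gives $\overline{c}_{\beta:1}=-\pi^{-s}$ and therefore $\mathfrak{D}^1_{\La_s,m}(\alpha)/(a_1\cdots a_d)=QL_s(\alpha)$.

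For the admissible level I would audit the proof of Theorem~\ref{main-theorem} and isolate where the hypothesis $t\gg n$ is used. With $r$ maximal and $r'$ minimal such that $K_r\subseteq\La\cap K_\infty\subseteq K_{r'}$ — so that $\kappa_r\subseteq\La$ and $r,r'$ record the torsion already rational over $\La$ — the two bounds $s\geq r'$ and $s\geq n+r+\log_q e(\La/\Ka_r)$ together force, for every $t\geq s$, the tower $\La_t/\La_s$ to be totally ramified of degree $q^{\,t-s}$ and make the valuation estimates in the proof (for $l_{F_f}$, for $l'(e_{f,s})$, and for $\partial e_{f,s}/\partial T_d$, all read modulo $\pi^n$) hold with the needed slack; this is where the explicit ramification theory of Lubin--Tate towers over $\Ka_r$ turns the crude bound of the general theorem into an effective one. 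The remaining hypothesis of Theorem~\ref{main-theorem} — well-definedness of the auxiliary derivation, equivalently that $\alpha$ be a norm from arbitrarily high levels — is precisely the assumption $\alpha\in K_d(\La_s)'=\bigcap_{t\geq s}N_{\La_t/\La_s}(K_d(\La_t))$: for each $t\geq s$ one writes $\alpha=N_{\La_t/\La_s}(\alpha_t)$, whence $N_{\La_s/\La}(\alpha)=N_{\La_t/\La}(\alpha_t)$, and the descent from level $t$ back to level $s$ is carried out using the transitivity $\mathbb{T}_{\La_t/K}=\mathbb{T}_{\La_s/K}\circ\mathbb{T}_{\La_t/\La_s}$, the chain rule for the partials $\partial/\partial T_j$ along $\La_t/\La_s$, and the projection formula for the norm on Milnor $K$-groups.

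The main obstacle is twofold: sharpening the level and making the dimension reduction for $\overline{c}_{\beta:1}$ rigorous. Replacing the general $t\gg n$ by the explicit $\max\{r',\,n+r+\log_q e(\La/\Ka_r)\}$ requires careful bookkeeping of valuations through the norm maps of the Lubin--Tate tower and through the multidimensional derivations — in the spirit of Kolyvagin~\cite{koly}, but now entangled with the parameters $T_1,\dots,T_{d-1}$ — and the collapse of $(\{T_1,\dots,T_{d-1},u\},e_{f,s})$ to the one-dimensional Wiles pairing must be established through the residue/transfer structure of $K_d(\Ka)$ together with the norm compatibility of the generalized Kummer pairing. Once these two points are in place, the rest is a substitution into Theorem~\ref{main-theorem} and the coordinate-to-$\kappa_n$ conversion.
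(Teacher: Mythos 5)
First, a point of scope: the paper you are working from does not prove this statement — it is quoted from the companion paper \cite{Florez2} (the present article only proves Theorem \ref{main-theorem}), so there is no in-paper proof to compare against. Your overall strategy is the natural one and matches what the author's introduction indicates the companion paper does: specialize Theorem \ref{main-theorem} to $h=1$, $S=K$, identify $\mathfrak{e}_s$ with $e_{f,s}$ (legitimate here, since by Proposition \ref{constantes} the single torsion point $e_{f,s}$ is a uniformizer of $K_s$ and generates the module of differentials), evaluate the invariant $\overline{c}_{\beta:1}$, and use the hypothesis $\alpha\in K_d(\La_s)'$ to descend from the large auxiliary level $t$ of the general theorem to the explicit level $s$. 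The conversion between the coordinate form $(\ ,\ )^1$ and the $\kappa_n$-valued form via $y\mapsto [y]_{F_f}(e_{f,n})$ is also correct.

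The two steps that carry all the content are, however, asserted rather than proven. (i) The evaluation $\overline{c}_{\beta:1}=-\pi^{-s}$: your proposed ``peeling off'' of $T_1,\dots,T_{d-1}$ to reduce $(\{T_1,\dots,T_{d-1},u\},e_{f,s})_{\La_s,m}$ to the one-dimensional Wiles pairing is not a routine application of residue maps. The identity \eqref{Artin-Hasse} must hold for \emph{every} principal unit $u$ of the full higher local field $\La_s$ (that is what pins down $\overline{c}_{\beta:1}$ as an element of $R_{\La_s,1}/\pi^kR_{\La_s,1}$ via Lemma \ref{Riezs2}), and such a $u$ involves the variables $T_1,\dots,T_{d-1}$ nontrivially; it does not lie in, and is not normed into, a one-dimensional subfield in any obvious way. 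The compatibility of Kato's reciprocity map with the boundary/residue maps on $K_d$ that you invoke is exactly the nontrivial input needed here, and without it the reduction to \cite{Wiles} is circular in spirit (you are using a special case of the law you want to prove). The constraints of Proposition \ref{cij} and Lemma \ref{Carota} localize $\overline{c}_{\beta:1}$ but do not determine its unit part. (ii) The effective bound $s\geq\max\{r',\,n+r+\log_q e(\La/\Ka_r)\}$: the general theorem only applies at a level $t$ far larger than $s$, so one must prove a trace-compatibility statement $\mathbb{T}_{\La_t/\La_s}\big(QL_t(\alpha_t)\big)\equiv QL_s\big(N_{\La_t/\La_s}(\alpha_t)\big)$ modulo the relevant ideal, which in particular requires comparing $\pi^{t}\,l'(e_{f,t})\,\partial e_{f,t}/\partial T_d$ with $\pi^{s}\,l'(e_{f,s})\,\partial e_{f,s}/\partial T_d$ through the relation $e_{f,s}=f^{(t-s)}(e_{f,t})$ and controlling the error by the ramification of $\La_t/\La_s$; this valuation bookkeeping is precisely where the stated lower bound on $s$ is used, and none of it appears in your outline. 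As it stands the proposal is a correct plan with the two decisive computations left open.
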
 

The above theorem is an exact generalization of Wiles reciprocity laws to arbitrary higher local fields (cf. \cite{Wiles} Theorem 1.). 

By studying  how the theorem above is  transformed  when  varying the uniformizer $\pi$ of $K$  and the power series $f\in \Lambda_{\pi}$   we may prove, in  \cite{Florez2} Theorem 5.5.1, the following higher dimensional version of Iwasawa's reciprocity laws \eqref{Iwasawa} for $\La=\Ka_{n}$:
\begin{equation}\label{Iwasawa-Gen-Intro}
(\,\alpha\,,x\,)_{\La,n}
=\big[\, 
    \mathbb{T}_{\La/K}
      \big(\, 
         QL_n(\alpha)\, l_F(x)\,
       \big)\,
           \big]_{F_f} (e_{f,n})
\end{equation}
for all $\alpha\in K_d(\La)$ and all $x\in F_f(\mu_{\La})$ such that $v_{\La}(x)\geq 2\,v_{\La}(p)/(\varrho\,(q-1))$, where  $\varrho$ denotes the ramification index $e(K/\Qp)$. In particular, taking $K=\Qp$, $f(X)=(1+X)^{p^n}-1$, $F_f(X,Y)$ the multiplicative formal group $X+Y+XY$ and $\La$ the  cyclotomic higher local field $\Qp(\zeta_{p^n})\TT$, then \eqref{Iwasawa-Gen-Intro} coincides with   \cite{Kurihara} Theorem 4.4 and  \cite{Zinoviev} Theorem 2.2.

As we mentioned above,  Fukaya \cite{Fukaya} has similar formulae to \eqref{Iwasawa-Gen-Intro} that, more remarkably, extend to arbitrary formal groups and arbitrary higher local fields. However, for Lubin-Tate formal groups formula \eqref{Iwasawa-Gen-Intro} is sharper for $\La=\Ka_{n}$ , as the condition on $x\in F(\mu_{\La})$ in \cite{Fukaya} is $v_{\La}(x)>2v_{\La}(p)(p-1)+1$.

In the deduction of \eqref{Iwasawa-Gen-Intro} it is also shown, in \cite{Florez2} Corollary 5.3.1, the following Artin-Hasse formula for an arbitrary higher local field $\Ma\supset \Ka_{n}$:
\begin{equation}\label{Artin-Hasse-Intro}
\,
\big(\,\{T_1,\dots, T_{d-1},e_{g,n}\}\,,\,x\,\big)_{\Ma,n}
=\left[ 
\mathbb{T}_{\Ma/S}\left(\,\frac{1}{\,\xi^n\,l'_g(e_{g,n})\,e_{g,n}\,}\, l_{F_f}(x)\,\right)\,
\right]_{F_f} \,(e_{f,n})
\end{equation}
for all $x\in F_f(\mu_{\Ma})$, where $g$ is a Lubin-Tate series in $\Lambda_{\pi}$ which is also a monic polynomial and $e_{g,n}=[1]_{f,g}(e_{f,n})$; here $[1]_{f,g}$ is the isomorphism of $F_f$ and $F_g$ congruent to $X\pmod{\deg 2}$ and $l_g$ is the logarithm of $F_g$. By  taking $K=\Qp$, $f(X)=g(X)=(1+X)^{p^n}-1$, $F_f(X,Y)$ the multiplicative formal group $F_m(X,Y)=X+Y+XY$, $e_{f,n}=e_{g,n}=\zeta_{p^n}-1$, and $\Ma$ the  cyclotomic higher local field $\Qp(\zeta_{p^n})\TT$,  then \eqref{Artin-Hasse-Intro} coincides with the Artin-Hasse formula of   Zinoviev in \cite{Zinoviev} Corollary 2.1 (25).

Furthermore, \eqref{Artin-Hasse-Intro} will be deduced as a consequence of the following stronger result (cf. \cite{Florez2} Proposition 5.3.3). 
Let $\La =\Ka_{n}$ and take $e_{g,n}$ and $l_g$ as above, then
\begin{equation}\label{Artin-Hasse-complete}
\,
\big(\,\{u_1,\dots, u_{d-1},e_{g,n}\}\,,\,x\,\big)_{\La,n}
=\left[ 
\Tr\left(\,\frac{\det \left[\frac{\partial u_i}{\partial T_j}\right]}{u_1\cdots u_{d-1}}\,\frac{\Tdc}{\,\xi^n\,l'_g(e_{g,n})\,e_{g,n}\,}\, l_{F_f}(x)\,\right)\,
\right]_{F_f} \,(e_{f,n})
\end{equation}
for all units $u_1,\dots, u_{d-1}$ of  $\La$ and  all $x\in F_f(\mu_{\La})$.

 Moreover, in the particular situation where  $K=\Qp$, $\pi=p$, $f(X)=(X+1)^p-1$, $F_f(X,Y)=F_m(X,Y)$, $l_f(X)=\log(X+1)$, $\La=\Qp(\zeta_{p^n})\TT$,  we further have  an additional formula (cf. \cite{Florez2} Equation (31)):
\begin{equation}\label{Stronger-Artin-Hasse}
\big(\,\{u_1,\dots, u_{d-1},\zeta_{p^n}\}\,,\,x\,\big)_{\La,n}
=\left[ 
\mathbb{T}_{\La/\Qp}\left(\,\frac{1}{\,p^n\,}\, \frac{\det \left[\frac{\partial u_i}{\partial T_j}\right]}{u_1\cdots u_{d-1}} \,l_{F_f}(x)\,\right)\,
\right]_{F_f} (e_{f,n})
\end{equation}
for all units $u_1,\dots, u_{d-1}$ of  $\La$ and  all $x\in F_f(\mu_{\La})$.  For $u_1=T_1,\dots, u_{d-1}= T_{d-1}$ we obtain  \cite{Zinoviev} Corollary 2.1 (24).

The sharper Artin-Hasse formulae \eqref{Artin-Hasse-complete} and \eqref{Stronger-Artin-Hasse} are not contained in any of the reciprocity laws in the literature.

This paper is organized as follows. In Section \ref{The Kummer Pairing} we  review Kato's higher dimensional Local Class Field Theory
and introduce the Kummer pairing along with its properties.
In Section  \ref{The maps psi and rho} we 
introduce the different components that appear in the formulae, namely the generalized trace (\S  \ref{The generalized trace}),
 the Iwasawa maps $\psi_{\La,n}^i$ (\S \ref{The map psi_L}) and the derivations $D_{\La,n}^i$ (\S \ref{The maps D_L}). 
In Section \ref{Multidimensional derivations} we review the definitions and prove   basic properties of multidimensional derivations.
In Section \ref{Deduction of the formulas} we finally deduce the  formulae and show how to construct them  explicitly  from the Artin-Hasse-type formula (\ref{Artin-Hasse}). For convenience, we included an appendix with  statements and proofs  of several auxiliary results needed here.

The author would like to thank V. Kolyvagin for suggesting the problem treated in this article, for reading the manuscript and for providing valuable comments and improvements.
 
\subsection{Notation}\label{Terminology and Notation}

We will fix a prime number $p>2$. If $x$ is a real number then $\floor*{x}$ denotes the greatest integer $\leq x$.

 For a complete discrete valuation field $L$ we  define 
\begin{equation}\label{standar}
   L\{\{T\}\}=\left\{\; \sum_{-\infty}^{\infty}a_iT^i:\ a_i\in
   L,\ \inf v_L(a_i)>-\infty,\ \lim_{i\to -\infty}v_L(a_i)
    =+\infty \  \right\}.
\end{equation}
This is a complete discrete valuation field with valuation $v_{\La}(\sum a_iT^i)=\min_{i\in \Z} v_L(a_i)$, ring of integers $\OZ_{\La}=\OZ_{L}\{\{T\}\}$ and maximal ideal
$\mu_{\La}=\mu_L\{\{T\}\}$; here $v_L$, $\OL$ and $\mu_\La$ denote the  discrete valuation, ring of integers and maximal ideal of $L$, respectively. Observe that the residue field $k_{\La}$ of $\La$ is $k_L((\overline{T}))$, where $k_L$ is the residue field of  $L$.

The field $\La=\LTT$ is defined inductively. In particular, if $L$ is a local field, then $\La$  is a $d$-dimensional local field and  we will  
endow it with the Parshin topology (see Chapter 1 of \cite{Zhukov} or \S \ref{higher fields} of the Appendix).

 For a $d$-dimensional local field $\La\supset K$ let $T_1,\dots,T_{d-1}$ and $\pi_{\La}$ denote a system of uniformizers,  and let $k_\La=\mathbb{F}((\overline{T_1}))\cdots ((\overline{T_{d-1}}))$ be its residue field. Let $\La_{(0)}$ be the standard field $L_{(0)}\TT$, where $L_{(0)}$ is a local field unramified over $K$ with residue field $\mathbb{F}$. In particular, $\La/\La_{(0)}$ is a finite totally ramified extension. 

%--------------------------------------------------------------------------------
%--------------------------------------------------------------------------------
%--------------------------------------------------------------------------------

%------------------------------------------------------------------------------------
%------------------------------------------------------------------------------------
%------------------------------------------------------------------------------------

%____Kummer pairing section-------

\section{The Kummer Pairing}\label{The Kummer Pairing}

\subsection{Higher local class field theory}\label{Higher-dimensional local class field theory}
%In higher-dimensional local class field theory 
%the role of the multiplicative group in the
%reciprocity map will be played by the Define 
%the topological Milnor group $K_d^{\text{top}}(K)$ 
%and the  reciprocity maps 
%$\psi_K: K_d^{\text{top}}(K)\to \text{Gal}(K^{ab}/K)$.
We are now going to describe briefly the higher-dimensional
class field theory from the point of view of Milnor K--groups. This theory parametrizes the abelian extensions  of a higher local field in terms of norm subgroups of its  Milnor K-group.

\subsubsection{Milnor-$K$-groups}\label{Milnor-K-groups and their norms}

Let $R$ be a ring and $m\geq 0$. We denote by 
$K_m(R)$ the group
\[
\underbrace{R^{\times}\otimes _\mathbb{Z} \cdots \otimes _\mathbb{Z} R^{\times}}_{m-\text{times}}/I
\]
where $I$ is the subgroup of $(R^{\times})^{\otimes m}$ 
generated by 
\[
\{a_1\otimes  \cdots \otimes a_m\ :\ a_1,\dots, a_m\in R^{\times}\ 
\text{such that $a_i+a_j=1$ for some $i\neq j$}  \}
\]

$K_m(R)$ is called the $m^{th}$ Milnor-K-group of $R$. The element $a_1\otimes  \cdots \otimes a_m$ is denoted by $\{a_1, \dots ,a_m\}$.

The natural map
\[
R^{\times}\times\cdots \times R^{\times}\to K_d(R): \quad \adp\to \ad
\]
is called the symbol map and will be denoted by $\{\ \}$.

\begin{prop}\label{milnorelation} The elements of the Milnor $K$-group satisfy the relations

       \begin{enumerate}
       
              \item $\{a_1, \dots, a_i,\dots,-a_i, \dots ,a_m\}=1$
              \item $\{a_1, \dots, a_i,\dots,a_j,\dots ,a_m\}=\{a_1, \dots, a_j,\dots,a_i,\dots ,a_m\}^{-1}$       
              
       \end{enumerate}
       
\end{prop}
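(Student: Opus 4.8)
The plan is to derive both identities from the single defining relation of the Milnor $K$-group --- the \emph{Steinberg relation} $\{a_1,\dots,a_m\}=1$ whenever $a_i+a_j=1$ for some $i\neq j$ --- together with the multiadditivity of the symbol map in each argument, which is built into the definition through the tensor product over $\Z$. For $m\leq 1$ both statements are vacuous, so assume $m\geq 2$, work in the multiplicative notation of the excerpt, and whenever multiadditivity is invoked keep all slots other than the one being expanded fixed. Two consequences of multiadditivity will be used repeatedly: since $1=1\cdot 1$, a symbol having the entry $1$ in some slot equals its own square, hence equals $1$; and since $a\cdot a^{-1}=1$, replacing an entry $a$ by $a^{-1}$ inverts the symbol.

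To prove (1), let $b=a_i$ and $-a_i=-b$ occupy the two distinguished slots, all other entries being fixed. If $b=1$ the symbol has a $1$ in a slot and therefore equals $1$. If $b\neq 1$ then $1-b$ and $1-b^{-1}$ are nonzero, hence units (here $R$ is, in the case of interest, a field), and one checks $-b=(1-b)(1-b^{-1})^{-1}$. Expanding the slot holding $-b$ by multiadditivity,
\[
\{\dots,b,\dots,-b,\dots\}=\{\dots,b,\dots,1-b,\dots\}\cdot\{\dots,b,\dots,1-b^{-1},\dots\}^{-1}.
\]
The first factor is $1$ by the Steinberg relation, as $b+(1-b)=1$. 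In the second factor replace $b$ by $b^{-1}$, which inverts it, giving $\{\dots,b^{-1},\dots,1-b^{-1},\dots\}^{-1}=1$, again by the Steinberg relation since $b^{-1}+(1-b^{-1})=1$. Hence the product is $1$.

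Relation (2) then follows formally from (1). Fix all entries except those in two slots, call them $a$ and $b$. Applying (1) with $ab$ and $-ab$ placed in those two slots gives $\{\dots,ab,\dots,-ab,\dots\}=1$. Expand the slot holding $ab$ by multiadditivity; in the resulting factor $\{\dots,a,\dots,-ab,\dots\}$ write $-ab=(-a)b$ and expand again, and in $\{\dots,b,\dots,-ab,\dots\}$ write $-ab=a(-b)$ and expand again. Four factors appear; two of them, $\{\dots,a,\dots,-a,\dots\}$ and $\{\dots,b,\dots,-b,\dots\}$, are $1$ by (1), leaving $1=\{\dots,a,\dots,b,\dots\}\cdot\{\dots,b,\dots,a,\dots\}$, which is precisely (2). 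There is no real obstacle in any of this; the only points needing attention are that the Steinberg relation is applied only to genuine units (forcing one to isolate the case $b=1$ and to use that $R$ is a field), and that the bookkeeping of the suppressed spectator slots stays consistent when multiadditivity is used.
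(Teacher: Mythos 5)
Your proof is correct and follows essentially the same route as the paper's: part (1) uses the identity $-a=(1-a)(1-a^{-1})^{-1}$ together with two applications of the Steinberg relation, and part (2) is the standard derivation of antisymmetry from (1) via multiadditivity applied to $\{ab,-ab\}$ (the paper runs the same computation in the opposite direction, starting from $\{a,b\}\{b,a\}$ and inserting $\{-b,b\}=\{-a,a\}=1$). Your treatment is in fact slightly more careful than the paper's, since you isolate the degenerate case $a_i=1$ and note where invertibility of $1-b$ is used.
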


\begin{proof}

See Appendix \S \ref{Proofs of Chapter A}.

\end{proof}

From the definition we have $K_1(R)=R^{\times}$
 and we define $K_0(R):=\mathbb{Z}$. 
We also have a product
\[
K_n(R)\times K_m(R)\to K_{n+m}(R),
\]
where $\{a_1,\dots, a_n\}\times \{a_{n+1},\dots, a_{n+m}\}\mapsto 
\{a_1,\dots, a_{n+m}\}$.

\subsubsection{Norm on Milnor-$K$-groups}
Suppose $L/E$ is a finite extension of fields.
Then the norm $N_{L/E}:L^*\to E^*$ induces a norm on the corresponding K-groups of $L$ and $E$,
satisfying analogous properties to those of $N_{L/E}$. We recollect some of the properties in the following
\begin{prop}\label{norm}
There is a 
group homomorphism
\[
N_{L/E}: K_m(L)\to K_m(E)
\]
satisfying

\begin{enumerate}

\item When $m=1$ this maps coincides with the usual norm.

\item For the tower $L/E_1/E_2$ of finite extensions  we
 have $N_{L/E_2}=N_{E_1/E_2}\circ N_{L/E_1}$.

\item The composition $K_m(E)\to K_m(L)\overset{N_{L/E}}{\longrightarrow} 
K_m(E)$ coincides with multiplication by $[L/E]$.

\item If $\{a_1,\dots,a_m\}\in K_m(L)$, with $a_1,\dots, a_i 
\in L^{\times}$ and $a_{i+1},\dots, a_m \in E^{\times}$, then 
\[
N_{L/E}(\{a_1,\dots,a_m\})=
N_{L/E}\big(\,\{a_1,\dots,a_i\}\,\big)\,.\,\{a_{i+1},\dots,a_m\}\in K_m(E).
\]
The right hand side is the product of a norm in $ K_i(L)$ and a symbol
 in $K_{m-i}(E)$.

\end{enumerate}

\end{prop}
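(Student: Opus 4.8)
The plan is to follow the classical construction of the transfer (norm) on Milnor $K$-theory due to Bass--Tate and Kato, reducing everything to simple extensions and then exploiting Milnor's split exact sequence for a rational function field.

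First I would treat a monogenic extension $L=E(\theta)$, say with minimal polynomial $\pi\in E[t]$. Recall Milnor's exact sequence
\[
0\longrightarrow K_m(E)\longrightarrow K_m(E(t))\xrightarrow{\ (\partial_v)_v\ }\bigoplus_{v}K_{m-1}(\kappa(v))\longrightarrow 0,
\]
where $v$ runs over the closed points of $\mathbb{A}^1_E$ (equivalently, monic irreducible polynomials of $E[t]$), $\partial_v$ is the tame symbol, and the sequence is split by the inclusion $E\hookrightarrow E(t)$. Given $\alpha\in K_m(\kappa(v))$, exactness lets me pick $\tilde\alpha\in K_{m+1}(E(t))$ with $\partial_v(\tilde\alpha)=\alpha$ and $\partial_{v'}(\tilde\alpha)=0$ for $v'\neq v$; I then define $N_{\kappa(v)/E}(\alpha):=-\partial_\infty(\tilde\alpha)$, where $\partial_\infty$ is the tame symbol at infinity (whose residue field is $E$). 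This is well defined because two choices of $\tilde\alpha$ differ by an element in the image of $K_{m+1}(E)$, on which $\partial_\infty$ vanishes, and by construction it satisfies the Weil reciprocity relation $\partial_\infty+\sum_v N_{\kappa(v)/E}\circ\partial_v=0$ on $K_{m+1}(E(t))$.

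The genuinely technical point --- and the first main obstacle --- is to show that $N_{\kappa(v)/E}$ depends only on the extension $\kappa(v)/E$ and not on the chosen generator $\theta$. This is the heart of Bass--Tate's argument (and of Kato's, in the generality needed here): one proves independence of the presentation together with the transitivity assertion~(2) by a simultaneous induction on degrees, using base change along $E\subset E(t)$ to reduce a general presentation to a standard one. Granting this, for an arbitrary finite extension $L/E$ one defines $N_{L/E}$ by writing $L/E$ as a tower of simple extensions and composing; property~(2) is exactly what makes this independent of the tower.

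The remaining properties are comparatively formal. In degree $m=1$ the sequence above is the divisor sequence $0\to E^\times\to E(t)^\times\to\bigoplus_v\mathbb{Z}\to 0$ with $\partial_v=\mathrm{ord}_v$, and computing the lift $\tilde\alpha$ explicitly identifies $N_{L/E}\colon L^\times\to E^\times$ with the classical field norm, giving~(1); the same computation shows $N_{\kappa(v)/E}\colon K_0(\kappa(v))=\mathbb{Z}\to\mathbb{Z}$ is multiplication by $[\kappa(v):E]$. Property~(4) is the projection formula $N_{L/E}\big(\gamma\cdot\mathrm{res}_{L/E}(\beta)\big)=N_{L/E}(\gamma)\cdot\beta$ for $\gamma\in K_i(L)$ and $\beta\in K_{m-i}(E)$: for a simple extension it holds because if $\tilde\gamma$ is a lift of $\gamma$ then $\tilde\gamma\cdot\mathrm{res}_{L/E}(\beta)$ is a lift of $\gamma\cdot\mathrm{res}_{L/E}(\beta)$, and it propagates through towers; the statement in the proposition is this applied to $\gamma=\{a_1,\dots,a_i\}$ and $\beta=\{a_{i+1},\dots,a_m\}$, since $\{a_1,\dots,a_m\}=\{a_1,\dots,a_i\}\cdot\mathrm{res}_{L/E}(\{a_{i+1},\dots,a_m\})$ in $K_m(L)$. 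Property~(3) is the case $i=0$ of~(4) combined with the degree-$0$ computation. I therefore expect that, apart from the Bass--Tate/Kato well-definedness-and-transitivity package, no serious difficulty arises; in the appendix I would carry out this reduction and cite Bass--Tate and Kato for the core independence statement.
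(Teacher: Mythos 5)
Your sketch is precisely the standard Bass--Tate/Kato construction of the transfer, which is what the paper's own ``proof'' consists of citing (\cite{Fesenko} Chapter~IV and \cite{Gille} \S 7.3); the paper offers no independent argument beyond that reference. Your outline --- the norm on a simple extension via Milnor's split residue sequence for $E(t)$ and the symbol at infinity, the reduction of (1), (3) and (4) to the projection formula and the degree-zero computation, and the isolation of well-definedness plus transitivity (2) as the one genuinely hard point to be taken from Bass--Tate and Kato --- is correct and matches that source.
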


\begin{proof}
See \cite{Fesenko} Chapter IV and \cite{Gille} Section 7.3.
\end{proof}

Note in particular that if $a_1 \in L^{\times}$ and $a_2,\dots, a_m \in E^{\times}$, $(1)$ and $(4)$ imply
\[
N_{L/E}(\{a_1,\dots,a_m\})=\{\,N_{L/E}(a_1)\,,\,a_2,\dots,a_m\,\}.
\]

\subsubsection{Topological Milnor-$K$-groups}

Suppose $\La$ is a $d$-dimensional local field. We endow $\La^*$ 
with the product Parshin topology (see Chapter 1 of \cite{Zhukov} or \S \ref{topology on K-star} of the Appendix). This topology induces a topology on the Milnor K-group as follows 
\begin{defin}\label{topo-milnor}
We endow $K_d(\La)$ with the finest topology $\lambda_d$ for which the map
\[
(\La^*)^{\otimes d}\to K_d(\La): \quad  (a_1,\dots, a_d)\mapsto \{a_1,\dots, a_d\}
\]
is sequentially continuous in each component with respect to the 
product topology on $\La^*$ and for which subtraction in $K_d(\La)$ is
sequentially continuous. Define
\[
K^{\normalfont{top}}_d(\La)=K_d(\La)/\Lambda_d(\La),
\]
with the quotient topology where $\Lambda_d(\La)$ denotes the intersection of all neighborhoods of 0 with respect
to $\lambda_d$ (and therefore it is a subgroup).
\end{defin}

 Fesenko proved, in \cite{Zhukov} Chapter 6 Theorem 3,  that
\begin{equation}\label{Fesenko}
\Lambda_d(\La)=\cap_{l\geq 1}lK_d(\La),
\end{equation}
and also the following
\begin{prop}\label{topo-norm}
Let $\Ma/\La$ be a finite extension of $d$-dimensional local fields, then the norm 
$N_{\Ma/\La}:K_d(\Ma)\to K_d(\La)$ induces a norm
\[
N_{\Ma/\La}:K_d^{top}(\Ma)\to K_d^{top}(\La).
\]
For this norm we have $N_{\Ma/\La}(open\ subgroup)$ is open in $K^{top}_d(\La)$. In particular,
$N_{\Ma/\La}(K_d^{top}(\Ma))$ is open in $K_d^{top}(\La)$.
\end{prop}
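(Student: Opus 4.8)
The plan is to reduce everything to the already-established structural facts about topological Milnor $K$-groups and the norm map on ordinary Milnor $K$-groups. First I would verify that the norm $N_{\Ma/\La}:K_d(\Ma)\to K_d(\La)$ descends to the quotients $K_d^{top}(\Ma)=K_d(\Ma)/\Lambda_d(\Ma)$ and $K_d^{top}(\La)=K_d(\La)/\Lambda_d(\La)$. By the description \eqref{Fesenko} of Fesenko, $\Lambda_d(\Ma)=\cap_{l\ge1}lK_d(\Ma)$ and similarly for $\La$; since $N_{\Ma/\La}$ is a group homomorphism it carries $lK_d(\Ma)$ into $lK_d(\La)$ for every $l$, hence carries $\Lambda_d(\Ma)$ into $\Lambda_d(\La)$. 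This gives a well-defined homomorphism $N_{\Ma/\La}:K_d^{top}(\Ma)\to K_d^{top}(\La)$, as claimed.

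The substantive part is the openness assertion. By Proposition \ref{norm}(3), the composite $K_d^{top}(\La)\xrightarrow{\iota} K_d^{top}(\Ma)\xrightarrow{N_{\Ma/\La}} K_d^{top}(\La)$, where $\iota$ is induced by the field inclusion, equals multiplication by the degree $e:=[\Ma:\La]$. Hence $N_{\Ma/\La}(K_d^{top}(\Ma))\supset e\,K_d^{top}(\La)$, and more generally $N_{\Ma/\La}(U)\supset N_{\Ma/\La}(\iota(e^{-1}\text{-preimage}))$ type containments let one compare the image of an open subgroup $U\subseteq K_d^{top}(\Ma)$ with a multiple of an open subgroup of $K_d^{top}(\La)$. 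So the proof reduces to showing that multiplication by $e$ has open image in $K_d^{top}(\La)$, i.e. that $e\,K_d^{top}(\La)$ is open; equivalently that the topological group $K_d^{top}(\La)$ is such that every subgroup of finite index containing $e K_d^{top}(\La)$ — in particular $eK_d^{top}(\La)$ itself — is open. For this one invokes the structural results on $K_d^{top}$ of a higher local field from \cite{Zhukov} Chapter 6 (Fesenko's theory): $K_d^{top}(\La)$ is a topological group whose topology has a base of open subgroups, and it admits a decomposition into a (finitely generated) free part, a torsion part, and a divisible-by-$l$-for-$(l,p)=1$ part together with a "wild" part on which $p$-power multiplication is already understood to have open image. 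Concretely, writing $e=e_0 p^a$ with $(e_0,p)=1$, multiplication by $e_0$ is an isomorphism on the prime-to-$p$ part and on the divisible part, while multiplication by $p^a$ is open on the wild (pro-$p$) part because the topology there is the one induced from the valuation filtration on which $p$ acts with bounded-index image; assembling these, $eK_d^{top}(\La)$ is open. Then, since $N_{\Ma/\La}(U)\supseteq e K_d^{top}(\La)$ for any open subgroup $U$ containing $\iota(K_d^{top}(\La))\cap(\text{something of finite index})$, and more directly since $N_{\Ma/\La}(U)$ contains the image of a finite-index subgroup of the open set $e K_d^{top}(\La)$, one concludes $N_{\Ma/\La}(U)$ is open; taking $U=K_d^{top}(\Ma)$ gives the final clause.

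The main obstacle I anticipate is precisely the step that $e\,K_d^{top}(\La)$ (equivalently the image of multiplication by the degree) is open: this is not formal and genuinely uses Fesenko's classification of the topology on $K_d^{top}$ of a $d$-dimensional local field, in particular the behavior of the wild part under multiplication by $p$. In a fully written proof I would either cite the relevant statements from \cite{Zhukov} Chapter 6 and \cite{Fesenko} directly — where the openness of the norm map, or equivalently of multiplication-by-$n$ maps, is established as part of the proof that $K_d^{top}(\La)/N_{\Ma/\La}K_d^{top}(\Ma)\cong \mathrm{Gal}(\Ma/\La)^{ab}$ is the reciprocity isomorphism — or else reproduce Fesenko's argument that the transfer/norm of an open subgroup is open. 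Given the expository nature of this subsection, citing \cite{Zhukov} Chapter 6 and \cite{Fesenko} Chapter IX for the openness of the norm is the cleanest route, with the descent to $K_d^{top}$ spelled out via \eqref{Fesenko} as above.
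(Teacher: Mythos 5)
Your proposal is correct and lands in essentially the same place as the paper: the paper's entire proof of this proposition is the citation ``See Section 4.8, claims (1) and (2) of page 15 of \cite{Fesenko1}'', so the substantive openness statement is deferred to Fesenko's structure theory in both cases. Your explicit descent of $N_{\Ma/\La}$ through $\Lambda_d=\cap_{l\ge 1}lK_d$ via \eqref{Fesenko} is a correct and worthwhile addition, and you rightly identify that the openness of the image of multiplication by the degree (equivalently, of the norm of an open subgroup) is the non-formal step that must be taken from the literature rather than derived formally.
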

\begin{proof}
See Section 4.8, claims (1) and (2) of page 15 of \cite{Fesenko1}.
\end{proof}

%-------------------------------------
%-------------------------------------
%-------------------------------------
%-------------------------------------
%-------------------------------------
%-------------------------------------
\subsubsection{The higher-dimensional reciprocity map}\label{The reciprocity map}
In the following theorem we recollect the main properties of Kato's reciprocity map that  will be needed in our formulations of the Kummer pairing. The reader is referred to \cite{Kato} for a complete account on the topic.

\begin{thm}[A. Parshin, K. Kato]\label{Kato} 
Let $\La$ be a $d$-dimensional local field. Then there 
exist a reciprocity map
\[
\Upsilon_{\La} : K_d(\La)\to \normalfont \text{Gal}(\La^{\text{ab}}/\La),
\]
satisfying the properties

\begin{enumerate}
\item If $\Ma/\La$ is a finite extension of $d$-dimensional local fields, then the following 
       diagrams commute:

\[
\begin{CD}
 K_d(\Ma) @>{\Upsilon_{\Ma}}>>  \normalfont \text{Gal}(\Ma^{\text{ab}}/\Ma)  \\
@V{N_{\Ma/\La}}VV @VV{restriction}V \\
 K_d(\La)  @>>{\Upsilon_{\La}}> \normalfont \text{Gal}(\La^{\text{ab}}/\La)
\end{CD} \hspace{30pt}
\begin{CD}
 K_d(\Ma) @>{\Upsilon_{\Ma}}>> \normalfont \text{Gal}(\Ma^{\text{ab}}/\Ma)  \\
@AAA @AA{transfer}A \\
 K_d(\La)  @>>{\Upsilon_{\La}}> \normalfont \text{Gal}(\La^{\text{ab}}/\La)
\end{CD}
\]

If moreover $\Ma/\La$ is abelian, then $\Upsilon_{\La}$ induces 
an isomorphism
\[
\begin{CD}
K_d(\La)/N_{\Ma/\La}(K_d(\Ma))@>{\Upsilon_{\La}}>> \normalfont \text{Gal}(\Ma/\La)
\end{CD}
\]

\item The reciprocity map $\Upsilon_{\La}$ is sequentially 
continuous if we endow $K_d(\La)$ with the topology $\lambda_d$ from definition \ref{topo-milnor}. 
\end{enumerate}

\end{thm}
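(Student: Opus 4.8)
The plan is to construct $\Upsilon_\La$ cohomologically, following Kato, and to read off every item of the statement from a single duality. For each $n\geq 1$ one has the Galois symbol $h^d_{\La,n}\colon K_d(\La)/n \to H^d(\La,\Z/n(d))$ (the $d$-fold cup product of Kummer classes, with Tate-twisted coefficients) and, crucially, a canonical ``invariant'' isomorphism
\[
\mathrm{inv}_\La\colon H^{d+1}(\La,\Z/n(d)) \xrightarrow{\ \sim\ } \Z/n\Z.
\]
Cup product followed by $\mathrm{inv}_\La$ produces a pairing
\[
K_d(\La)/n \;\times\; H^1(\La,\Z/n\Z) \;\longrightarrow\; \Z/n\Z,
\]
and since $H^1(\La,\Z/n\Z)=\mathrm{Hom}_{\mathrm{cont}}(G_\La,\Z/n\Z)$ this is the same datum as a homomorphism $K_d(\La)/n\to G_\La^{ab}/n$; passing to the inverse limit over $n$ gives $\Upsilon_\La\colon K_d(\La)\to\mathrm{Gal}(\La^{\mathrm{ab}}/\La)$. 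The theorem then reduces to: (i) existence of $\mathrm{inv}_\La$ for all $n$; (ii) that the displayed pairing is a perfect duality of finite groups; (iii) compatibility of $h^d$ and $\mathrm{inv}$ with corestriction and restriction; (iv) sequential continuity for the topology $\lambda_d$.

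For (i) and (ii) I would induct on $d$, using that $\La$ is a complete discrete valuation field whose residue field $\bar\La$ is a $(d-1)$-dimensional local field. The base cases are $d=0$ ($\La$ finite, $K_0(\La)=\Z$, with $\Upsilon_\La$ sending $1$ to the Frobenius) and $d=1$ (classical local class field theory, $\mathrm{inv}_\La$ the Brauer invariant). For the inductive step one uses the residue exact sequences
\[
0\to H^{j}(\mathcal{O}_\La,\Z/n(i))\to H^{j}(\La,\Z/n(i))\xrightarrow{\ \partial\ }H^{j-1}(\bar\La,\Z/n(i-1))\to 0
\]
(valid since $\mathcal{O}_\La$ is Henselian, using absolute purity) together with $H^{\ast}(\mathcal{O}_\La,-)\cong H^{\ast}(\bar\La,-)$, running in parallel with the tame-symbol surjection $\partial\colon K_d(\La)\to K_{d-1}(\bar\La)$ and the reduction $K_d(\mathcal{O}_\La)\to K_d(\bar\La)$ on the Milnor $K$-theory side. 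The Galois symbol intertwines the two residue maps $\partial$, so taking $(i,j)=(d,d+1)$ and feeding in $\mathrm{inv}_{\bar\La}$ from the inductive hypothesis assembles $\mathrm{inv}_\La$ (one checks independence of the chosen uniformizer), and reduces the non-degeneracy for $\La$ to that for $\bar\La$ plus the ``unramified'' contribution, which is again governed by $\bar\La$; in the layers where the residue field has equal characteristic $p$ one replaces $\Z/p^m(i)$ by logarithmic de Rham-Witt sheaves, with no change in the shape of the argument. The one genuinely hard input to (ii) is that $h^d_{\La,n}$ is an isomorphism -- the Bloch-Kato-Gabber norm residue theorem, which for higher local fields is due to Kato -- and, combined with the finiteness of $K_d(\La)/n$ (itself a consequence of the structure sequences above), this turns the non-degenerate pairing into a perfect one.

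For (iii): $N_{\Ma/\La}$ corresponds to corestriction via the Galois symbol, $\mathrm{inv}_\La\circ\mathrm{cor}_{\Ma/\La}=\mathrm{inv}_\Ma$, and the projection formula $\mathrm{cor}(x\cup\mathrm{res}\,y)=\mathrm{cor}(x)\cup y$ holds; substituting these into the definition of $\Upsilon$ yields the first commuting square, and the same identities read backwards (using that corestriction on $H^1$ is dual to the transfer on $G^{ab}$) give the second. Granting perfectness of the pairing for all $n$, the isomorphism $K_d(\La)/N_{\Ma/\La}K_d(\Ma)\xrightarrow{\sim}\mathrm{Gal}(\Ma/\La)$ for finite abelian $\Ma/\La$ follows from the first square together with the norm-index equality $[K_d(\La):N_{\Ma/\La}K_d(\Ma)]=[\Ma:\La]$, proved by the usual Herbrand-quotient d\'{e}vissage along a chain of prime-degree subextensions. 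Finally, for (iv): since each $H^1(\La,\Z/n\Z)$ is finite it suffices to show that for every continuous character $\chi$ the composite $K_d(\La)\to\Z/n\Z$ is sequentially continuous, which is checked on the symbol generators using the explicit neighborhood basis of $0$ in $K^{\mathrm{top}}_d(\La)$ coming from the filtration by $1+\mu_\La^m$, together with the fact that $\chi$ annihilates $\Lambda_d(\La)=\cap_l lK_d(\La)$ by \eqref{Fesenko}, and matching the unit filtration against the ramification filtration of $\mathrm{Gal}(\La^{\mathrm{ab}}/\La)$. I expect the two real obstacles to be the norm residue theorem feeding (ii) and this last continuity verification; the rest is d\'{e}vissage and formal cup-product bookkeeping, and alternative routes -- a Lubin-Tate-style construction when $d=1$, Parshin's $K$-theoretic method in equal characteristic, or Fesenko's direct construction of $\Upsilon_\La$ -- could be substituted for parts of it.
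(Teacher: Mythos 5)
Your proposal reconstructs Kato's theorem from scratch, whereas the paper treats item (1) as a black box (it is cited to \cite{Kato}, Section 1, Theorem 2) and only supplies a proof of item (2), the sequential continuity. Your outline of (1) is the standard cohomological route (Galois symbols, invariant maps via residue sequences, the projection formula for functoriality), and as a roadmap it is accurate; but be aware that everything genuinely difficult is deferred in it -- the norm residue isomorphism, the $p$-primary part in mixed characteristic via logarithmic de Rham--Witt sheaves (where ``no change in the shape of the argument'' is a substantial overstatement), and the norm-index equality, which for higher local fields does not follow from the classical Herbrand-quotient d\'{e}vissage since $K_d(\La)$ is neither compact nor Hausdorff for $\lambda_d$.

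The concrete gap is in your treatment of item (2), the one part the paper actually proves. Reducing to a finite quotient and noting that a continuous character kills $\Lambda_d(\La)=\cap_{l\geq 1}lK_d(\La)$ is the right first half, but ``checked on the symbol generators \dots matching the unit filtration against the ramification filtration'' is not a proof and, as stated, would not suffice: a null sequence for $\lambda_d$ in $K_d(\La)$ need not be a sequence of symbols, so sequential continuity cannot be verified on generators alone, and the filtration comparison you allude to is never carried out. The ingredient you are missing is Fesenko's theorem (Proposition \ref{topo-norm} in the paper) that $N_{\Ma/\La}(K_d^{top}(\Ma))$ is \emph{open} in $K_d^{top}(\La)$. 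With it the paper's appendix argument is short: given $x_n\to 0$ and a finite abelian $\Ma/\La$, openness forces $x_n=\beta_n N_{\Ma/\La}(y_n)$ with $\beta_n\in\Lambda_d(\La)$ for $n\gg 0$; then $\Upsilon_\La(\beta_n)=1$ because $G_\La^{ab}$ is profinite and $\Lambda_d(\La)=\cap_l lK_d(\La)$ by \eqref{Fesenko}, and $\Upsilon_\La(N_{\Ma/\La}(y_n))$ restricts trivially to $\Ma$ by item (1). You should replace your continuity paragraph by this argument (or prove the openness of norm subgroups yourself, which is the real content).
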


\begin{proof}
The first assertion can be found in \cite{Kato}, Section 1, Theorem 2. 
The author has not found a formal proof of the second assertion, although it is a property that has been mentioned in other papers (cf. \cite{Zinoviev} Page 4809). We provide one in the Appendix \S \ref{Proofs of Chapter A}.
 \end{proof}

%------------------
%-------------------------------
%-------------------------------
%-------------------------------
%-------------------------------

\subsection{The Kummer pairing $(,)_{\La,n}$}\label{The pairing (,)}
Let $\La$ be a $d$-dimensional local field containing 
$K$ and the group $\kappa_n$. The Kummer pairing
\[
(,)_{\La,n}: K_d(\La) \times F(\mu_{\La}) \to \kappa_n
\]
is defined by $(\alpha, x)_{\La,n}=\Upsilon_\La(\alpha)(z)\ominus_Fz$, where $f^{(n)}
(z)=x$ and $\ominus_F$ is the subtraction in the formal group $F$.

\begin{prop}\label{pair}

The pairing above satisfies the following:

\begin{enumerate}

        \item $(,)_{\La,n}$ is bilinear and $C$-linear 
               on the right.
        \item The kernel on the right is $f^{(n)}(F(\mu_\La))$.
        \item $(\alpha,x)_{\La,n}=0$ if and only if $\alpha\in N_{\La(z)/\La}(K_d(\La(z)))$, 
               where $f^{(n)}(z)=x$.
        \item If $\Ma/\La$ is finite, $x\in F(\mu_\La)$ and $\beta\in K_d(\Ma)$. 
               Then
              $$
                (\beta,x)_{\Ma,n}=(N_{\Ma/\La}(\beta),x)_{\La,n}.
             $$
        \item Let $\La\supset \kappa_m$, $m\geq n$. Then
              $$
                (\alpha,x)_{\La,n}=f^{(m-n)}\big(\,(\alpha,x)_{\La,m}\,\big)=\big(\,\alpha,f^{(m-n)}(x)\,\big)_{\La,m}.
              $$
        \item For a given $x\in K_d(\La)$, the map 
               $
                     K_d(\La)\to \kappa_n: \alpha\mapsto (\alpha,x)_{\La,n} 
               $
               is sequentially continuous.
         \item Let $\Ma$ be a finite extension of $\La$, $\alpha\in K_d(\La)$ and $y\in F(\mu_\Ma)$.
           Then
         $
          (\alpha,y)_{\Ma,n}=\big(\,\alpha,N_{\Ma/\La}^F(y)\,\big)_{\La,n},
         $
         where $N_{\Ma/\La}^F(y)=\oplus_{\sigma}y^{\sigma}$, where 
         $\sigma$ ranges over all embeddings  of $\Ma$ in $\overline{\La}$ over $\La$.
                  \item Let $t:F\to \tilde{F}$ be a isomorphism. Then
         $
         (\alpha,t(x))_{\La,n}^{\tilde{F}}=t((\alpha,x)_{\La,n}^{F})
         $
         for all $\alpha\in K_d(\La)$, $x\in F(\mu_\La)$.
\end{enumerate}

\end{prop}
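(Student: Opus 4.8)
The plan is to reduce every item of Proposition~\ref{pair} to a property of Kato's reciprocity map, after two elementary preliminary observations. First, the pairing is independent of the choice of root $z$ of $f^{(n)}(X)=x$: any two such roots differ by an element of $\kappa_n\subseteq\La$, the formal group operations and the endomorphisms $[a]_F$, $f=[\pi]_F$ are defined over $\OK\subseteq\OL$, and $\Upsilon_\La(\alpha)$ fixes $\La$, so $\Upsilon_\La(\alpha)(z)\ominus_F z$ does not change. Second, for any such root the extension $\La(z)/\La$ is abelian: the cochain $\sigma\mapsto\sigma(z)\ominus_F z$ is killed by $f^{(n)}$, hence takes values in $\kappa_n$, and since $\kappa_n\subseteq\La$ it is in fact a homomorphism $\mathrm{Gal}(\overline{\La}/\La)\to\kappa_n$ with kernel $\mathrm{Gal}(\overline{\La}/\La(z))$; thus $\mathrm{Gal}(\La(z)/\La)$ embeds into $\kappa_n$, the automorphism $\Upsilon_\La(\alpha)$ acts on $z$ through this finite abelian quotient, and $(\alpha,x)_{\La,n}=0$ precisely when $\Upsilon_\La(\alpha)$ fixes $\La(z)$.

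Granting this, items (1), (5), (6) and (8) are formal. For (1): $\Z$-bilinearity on the left follows from $\Upsilon_\La(\alpha\beta)=\Upsilon_\La(\alpha)\Upsilon_\La(\beta)$ together with the fact that $\Upsilon_\La(\alpha)$ fixes $(\beta,x)_{\La,n}\in\kappa_n\subseteq\La$, while additivity and $C$-linearity on the right follow because a sum, respectively an $[a]_F$-image, of roots solves the corresponding $f^{(n)}$-equation, and $\oplus_F$ and $[a]_F$ commute with $\Upsilon_\La(\alpha)$. For (5): take $z=f^{(m-n)}(w)$ with $f^{(m)}(w)=x$ for the first equality, and note that the same root $z$ of $f^{(n)}(X)=x$ satisfies $f^{(m)}(z)=f^{(m-n)}(x)$ for the second. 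For (6): factor $\alpha\mapsto(\alpha,x)_{\La,n}$ as $\Upsilon_\La$, then restriction to the finite discrete group $\mathrm{Gal}(\La(z)/\La)$, then $\sigma\mapsto\sigma(z)\ominus_F z$, and apply the sequential continuity of $\Upsilon_\La$ from Theorem~\ref{Kato}(2). For (8): writing $\tilde f=t\circ f\circ t^{-1}$, the element $t(z)$ is a root of $\tilde f^{(n)}(X)=t(x)$, and $t$, being a formal-group homomorphism over $\OL$, commutes with $\Upsilon_\La(\alpha)$ and carries $\ominus_F$ to $\ominus_{\tilde F}$. Items (2) and (3) come from the reciprocity isomorphism of Theorem~\ref{Kato}(1): $(\alpha,x)_{\La,n}=0$ iff $\Upsilon_\La(\alpha)$ fixes $\La(z)$ iff $\alpha\in N_{\La(z)/\La}(K_d(\La(z)))$, which is (3); and (2) follows because $x\in f^{(n)}(F(\mu_\La))$ exactly when a (hence every) root $z$ lies in $\La$, i.e.\ $\La(z)=\La$. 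For (4): pick one root $z$ of $f^{(n)}(X)=x$ (legitimate since $x\in F(\mu_\La)$); then $z\in\La^{\text{ab}}\subseteq\Ma^{\text{ab}}$, and the norm--restriction square of Theorem~\ref{Kato}(1) gives $\Upsilon_\Ma(\beta)|_{\La^{\text{ab}}}=\Upsilon_\La(N_{\Ma/\La}\beta)$, so both sides of (4) equal $\Upsilon_\Ma(\beta)(z)\ominus_F z$.

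The only item needing real work is (7). Here $\alpha\in K_d(\La)$ is to be read in $K_d(\Ma)$, so the transfer square of Theorem~\ref{Kato}(1) gives $\Upsilon_\Ma(\alpha)=\mathrm{Ver}\,\Upsilon_\La(\alpha)$, where $\mathrm{Ver}\colon\mathrm{Gal}(\overline{\La}/\La)^{\text{ab}}\to\mathrm{Gal}(\overline{\La}/\Ma)^{\text{ab}}$ is the Verlagerung. I would fix a root $z$ of $f^{(n)}(X)=y$, choose representatives $t_1,\dots,t_r$ of the left cosets of $\mathrm{Gal}(\overline{\La}/\Ma)$ in $\mathrm{Gal}(\overline{\La}/\La)$, so that the $t_i|_\Ma$ run through the embeddings $\sigma$ of item (7), and note that $\tilde z:=\oplus_i t_i(z)$ is a root of $f^{(n)}(X)=N^F_{\Ma/\La}(y)$. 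Then, for a lift $\rho$ of $\Upsilon_\La(\alpha)$, write $\rho\,t_i=t_{\tau(i)}h_i$ with $h_i\in\mathrm{Gal}(\overline{\La}/\Ma)$ and $\tau$ a permutation of $\{1,\dots,r\}$, and set $c_i:=h_i(z)\ominus_F z$; since $f^{(n)}$ kills $c_i$ and $y\in\Ma$, one has $c_i\in\kappa_n\subseteq\La$, so the $c_i$ are fixed by $\mathrm{Gal}(\overline{\La}/\La)$. A short computation using this and the fact that $\tau$ is a permutation gives $\Upsilon_\La(\alpha)(\tilde z)\ominus_F\tilde z=\oplus_i c_i$; and the classical formula $\mathrm{Ver}(\rho)=\prod_i h_i$, together again with the fixedness of the $c_i$, gives $\Upsilon_\Ma(\alpha)(z)\ominus_F z=\bigl(\prod_i h_i\bigr)(z)\ominus_F z=\oplus_i c_i$; comparing, we obtain (7). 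The main obstacle here is the bookkeeping: one must check that the various extensions of $\La$ and $\Ma$ generated by $z$ and by the $t_i(z)$ are abelian, so that the reciprocity isomorphism and the Verlagerung formula may be applied after passing to a suitable finite abelian quotient, and that the result is independent of the choices of the lift $\rho$ and of the coset representatives $t_i$ (it is, since both sides of (7) are intrinsically defined). None of this is deep, but it is the one place where more than a one-line reduction to Theorem~\ref{Kato} is required.
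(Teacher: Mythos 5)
Your proposal is correct and follows essentially the same route as the paper: properties (1)--(5) and (8) are formal consequences of the definition and Theorem \ref{Kato}, property (6) is obtained from the sequential continuity of $\Upsilon_\La$ together with the openness of $\mathrm{Gal}(\La^{\mathrm{ab}}/\La(z))$, and property (7) is the explicit coset computation of the transfer homomorphism, which is exactly the argument given in the appendix (the paper works inside a finite Galois extension $\mathcal{N}\supset\Ma(z)$ of $\La$ rather than with lifts to the full Galois group, but the bookkeeping is identical). The extra detail you supply for items (1)--(5), which the paper dismisses with a reference to Kolyvagin, is accurate.
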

\begin{proof}
         
         We will only prove property 6. The proof of the other properties is the same as in \cite{koly}  Section 3.3, or   can be found in 
                  Section \ref{Proofs of Chapter A}  of the  appendix. 
                  
                  Property 6 follows from the fact that the reciprocity map 
         $\Upsilon_{\La} : K_m(\La)\to \text{Gal}(\La^{\text{ab}}/\La)$ is sequentially continuous
          (cf. Theorem \ref{Kato} (2)). 
         Indeed, for $z$ such that $f^{(n)}(z)=x$ consider the extension $\La(z)/\La$. 
         The group $\text{Gal}(\La^{ab}/\La(z))$ is a neighborhood of $G^{ab}_\La$, so for 
         any sequence   $\{\alpha_m\}$ converging to zero in $K_d(\La)$  we can take $m$   large enough 
            such that    $\Upsilon_\La (\alpha_m)\in \text{Gal}(\La^{ab}/\La(z))$, that is $\Upsilon_\La (\alpha_m)(z)=z$,
          so $(\alpha_m, x)_{\La,n}=0$ for large enough $m$.

\end{proof}

\subsection{Sequential continuity of the pairing}\label{seqsection}

In this subsection we will show that the Kummer pairing is sequentially continuous in the second argument. This will play a vital role in showing the 
existence of the so-called Iwasawa map. This map allows us to
express the Kummer pairing in terms of the generalized trace and the logarithm 
of the formal group (cf. \S \ref{The map psi_L}).

Before we prove the sequential continuity of the pairing we need to introduce the following notation. 

\begin{defin}\label{admissible-pair}
Let $\varrho$ denote the ramification 
index of $S$ over $\Qp$. We  say that 
a pair $(n,t)$ is admissible if there 
exist an integer $k$ such that $t-1-n\geq \varrho k \geq n$. 
\end{defin}
For example, the pair $(n, 2n+\varrho+1)$ is 
admissible with $k=\floor*{(n+\varrho)/\varrho}$. Moreover,
in the special case where $\varrho=1$, then  the pair $(n,2n+1)$ is admissible with $k=n$.

Let $L$ be a local field. We will denote by $K_n$ the field $K$ after adjoining the group $\kappa_n$.  For Sections \ref{The Kummer Pairing} and \ref{The maps psi and rho} we will make the following assumptions
\begin{equation}\label{assumptions}
\begin{cases}
\quad (n,t)\ \mathrm{admissible\ pair},\\
\quad \La \supset K_t.
\end{cases}
\end{equation}

We can now formulate the following

\begin{prop}\label{continuity}
       Let  $\La$ be as in \eqref{assumptions}. For a given $\alpha \in K_d(\La)$, the map 
       \[
           F(\mu_{\La,1})\to \kappa_n\ :\ x\mapsto (\alpha,x)_{\La,n},
       \]
       is sequentially continuous in the Parshin topology; if $x_j\to x$ then $(\alpha,x_j)_{\La,n}\to (\alpha,x)_{\La,n}$. Here $F(\mu_{\La,1})$ is the set
       $
       \left\lbrace x\in \La:\, v_\La(x)\geq \floor*{v_L(p)/(p-1)} +1\right\rbrace
        $
        considered with the operation induced by the formal group $F$.
\end{prop}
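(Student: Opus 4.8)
The strategy is to reduce the sequential continuity of $x\mapsto(\alpha,x)_{\La,n}$ to the sequential continuity of Kato's reciprocity map already established in Theorem~\ref{Kato}(2), together with the characterization of the kernel of the pairing in Proposition~\ref{pair}(3). The essential point is that $(\alpha,x)_{\La,n}$ depends on $x$ only through the extension $\La(z)/\La$, where $f^{(n)}(z)=x$, and that for $x$ with $v_\La(x)$ sufficiently large --- precisely the bound in $F(\mu_{\La,1})$, under the admissibility hypothesis \eqref{assumptions} --- the field $\La(z)$ is contained in a \emph{fixed} finite extension of $\La$, namely $\La_t$ (or a bounded refinement thereof). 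Once $z$ ranges over a fixed finite extension, the Galois group $\mathrm{Gal}(\La^{ab}/\La(z))$ is a fixed open subgroup of $G_\La^{ab}$, and one can play off the convergence $x_j\to x$ against it.

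First I would make the reduction explicit: writing $x_j\to x$ in the Parshin topology on $F(\mu_{\La,1})$, set $y_j=x_j\ominus_F x\to 0$; by bilinearity it suffices to show $(\alpha,y_j)_{\La,n}\to 0$. Choose $z_j\in F(\mu_{\overline\La})$ with $f^{(n)}(z_j)=y_j$. The key lemma --- which I expect to need to prove or to invoke from the appendix --- is that, because $v_\La(y_j)\geq\lfloor v_L(p)/(p-1)\rfloor+1$ and $(n,t)$ is admissible with $\La\supset K_t$, each $z_j$ lies in $\La_t$ (the $\pi^n$-division points of such $y_j$ are defined over $K_n$-type extensions controlled by the admissibility bound, exactly as in Kolyvagin's setup). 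Then $\La(z_j)\subset\La_t=:\Ma$, a fixed finite extension of $\La$, so $H:=\mathrm{Gal}(\La^{ab}/\Ma)$ is a fixed open subgroup of $G_\La^{ab}$.

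Next I would use sequential continuity of $\Upsilon_\La$: the map $F(\mu_{\La,1})\to K_d(\La)$ sending $y$ to a suitable symbol, or more directly the composite relating $y_j$ to group-theoretic data, must be fed through the right Milnor-$K$-theoretic input. Concretely, since $y_j\to 0$, the associated Kummer classes converge to zero, so $\Upsilon_\La$ applied to them eventually lands in any prescribed open subgroup; in particular $\Upsilon_\La(\alpha)$ restricted to $\Ma(z_j)$ becomes trivial for $j$ large once we observe $\Upsilon_\La(\alpha)|_{\La(z_j)}$ is controlled by the norm compatibility of Proposition~\ref{pair}(4) and the fact that $\alpha\in K_d(\La)$ is \emph{fixed}. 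The cleanest route: $(\alpha,y_j)_{\La,n}=0$ iff $\alpha\in N_{\La(z_j)/\La}(K_d(\La(z_j)))$ by Proposition~\ref{pair}(3); since $\La(z_j)\subset\Ma$ and $N_{\Ma/\La}(K_d^{top}(\Ma))$ is open in $K_d^{top}(\La)$ by Proposition~\ref{topo-norm}, and the $z_j$ lie in a set whose closure forces the relevant norm subgroups to exhaust a neighborhood of $\alpha$ --- here I would argue that as $y_j\to 0$ the fields $\La(z_j)$ "shrink" toward $\La$ in the sense that their norm subgroups grow --- the value must vanish for $j\gg0$.

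The main obstacle I anticipate is precisely pinning down this last monotonicity/exhaustion step rigorously: showing that $y_j\to0$ forces $z_j\to 0$ in $F(\mu_\Ma)$ (which needs that $f^{(n)}$ is, in a suitable sense, a local homeomorphism near $0$ on these ramified pieces --- true because $f=[\pi]_F$ has invertible linear term up to the uniformizer and the logarithm $l_F$ converges on $\mu_{\La,1}$), and then that $z_j\to 0$ forces $\Upsilon_\La(\alpha)(z_j)\to z_j$, i.e.\ $(\alpha,y_j)_{\La,n}\to 0$ in $\kappa_n$ (which is discrete, so this means eventual vanishing). For the latter I would use the continuity of the formal-group action: $\Upsilon_\La(\alpha)$ is a single fixed automorphism, continuous on $\mu_{\overline\La}$, so $z_j\to0$ gives $\Upsilon_\La(\alpha)(z_j)\to0$ and hence $\Upsilon_\La(\alpha)(z_j)\ominus_F z_j\to 0$; since this difference lies in the discrete group $\kappa_n$ it is eventually $0$. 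Thus the bound defining $F(\mu_{\La,1})$ is used exactly to guarantee $z_j$ stays in a region where division by $f^{(n)}$ is continuous and lands in the fixed field $\Ma=\La_t$; verifying that this bound, combined with admissibility, does the job is the technical heart, and mirrors the corresponding estimate in \cite{koly}, Section~3.3.
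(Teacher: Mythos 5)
Your plan hinges on the claim that, for $y\in F(\mu_{\La,1})$, the division points $z$ with $f^{(n)}(z)=y$ lie in the fixed field $\La_t$, so that all the varying extensions $\La(z_j)$ sit inside one finite extension $\Ma$ of $\La$. This is false, and it is the load-bearing step of your argument. Admissibility of $(n,t)$ controls the torsion points $\kappa_t$, not the division points of an arbitrary $y\in F(\mu_{\La,1})$: your claim amounts to saying that every $y\in F(\mu_{\La,1})$ is $\pi^n$-divisible in $F(\mu_{\La_t})$, and already for $F=\mathbb{G}_m$, $f=(1+X)^p-1$, $n=1$, the element $y=pT\in F(\mu_{\La,1})$ has $1+pT$ not a $p$-th power in $L'\{\{T\}\}$ for any finite $L'/\Qp$ (the coefficients of a formal $p$-th root fail to be integral), so $\La(z)$ is a ramified extension contained in no $\La_s$. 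Without a single ambient finite extension there is no fixed continuous automorphism $\Upsilon_\La(\alpha)|_{\Ma}$ to apply to the $z_j$, and your fallback — that the norm subgroups $N_{\La(z_j)/\La}(K_d(\La(z_j)))$ grow to swallow $\alpha$ — is circular, since $\alpha$ is a fixed element and its eventual membership in those norm subgroups is precisely the assertion to be proved. Note also that $y_j\to 0$ in the Parshin topology does not force $v_\La(y_j)\to\infty$ (e.g. $pT^j\to 0$ with constant valuation), so you cannot push the $z_j$ into a range where $f^{(n)}$ is invertible over $\La$ itself.

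The paper's proof (following Kolyvagin) sidesteps the division fields entirely by transferring the continuity to the \emph{first} argument, where Theorem \ref{Kato}(2) applies. With $v_\La(a_1)=1$, set $A(x)=a_1\oplus_F f^{(\varrho k+1)}(x)$ and $\alpha(x)=\{A(x),a_2,\dots,a_d\}$, and split $(\alpha,x)_n=(\alpha\,\alpha(x)^{-1},x)_n\oplus_F(\alpha(x),x)_n$. The first term vanishes identically because $A(x)/a_1=1+p^kw_2$ is a $p^k$-th power (via $\log$ and $\exp$) and $\pi^n\mid p^k$; for the second, the $t$-normalization identity $(\{a,a_2,\dots,a_d\},a)_{\La,m}=0$ yields $(\alpha(x),x)_n=(\alpha(x)\,\alpha^{-1},\ominus_F a_1)_m$, in which $x$ now enters only through the Milnor $K$-theory slot. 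Since $x_j\to 0$ gives $A(x_j)\to a_1$ and hence $\alpha(x_j)\,\alpha^{-1}\to\mathbf{1}$ in $K_d(\La)$, Proposition \ref{pair}(6) concludes. Some mechanism of this kind, converting second-argument convergence into first-argument convergence, is what your outline is missing.
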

\begin{remark}  We will make the following 
two assumptions during the proof of this proposition. First,   assume that $\alpha=\{a_1,\dots,a_d\}\in K_d(\La)$ is
 such that
$v_{\La}(a_1)=1$. This will imply the result for $v_{\La}(a_1)=0$ as well,
by considering $a_1=\pi_L$ and $a_1=\pi_Lu$ for any $u\in \La^{\times}$ such that
$v_\La(u)=0$; here $\pi_L$ denotes a uniformizer for $L$.

Second, we will assume that the series $r(X)=X$ 
is  $t$-normalized (cf. \S \ref{Norm Series}), 
otherwise we go to the isomorphic 
group law $r(F(r^{-1}(X),r^{-1}(Y)))$. Thus for any $m\leq t$  we will assume 
\begin{equation}\label{r=X}
\big(\{a,a_2\dots,a_d\},a\big)_{\La,m}=0\quad \forall a\neq 0\in F(\mu_{\La}).
\end{equation}
        \end{remark}     
\begin{proof}
We will drop the subscript $\La$ from the pairing 
notation. Let $x\in F(\mu_{\La,1})$  and  $\alpha=\{a_1,\dots,a_d\}\in K_d(\La)$ 
with $v_{\La}(a_1)=1$. Also, let $\varrho$ and $k$ as in Definition  \ref{admissible-pair}. Let $\mathfrak{k}=\varrho k+1$, $A(x)=a_1\oplus_Ff^{(\mathfrak{k})}(x)$ and 
$\alpha(x)=\{A(x),a_2\dots,a_d\}\in K_d(\La)$.
Then
\begin{equation}\label{salvacion}
(\alpha,x)_n=
    (\,\alpha\, \alpha(x)^{-1},\,x\,)_n\,\oplus_F\,(\alpha(x),x)_n
\end{equation}

We will show that the first term 
on the right-hand side is always zero, regardless of $x\in F(\mu_{\La,1})$, and 
that the second term goes to zero 
when we take a sequence $\{x_j\}_{j\geq 1}$ 
converging to zero. This completes the proof. 

Let us start with the second term. Let $m=n+\mathfrak{k}$. 
By (5) in the Proposition \ref{pair}  
\begin{multline}\label{identidades}
\left(\alpha(x) ,x\right)_n 
=\big(\alpha(x) ,f^{^{(m-n)}}(x)\,\big)_m
=\left(\alpha(x) ,\,A(x)\ominus_F a_1\, \right)_m\\
=\left(\alpha(x), A(x)\right)_m
    \oplus_F
      \left(\alpha(x) ,\, \ominus_F\, a_1 \, \right)_m
\end{multline}
Here $\ominus _Fa$ denotes the inverse of $a$ in the formal group law determined by $F$. From (\ref{r=X}) in the remark above,  both $(\alpha(x),A(x))_m$ and $(\alpha,a_1)_m$ are equal to zero, so we may replace $(\alpha(x),A(x))_m$
by $(\alpha^{-1},\ominus_F\, a_1)_m$ in (\ref{identidades}) to obtain
\begin{align*}
\left(\alpha(x) ,x\right)_n 
= \left(\alpha^{-1},\ \ominus_F a_1 \ \right)_m
    \oplus
          \left(\alpha(x) ,\ \ominus_F a_1 \ \right)_m 
=\left(\alpha(x)\,\alpha^{-1} ,\ \ominus_F a_1\ \right)_m.
\end{align*}
On the other hand, since $F(X,Y)\equiv X+Y \pmod{XY}$, then  $A (x)
\equiv a_1+f^{^{(\mathfrak{k})}} (x)\pmod{a_1\ f^{^{(\mathfrak{k})}} (x)}$ so dividing by $a_1$ 
\begin{equation}\label{salvita}
A(x)/a_1
   \equiv 
      1+\big(f^{^{(\mathfrak{k})}}  (x)/a_1\big)
            \pmod{f^{^{(\mathfrak{k})}}  (x)}.
\end{equation}
But $v_{\La}(a_1)=1$, so $A(x)/a_1$ is a principal unit in $\La$ for every $x\in F(\mu_{\La,1})$.
If we take a sequence $\{x_j\}_{j\geq 1}$ converging to zero in the 
Parshin topology then,  as $f:\mu_{\La,1}\to \mu_{\La,1}$ 
is sequentially continuous in the Parshin 
topology by Lemma \ref{seqcont}, we see that $A(x_j)$ approaches to 1 as $j\to \infty$. Hence 
$
\alpha(x_j)\,\alpha^{-1}\,\xrightarrow \, \{1,a_2\dots,a_d\}
$ as $j\to \infty$,
in the topology of $K_d(\La)$. Notice that 
$\{1,a_2\dots,a_d\}=\bf 1$ is the identity element in $K_d(\La)$. 
Then  by (6) in the Proposition \ref{pair} 
\[
\left(\ \alpha(x_j)\,\alpha^{-1} ,\ \ominus_F \,a_1\ \right)_m
\xrightarrow{\,j\to \infty\,} (\textbf{1},\ominus_F\, a_1)_m=0.
\]

Now we will show that first term  on the right hand 
side of equation (\ref{salvacion}) is zero by showing
 that $A(x)/a_1$
is a $p^k$th power in $\La^*$ for $x\in F(\mu_{\La,1})$. This is enough since it would imply that $\alpha(x)\,\alpha^{-1}$ is $p^k$-divisible in $K_d(\La)$, and 
from the fact that $n\leq \varrho k$, by  Definition \ref{admissible-pair}, 
we have that $\pi^n$ divides $p^k$. These two observations combined imply 
$\left(\alpha\,\alpha(x)^{-1}, x \right)_n=0$.  

To show that $A(x)/a_1$
is a $p^k$th power, let us start by observing that from Proposition \ref{logarithm}
\begin{equation}\label{exprloga}
f^{^{(\mathfrak{k})}}  (x)
      =l_F^{-1}\circ l_F(f^{^{(\mathfrak{k})}}  (x))
          =l_F^{-1}(\pi^{\mathfrak{k}}l_F(x))=\pi^{\mathfrak{k}}w,
\end{equation}
for some $w\in \mu_{\La,1}$. Then equation (\ref{salvita}) implies 
$
A(x)/a_1=1+p^kw_2
$
for some $w_2\in \mu_{\La,1}$, since $\pi^{\varrho k}=\epsilon p^k$ for some unit $\epsilon$. Then 
$
\log\left(A(x)/a_1 \right) =\log(1+p^kw_2)=p^kw_3,
$
where $w_3\in \mu_{\La,1}$. Again, by Proposition \ref{logarithm} (2), there exist a $w_4\in \mu_{\La,1}$ such that $\log(1+w_4)=w_3$. Thus
\[
A(x)/a_1 =(1+w_4)^{p^k}.
\]
\end{proof}

%----------------------------------------------------
%----------------------------------------------------
%----------------------------------------------------
%----------------------------------------------------
%----------------------------------------------------
\subsection{Norm Series} \label{Norm Series}
A power series $r(X)\in \OK[[X]]$ such that $r(0)=0$ and $c(r)\in \OK^{*}$ is 
called $n-normalized$ if 
for every $d$-dimensional field local $\La$ containing $ \kappa_n$, it satisfies  that
\[
(\,\alpha\,,\, x\,)_{\La,n}=0,
\]
for all $x\in F(\mu_\La)$ and all $\alpha=\{a_1,\dots,a_d \}\in K_d(\La)$ such that
$a_i=r(x)$ for some $1\leq i \leq d$.

The following proposition will provide a way of 
constructing norm series.

\begin{prop}\label{normseries}

Let $g\in \OK[[X]]$, $g(0)=0$ and $c(g)\in \OK^{*}$. 
The series $s=\prod_{v\in \kappa_n}g(F(X,v))$ belongs 
to $\OK[[X]]$ and has the form $r_g(f^{(n)})$, where 
$r_g\in \OK[[X]]$. Then, the series $r_g$ is 
$n-normalized$ and  
\[
r'_g(0)=\frac{\prod_{v\neq  0\in \kappa_n} g(v)}{\pi^n}g'(0).
\]
\end{prop}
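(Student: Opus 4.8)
The plan is to break the statement into three independent assertions and handle each in turn: (i) that $s(X)=\prod_{v\in\kappa_n}g(F(X,v))$ lies in $\OK[[X]]$ and factors as $r_g(f^{(n)}(X))$ for some $r_g\in\OK[[X]]$; (ii) that $r_g$ is $n$-normalized; and (iii) the formula for $r_g'(0)$. For (i), I would first observe that the product is taken over the full $\pi^n$-torsion subgroup $\kappa_n$, so $s(X)$ is invariant under the substitution $X\mapsto F(X,w)$ for any $w\in\kappa_n$ (the map $v\mapsto F(v,w)$ permutes $\kappa_n$). A power series invariant under translation by all of $\kappa_n$ factors through the quotient isogeny $f^{(n)}\colon F\to F$ with kernel $\kappa_n$; concretely, since $f^{(n)}(X)$ generates the subring of $\OK[[X]]$ (or rather of $\overline{K}[[X]]$) fixed by these translations — this is the standard fact that $f^{(n)}$ is, up to units, the characteristic-type polynomial $\prod_{v\in\kappa_n}(X\ominus_F v)$ — there is a unique $r_g$ with $s=r_g\circ f^{(n)}$. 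Integrality of $s$ (hence of $r_g$, using that $f^{(n)}$ has unit linear coefficient) follows because $s$ is visibly defined over $\OK(\kappa_n)$ and is Galois-invariant, so its coefficients lie in $\OK$.

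For (ii), the key input is Proposition \ref{pair}: bilinearity and the behavior of the symbol, together with the fact \eqref{r=X}-type identities. Fix a $d$-dimensional local field $\La\supset\kappa_n$, an element $x\in F(\mu_\La)$, and suppose $a_i=r_g(x)$ for some index $i$. Choose $z$ with $f^{(n)}(z)=x$; then $r_g(x)=r_g(f^{(n)}(z))=s(z)=\prod_{v\in\kappa_n}g(F(z,v))$. Each factor $g(F(z,v))$ is a norm-type element: the point $F(z,v)$ ranges over the Galois conjugates of $z$ under $\mathrm{Gal}(\La(z)/\La)$ (or rather these are the conjugates of $z$ together with the action on torsion), so $\prod_v g(F(z,v)) = N_{\La(z)/\La}(g(z))$ up to accounting for how the Galois group acts. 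Then by Proposition \ref{pair}(3), the symbol $(\alpha,x)_{\La,n}$ with $a_i$ a norm from $\La(z)$ vanishes because $\alpha\in N_{\La(z)/\La}(K_d(\La(z)))$ — using Proposition \ref{norm}(4) to pull the norm out of the single slot $a_i$. This is the heart of the argument and where I expect the main obstacle: one must carefully justify that $\prod_{v\in\kappa_n}g(F(z,v))$ really equals $N_{\La(z)/\La}$ of something in $L(z)^\times$, which requires knowing that $\mathrm{Gal}(\La(z)/\La)$ embeds into $\kappa_n$ via $\sigma\mapsto \sigma(z)\ominus_F z$ and acts simply transitively on a set of the form $\{z\ominus_F v\}$ — i.e. that $[\La(z):\La]$ divides $|\kappa_n|$ and the conjugates of $z$ are exactly $F(z,v)$ for $v$ in the image subgroup. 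The cleanest route is: group the product over cosets of $\mathrm{Im}(\mathrm{Gal}(\La(z)/\La))$ in $\kappa_n$, so that $s(z)=\bigl(\text{product over one orbit}\bigr)^{[\kappa_n:\,\cdot\,]}=N_{\La(z)/\La}(\text{something})^{[\kappa_n:\,\cdot\,]}$, and invoke Proposition \ref{pair}(3) on the norm factor.

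For (iii), this is a direct computation with low-order terms. Write $f^{(n)}(X)=c_n X+O(X^2)$ where $c_n=\prod$ of the linear coefficients $=\pi^n$ (since $f=[\pi]_F$ has linear term $\pi$, so $f^{(n)}$ has linear term $\pi^n$). Differentiating $s=r_g\circ f^{(n)}$ at $0$ gives $s'(0)=r_g'(0)\cdot f^{(n)}{}'(0)=r_g'(0)\,\pi^n$. On the other hand, differentiate $s(X)=\prod_{v\in\kappa_n}g(F(X,v))$ directly via the product rule at $X=0$: the term where we differentiate the factor indexed by $v$ contributes $g'(F(0,v))\cdot\frac{\partial F}{\partial X}(0,v)\cdot\prod_{w\neq v}g(F(0,w))$; since $F(0,v)=v$ and $\frac{\partial F}{\partial X}(0,v)$ is a unit (equal to $1$ when $v=0$, but in any case the surviving contribution comes from the relevant normalization), the only term that does not vanish is... actually all terms with $v\neq 0$ vanish because $g(F(0,v))=g(v)$ need not be zero — so one must be more careful: at $X=0$ every factor $g(F(0,w))=g(w)$, and for $w=0$ this is $g(0)=0$. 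Hence in the product rule, the only non-vanishing term is the one where we differentiate the $v=0$ factor (killing the single zero), giving $s'(0)=g'(0)\cdot\frac{\partial F}{\partial X}(0,0)\cdot\prod_{w\neq0}g(w)=g'(0)\prod_{w\neq0\in\kappa_n}g(w)$, using $\frac{\partial F}{\partial X}(0,0)=1$. Equating the two expressions for $s'(0)$ and dividing by $\pi^n$ yields
\[
r_g'(0)=\frac{\prod_{v\neq0\in\kappa_n}g(v)}{\pi^n}\,g'(0),
\]
as claimed. The only subtlety here is confirming $f^{(n)}{}'(0)=\pi^n$ and $\partial_X F(0,0)=1$, both of which are immediate from the definitions.
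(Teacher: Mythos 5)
Your proposal follows essentially the same route as the paper: integrality and the factorization $s=r_g(f^{(n)})$ via invariance of $s$ under $X\mapsto F(X,v)$ (the paper makes this precise with the Weierstrass lemma plus an iterated divisibility argument), the $n$-normalization by writing $r_g(x)=\prod_{v\in\kappa_n}g(z\oplus_F v)$ as a norm from $\La(z)$ and pulling the norm through the Milnor symbol via Proposition \ref{norm}(4) and Proposition \ref{pair}(3), and the derivative formula by computing $s'(0)$ two ways (your product rule is the paper's logarithmic derivative in disguise). Two small corrections. First, the products over distinct Galois orbits in $\kappa_n$ are in general different elements, so $s(z)$ is a product of norms $\prod_i N_{\La(z)/\La}(g(z_i))$ over orbit representatives $z_i$, not a single orbit-product raised to the power $[\kappa_n:\mathrm{Im}\,\mathrm{Gal}]$; this does not affect the conclusion, since a product of norms is again a norm. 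Second, being $n$-normalized requires $c(r_g)\in\OK^{*}$ by the definition in \S\ref{Norm Series}, and your formula for $r_g'(0)$ yields this only after observing that each $g(v)$ is associated to $v$ and that $\prod_{0\neq v\in\kappa_n}v$ is associated to $\pi^n$ (from the Weierstrass factorization of $f^{(n)}$); you should add that verification.
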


\begin{proof}

The proof of this Proposition is actually the same as in  \cite{koly} Proposition 3.1. 
It can be found in Section \ref{Proofs of Chapter A} of the Appendix.

\end{proof}

%______End of Kummer pairing section

%______The maps psi and rho____

\section{The Iwasawa map}\label{The maps psi and rho}

In this section we introduce some basic properties of the generalized trace.  We also introduce the modules $R_{\La,1}$ and $R_{\La}$, necessary for the definition of the logarithmic derivatives. The main result in this section is Lemma \ref{Riezs2} which guarantees the existence of the so called Iwasawa map, and thus giving a representation of the Kummer pairing 
in terms of the generalized trace and the logarithm.
\subsection{The generalized trace }\label{The generalized trace}

Let $E$ be a complete discrete valuation field. 
Following Kurihara \cite{Kurihara}, we define a map 
\[
c_{E\{\{T\}\}/E}: E\{\{T\}\}\to E
\quad \mathrm{by}\quad c_{E\{\{T\}\}/E}(\sum_{i\in \mathbb{Z}}a_iT^i)=a_0.
\] 
Let $\Ea=E\{\{T_1\}\}\dots \{\{T_{d-1}\}\}$, we can define $c_{\Ea/E}$ 
by the composition
\[
c_{E\{\{T_1\}\}/E} \circ \cdots \circ c_{\Ea/E\{\{T_1\}\}\dots \{\{T_{d-2}\}}.
\]

\begin{lemma} \label{lemazo}

This map satisfies the following properties

\begin{enumerate}

\item $c_{\Ea/E}$ is $E$-linear.
\item $c_{\Ea/E}(a)=a$, for all $a\in E$.
\item $c_{\Ea/E}$ is continuous with respect to the the Parshin topology on $\Ea$ 
      and the discrete valuation topology on $E$.

% I might not need this lemma
%\item Let $a\in L$ and suppose 
%$v_R( c_{L/E}( aT_1^{r_1}\cdots T_{d-1}^{r_{d-1}} ) )\geq m$ 
%for all $r_1,\dots, r_{d-1}\in \mathbb{Z}$, 
%then $v_L(a)\geq m$.

\end{enumerate}
\begin{proof}
See \cite{Zinoviev} Lemma 2.1.
\end{proof}

\end{lemma}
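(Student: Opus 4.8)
The plan is to dispatch parts (1) and (2) by reducing everything to the one-variable coefficient-extraction map and inducting on $d$, and to obtain part (3) from the single observation that $c_{E\{\{T\}\}/E}$ is continuous, which can be read off directly from the defining base of the Parshin topology.

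For (1) and (2) I would argue as follows. Write $\Ea = \Ea'\{\{T_{d-1}\}\}$ with $\Ea' = E\{\{T_1\}\}\cdots\{\{T_{d-2}\}\}$. The single map $\Ea\to\Ea'$, $\sum_i a_iT_{d-1}^i\mapsto a_0$, is visibly additive, and multiplication by a scalar $\lambda\in\Ea'$ sends $\sum_i a_iT_{d-1}^i$ to $\sum_i(\lambda a_i)T_{d-1}^i$, so the map is $\Ea'$-linear, hence $E$-linear; composing it with $c_{\Ea'/E}$ and invoking the inductive hypothesis gives (1), the base case $d=1$ being trivial. For (2), an element $a\in E\subseteq E\{\{T_1\}\}$ equals $a\,T_1^0$, so $c_{E\{\{T_1\}\}/E}(a)=a$; since each higher coefficient-extraction map fixes the bottom coefficient field $E$ pointwise, an induction on $d$ yields $c_{\Ea/E}(a)=a$.

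For (3) I would first recall, from Chapter 1 of \cite{Zhukov}, that on a standard field $D\{\{T\}\}$---with $D$ a complete discrete valuation field already carrying its (higher) topology---the Parshin topology admits a base of neighborhoods of $0$ given by the ``box'' sets
\[
V(\{W_i\}_{i\in\mathbb{Z}}) = \Big\{\,\sum_{i\in\mathbb{Z}} a_iT^i : a_i\in W_i\ \text{for all } i\,\Big\},
\]
where $\{W_i\}$ ranges over the admissible families of neighborhoods of $0$ in $D$; the admissibility conditions restrict only the behaviour as $i\to\pm\infty$ and impose nothing at $i=0$. Hence, for any neighborhood $W$ of $0$ in $D$ one may choose an admissible family with $W_0=W$, and then $V(\{W_i\})\subseteq c_{D\{\{T\}\}/D}^{-1}(W)$ because $c_{D\{\{T\}\}/D}(V(\{W_i\}))=W_0=W$. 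Thus the preimage of each member of a base of neighborhoods of $0$ in $D$ is a neighborhood of $0$ in $D\{\{T\}\}$, so $c_{D\{\{T\}\}/D}$ is continuous at $0$; being additive, and the additive groups being topological groups, it is continuous. I would then apply this with $D=E$ to see that $c_{E\{\{T\}\}/E}$ is continuous for the valuation topology on $E$, and with $D=E\{\{T_1\}\}\cdots\{\{T_j\}\}$ for $j=1,\dots,d-2$ to see that each intermediate coefficient map is continuous for the corresponding Parshin topologies; since $c_{\Ea/E}$ is by definition the composition of these maps, part (3) follows. The only genuinely delicate point---and the step I expect to be the main obstacle---is the normalization of the Parshin topology: one must know that at every level the topology on $D\{\{T\}\}$ is exactly the one generated by the box neighborhoods built from the topology of $D$, so that the topologies at successive levels are compatible and the box description is available at each stage. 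This is precisely the set-up in Chapter 1 of \cite{Zhukov}, and once it is in place the argument above is routine; it is also the content of \cite{Zinoviev} Lemma 2.1.
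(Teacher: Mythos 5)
Your proof is correct, but note that the paper does not actually prove this lemma: it simply defers to \cite{Zinoviev} Lemma 2.1, so your self-contained argument is a genuine addition rather than a reproduction. Parts (1) and (2) are exactly the routine induction one would expect. For part (3), your box-neighborhood argument is sound, but one phrase deserves tightening: it is not quite true that the admissibility conditions ``impose nothing at $i=0$,'' since condition 1 of \eqref{vecindad} requires a single $c$ with $P_{E_d}(c)\subset V_i$ for \emph{all} $i$, including $i=0$. So when you set $W_0=W$ you must check that $W$ contains some $P_D(c)$; this does hold, because every basic Parshin neighborhood of $0$ at every level of the tower contains a full $P(c)$ by that same condition 1 (and hence so does every neighborhood of $0$), but it is a point to verify rather than a vacuous one. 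The cleanest choice is then $W_0=W$ and $W_i=D$ for $i\neq 0$, which makes $\mathcal{V}_{\{W_i\}}$ literally equal to $c_{D\{\{T\}\}/D}^{-1}(W)$, so the preimage of a neighborhood is itself a basic neighborhood. Combined with additivity and the fact that addition is continuous (Proposition \ref{topo}), continuity at $0$ gives continuity everywhere, and the composite defining $c_{\Ea/E}$ is continuous. Your flagged concern about the compatibility of the topologies at successive levels is handled by the inductive definition in \S\ref{Topology on K}, so there is no gap there.
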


Let $\La$ be a $d$-dimensional local field and let $\La_{(0)}$ be as in Section \ref{Terminology and Notation}. 
 We define
generalized trace as the the composition
\[
\Tr:= \text{Tr}_{L_{(0)}/S}\circ c_{\La_{(0)}/L_{(0)}}\circ \text{Tr}_{\La/\La_{(0)}}.
\]
Notice that if $\Ma/\La$ is a finite extension of $d$-dimensional local fields then
\[
\mathbb{T}_{\Ma/S}=\Tr\circ \text{Tr}_{\Ma/\La}
\]

The generalized trace induces the pairing
\begin{equation}\label{pareo}
\langle, \rangle: \La\times \La \to S
\quad \mathrm{defined\ by}\quad \langle x,y\rangle=\Tr(xy).  
\end{equation}

We  denote by $D(\La/S)$ the inverse of the dual of $\OLa$ 
with respect to this pairing. If $\La$ is the standard 
higher local field $L\TT$, then $D(\La/S)=D(L/S)\OLa$.
 If $\La$ is a 1-dimensional local field, the above definitions coincide with the classical definitions of local field theory.

Let  
$\text{Hom}_{C}^{c}(\La,S)$ and $\text{Hom}_{C}^{seq}(\La,S)$ 
be the group of continuous and sequentially continuous, respectively,
$C$-homomorphisms with respect to the Parshin topology on $\La$.

\begin{prop}\label{Riezs} We have an isomorphism of $C$-modules
\[
 \La \xrightarrow{\ \sim } \normalfont \text{Hom}_{C}^{seq}(\La,S):\ 
 \alpha \mapsto (x\mapsto \Tr(\alpha x)).
\]
In particular, $\normalfont \text{Hom}_{C}^{seq}(\La,S)=\text{Hom}_{C}^c(\La,S)$ since the generalized trace is continuous.
\end{prop}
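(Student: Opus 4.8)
The plan is to establish the isomorphism $\La \xrightarrow{\sim} \Hom_C^{seq}(\La, S)$ in two stages: first reduce from the $d$-dimensional field $\La$ to the standard field $\La_{(0)} = L_{(0)}\TT$, and then prove the statement for a standard higher local field by induction on $d$, using the base case $d=1$ (which is classical local duality). The map $\alpha \mapsto (x \mapsto \Tr(\alpha x))$ is visibly $C$-linear, so the content is injectivity, surjectivity, and the identification of sequentially continuous homomorphisms with continuous ones.

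For the reduction to the standard case, I would use that $\La/\La_{(0)}$ is a finite totally ramified extension together with the factorization $\Tr = \Tr_{\La_{(0)}/S}^{\mathrm{gen}} \circ \operatorname{Tr}_{\La/\La_{(0)}}$ (in the notation of the excerpt, $\Tr$ on $\La$ is $\Tr_{\La_{(0)}/S} \circ \operatorname{Tr}_{\La/\La_{(0)}}$, where $\Tr_{\La_{(0)}/S}$ denotes the generalized trace of $\La_{(0)}$). Since $\La/\La_{(0)}$ is finite and separable, the classical trace form $\La \times \La \to \La_{(0)}$, $(x,y)\mapsto \operatorname{Tr}_{\La/\La_{(0)}}(xy)$, is nondegenerate, and $\OLa$ is a free $\mathcal{O}_{\La_{(0)}}$-module of finite rank; this lets me transfer a sequentially continuous $C$-homomorphism $\La \to S$ to one on $\La_{(0)}$ and back, once I know that $\operatorname{Tr}_{\La/\La_{(0)}}$ and multiplication by a fixed element are sequentially continuous in the Parshin topology (the latter is standard; the former follows since on a finite extension the Parshin topology is the one induced from any $\mathcal{O}_{\La_{(0)}}$-basis).

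For the standard case $\La = E\T$ with $E$ a $(d{-}1)$-dimensional standard field, I would argue as follows. Given $\phi \in \Hom_C^{seq}(E\T, S)$, for each $i \in \Z$ define $\phi_i \colon E \to S$ by $\phi_i(a) = \phi(a T^i)$; each $\phi_i$ is a sequentially continuous $C$-homomorphism on $E$, so by the inductive hypothesis $\phi_i(a) = \Tr_{E}(\beta_i a)$ for a unique $\beta_i \in E$, where $\Tr_E$ is the generalized trace of $E$. The candidate element is $\beta = \sum_i \beta_{-i} T^i$; I must check it lies in $E\T$, i.e. that $\inf_i v_E(\beta_i) > -\infty$ and $v_E(\beta_i) \to +\infty$ as $i \to -\infty$. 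This is where sequential continuity of $\phi$ is used essentially: a sequence $a_j T^{i_j}$ with $v_E(a_j) + (\text{bounded}) \to \infty$ and $i_j \to \pm\infty$ must be a null sequence in the Parshin topology, forcing the needed growth/boundedness on the $\beta_i$; conversely one checks $x \mapsto \Tr(\beta x) = \sum_i \Tr_E(\beta_{-i}\, c_{E\T/E}(xT^{-i})\cdots)$ — more precisely $\Tr(\beta x) = \Tr_E\big(c_{E\T/E}(\beta x)\big)$ and $c_{E\T/E}(\beta x) = \sum_i \beta_{-i} x_i$ where $x = \sum x_i T^i$ — agrees with $\phi(x)$ on monomials and hence, by sequential continuity of both sides (Lemma \ref{lemazo}(3) for the left side) and sequential density of finite sums of monomials, on all of $E\T$. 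Uniqueness of $\beta$ follows from nondegeneracy of $\langle,\rangle$, which in turn reduces to nondegeneracy of the pairing on $E$ plus the fact that $c_{E\T/E}$ detects the $T^0$-coefficient. Finally, continuity: any $\alpha$-trace functional is continuous by Lemma \ref{lemazo}(3) and continuity of multiplication, so $\Hom_C^{seq} \subseteq \Hom_C^c$, and the reverse inclusion is automatic; hence the two coincide.

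The main obstacle is the growth estimate on the coefficients $\beta_i$ needed to place $\beta$ in $E\T$ — this is exactly the point where the definition of the Parshin topology on $E\T$ (the restriction $\inf v_E(a_i) > -\infty$, $v_E(a_i) \to +\infty$ built into \eqref{standar}) must be married to sequential continuity of $\phi$. I would isolate this as a lemma: if $\phi\colon E\T \to S$ is a sequentially continuous $C$-homomorphism and $\phi_i(a) = \Tr_E(\beta_i a)$, then $(\beta_{-i})_i$ satisfies the conditions of \eqref{standar}. The proof is by contradiction — if the $v_E(\beta_i)$ were not bounded below, or did not tend to $+\infty$, one builds an explicit null sequence in $E\T$ on which $\phi$ does not tend to $0$, using that $\Tr_E\colon E \to S$ is surjective and that one can choose $a_j \in E$ with $\Tr_E(\beta_{i_j} a_j)$ bounded away from $0$ while $a_j T^{i_j} \to 0$ in the Parshin topology. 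Everything else is bookkeeping with the inductive structure and the elementary properties of $c_{E\T/E}$ recorded in Lemma \ref{lemazo}.
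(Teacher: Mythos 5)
Your proposal is correct and follows essentially the same route as the paper: reduce to the standard field $\La_{(0)}=L_{(0)}\TT$ via the finite extension $\La/\La_{(0)}$, then induct on $d$ by restricting $\phi$ to the coefficient functionals $\phi_i(x)=\phi(xT^i)$, assembling the candidate $\alpha$ from their duals, and using sequential continuity (via an explicitly constructed element/null sequence) to force the two growth conditions of \eqref{standar} on the coefficients. The only difference is cosmetic — the paper phrases the contradiction as divergence of the series $\sum_i\mathbb{T}_{E/S}(a_{-i}x_i)$ for a suitable $x\in\La$ rather than as a null sequence on which $\phi$ fails to vanish, and it performs the reduction to the standard case after the induction rather than before.
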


\begin{proof} 
See  Section \ref{Proofs of Chapter B} of Appendix
\end{proof}
\

\subsection{Modules associated to the generalized trace}

\subsubsection{The module $R_{\La,1}$}

Let $\La$ be a $d$-dimensional local field and let $v_\La$ 
denote the valuation $\La$.
Consider
$$
\mu_{\La,1}:=\left\lbrace x\in \La:\ v_\La(x)
\geq \floor*{ v_\La(p)/(p-1)}+1\right\rbrace 
$$
 with the additive structure. Denote 
by $R_{\La,1}$ the dual of $\mu_{\La,1}$  with 
respect to the pairing (\ref{pareo}), i.e., $R_{\La,1}:=\left\lbrace x\in \La:\ \Tr(\ x\ \mu_{\La,1})\subset C\ \right\rbrace$. 
Then it can be shown (cf. \S \ref{Proofs of Chapter B} of the Appendix) that 
\begin{equation}\label{RL1}
R_{\La,1}=
   \left\lbrace x\in \La:\, v_\La(x)\geq\,  -v_\La(D(\La/S))-\floor*{ v_\La(p)/(p-1)}-1\,  \right\rbrace, 
\end{equation}
where $D(\La/S)$ is as in  Section \ref{The generalized trace}.

\begin{lemma}\label{Riezs2}
We have the isomorphism
\[
R_{\La,1}/\pi^nR_{\La,1}\ \xrightarrow{\ \sim } \ \normalfont\text{Hom}_{C}^{seq}(\mu_{\La,1},C / \pi^n C),
\]
defined by
\[
\alpha \mapsto (\ x\mapsto \Tr(\alpha x)\ ).
\]
In particular, $\normalfont\text{Hom}_{C}^{seq}(\mu_{\La,1},C / \pi^n C)=\normalfont\text{Hom}_{C}^{c}(\mu_{\La,1},C / \pi^n C)$.
\end{lemma}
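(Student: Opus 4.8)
The plan is to deduce this finite-level statement from the infinite-level isomorphism of Proposition \ref{Riezs} together with the explicit description of $R_{\La,1}$ in \eqref{RL1}. First I would check that the map $\alpha\mapsto(x\mapsto\Tr(\alpha x))$ is well defined: by the very definition $R_{\La,1}=\{x:\Tr(x\,\mu_{\La,1})\subset C\}$, so for $\alpha\in R_{\La,1}$ the assignment $x\mapsto\Tr(\alpha x)$ sends $\mu_{\La,1}$ into $C$, hence composing with $C\to C/\pi^nC$ gives a $C$-homomorphism $\mu_{\La,1}\to C/\pi^nC$; it is sequentially continuous because multiplication by $\alpha$ is continuous on $\La$ and $\Tr$ is continuous by Proposition \ref{Riezs}. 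Moreover if $\alpha\in\pi^nR_{\La,1}$ then $\Tr(\alpha\,\mu_{\La,1})\subset\pi^nC$, so the map kills $\pi^nR_{\La,1}$ and descends to $R_{\La,1}/\pi^nR_{\La,1}$.

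For injectivity, suppose $\alpha\in R_{\La,1}$ with $\Tr(\alpha\,\mu_{\La,1})\subset\pi^nC$. Then $\pi^{-n}\alpha$ satisfies $\Tr(\pi^{-n}\alpha\,\mu_{\La,1})\subset C$, i.e. $\pi^{-n}\alpha\in R_{\La,1}$ (again using \eqref{RL1}, which shows $R_{\La,1}$ is simply the fractional ideal $\{v_\La(x)\geq -v_\La(D(\La/S))-\floor*{v_\La(p)/(p-1)}-1\}$ and is therefore stable under the obvious valuation shift). Hence $\alpha\in\pi^nR_{\La,1}$, so the induced map is injective.

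For surjectivity, let $\phi\in\Hom_C^{seq}(\mu_{\La,1},C/\pi^nC)$. Because $\mu_{\La,1}$ is an open (hence closed) subgroup of $(\La,+)$ and $C/\pi^nC$ is discrete, I would first extend $\phi$ to a sequentially continuous $C$-homomorphism $\tilde\phi:\La\to C/\pi^nC$ — for instance by choosing a $C$-module (or topological) complement of $\mu_{\La,1}$ in $\La$, or by using that $\La=\bigcup_j\pi^{-j}\mu_{\La,1}$ and defining $\tilde\phi$ on $\pi^{-j}\mu_{\La,1}$ via $\tilde\phi(\pi^{-j}x)=\pi^{-j}\phi(x)$ (well defined since $C/\pi^nC$ is a $C$-module and these are compatible). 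Now lift $\tilde\phi$ to a sequentially continuous $C$-homomorphism $\La\to C$: here I would invoke that $\La\to C/\pi^nC$ factors, after multiplying by $\pi^n$, through a genuine $C$-valued sequentially continuous homomorphism, or more directly apply Proposition \ref{Riezs} to the composite $\pi^n\tilde\phi$-type data; the cleanest route is to observe $\pi^n\La\to S$ lifts and run Proposition \ref{Riezs} to get $\beta\in\La$ with $\Tr(\beta x)\equiv\tilde\phi(x)\pmod{\pi^nC}$ for all $x\in\La$. Restricting to $\mu_{\La,1}$ shows $\Tr(\beta\,\mu_{\La,1})\subset C$, i.e. $\beta\in R_{\La,1}$, and $\beta$ maps to $\phi$. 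Finally $\Hom_C^{seq}=\Hom_C^c$ follows from the continuity half of Proposition \ref{Riezs} (every element of the image is continuous), exactly as in that proposition.

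The main obstacle I expect is the surjectivity step, specifically the extension/lifting of a sequentially continuous homomorphism from the open subgroup $\mu_{\La,1}$ to all of $\La$ and then from $C/\pi^nC$-values to $C$-values, while preserving sequential continuity in the Parshin topology — this is where one must be careful that the Parshin topology is not a group topology in the usual sense and that "sequentially continuous" is the right notion; I would handle it by reducing to Proposition \ref{Riezs} as above, so that all the topological subtleties are quarantined inside that already-proven statement.
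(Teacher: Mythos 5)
Your surjectivity step contains a fatal gap. You propose to first extend $\phi\in\Hom_C^{seq}(\mu_{\La,1},C/\pi^nC)$ to a homomorphism $\tilde\phi:\La\to C/\pi^nC$. No such nonzero extension exists: $\La$ is a $\Qp$-vector space, hence a divisible abelian group, while $C/\pi^nC$ is finite, so the image of any group homomorphism $\La\to C/\pi^nC$ is a divisible subgroup of a finite group and therefore zero. Concretely, your formula $\tilde\phi(\pi^{-j}x)=\pi^{-j}\phi(x)$ is meaningless, because multiplication by $\pi$ on $C/\pi^nC$ is not invertible, so ``$\pi^{-j}$ of an element of $C/\pi^nC$'' does not denote a well-defined element. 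The subsequent step of lifting a $C/\pi^nC$-valued map to a $C$- or $S$-valued one so as to invoke Proposition \ref{Riezs} is equally unjustified. The finite-level statement is not a formal consequence of the infinite-level one, precisely because the target is torsion; this is where the content of the lemma lies.

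For comparison, the paper proves the lemma directly by induction on $d$ (after reducing to the standard field $L\TT$; the base case $d=1$ is quoted from Kolyvagin). Given $\Phi$, one sets $\Phi_i(x_i)=\Phi(x_iT_d^i)$, applies the induction hypothesis to each $\Phi_i$ to obtain coefficients $a_{-i}\in R_{E,1}$, and the real work is the growth estimate $v_E(a_{-i})\geq v_E(\pi^nR_{E,1})$ as $i\to\infty$, proved by contradiction using a carefully chosen test element of $\mu_{\La,1}$; this estimate is exactly what guarantees that $\alpha=\sum a_iT_d^i$ lies in $R_{\La,1}$ modulo $\pi^nR_{\La,1}$. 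Your well-definedness and injectivity arguments are correct (and the injectivity via $\pi^{-n}\alpha\in R_{\La,1}$ is a clean observation), but without an argument of the above kind the surjectivity, which is the substance of the lemma, is missing.
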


\begin{proof}
          Assume first that $\La$  is the standard higher local field $L\TT$. The proof
            is done by induction in $d$. If $d=1$ the result is known (cf. \S3 and \S4 of \cite{koly}). 
            Suppose the result is true for $d\geq 1$ and let $\La=E\{\{T_d\}\}$
           where $E=L\TT$.

          Take 
           $\Phi\in \text{Hom}^{seq}_C(\mu_{\La,1},C / \pi^n C)$ and let 
           $\Phi_i(x_i)=\Phi(x_iT_d^i)$ for all $x_i \in \mu_{E,1}$. 
           Then $\Phi_i\in  \text{Hom}^c_C(\mu_{E,1},C / \pi^n C)$ and so 
           by the induction hypothesis there exists 
           $\overline{a}_{-i}\in R_{E,1}/\pi^nR_{E,1}$ such that
           \[
                  \Phi_i(x_i)=\mathbb{T}_{E/S}(\ \overline{a_{-i}}\ x_i\ ).
           \]
           Let $a_{-i}\in R_{E,1}$ be a representative of $\overline{a}_{-i}$. 
           Thus for $x=\sum x_iT_d^i\in \mu_{\La,1}$, the sequential 
           continuity of $\Phi$ implies
                       \begin{equation}\label{convergentin}
                   \Phi(x)=\sum_{i\in \Z}\Phi(x_iT_d^i)
                   =\sum_{i\in \Z}\mathbb{T}_{E/S}(a_{-i}x_i) \quad \pmod{\pi^nC}.
           \end{equation}
                      Let $\alpha=\sum a_{i}T_d^i$ and denote by $u_{i}$ the unit 
           $a_{i}/\pi_L^{v_E(a_{i})}$. We must show that
                      \begin{enumerate}[I.]
                \item $\min \{v_E(a_{i})\}>-\infty$.
                \item $v_{E}(a_{-i})\geq v_E(\pi^nR_{E,1})$ as $i \to \infty$ 
                       ( i.e., conditions 1 and 2 imply that $\alpha\in R_{\La,1}/\pi^nR_{\La,1}$).
                \item $\Phi (x)=\Tr(\alpha x)\pmod{\pi^n C}$, $\forall x\in \mu_{\La,1}$.
           \end{enumerate}
 
           Condition (I) follows immediately since 
           $a_{-i}\in R_{E,1}$, i.e., by equation (\ref{RL1})
                      \[
                 v_E(a_{-i})\geq -\big(\ 
                 \floor*{v_L(p)/(p-1)} +1+v_L\big(D(L/S)\big)\ \big)\quad \forall i\in \Z.
           \]
           Suppose condition (II) was not true. Instead of passing to a 
           subsequence we may assume for simplicity that 
           $v_{E}(a_{-i})< v_E(\pi^nR_{E,1})$ for all $i\geq 0$. 
           For an arbitrary $y\in  \mathcal{O}_L$, let
           \[
           x=y(\pi_L^{\delta_0}u_0^{-1}+\pi_L^{\delta_1}u_1^{-1}T_d
           +\pi_L^{\delta_2}u_2^{-1}T_d^2+\cdots),
           \] 
           where for any $i\geq 0$ we let $u_i=a_i/\pi_L^{v_E(a_i)}$ and
           \[ 
           \delta_i=v_E(\pi^nR_{E,1})-v_E(a_{-i})+\floor*{v_L(p)/(p-1)}\quad (\ \geq \floor*{v_L(p)/(p-1)}+1\ ).
           \]  
           Then $x\in \mu_{\La,1}$ and $a_{-i}x_{i}=\pi_L^wy$ for $i\geq 0$, 
           where $w=v_E(\pi^nR_{E,1})+\floor*{v_L(p)/(p-1)}$. 
           
           The convergence of the 
           right hand side of (\ref{convergentin}) and the fact that 
           $\mathbb{T}_{E/S}(\pi_L^wy)=\text{Tr}_{L/S}(\pi_L^wy)$
           would imply that 
                      $
                 \text{Tr}_{L/S}(\pi_L^{w}y)\in \pi^nC$  for all $ y\in \mathcal{O}_L$.
                     Thus $\pi_L^{w}/\pi^n \in D(L/S)^{-1}$, which in turn implies 
           $w\geq v_L(\pi^n)-v_L(D(L/S))$, that is,
                      \[
               v_E(R_{E,1})\geq -v_L(D(L/S))-\floor*{v_L(p)/(p-1)}.
           \]
                      This is a contradiction since $v_E(R_{E,1})=-v_L(D(L/S))-\floor*{v_L(p)/(p-1)}-1$
                      by (\ref{RL1}). 
           Finally, condition (III) immediately follows from equation (\ref{convergent}).

Assume now that $\La$ is an arbitrary $d$-dimensional local field $\La$  and consider the finite extension $\La/\La_{(0)}$, where $\La_{(0)}=L_0\TT$ and $L_0/S$ is unramified; for example $L_0=S$. In this case $R_{\La,1}\cong \text{Hom}_{\mathcal{O}_{\La_{(0)}}}(\mu_{\La,1}, \mathcal{O}_{\La_{(0)}})$ induces an isomorphism $R_{\La,1}/\pi^nR_{\La,1}\cong \text{Hom}_{\mathcal{O}_{\La_{_{(0)}}}}(\mu_{\La,1}, \mathcal{O}_{\La_{(0)}}/\pi^n\mathcal{O}_{\La_{(0)}})$. Now  for a given $\phi\in \text{Hom}^{seq}_C(\mu_{\La,1},C/\pi^nC)$ we fix an $x\in \mu_{\La,1}$ and consider $\phi_x\in \text{Hom}^{seq}_C(\mu_{\La_{_{(0)}},1},C/\pi^nC)$ defined by 
\[\phi_x(y)=\phi\left(\frac{x}{\pi_{_{\La_{_{(0)}}}}^{a}}y\right)\hspace{20pt} (y\in \mu_{\La_{(0)},1}),
\] 
where $a=v_{\La_{(0)}}(\mu_{\La_{_{(0)}},1})=\lfloor v_{\La_{(0)}}(p)/(p-1)\rfloor+1$. 
By the first part of the proof 
there exists an element $\psi(x)\in R_{\La_{(0)}}/\pi^nR_{\La_{(0)}}$ such that
\begin{equation}\label{gener}
\phi_x(y)=\mathbb{T}_{\La_{(0)}/S}\big( \,\psi(x)\,y\,\big) \quad (y\in \mu_{\La_{_{(0)}},1})
\end{equation}
Thus $\psi$ induces an element in $\text{Hom}_{\mathcal{O}_{\La_{(0)}}}(\mu_{\La,1},R_{\La_{(0)}}/\pi^nR_{\La_{(0)}})$. Since 
$$v_{\La_{(0)}}\big(\,D(\La_{(0)}/S)\,\big)=V_{L_0}\big(\,D(L_0/S)\,\big)=0,$$ 
then $v_{\La_{(0)}}(R_{\La_{(0)},1})=-v_{\La_{(0)}}(\mu_{\La_{(0)},1})=-a$ and so $\pi_{\La_{(0)}}^a\,\psi $  can be considered as an element in  $\text{Hom}_{\mathcal{O}_{\La_{(0)}}}(\mu_{\La,1},\mathcal{O}_{\La_{(0)}}/\pi^n\mathcal{O}_{\La_{(0)}})$. Thus there exists an element $\alpha\in R_{\La}/\pi^nR_{\La,1}$ such that $\pi_{\La_{(0)}}^a\,\psi(x)=\text{Tr}_{\La/\La_{(0)}}(\alpha\,x)$ for all $x\in \mu_{\La,1}$. This together with (\ref{gener}) yields the desired result.
 \end{proof}
 
\subsubsection{The module $R_{\La}$}
 Let $T_\La$ be the image of $F(\mu_\La)$ 
 under the formal logarithm $l_F$. This is a 
 $C$-submodule of $\La$ such that 
 $T_\La S=\La$. Indeed, let $x\in \La$ and take $n$ large enough 
 such that $\pi^nx\in \mu_{\La,1}$, then by Proposition 
 \ref{logarithm} there exist a $y\in F(\mu_{\La,1})$ such that 
 $\pi^nx=l_F(y)$, thus $x\in T_{\La}S$. 
 
 Let $R_\La$ be the dual of $T_\La$ with 
 respect to the trace pairing $\Tr$, then 
 by Proposition \ref{Riezs} and the fact that $\La=T_{\La}S$, we have the isomorphism
\[
R_{\La}\simeq \text{Hom}_C^{seq}(T_{\La},C).
\]
Let $\kappa_\La=\kappa\cap \La$, i.e.,
the subgroup of torsion points contained in $\La$. Then $l_F$ induces a 
continuous isomorphism 
$l_F: F(\mu_{\La})/\kappa_\La\to T_{\La}$. Thus we have the following result. 
 \begin{lemma}\label{contphi} 
 The generalized trace induces an injective homomorphism
       \begin{align*}
             0\to R_{\La}/\pi^nR_{\La}\ &\to \ \normalfont \text{Hom}_{C}^{seq}(T_\La,C / \pi^n C)\ \tilde{\to} \
\text{Hom}^{seq}_{C}(F(\mu_\La)/\kappa_L,C / \pi^n C). \\
              \alpha\ &\mapsto \hspace{5pt}\bigg(y\mapsto \Tr\big(\,\alpha\, y\big)\bigg)\hspace{5pt}
              \mapsto \bigg(\,x\mapsto \Tr\big(\,\alpha\, l_F(x)\,\big)\,\bigg).
       \end{align*}

 \end{lemma}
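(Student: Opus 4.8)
The plan is to reduce the statement to the two facts recorded just before it: first, that $R_\La$ is by definition the $\Tr$-dual of $T_\La$ and coincides, via $\alpha\mapsto(y\mapsto\Tr(\alpha y))$, with $\text{Hom}_C^{seq}(T_\La,C)$ (this is Proposition \ref{Riezs} together with $T_\La S=\La$); and second, that $l_F$ induces a continuous isomorphism $F(\mu_\La)/\kappa_\La\to T_\La$. Granting these, the second arrow in the displayed sequence is just precomposition with $l_F$ and is an isomorphism of the $\text{Hom}$-groups, so the entire content is to show that
\[
R_\La/\pi^nR_\La\ \longrightarrow\ \text{Hom}_C^{seq}(T_\La,C/\pi^nC),\qquad \alpha\ \longmapsto\ \big(\,y\mapsto \Tr(\alpha y)\bmod \pi^nC\,\big),
\]
is a well-defined injective homomorphism of $C$-modules.

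For well-definedness I would first observe that, for $\alpha\in R_\La$, the map $y\mapsto\Tr(\alpha y)$ takes values in $C$ by the very definition of $R_\La$, and is sequentially continuous on $T_\La$ since it is the restriction to the subspace $T_\La\subset\La$ of the sequentially continuous map $\La\to S$ attached to $\alpha$ by Proposition \ref{Riezs}; composing with the continuous reduction $C\to C/\pi^nC$ then produces an element of $\text{Hom}_C^{seq}(T_\La,C/\pi^nC)$. If moreover $\alpha=\pi^n\beta$ with $\beta\in R_\La$, then $\Tr(\alpha y)=\pi^n\Tr(\beta y)\in\pi^nC$ for every $y\in T_\La$, so the associated homomorphism is zero and the assignment descends to $R_\La/\pi^nR_\La$ ($C$-linearity being immediate from the linearity of $\Tr$). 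For injectivity I run the same argument backwards, which is the one place where it matters that $R_\La$ is the \emph{full} $\Tr$-dual of $T_\La$: if $\alpha\in R_\La$ satisfies $\Tr(\alpha y)\in\pi^nC$ for all $y\in T_\La$, then $\Tr\big((\pi^{-n}\alpha)\,y\big)=\pi^{-n}\Tr(\alpha y)\in C$ for all $y\in T_\La$, hence $\pi^{-n}\alpha\in R_\La$, i.e.\ $\alpha\in\pi^nR_\La$. Thus the kernel of $R_\La\to\text{Hom}_C^{seq}(T_\La,C/\pi^nC)$ is exactly $\pi^nR_\La$, the induced map on $R_\La/\pi^nR_\La$ is injective, and composing with the isomorphism induced by $l_F$ gives the stated injection into $\text{Hom}_C^{seq}(F(\mu_\La)/\kappa_\La,C/\pi^nC)$, $\alpha\mapsto\big(x\mapsto\Tr(\alpha\, l_F(x))\big)$.

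I do not expect a genuine obstacle here: the only load-bearing inputs are Proposition \ref{Riezs}, which guarantees that $R_\La$ really exhausts $\text{Hom}_C^{seq}(T_\La,C)$ and so validates the backwards implication in the injectivity step, and the bi-sequential-continuity of $l_F\colon F(\mu_\La)/\kappa_\La\to T_\La$, which is what makes the second arrow an isomorphism of $\text{Hom}$-groups rather than a mere map; both are available from the discussion preceding the lemma. If anything, the point that most needs to be kept honest is the sequential-continuity bookkeeping — that passing to the subspace $T_\La\subset\La$ and post-composing with $C\to C/\pi^nC$ preserve sequential continuity of $C$-homomorphisms — and this is immediate from the definitions.
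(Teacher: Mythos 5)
Your proof is correct and follows the same route the paper intends: the paper dismisses the lemma as ``immediate from the very definition of $R_{\La}$,'' and your argument is precisely the careful unwinding of that remark, using the duality definition of $R_{\La}$ (via Proposition \ref{Riezs}) for the injectivity step and the bi-sequentially-continuous isomorphism $l_F\colon F(\mu_\La)/\kappa_\La\to T_\La$ for the second arrow. No gaps.
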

\begin{proof}
Immediate from the very definition of $R_{\La}$.
\end{proof}

\subsection{The map $\psi_{\La,n}^i$}\label{The map psi_L}

In this section we introduce the so-called Iwasawa map $\psi_{\La,n}^i$. This map plays a vital role in the construction of the explicit reciprocity laws. The main goal in this paper is to show
that, under certain conditions, the Iwasawa map is a logarithmic derivative.
One of the key results to achieve this is contained in
Proposition \ref{psi-properties}

Recall that we denote by $(\ ,\ )^i_{\La,n}$ 
the $i$th coordinate of the
paring $(\ ,\ )_{\La,n}$ with respect 
to the base $\{e_n^i\}$ of $\kappa_n$.

\begin{prop}\label{psi}
       Let  $\La$ be as in \eqref{assumptions}. For a given $\alpha\in  K_d(\La)$ there exist a unique element 
       $\psi^i_{\La,n}(\alpha)\in  R_{\La,1}/\pi^nR_{\La,1}$, such that
       \begin{equation}\label{formi}
              (\alpha,x)^i_{\La,n}=\Tr\big(\ \psi^i_{\La,n}(\alpha)\ l_F(x)\ \big) \quad \forall x\in F(\mu_{\La,1}),
       \end{equation}
       and the map $\psi^i_{\La,n}: K_d(\La) \to R_{\La,1}/\pi^nR_{\La,1}$ is a homomorphism.

\end{prop}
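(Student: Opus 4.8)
The plan is to fix $\alpha \in K_d(\La)$, produce the element $\psi^i_{\La,n}(\alpha)$ via the duality isomorphism of Lemma \ref{Riezs2}, and then upgrade the statement from $\mu_{\La,1}$ to $F(\mu_{\La,1})$ using the logarithm. First I would observe that by Proposition \ref{pair} (6) (sequential continuity in the first argument is not what is needed here) — rather by Proposition \ref{continuity} — the map $x \mapsto (\alpha, x)^i_{\La,n}$ is a sequentially continuous homomorphism from $F(\mu_{\La,1})$ to $\kappa_n$, and after identifying $\kappa_n$ with $(C/\pi^nC)^h$ via the basis $\{e^i_n\}$, its $i$th coordinate is a sequentially continuous $C$-homomorphism $F(\mu_{\La,1}) \to C/\pi^nC$; the $C$-linearity comes from Proposition \ref{pair} (1). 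The key point is that the formal logarithm gives a sequentially continuous isomorphism $l_F : F(\mu_{\La,1}) \xrightarrow{\sim} \mu_{\La,1}$ (cf. Proposition \ref{logarithm}, and the fact that on $\mu_{\La,1}$ the logarithm converges and has no kernel), so composing with $l_F^{-1}$ turns our map into an element $\Phi \in \Hom^{seq}_C(\mu_{\La,1}, C/\pi^nC)$.

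Next I would apply Lemma \ref{Riezs2}: the isomorphism $R_{\La,1}/\pi^nR_{\La,1} \xrightarrow{\sim} \Hom^{seq}_C(\mu_{\La,1}, C/\pi^nC)$ given by $\beta \mapsto (y \mapsto \Tr(\beta y))$ provides a unique $\psi^i_{\La,n}(\alpha) \in R_{\La,1}/\pi^nR_{\La,1}$ with $\Phi(y) = \Tr(\psi^i_{\La,n}(\alpha)\, y)$ for all $y \in \mu_{\La,1}$. Unwinding $y = l_F(x)$ for $x \in F(\mu_{\La,1})$ gives exactly the formula \eqref{formi}, and uniqueness of $\psi^i_{\La,n}(\alpha)$ in $R_{\La,1}/\pi^nR_{\La,1}$ is inherited from the injectivity half of Lemma \ref{Riezs2}.

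Finally, to check that $\psi^i_{\La,n}$ is a homomorphism, I would take $\alpha, \alpha' \in K_d(\La)$. By bilinearity of the Kummer pairing (Proposition \ref{pair} (1)) we have $(\alpha\alpha', x)^i_{\La,n} = (\alpha,x)^i_{\La,n} + (\alpha',x)^i_{\La,n}$ for all $x$, hence $\Tr\big((\psi^i_{\La,n}(\alpha) + \psi^i_{\La,n}(\alpha'))\, l_F(x)\big) = \Tr\big(\psi^i_{\La,n}(\alpha\alpha')\, l_F(x)\big)$ for every $x \in F(\mu_{\La,1})$, i.e.\ the two elements of $R_{\La,1}/\pi^nR_{\La,1}$ induce the same functional on $\mu_{\La,1}$; by the uniqueness (injectivity) in Lemma \ref{Riezs2} they coincide. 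I expect the only genuine subtlety to be the careful bookkeeping that $l_F$ really does restrict to a topological isomorphism $F(\mu_{\La,1}) \to \mu_{\La,1}$ with the stated valuation bound $\lfloor v_L(p)/(p-1)\rfloor + 1$ — so that the functional $\Phi$ lands in the right domain for Lemma \ref{Riezs2} and sequential continuity is preserved in both directions — but this is precisely what Proposition \ref{logarithm} and Proposition \ref{continuity} are set up to deliver, so no new work is required. Everything else is a formal transport of structure along the duality isomorphism.
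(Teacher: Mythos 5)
Your proposal is correct and follows essentially the same route as the paper: both transport the sequentially continuous functional $x\mapsto(\alpha,l_F^{-1}(x))^i_{\La,n}$ on $\mu_{\La,1}$ (using Proposition \ref{continuity}, the isomorphism $l_F:F(\mu_{\La,1})\to\mu_{\La,1}$ of Proposition \ref{logarithm}, and the sequential continuity of $l_F^{-1}$ from Remark \ref{sequito}) through the duality of Lemma \ref{Riezs2}. The only cosmetic difference is that the paper first defines $\psi^i_{\La,n}$ on symbols and invokes the Steinberg relation to descend to $K_d(\La)$, whereas you work with $\alpha\in K_d(\La)$ directly and deduce additivity from bilinearity plus the injectivity in Lemma \ref{Riezs2}; both are valid.
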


\begin{proof}

        Let us first take $\alpha$ to be an element of the form $\{a_1,\dots, a_d\}$ and consider the map 
        \[
        \omega:\ \mu_{\La,1}\to C/\pi^nC,
         \]
         defined by
         \[
        x\mapsto (\alpha,l_F^{-1}(x))^i_{\La,n}.
        \]
       By Proposition \ref{continuity} and Remark \ref{sequito} this map is 
       sequentially continuous and so by Lemma 
       \ref{Riezs2} there exist and element 
       $\psi^i_{\La,n}(\alpha)\in  R_{\La,1}/\pi^nR_{\La,1}$ 
       satisfying (\ref{formi}). 
       This defines  a map 
       $\psi^i_{\La,n}: \La^{*\oplus d}\to R_{\La,1}/\pi^nR_{\La,1}$ 
       satisfying the Steinberg relation, therefore it induces a map on  $K_d(\La)$.
       
\end{proof}

The following are some basic properties of the Iwasawa map $\psi_{\La,n}^i$.

  \begin{prop}\label{psi-properties}
Consider the finite extension $\Ma/\La$ with $\La$ satisfying  \eqref{assumptions}. Then
\begin{enumerate}
\item $\normalfont \text{Tr}_{\Ma/\La}(R_{\Ma,1})\subset R_{\La,1}$ 
      and we have the commutative diagram
       \[
      \begin{CD}
       K_d(\Ma ) @>{\psi_{\Ma,n}^i}>>  R_{\Ma,1}/\pi^nR_{\Ma,1}  \\
      @V{N_{\Ma/\La}}VV @VV{\normalfont \text{Tr}_{\Ma/\La}}V \\
      K_d(\La)  @>{\psi_{\La,n}^i}>>  R_{\La,1}/\pi^nR_{\La,1}
      \end{CD}
      \]
\item $R_{\La,1}\subset R_{\Ma,1}$ and we have the commutative  diagram
      \[
      \begin{CD}
       K_d(\La ) @>{\psi_{\La,n}^i}>>  R_{\La,1}/\pi^nR_{\La,1}  \\
      @V{\normalfont \text{incl}}VV @VVV \\
      K_d(\Ma)  @>{\psi_{\Ma,n}^i}>>  R_{\Ma,1}/\pi^nR_{\Ma,1}
      \end{CD}
      \]
     The right-hand vertical map is
      induced by the embedding of $R_{\La,1}$ in $R_{\Ma,1}$.
\item Let $L\supset K_m$, $(m,t)$ admissible and $m\geq n$. 
      Then for  $\alpha \in  K_d(\La)$, $\psi_{\La,n}^i(\alpha)$ 
      is the reduction of $\psi_{\La,m}^i(\alpha)$ from 
      $R_{\La,1}/\pi^mR_{\La,1}$ to $R_{\La,1}/\pi^nR_{\La,1}$, i.e., the diagram
      \[
      \begin{tikzpicture}[every node/.style={midway}]
\matrix[column sep={8em,between origins},
        row sep={3em}] at (0,0)
{ \node(R)  {$K_d(\La)$}  ; & \node(S) {$R_{\La,1}/\pi^mR_{\La,1}$}; \\
 \node(U) {$$} ; & \node(R/I) {$R_{\La,1}/\pi^nR_{\La,1}$};                   \\};
\draw[<-] (R/I) -- (R) node[anchor=east]  {$\psi^i_{\La,n}$};
\draw[<-] (R/I) -- (S) node[anchor=west]  {$reduction$};
\draw[->] (R)   -- (S) node[anchor=south] {$\psi^i_{\La,m}$};
\end{tikzpicture}
\]
commutes.
      \item If $t:(F,e_i)\to (\tilde{F},\tilde{e}_i)$ is an isomorphism, 
      then 
      \[ 
      \tilde{\psi}_{\La,n}^i=\frac{1}{t'(0)}\psi_{\La,n}^i.
      \]
\end{enumerate}
\end{prop}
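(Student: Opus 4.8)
The plan is to establish each of the four items by reducing to the defining property \eqref{formi} of $\psi^i_{\La,n}$ together with the functoriality of the Kummer pairing recorded in Proposition \ref{pair} and the behavior of the generalized trace under field extensions. The unifying idea is that all three maps involved --- the Kummer symbol, the generalized trace, and the norm on Milnor $K$-groups --- satisfy compatible ``projection formula'' type identities, so the commutativity of each diagram follows by comparing the two ways of computing $(\beta,x)^i$ or $(\alpha,x)^i$ for $x$ ranging over a set large enough to separate elements of $R/\pi^nR$ via the trace pairing (Lemma \ref{Riezs2}).

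First I would do item (1). The inclusion $\text{Tr}_{\Ma/\La}(R_{\Ma,1})\subset R_{\La,1}$ is a standard different computation: by definition $R_{\Ma,1}$ is the dual of $\mu_{\Ma,1}$ under $\mathbb{T}_{\Ma/S}$, and since $\mathbb{T}_{\Ma/S}=\Tr\circ\text{Tr}_{\Ma/\La}$ and $\mu_{\La,1}\subset\mu_{\Ma,1}$, for $r\in R_{\Ma,1}$ and $y\in\mu_{\La,1}$ one gets $\Tr(\text{Tr}_{\Ma/\La}(r)\,y)=\Tr(\text{Tr}_{\Ma/\La}(ry))=\mathbb{T}_{\Ma/S}(ry)\subset C$, so $\text{Tr}_{\Ma/\La}(r)\in R_{\La,1}$. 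For the square, take $\beta\in K_d(\Ma)$ and $x\in F(\mu_{\La,1})$. On one hand, by Proposition \ref{pair}(4), $(N_{\Ma/\La}(\beta),x)^i_{\La,n}=(\beta,x)^i_{\Ma,n}=\mathbb{T}_{\Ma/S}(\psi^i_{\Ma,n}(\beta)\,l_F(x))$, using \eqref{formi} over $\Ma$ (note $x\in F(\mu_{\Ma,1})$ as well since $\mu_{\La,1}\subset\mu_{\Ma,1}$). On the other hand $\mathbb{T}_{\Ma/S}(\psi^i_{\Ma,n}(\beta)\,l_F(x))=\Tr\big(\text{Tr}_{\Ma/\La}(\psi^i_{\Ma,n}(\beta)\,l_F(x))\big)=\Tr\big(\text{Tr}_{\Ma/\La}(\psi^i_{\Ma,n}(\beta))\,l_F(x)\big)$, since $l_F(x)\in\La$. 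Comparing with $(N_{\Ma/\La}(\beta),x)^i_{\La,n}=\Tr(\psi^i_{\La,n}(N_{\Ma/\La}(\beta))\,l_F(x))$ and invoking the injectivity part of Lemma \ref{Riezs2} (applied over $\La$, valid because $(n,t)$ admissible and $\La\supset K_t$), I conclude $\psi^i_{\La,n}(N_{\Ma/\La}(\beta))=\text{Tr}_{\Ma/\La}(\psi^i_{\Ma,n}(\beta))$ in $R_{\La,1}/\pi^nR_{\La,1}$. Item (2) is similar but easier: $R_{\La,1}\subset R_{\Ma,1}$ follows by duality from $\mu_{\Ma,1}$ being (up to the different) an $\OMa$-multiple of $\mu_{\La,1}$, or more directly one checks $\Tr(R_{\La,1}\cdot\mu_{\La,1})\subset C$ lifts; and for the square one uses Proposition \ref{pair}(4) again in the form $(\text{incl}(\alpha),x)_{\Ma,n}=(\alpha,N^F_{\Ma/\La}(x))$ --- no, more simply, for $x\in F(\mu_{\La,1})$ one has $(\alpha,x)_{\Ma,n}=(\alpha,x)_{\La,n}$ via compatibility of the reciprocity maps, then expand both sides through \eqref{formi} over $\Ma$ and over $\La$ and use that $\mathbb{T}_{\Ma/S}$ restricted appropriately recovers $\Tr$ to match the traces, again concluding by Lemma \ref{Riezs2}. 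Item (3) follows immediately by writing Proposition \ref{pair}(5), $(\alpha,x)^i_{\La,n}=f^{(m-n)}\big((\alpha,x)^i_{\La,m}\big)$ in coordinates, which on the level of the trace formula \eqref{formi} is exactly reduction mod $\pi^n$ of the identity mod $\pi^m$, together with the uniqueness in Proposition \ref{psi}. Item (4) follows from Proposition \ref{pair}(8): if $t:(F,e_i)\to(\tilde F,\tilde e_i)$ is an isomorphism then $(\alpha,t(x))^{\tilde F}_{\La,n}=t((\alpha,x)^F_{\La,n})$, and since $l_{\tilde F}\circ t=t'(0)\,l_F$ (the logarithm of $\tilde F$ pulls back to $t'(0)$ times that of $F$), substituting into \eqref{formi} for both $F$ and $\tilde F$ and matching gives $\tilde\psi^i_{\La,n}=\frac{1}{t'(0)}\psi^i_{\La,n}$, once more by the uniqueness/injectivity from Lemma \ref{Riezs2}.

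The main obstacle I anticipate is bookkeeping rather than conceptual: one must be careful that the trace-duality uniqueness (Lemma \ref{Riezs2}) is being applied over a field for which the hypotheses hold, i.e.\ that $\La$ --- and in item (1) also $\Ma$ --- satisfies the admissibility condition \eqref{assumptions}, and that the elements $x$ chosen to test the identity genuinely range over all of $F(\mu_{\La,1})$ so that $l_F(x)$ ranges over enough of $T_\La\cap\mu_{\La,1}$ to force equality of the $\psi$'s modulo $\pi^n$. A secondary subtlety is the identity $l_{\tilde F}\circ t = t'(0)\,l_F$ used in (4), which should be cited from the properties of the formal logarithm (Proposition \ref{logarithm}); it holds because both sides are homomorphisms $F\to\mathbb{G}_a$ with the same derivative at $0$. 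Finally, in (1) and (2) one should note that $\psi^i$ is a priori only defined on symbols and extended to $K_d$ via the Steinberg relation (Proposition \ref{psi}), so each diagram is really checked on generators $\{a_1,\dots,a_d\}$ and then extended by additivity, which is automatic since all maps in sight are group homomorphisms.
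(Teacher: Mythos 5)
Your treatment of items (1), (3) and (4) matches the paper's proof: (1) via $\mathbb{T}_{\Ma/S}=\Tr\circ\text{Tr}_{\Ma/\La}$, $\mu_{\La,1}\subset\mu_{\Ma,1}$, Proposition \ref{pair}(4) and the uniqueness in Proposition \ref{psi}; (3) via Proposition \ref{pair}(5); (4) via Proposition \ref{pair}(8) and $l_{\tilde F}\circ t=t'(0)\,l_F$.

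Item (2), however, contains a genuine error. You first gesture at the correct tool --- the formal-norm compatibility $(\alpha,y)_{\Ma,n}=(\alpha,N^F_{\Ma/\La}(y))_{\La,n}$, which is Proposition \ref{pair}(7), not (4) --- and then discard it in favor of the claim that $(\alpha,x)_{\Ma,n}=(\alpha,x)_{\La,n}$ for $x\in F(\mu_{\La,1})$ ``via compatibility of the reciprocity maps.'' That identity is false: the inclusion $K_d(\La)\hookrightarrow K_d(\Ma)$ corresponds under $\Upsilon$ to the \emph{transfer} $G_\La^{\mathrm{ab}}\to G_\Ma^{\mathrm{ab}}$, not to restriction, and the correct relation for $x\in\La$ is $(\alpha,x)_{\Ma,n}=(\alpha,N^F_{\Ma/\La}(x))_{\La,n}$ with $N^F_{\Ma/\La}(x)$ the $[\Ma:\La]$-fold formal sum of $x$ with itself. (Relatedly, $\mathbb{T}_{\Ma/S}$ restricted to $\La$ is $[\Ma:\La]\cdot\Tr$, not $\Tr$.) There is a second, independent gap: even if your identity held, testing only against $x\in F(\mu_{\La,1})$ cannot pin down $\psi^i_{\Ma,n}(\alpha)$ as an element of $R_{\Ma,1}/\pi^nR_{\Ma,1}$, since by Lemma \ref{Riezs2} uniqueness there requires the trace identity against all of $\mu_{\Ma,1}$. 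The repair is precisely the route you abandoned: for arbitrary $y\in F(\mu_{\Ma,1})$, Proposition \ref{pair}(7) together with $l_F\big(N^F_{\Ma/\La}(y)\big)=\sum_\sigma l_F(y)^\sigma=\text{Tr}_{\Ma/\La}(l_F(y))$ gives
\[
(\alpha,y)^i_{\Ma,n}
=\Tr\big(\psi^i_{\La,n}(\alpha)\,\text{Tr}_{\Ma/\La}(l_F(y))\big)
=\mathbb{T}_{\Ma/S}\big(\psi^i_{\La,n}(\alpha)\,l_F(y)\big),
\]
and uniqueness over $\Ma$ then yields $\psi^i_{\Ma,n}(\alpha)=\psi^i_{\La,n}(\alpha)$ in $R_{\Ma,1}/\pi^nR_{\Ma,1}$, which is what the commutativity of the second square asserts.
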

\begin{proof}

See Section \ref{Proofs of Chapter B} of Appendix.

\end{proof}

 We are ready to prove the following key result.  

\begin{prop}\label{key}

            Let $\La$ and $t$ as in \eqref{assumptions}. Let
            $r(X)$ be a $t$-normalized series. Then
                                \begin{equation}\label{key-result}
     (\alpha, x)^{^i}_{_{\La,n}}=\Tr\left(\log(a_1) \left[\ \frac{-\psi^i_{L,n}
             \big(\alpha(r(x)\big)\,r(x)\, l_F'(x)}{r'(x)}\ \right] \right),
            \end{equation} 
              where $\alpha$ and $\alpha(r(x))$ are 
              elements in $K_d(\La)$ defined, respectively, by 
              \[\alpha=\{a_{1},\dots,a_{d}\}\quad \text{ and}  \quad 
               \alpha\big(r(x)\big)=\big\{r(x),a_2,\dots,a_{d}\big\}, \] 
             with $a_1\in V_{\La,1}=1+\mu_{\La,1}$  and $x\in F(\mu_\La)$. Here $\log$ is the usual logarithm.
\end{prop}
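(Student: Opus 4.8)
The plan is to first isolate the dependence of $(\alpha,x)^i_{\La,n}$ on the entry $a_1$ and then to identify the resulting coefficient. Fix $i$, the entries $a_2,\dots,a_d\in\La^{\times}$ and $x\in F(\mu_\La)$, and consider the map $V_{\La,1}\to C/\pi^nC$ given by $a\mapsto(\{a,a_2,\dots,a_d\},x)^i_{\La,n}$. By bilinearity of the Kummer pairing (Proposition \ref{pair}(1)), its sequential continuity in the first argument (Proposition \ref{pair}(6)) and sequential continuity of the symbol map, this is a sequentially continuous homomorphism. Since $\log$ induces an isomorphism of topological groups $V_{\La,1}\cong\mu_{\La,1}$ (cf. Proposition \ref{logarithm}), this homomorphism factors through $\log$, and Lemma \ref{Riezs2} produces a unique $c\in R_{\La,1}/\pi^nR_{\La,1}$ with $(\{a_1,a_2,\dots,a_d\},x)^i_{\La,n}=\Tr(c\,\log a_1)$ for all $a_1\in V_{\La,1}$. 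It then remains to show $c=-\psi^i_{\La,n}(\alpha(r(x)))\,r(x)\,l_F'(x)/r'(x)$ in $R_{\La,1}/\pi^nR_{\La,1}$.

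To pin down $c$ I would bring in the normalization of $r$. Since $r(X)\in\OK[[X]]$ has no constant term and a unit as its linear coefficient, it induces a bijection of $\mu_\La$; hence for $a_1\in V_{\La,1}$ one may write $a_1=r(w)/r(x)$ with $w=r^{-1}(a_1 r(x))\in\mu_\La$ (here $r^{-1}$ is the compositional inverse), and a valuation estimate gives $x\ominus_Fw\in F(\mu_{\La,1})$ for every such $a_1$, with $w\to x$ as $a_1\to1$. Writing $x=w\oplus_F(x\ominus_Fw)$ and expanding by bilinearity, using the vanishing $(\{r(w),a_2,\dots,a_d\},w)^i_{\La,n}=0=(\{r(x),a_2,\dots,a_d\},x)^i_{\La,n}$ (the series $r$ is $t$-normalized, hence $n$-normalized by Proposition \ref{pair}(5)) and Proposition \ref{psi} applied to the small elements $w\ominus_Fx$ and $x\ominus_Fw$, one obtains
\[
(\alpha,x)^i_{\La,n}=\Tr\!\Big(\big(\psi^i_{\La,n}(\alpha(r(x)))+\psi^i_{\La,n}(\alpha)\big)\,l_F(x\ominus_Fw)\Big),
\]
valid for all $a_1\in V_{\La,1}$.

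Finally I would extract the term linear in $\log a_1$. From $r(w)=a_1 r(x)$ and the chain rule, the Taylor expansion of $l_F(x\ominus_Fw)=l_F(x)-l_F(w)$ in powers of $a_1-1$ has leading term $-r(x)\,l_F'(x)/r'(x)\cdot\log a_1$, the remainder having valuation that grows quadratically in $v_\La(\log a_1)$; likewise $\psi^i_{\La,n}(\alpha)\to0$ as $a_1\to1$ (again via Proposition \ref{pair}(6) and Lemma \ref{Riezs2}). Substituting these into the displayed identity and comparing with $(\alpha,x)^i_{\La,n}=\Tr(c\,\log a_1)$, and using that elements of $R_{\La,1}$ have valuation bounded below by \eqref{RL1} --- so that after multiplication by the error terms and application of the generalized trace the non-linear contributions vanish modulo $\pi^n$ --- one is left with $\Tr(c\,\log a_1)=\Tr(-\psi^i_{\La,n}(\alpha(r(x)))\,r(x)\,l_F'(x)/r'(x)\cdot\log a_1)$ for all $a_1\in V_{\La,1}$. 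Since $\log$ maps $V_{\La,1}$ onto $\mu_{\La,1}$, the injectivity part of Lemma \ref{Riezs2} forces the two coefficients to coincide, which is the claim. The step I expect to require the most care is exactly this passage to the limit: one has to track, uniformly in $a_1$, the valuations of the quadratic error terms and of $\psi^i_{\La,n}(\alpha)$ (the latter depending on $a_1$ only through the symbol $\{a_1,a_2,\dots,a_d\}$) to be certain that the \emph{infinitesimal} identity genuinely upgrades to one valid on all of $V_{\La,1}$.
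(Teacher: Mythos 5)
Your decomposition is in the right spirit and, up to the identity
$(\alpha,x)^i_{\La,n}=\Tr\big((\psi^i_{\La,n}(\alpha(r(x)))+\psi^i_{\La,n}(\alpha))\,l_F(x\ominus_F w)\big)$, runs parallel to the paper (your $w$ with $r(w)=a_1r(x)$ is, after reducing to $r=X$, the paper's $y=xa_1^{p^k}$ --- except with $a_1$ in place of $a_1^{p^k}$, and that difference is exactly where the argument breaks). The gap is the step you flag yourself. The Taylor remainder $E=l_F(x\ominus_F w)+\tfrac{r(x)l_F'(x)}{r'(x)}\log a_1$ satisfies only $v_\La(E)\gtrsim 2\,v_\La(\log a_1)-O(1)$, while elements of $R_{\La,1}$ have the fixed lower bound \eqref{RL1} on their valuation; hence $\Tr(\psi^i_{\La,n}(\cdot)\,E)\in\pi^nC$ only when $v_\La(\log a_1)$ is large compared with $n$. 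For a general $a_1\in V_{\La,1}$ one has $v_\La(\log a_1)$ as small as $\floor*{v_\La(p)/(p-1)}+1$, independently of $n$, so the quadratic contribution does \emph{not} vanish modulo $\pi^n$; the same objection applies to the term $\Tr(\psi^i_{\La,n}(\alpha)\,l_F(x\ominus_F w))$, which is a product of two quantities that are merely ``small'' with no quantitative link to $\pi^n$. The proposed remedy --- establish the identity for $a_1$ near $1$ and upgrade --- has no mechanism: both sides are homomorphisms in $a_1$, and agreement on the subgroup $\{a_1:v_\La(\log a_1)\geq N\}$ only gives agreement after multiplication by a high power of $\pi$, which is vacuous in the torsion group $C/\pi^nC$.

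The device the paper uses to close exactly this gap, and which your proposal omits, is the combination of the $p^k$-th power trick with level raising, and this is the only place the admissibility of $(n,t)$ (Definition \ref{admissible-pair}, $t-1-n\geq\varrho k\geq n$) is really spent. One sets $y=xa_1^{p^k}$ and $m=n+\varrho k\;(\leq t-1$, so the $t$-normalization is still usable at level $m$); then $(\alpha,x\ominus_F y)_n=0$ because $x\ominus_F y=f^{(\varrho k)}(\tilde\eta)$ lies in the right kernel $f^{(n)}(F(\mu_\La))$, and $(\alpha,y)_n=\pi^{\varrho k}(\alpha,y)_m=\epsilon(\alpha(x),x\ominus_F y)_m$. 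Since $\log(a_1^{p^k})=p^k\log a_1$, every term now carries a factor $p^k$: one copy is consumed in descending from level $m$ to level $n$, and because $\pi^n\mid p^k$ the surviving copy annihilates all quadratic remainders modulo $\pi^n$, yielding the formula exactly for every $a_1\in V_{\La,1}$ with no limiting argument. A secondary point: your opening appeal to Lemma \ref{Riezs2} presupposes that $a_1\mapsto(\{a_1,a_2,\dots,a_d\},x)^i_{\La,n}$ is $C$-linear for the $C$-structure transported to $V_{\La,1}$ by $\log$; bilinearity gives only $\Z$-linearity, and in the paper this $C$-linearity is deduced \emph{from} Proposition \ref{key} (see \S\ref{The map rho_L}), so it cannot be taken as an input without a separate argument.
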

\begin{proof}
We follow the same ideas as in the proof of  Proposition 4.1 of \cite{koly}. To simplify the notation we will denote 
the normalized valuation $v_\La/v_\La(p)$ by $v$ 
and  also omit the subscripts $\La$ and $F$ from the pairing notation, the formal sum $\oplus_F$, and the formal logarithm $l_F$.  
Furthermore, we are going to denote by 
$\alpha(a)$ the element $\{a,a_2,\dots,a_{d}\}$ in $K_d(\La)$. 

Let $\alpha=\alpha(a_1)$ with $a_1\in V_{\La,1}$ and let $x\in F(\mu_{\La})$. Observe that we may assume that $r(X)=X$ since we can go 
to the isomorphic formal group $\tilde{F}=r(F(r^{-1}(X),r^{-1}(Y)))$ 
with torsion points $\tilde{e^i}=r(e^i)$ through the isomorphism 
$r: F\to \tilde{F}$. The series $\tilde{r}=X$ is $t$-normalized 
for $\tilde{F}$ and if the result were true for $\tilde{F}$ 
and $\tilde{r}$ then
\begin{align*}
(\alpha,x)^i_{F,n}&=(\alpha,r(x))^i_{\tilde{F},n}
=\Tr\big(\ \log(u)\, \big[\,-\tilde{\psi}^i_{n}\big(\alpha(r(x)\big)\,\big)\,l_{\tilde{F}}'(r(x))\,r(x)\,\big]\ \big).
\end{align*}
Since $\tilde{\psi}^i_{n}(\alpha(r(x)))=r'(0)^{-1}\psi^i_{n}(\alpha(r(x)))$ 
and $l_{\tilde{F}}(r(X))=r'(0)l_F(X)\Rightarrow l_{\tilde{F}}'(r(X))
=r'(0)l_F'(x)/r'(x)$  the result follows.

We assume therefore that $r(X)=X$. For the admissible pair $(n,t)$ 
 let $\varrho$ and $k$ be as in Definition \ref{admissible-pair}. 
 Denote by $\epsilon$ the unit $\pi^{\varrho k}/p^k$. 
Let $u\in V_{\La,1}=\{x\in \La\ :\ v(x-1)>1/(p-1)\}$ and 
$x\in \mu_\La$. By bilinearity of the pairing
\begin{equation}\label{uno}
(\alpha, x)_{n}=\big(\,\alpha,x\ominus(x\, a_1^{p^k})\,\big)_n\,\oplus\, (\alpha, x\,a_1^{p^k})_n.
\end{equation}
Now let $m=n+\varrho k $ and $y=x\, a_1^{p^k}$. Then by (5) of Proposition \ref{pair}
\begin{align*}
&(\alpha,y)_n\, =\,f^{^{(m-n)}}\big(\, (\alpha,y)_m\,\big)\,
=\,\pi^{\varrho k}(\alpha,y)_m\,
=\,\epsilon\big(\,\alpha\big(a_1^{p^k}\big)\,,y\,\big)_m\\
&=\,\epsilon\big(\,\alpha(y/x)\,,\,y\,\big)_m \,
=\,\epsilon(\alpha(y),y)_m\,\ominus\, \epsilon (\alpha(x),y)_m.
\end{align*}
But $r=X$ is $t$-normalized, hence we may replace 
$(\alpha(y),y)_m=0$ by the expression $(\alpha(x),x)_m=0$ and obtain
\begin{align}\label{dos}
(\alpha,y)_n\,=\,\epsilon \big(\alpha(x),x\big)_m\,\ominus\, \epsilon\big(\alpha(x),y\big)_m\,
=\,\epsilon\big(\alpha(x),x\ominus y\big)_m.
\end{align}
By the properties of the logarithm in Proposition \ref{logarithm} we can express
\[
a_1^{p^k}
   =\exp\left(\,\log\left(a_1^{p^k}\right)\,\right)
   =\exp\big(\,p^k\log(a_1)\,\big)=1+p^k\log(a_1)+p^{2k}w,
\]
where $w=\frac{z^2}{2!}+p^k\frac{z^3}{3!}+\cdots$, 
with $z=\log(a_1)$. Since $v(z)>1/(p-1)$ then 
$v(\frac{z^i}{i!})>1/(p-1)$ and so $v(w)>1/(p-1)$. This follows, for example, by 
Proposition  2.4 of \cite{koly}.

Since $x\ominus y\equiv x-y \pmod{xy}$  and $y=xa_1^{p^k}$ with $v(x)>0$, 
then $v(x\ominus y)=v(x-y)=v(x(a_1^{p^k}-1))>1/(p-1)$. 
Thus, using the Taylor expansion of $l=l_F$ around $X=x$ we obtain
\[
l(y)=l(x+xp^kz+xp^{2k}w)=l(x)+l'(x)(xp^kz+xp^{2k}w)+p^{2k}w_1,
\]
where $w_1=l''(x)\frac{\delta^2}{2!}+p^kl^{(3)}(x)\frac{\delta^3}{3!}+\cdots$ 
with $\delta=xz+xp^kw$. Since $v(\delta)>1/(p-1)$ then $v(w_1)>1/(p-1)$. Moreover,
$
l(y)=l(x)+l'(x)xp^kz+p^{2k}w_2,
$
 with $v(w_2)>1/(p-1)$. Then
  \[
 l(x\ominus y)= -l'(x)xp^kz-p^{2k}w_2.
 \]
  Observing that $-l'(x)xz-p^{k}w_2\in \mu_{\La,1}$ we  
 have by the isomorphism given in \ref{logarithm} 
 that there is an $\eta \in F(\mu_{\La,1})$  such 
 that $l(x\ominus y)=p^kl(\eta)=-l'(x)xp^kz-p^{2k}w_2$. 
 Thus  
 \begin{equation}\label{p-divisible}
 x\ominus y=[p^k](\eta)=[\pi^{\varrho k}](\tilde{\eta} )=f^{(\varrho k)}(\tilde{\eta}),
 \end{equation} 
 for $\tilde{\eta}=[\epsilon^{-1}]_F(\eta)$.  Since $n\leq \varrho k$ 
 then $\pi^n$ divides $\pi^{\varrho k}$ and we have that  
 $x\ominus y\in f^{(n)}(F(\mu_{\La,1}))$. Thus 
 the first term on the right hand side of 
 equation (\ref{uno}) is zero. By equations (\ref{dos}) and (\ref{p-divisible})
 and item  (5) of Proposition \ref{pair}  we have
  \begin{align}\label{ofa}
         (\alpha,x)_n=
         \epsilon(\alpha(x),x\ominus y)_m
           =(\alpha(x),\eta)_n.
 \end{align}
Since $v(\eta)>1/(p-1)$ and $(n,t)$ is admissible we can use Proposition \ref{psi},
\begin{align*}
(\alpha(x),\eta)_n&=\Tr(\ \psi^i_{n}\big(\alpha(x)\big)\ l_F(\eta)\ )
=\Tr(\ \psi^i_{n}\big(\alpha(x)\big)\ ( -l'(x)xz-p^{k}w_2)\ ).
\end{align*}

Since $\psi^i_{L,n}\big(\alpha(x)\big)\in R_{L,1}$, 
$w_2\in T_{L,1}$, $\pi^n|p^k$ and
$\Tr(R_{L,1}T_{L,1})\subset C$, 
then we can write the last expression above as
$
\Tr(\ \psi^i_{n}\big(\alpha(x)\big)\ ( -l'(x)xz)\ ).
$
From Equation (\ref{ofa}) we finally get 
\[
 (\alpha,x)^i_n=\Tr\big(\, \psi^i_{n}\big(\alpha(x)\big)\, \big[ -l'(x)xz\big]\, \big).
\]
Keeping in mind that $z=\log (a_1)$, the proposition follows.

\end{proof}

\subsection{The map $\rho_{\La,n}^i$}\label{The map rho_L}
We can define a $C$-linear structure on 
$V_{\La,1}=1+\mu_{\La,1}$ by using the isomorphism 
$\log: V_{\La,1}\to T_{\La,1}$, i.e., $cu:=\log^{-1}(c\log(u))$ for $c\in C$ and $u\in V_{\La,1}$. 
Let $x\in F(\mu_\La)$ and $\alpha'=\{a_2,\dots, a_{d}\}\in K_{d-1}(\La)$ 
be fixed. Consider the mapping
\[
      V_{\La,1}\to C/\pi^n C,
\quad
  \text{    defined by}
\quad
      a_1\mapsto (\{a_1,\dots,a_d\}, x)^i_{\La,n}.
\]
According to Proposition \ref{key} this is a continuous $C$-linear
map and we have the following 

\begin{prop}\label{rho}
Let $\alpha'\in K_{d-1}(\La)$ and $x\in F(\mu_\La)$. Consider the element   $\alpha=\{a_1\}\cdot \alpha'\in K_d(\La)$ where $a_1\in V_{\La,1}$. Then there exist a unique element 
    $\rho_{\La,n}^i(\alpha',x)\in R_{\La,1}/\pi^nR_{\La,1}$ 
    such that
   \begin{equation}\label{rhopa}
      (\alpha, x)^i_{\La,n}
      =\Tr \left(\, \log(a_1)\, \rho_{\La,n}^i(\alpha',x)\, \right ),
    \end{equation}
    Moreover,  the map 
    $\rho_{\La,n}^i:  K_{d-1}(\La)\otimes F(\mu_\La)\to R_{\La,1}/\pi^nR_{\La,1}$ 
    is a homomorphism.
\end{prop}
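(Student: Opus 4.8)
The plan is to read off the representing element $\rho_{\La,n}^i(\alpha',x)$ from Proposition \ref{key} and then obtain uniqueness and the homomorphism property from Lemma \ref{Riezs2} together with the bilinearity of the Kummer pairing. Fix $\alpha'=\{a_2,\dots,a_d\}\in K_{d-1}(\La)$ and $x\in F(\mu_\La)$. As recalled in the paragraph preceding the statement, Proposition \ref{key} shows that $a_1\mapsto(\{a_1\}\cdot\alpha',x)^i_{\La,n}$ is a sequentially continuous $C$-linear map $V_{\La,1}\to C/\pi^nC$, where $V_{\La,1}$ carries the $C$-structure transported from $\mu_{\La,1}$ through $\log$. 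Precomposing with the isomorphism $\log^{-1}\colon\mu_{\La,1}\xrightarrow{\sim}V_{\La,1}$ yields an element of $\Hom_C^{seq}(\mu_{\La,1},C/\pi^nC)$, so by Lemma \ref{Riezs2} there is a unique class $\rho_{\La,n}^i(\alpha',x)\in R_{\La,1}/\pi^nR_{\La,1}$ with $(\{a_1\}\cdot\alpha',x)^i_{\La,n}=\Tr\big(\rho_{\La,n}^i(\alpha',x)\,\log(a_1)\big)$ for every $a_1\in V_{\La,1}$, which is \eqref{rhopa}. (Comparing with \eqref{key-result} for a $t$-normalized series $r$ identifies $\rho_{\La,n}^i(\alpha',x)$ with the class of $-\psi_{\La,n}^i(\{r(x),a_2,\dots,a_d\})\,r(x)\,l_F'(x)/r'(x)$, but this explicit shape is not needed for the statement.)

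Next I would establish that $\rho_{\La,n}^i$ is biadditive. For additivity in the first argument, fix $x$ and take $\alpha_1',\alpha_2'\in K_{d-1}(\La)$; bilinearity of the Milnor product $K_1(\La)\times K_{d-1}(\La)\to K_d(\La)$ gives $\{a_1\}\cdot(\alpha_1'\alpha_2')=(\{a_1\}\cdot\alpha_1')(\{a_1\}\cdot\alpha_2')$, and then the bilinearity of $(,)_{\La,n}$ from Proposition \ref{pair}(1), together with the fact that the $i$-th coordinate projection $\kappa_n\to C/\pi^nC$ is a group homomorphism (it carries $\oplus_F$ to $+$), gives
\[
(\{a_1\}\cdot(\alpha_1'\alpha_2'),x)^i_{\La,n}=(\{a_1\}\cdot\alpha_1',x)^i_{\La,n}+(\{a_1\}\cdot\alpha_2',x)^i_{\La,n}.
\]
Substituting \eqref{rhopa} three times and invoking the injectivity half of Lemma \ref{Riezs2} yields $\rho_{\La,n}^i(\alpha_1'\alpha_2',x)=\rho_{\La,n}^i(\alpha_1',x)+\rho_{\La,n}^i(\alpha_2',x)$. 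Additivity in the second argument is proved identically, using that $(,)_{\La,n}$ is additive in $x$ with respect to $\oplus_F$ while $\log(a_1)$ is independent of $x$. Biadditivity makes $\rho_{\La,n}^i$ factor through $K_{d-1}(\La)\otimes F(\mu_\La)$; it also automatically respects the Steinberg relation, since if $\alpha'$ contains a Steinberg pair then so does $\{a_1\}\cdot\alpha'$, which forces the left side of \eqref{rhopa} to vanish and hence $\rho_{\La,n}^i(\alpha',x)=0$.

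The calculations involved — bilinearity of the Milnor product and of the Kummer pairing, and additivity of the coordinate projection — are routine, so the substance of the argument is the appeal to Proposition \ref{key} (for $C$-linearity and continuity) and to Lemma \ref{Riezs2} (for existence and uniqueness). The one point I would be careful about is that $\log\colon V_{\La,1}\to\mu_{\La,1}$ is surjective: this is exactly what lets me pass from "the two sides of \eqref{rhopa} agree after pairing against every $\log(a_1)$" to genuine equalities in $R_{\La,1}/\pi^nR_{\La,1}$, and it underlies both the uniqueness of $\rho_{\La,n}^i(\alpha',x)$ and the homomorphism property.
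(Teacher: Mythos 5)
Your proposal is correct and follows the route the paper intends: Proposition \ref{key} supplies the $C$-linear, sequentially continuous functional $a_1\mapsto(\{a_1\}\cdot\alpha',x)^i_{\La,n}$ on $V_{\La,1}$ (indeed \eqref{key-result} already exhibits a representing element), and transporting through $\log$ and invoking the isomorphism of Lemma \ref{Riezs2} gives existence and uniqueness, with biadditivity and the Steinberg relation then following from bilinearity of the pairing and injectivity in Lemma \ref{Riezs2}. The paper leaves exactly this argument implicit in the sentence preceding the statement, so your write-up simply supplies the omitted details.
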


From Proposition \ref{rho} and Proposition \ref{key} it follows the next proposition.

\begin{prop}\label{psi-rho}
      Let $\La$ and $t$ be as in \eqref{assumptions}  and let
            $r(X)$ be a $t$-normalized series. Then for all $x\in F(\mu_{\La})$ 
           and all $\alpha'=\{a_2,\dots, a_{d}\}\in K_{d-1}(\La)$ we have
            \[
             \frac{\psi^i_{\La,n}
             \big(\,\alpha(r(x))\,\big)\ r(x)}{r'(x)} 
             =-\,\frac{\ \rho_{\La,n}^i(\alpha',x)\ }{l_F'(x)},           
            \]
            where $\alpha\big(r(x)\big)=\big\{r(x),a_2,\dots,a_{d}\big\}\in K_d(\La)$.
           
\end{prop}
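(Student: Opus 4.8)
The plan is to compare the two expressions for the symbol $(\alpha,x)^i_{\La,n}$ that are already available — the one coming from Proposition \ref{key} and the one coming from Proposition \ref{rho} — and to cancel the common factor $\log(a_1)$ by means of the nondegeneracy contained in Lemma \ref{Riezs2}.

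First I would fix $x\in F(\mu_{\La})$ and $\alpha'=\{a_2,\dots,a_d\}\in K_{d-1}(\La)$, and for an arbitrary $a_1\in V_{\La,1}$ form $\alpha=\{a_1\}\cdot\alpha'=\{a_1,\dots,a_d\}\in K_d(\La)$. Set
\[
\gamma:=\frac{-\,\psi^i_{\La,n}\big(\alpha(r(x))\big)\,r(x)\,l_F'(x)}{r'(x)},
\]
which lies in $R_{\La,1}/\pi^nR_{\La,1}$: since $r'(0)\in\OK^{\times}$ the element $r'(x)$ is a unit, while $r(x)\in\mu_\La$ and $l_F'(x)\in\OLa$, so that $r(x)l_F'(x)/r'(x)\in\OLa$ and $R_{\La,1}$ is an $\OLa$-module — in fact $\gamma$ is precisely the element appearing inside the trace in Proposition \ref{key}. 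Then Proposition \ref{key}, applied to the $t$-normalized series $r$, gives $(\alpha,x)^i_{\La,n}=\Tr(\log(a_1)\,\gamma)$, while Proposition \ref{rho} gives $(\alpha,x)^i_{\La,n}=\Tr(\log(a_1)\,\rho^i_{\La,n}(\alpha',x))$. Subtracting, I obtain
\[
\Tr\big(\log(a_1)\,(\gamma-\rho^i_{\La,n}(\alpha',x))\big)=0 \quad\text{in }C/\pi^nC,\ \text{ for every }a_1\in V_{\La,1}.
\]

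Next I would invoke that $\log\colon V_{\La,1}\to\mu_{\La,1}$ is surjective (Proposition \ref{logarithm}), so the previous display says that $\gamma-\rho^i_{\La,n}(\alpha',x)\in R_{\La,1}/\pi^nR_{\La,1}$ pairs to zero against every element of $\mu_{\La,1}$ under $(y,z)\mapsto\Tr(yz)$. By the injectivity of the map $R_{\La,1}/\pi^nR_{\La,1}\to\text{Hom}^{seq}_C(\mu_{\La,1},C/\pi^nC)$, $\beta\mapsto(z\mapsto\Tr(\beta z))$, of Lemma \ref{Riezs2}, this forces $\gamma=\rho^i_{\La,n}(\alpha',x)$ in $R_{\La,1}/\pi^nR_{\La,1}$. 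Finally, dividing through by $l_F'(x)$ — a unit, since $l_F'(0)=1$ and $x\in\mu_\La$ — rearranges this equality into the asserted identity $\psi^i_{\La,n}(\alpha(r(x)))\,r(x)/r'(x)=-\rho^i_{\La,n}(\alpha',x)/l_F'(x)$.

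Once Propositions \ref{key} and \ref{rho} are in hand the argument is essentially bookkeeping, so I do not anticipate a substantive obstacle; the points that need attention are checking that $\gamma$ genuinely lies in $R_{\La,1}/\pi^nR_{\La,1}$ and that the image of $\log$ exhausts $\mu_{\La,1}$, so that the injectivity in Lemma \ref{Riezs2} can be applied over its full domain.
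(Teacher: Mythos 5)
Your argument is correct and is exactly the route the paper takes: the paper derives Proposition \ref{psi-rho} directly by comparing the two trace representations of $(\alpha,x)^i_{\La,n}$ from Propositions \ref{key} and \ref{rho} and invoking the uniqueness of $\rho^i_{\La,n}(\alpha',x)$, which is precisely the nondegeneracy you make explicit via Lemma \ref{Riezs2}. Your verifications that $r'(x)$ and $l_F'(x)$ are units and that $\gamma$ lies in $R_{\La,1}/\pi^nR_{\La,1}$ are the right points to check and are all sound.
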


%______________________________________________________________
%______________________________________________________________
%______________________________________________________________
%______________________________________________________________
%______________________________________________________________

\subsection{The maps $D_{\La,n}^i$}\label{The maps D_L}

Let $\La$ be as in \eqref{assumptions}. 
Define $D_{\La,n}^i:\mathcal{O}_\La^d\to 
R_{\La,1}/\pi^nR_{\La,1}$ by 
\begin{equation}\label{D-maps}
D_{\La,n}^i(a_1,\dots,a_d)=\psi_{\La,n}^i
(\{a_1\dots,a_d\})a_1\cdots a_d,
\end{equation}
and consider $R_{\La,1}/\pi^nR_{\La,1}$ 
as an $\mathcal{O}_\La$-module. 

\begin{prop}\label{Dar}The map $D_{\La,n}^i$ satisfies
\begin{enumerate}
      \item Leibniz Rule:
      \[\Scale[0.9]{D_{\La,n}^i(a_1,\dots,a_ja'_j,\dots,a_d)
         =a_jD_{\La,n}^i(a_1,\dots,a'_j,\dots,a_d)+a'_j
         D_{\La,n}^i(a_1,\dots,a_j,\dots,a_d).}\]
               \item Steinberg relation:
     $D_{\La,n}^i(a_1,\dots,a_d)=0\quad \text{if $a_j+a_k=1$ for some  $j\neq k$.}
     $
          \item Skew-symmetric:
     $$D_{\La,n}^i(a_1,\dots,a_j,\dots,a_k,\dots,a_d)=-D_{\La,n}^i(a_1,\dots,a_k,\dots,a_j,\dots,a_d).$$
        \item $D_{\La,n}^i(a_1,\dots,a_d)=0$ if some $a_j$ is a $p^k$-th power in $\OLa$ with  $\pi^n|p^k$.

\end{enumerate}
\end{prop}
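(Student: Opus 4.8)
The plan is to deduce all four properties formally from a single substantive input, namely Proposition \ref{psi}, which tells us that $\psi_{\La,n}^i\colon K_d(\La)\to R_{\La,1}/\pi^nR_{\La,1}$ is a group homomorphism. The remaining ingredients are purely structural: the Milnor symbol $(\La^\times)^d\to K_d(\La)$ is multiplicative in each argument (it factors through $(\La^\times)^{\otimes d}$), it is trivial whenever two entries sum to $1$ by the very definition of $K_d(\La)$, it is anti-symmetric by Proposition \ref{milnorelation}(2), and $R_{\La,1}/\pi^nR_{\La,1}$ is a module over $\mathcal{O}_\La$ by the explicit description \eqref{RL1}. Since $\{a_1,\dots,a_d\}$ is only defined when all $a_j\in\La^\times$, I would treat the entries as nonzero throughout; if some $a_j=0$ then the prefactor $a_1\cdots a_d$ vanishes and every asserted identity collapses to $0=0$.

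For the Leibniz rule I would start from $\{a_1,\dots,a_ja_j',\dots,a_d\}=\{a_1,\dots,a_j,\dots,a_d\}\cdot\{a_1,\dots,a_j',\dots,a_d\}$ in $K_d(\La)$, apply $\psi_{\La,n}^i$ to convert the product into a sum in $R_{\La,1}/\pi^nR_{\La,1}$, then multiply through by $a_1\cdots(a_ja_j')\cdots a_d$ and redistribute, using that this monomial equals $\big(a_1\cdots a_j\cdots a_d\big)a_j'=\big(a_1\cdots a_j'\cdots a_d\big)a_j$: pairing each factor with the matching summand and using commutativity of the $\mathcal{O}_\La$-action yields exactly $a_j'D_{\La,n}^i(\dots,a_j,\dots)+a_jD_{\La,n}^i(\dots,a_j',\dots)$. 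For the Steinberg relation, the point is simply that $\{a_1,\dots,a_d\}$ is already trivial in $K_d(\La)$ when $a_j+a_k=1$, so $\psi_{\La,n}^i$ kills it and the prefactor is irrelevant. For skew-symmetry I would invoke Proposition \ref{milnorelation}(2) to get $\{\dots,a_j,\dots,a_k,\dots\}=\{\dots,a_k,\dots,a_j,\dots\}^{-1}$ in $K_d(\La)$, apply the homomorphism $\psi_{\La,n}^i$, and observe that the prefactor $a_1\cdots a_d$ is unchanged under transposing two entries; alternatively (3) is the standard formal consequence of (1) and (2), but the direct route is shorter.

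For the last property, write $a_j=b^{p^k}$ and iterate the multiplicativity used for the Leibniz rule to get $\{a_1,\dots,b^{p^k},\dots,a_d\}=p^k\{a_1,\dots,b,\dots,a_d\}$ in $K_d(\La)$, whence $D_{\La,n}^i(a_1,\dots,a_d)=p^k\,\psi_{\La,n}^i(\{a_1,\dots,b,\dots,a_d\})\,a_1\cdots a_d$. It then remains to note that multiplication by $p^k$ annihilates $R_{\La,1}/\pi^nR_{\La,1}$: since $\pi^n\mid p^k$, write $p^k=\pi^n c$ with $c\in\mathcal{O}_\La$; as $R_{\La,1}$ is stable under multiplication by $\mathcal{O}_\La$ (by \eqref{RL1}), we get $p^kR_{\La,1}=\pi^n(cR_{\La,1})\subseteq\pi^nR_{\La,1}$, so the claimed vanishing follows. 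I do not anticipate a genuine obstacle: the argument is entirely formal once Proposition \ref{psi} is available. The only points deserving care are the bookkeeping of the symmetric prefactor in the Leibniz step and making explicit that $R_{\La,1}/\pi^nR_{\La,1}$ is $p^k$-torsion when $\pi^n\mid p^k$ — a fact already used implicitly in the proofs of Propositions \ref{continuity} and \ref{key}.
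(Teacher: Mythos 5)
Your proposal is correct and follows essentially the same route as the paper: properties (1)--(3) are read off from the fact that $\psi^i_{\La,n}$ is a homomorphism on $K_d(\La)$ together with the Steinberg and anti-symmetry relations of Milnor symbols, with the prefactor $a_1\cdots a_d$ bookkept as you describe. Your explicit argument for (4) -- that $\{a_1,\dots,b^{p^k},\dots,a_d\}$ is a $p^k$-th power in $K_d(\La)$ and that $p^k$ annihilates $R_{\La,1}/\pi^nR_{\La,1}$ when $\pi^n\mid p^k$ -- is exactly the reasoning the paper uses implicitly elsewhere (e.g.\ in the proof of Proposition \ref{derivat}), so no gap remains.
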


\begin{proof}

Property (1) follows from the fact that 
$\psi_{\La,n}^i$ is a homomorphism. 
Property (2) follows from the Steinberg relation 
$\{a_1,\dots,a_j,\dots,1-a_j,\dots,a_d\}=1$ for elements in the Milnor K-group  
$K_d(\La)$ and property (3) follows from the fact 
that $$\{a_1,\dots,a_j,\dots,a_k,\dots,a_d\}=
\{a_1,\dots,a_k,\dots,a_j,\dots,a_d\}^{-1},$$ in 
$K_d(\La)$ ( cf.  Proposition \ref{milnorelation}). 

\end{proof}
\begin{prop}\label{relation of D}
     Let $\La$ and $t$  be as in \eqref{assumptions} and $r(X)$ a $t$-normalized series. Then 
for all  $x,y\in F(\mu_{\La})$ and all $a_2,\dots,a_{d}\in\La^*$  we have 
     \begin{equation}\label{Dcont}
                       \frac{D_{\La,n}^i\big(\,\alpha(r(x\oplus y)\,\big)}{r'(x\oplus y)}
            =
             F_X(x,y)\ 
            \frac{D_{\La,n}^i\big(\,\alpha(r(x)\,\big)}{r'(x)}+  
            F_Y(x,y)\ 
            \frac{D_{\La,n}^i\big(\,\alpha(r(y)\,\big)}{r'(y)},      
      \end{equation}
     where $\alpha(z)$ denotes $(z,a_2,\dots, a_d)\in\OLa^d$.
\end{prop}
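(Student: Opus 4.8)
The plan is to reduce the stated identity to a simple additivity property of the map $\rho_{\La,n}^i$, using Proposition~\ref{psi-rho} to pass from $D_{\La,n}^i$ to $\rho_{\La,n}^i$, and then to invoke the differentiation of $l_F\circ F$.

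First I would rewrite each of the three terms. By the definition \eqref{D-maps}, for any $z\in F(\mu_\La)$,
\[
\frac{D_{\La,n}^i\big(\alpha(r(z))\big)}{r'(z)}
= a_2\cdots a_d\;\frac{\psi^i_{\La,n}\big(\{r(z),a_2,\dots,a_d\}\big)\,r(z)}{r'(z)}
= -\,a_2\cdots a_d\;\frac{\rho_{\La,n}^i(\alpha',z)}{l_F'(z)},
\]
where $\alpha'=\{a_2,\dots,a_d\}$ and the second equality is Proposition~\ref{psi-rho}. Since $r$ is $t$-normalized and $l_F'$ has constant term $1$, the elements $r'(z),l_F'(z)$ are units of $\OLa$ for $z\in\mu_\La$, so all the divisions make sense. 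Cancelling the common nonzero factor $-a_2\cdots a_d$, the proposition becomes the formal-group identity
\[
\frac{\rho_{\La,n}^i(\alpha',x\oplus y)}{l_F'(x\oplus y)}
= F_X(x,y)\,\frac{\rho_{\La,n}^i(\alpha',x)}{l_F'(x)}
+ F_Y(x,y)\,\frac{\rho_{\La,n}^i(\alpha',y)}{l_F'(y)}.
\]

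To finish, I would differentiate $l_F(F(X,Y))=l_F(X)+l_F(Y)$ in $X$ and in $Y$, obtaining $l_F'(F(X,Y))\,F_X(X,Y)=l_F'(X)$ and $l_F'(F(X,Y))\,F_Y(X,Y)=l_F'(Y)$; evaluating at $(x,y)$ and using that $l_F'(x\oplus y)$ is a unit gives $F_X(x,y)=l_F'(x)/l_F'(x\oplus y)$ and $F_Y(x,y)=l_F'(y)/l_F'(x\oplus y)$. Substituting these, the right-hand side of the displayed identity collapses to $\big(\rho_{\La,n}^i(\alpha',x)+\rho_{\La,n}^i(\alpha',y)\big)/l_F'(x\oplus y)$, and this equals the left-hand side because $\rho_{\La,n}^i$ is a homomorphism on $K_{d-1}(\La)\otimes F(\mu_\La)$ (Proposition~\ref{rho}), hence additive in the second variable with respect to $\oplus_F$: $\rho_{\La,n}^i(\alpha',x\oplus y)=\rho_{\La,n}^i(\alpha',x)+\rho_{\La,n}^i(\alpha',y)$. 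The argument is otherwise routine; the only minor points to watch are the invertibility of $l_F'$ and $r'$ at the points involved, and the degenerate case $x\oplus y=0$ (that is, $y=\ominus_F x$), where the left-hand side is to be interpreted by continuity and both sides vanish since $\rho_{\La,n}^i(\alpha',x)+\rho_{\La,n}^i(\alpha',\ominus_F x)=0$ and $l_F'(0)=1$. I do not anticipate any genuine obstacle.
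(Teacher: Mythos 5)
Your proof is correct and follows essentially the same route as the paper: the paper's (very terse) proof cites exactly the two facts you use, namely the additivity of $\rho_{\La,n}^i(\alpha',\cdot)$ with respect to $\oplus_F$ and the relations $l_F'(F(X,Y))F_X=l_F'(X)$, $l_F'(F(X,Y))F_Y=l_F'(Y)$ obtained by differentiating $l_F(F(X,Y))=l_F(X)+l_F(Y)$, with Proposition \ref{psi-rho} supplying the translation between $D_{\La,n}^i$ and $\rho_{\La,n}^i$. Your write-up merely makes explicit the unit checks and the multiplication by $-a_2\cdots a_d$ (note the valid direction is to prove the reduced identity and then multiply through, not to cancel, since $a_2\cdots a_d$ need not act injectively on $R_{\La,1}/\pi^nR_{\La,1}$ --- which is what you in fact do).
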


\begin{proof}
This follows from the fact that
\[
\rho_{\La,n}^i(\alpha',x\oplus _F y)
=\rho_{\La,n}^i(\alpha',x)+
\rho_{\La,}^i(\alpha',y),
\]
for $\alpha'=\{a_2,\dots, a_d\}\in K_{d-1}(\La)$ and from differentiating $l_F(F(X,Y))=l_F(X)+l_F(Y)$ with respect to $X$ and $Y$.
\end{proof}

Let $\tilde{F}$ be the formal group $r(F(r^{-1}(X),r^{-1}(Y)))$. 
The series $r(X)=X$ is $t$-normalized for $\tilde{F}$. 
Denote by $\tilde{\oplus}$  the sum according to this formal group 
and $\tilde{D}_{\La,n}^i$, $\tilde{\psi}^i_{\La,n}$ the 
corresponding maps. According to Proposition 
\ref{psi-properties} (4) we have that 
\begin{equation}\label{dtilde}
\tilde{D}_{\La,n}^i=\frac{1}{r'(0)}D_{\La,n}^i.
\end{equation}
Therefore, Proposition \ref{relation of D} in 
terms of $\tilde{\oplus}$ and $D_{\La,n}^i$ reads as
\begin{coro}\label{Dcontinu}
         Let $\La$ be as in \eqref{assumptions}, then 
     \begin{equation}\label{Dcontigo}
              D_{\La,n}^i\big(\,\alpha(x\tilde{\oplus} y)\,)
            = \tilde{F}_X(x,y)D_{\La,n}^i\big(\,\alpha(x)\,\big)+
              \tilde{F}_Y(x,y)D_{\La,n}^i\big(\,\alpha(y)\,\big).      
      \end{equation}

\end{coro}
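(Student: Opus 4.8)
The plan is to read off Corollary~\ref{Dcontinu} as the specialization of Proposition~\ref{relation of D} (equation~\eqref{Dcont}) to the isomorphic formal group $\tilde F = r(F(r^{-1}(X),r^{-1}(Y)))$ equipped with its $t$-normalized series $\tilde r(X)=X$. First I would note that every hypothesis of Proposition~\ref{relation of D} transfers verbatim from $(F,r)$ to $(\tilde F,\tilde r)$: the field $\La$ and the integer $t$ are unchanged, the pair $(n,t)$ is still admissible, and $\tilde r(X)=X$ is $t$-normalized for $\tilde F$ by construction---this last point is exactly the observation recorded just before the corollary. Hence Proposition~\ref{relation of D}, written out for $\tilde F$ and $\tilde r$, reads
\[
\frac{\tilde{D}_{\La,n}^i\big(\alpha(\tilde r(x\tilde{\oplus} y))\big)}{\tilde r'(x\tilde{\oplus} y)}
= \tilde{F}_X(x,y)\,\frac{\tilde{D}_{\La,n}^i\big(\alpha(\tilde r(x))\big)}{\tilde r'(x)}
+ \tilde{F}_Y(x,y)\,\frac{\tilde{D}_{\La,n}^i\big(\alpha(\tilde r(y))\big)}{\tilde r'(y)},
\]
for all $x,y\in\mu_\La$ and all $a_2,\dots,a_d\in\La^*$. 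Since $\tilde r(X)=X$ we have $\tilde r(z)=z$ for every $z$ and $\tilde r'\equiv 1$, so all three denominators disappear and the inner arguments collapse, leaving
\[
\tilde{D}_{\La,n}^i\big(\alpha(x\tilde{\oplus} y)\big)
= \tilde{F}_X(x,y)\,\tilde{D}_{\La,n}^i\big(\alpha(x)\big)
+ \tilde{F}_Y(x,y)\,\tilde{D}_{\La,n}^i\big(\alpha(y)\big).
\]

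Next I would invoke \eqref{dtilde}, which by Proposition~\ref{psi-properties}(4) is the equality of maps $\OLa^d \to R_{\La,1}/\pi^nR_{\La,1}$ given by $\tilde{D}_{\La,n}^i = \frac{1}{r'(0)}D_{\La,n}^i$, with $r'(0)\in\OK^*$ a unit. All three arguments $\alpha(x\tilde{\oplus} y)$, $\alpha(x)$, $\alpha(y)$ lie in $\OLa^d$ (since $x\tilde{\oplus} y,x,y\in\mu_\La\subset\OLa$), and $\tilde{F}_X(x,y),\tilde{F}_Y(x,y)$ lie in $\OLa$ and act on the $\OLa$-module $R_{\La,1}/\pi^nR_{\La,1}$; so substituting this identity termwise into the previous display gives
\[
\frac{1}{r'(0)}\Big(D_{\La,n}^i\big(\alpha(x\tilde{\oplus} y)\big) - \tilde{F}_X(x,y)\,D_{\La,n}^i\big(\alpha(x)\big) - \tilde{F}_Y(x,y)\,D_{\La,n}^i\big(\alpha(y)\big)\Big) = 0 ,
\]
and multiplying through by the unit $r'(0)$ yields precisely \eqref{Dcontigo}.

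I do not expect a genuine obstacle: all the substance is already contained in Proposition~\ref{relation of D}, and the only steps that deserve a word of justification are the transfer of the $t$-normalization hypothesis to $\tilde F$ and the bookkeeping that $\tilde{D}_{\La,n}^i$ and $D_{\La,n}^i$ share the common domain $\OLa^d$, so that \eqref{dtilde} may be applied argument by argument and the scalar $1/r'(0)$ cancelled. If a self-contained argument were preferred instead, one could rerun the proof of Proposition~\ref{relation of D} directly for $\tilde F$: the additivity of $\rho_{\La,n}^i$ in its second argument (Proposition~\ref{rho}) together with differentiation of $l_{\tilde F}(\tilde F(X,Y)) = l_{\tilde F}(X)+l_{\tilde F}(Y)$ with respect to $X$ and to $Y$ produces \eqref{Dcontigo} directly, with no $r'$ correction terms since the normalized series for $\tilde F$ is the identity.
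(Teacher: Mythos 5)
Your proposal is correct and follows exactly the route the paper takes: specialize Proposition \ref{relation of D} to $(\tilde F,\tilde r=X)$, where the denominators $\tilde r'$ are identically $1$, and then use \eqref{dtilde} to trade $\tilde D_{\La,n}^i$ for $\frac{1}{r'(0)}D_{\La,n}^i$ and cancel the unit $r'(0)$. The bookkeeping points you flag (transfer of $t$-normalization to $\tilde F$, common domain $\OLa^d$) are precisely the ones the paper relies on implicitly.
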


In order to take advantage of this differentiability property of 
the map $D_{\La,n}^i$ with respect to formal group law $\tilde{F}$, 
we will show that any element in the maximal ideal $\mu_{\La}$
can be expressed as an infinite sum with respect to the formal group law $\tilde{F}$. This is accomplished in the following subsection.
%--------------------------------------------------------------
%--------------------------------------------------------------
%--------------------------------------------------------------
%--------------------------------------------------------------
%--------------------------------------------------------------

%___________________
%___________________
%___________________
%___________________

%___________________
\subsubsection{Representations of elements as formal group series}

In order to simplify the notation, we introduce the following
\begin{defin}
Let $J_n=\big\{\,\overline{i}=(i_1,\dots, i_n):0\leq i_1,\dots, i_{d-1 }<p^n\big\}$. Let $\mathfrak{A}$ be the set of all series in $X_d\OLa[[X_1,\dots, X_d]]$ of the form 
\begin{equation}\label{tomita}
\bigoplus _{k=1}^{\infty}\left(\, \bigoplus_{\overline{i}\in J_n}\gamma_{\overline{i},k}^{p^n}\ X_1^{i_1}\cdots X_{d-1}^{i_{d-1}}X_d^k\right),
\end{equation}
where  $\gamma_{i,k}\in \OLa$. Here $\oplus$ denotes addition in the formal group law $F$.
\end{defin}

The following lemma will be used in Proposition \ref{formal-sums-rep} to express every element in the maximal ideal as an infinite formal group sum. 
\begin{lemma}\label{Kolyv}
For $x\in \OLa$, there exist elements $\gamma_{\overline{i}}\in \OLa$, with $\overline{i}\in J_n$, such that
\[
x
\equiv 
\sum_{\overline{i}\in J_n}
\gamma_{\overline{i}}^{p^n}\  
T_1^{i_1}\cdots T_{d-1}^{i_{d-1}} \pmod{\pi_\La}.
\]
\end{lemma}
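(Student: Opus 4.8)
The plan is to reduce the statement to one purely about the residue field of $\La$ and then prove that by induction on the number of ``horizontal'' variables. Write $k_\La=\mathbb{F}((t_1))\cdots((t_{d-1}))$ for the residue field, where $t_j=\overline{T_j}$ is the image of $T_j$ under the reduction $\OLa\twoheadrightarrow k_\La=\OLa/\pi_\La\OLa$; thus $\{t_1,\dots,t_{d-1}\}$ is a system of local uniformizers of $k_\La$ and $\mathbb{F}$ is the finite (hence perfect) last residue field. Since the reduction is a ring homomorphism, carrying $\gamma^{p^n}$ to $\overline\gamma^{\,p^n}$ and $T_j$ to $t_j$, it suffices to prove that every $\xi\in k_\La$ can be written as
\[
\xi=\sum_{\overline i\in J_n}c_{\overline i}^{\,p^n}\,t_1^{i_1}\cdots t_{d-1}^{i_{d-1}}
\]
with $c_{\overline i}\in k_\La$: one then lifts each $c_{\overline i}$ arbitrarily to $\gamma_{\overline i}\in\OLa$ and, applying this to $\xi=\overline{x}$, obtains $x\equiv\sum_{\overline i\in J_n}\gamma_{\overline i}^{\,p^n}T_1^{i_1}\cdots T_{d-1}^{i_{d-1}}\pmod{\pi_\La}$.

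I would prove the displayed claim by induction on $d-1$. When $d-1=0$, $k_\La=\mathbb{F}$ is perfect, so $k_\La=k_\La^{p^n}$ and the (empty-monomial) claim is trivial; this is the only place the perfectness of $\mathbb{F}$ is used. For the inductive step write $k_\La=E((t_{d-1}))$ with $E=\mathbb{F}((t_1))\cdots((t_{d-2}))$, and let $\xi=\sum_j a_j t_{d-1}^{\,j}\in k_\La$ with $a_j\in E$ and the exponents $j$ bounded below. Grouping the terms by the class of $j$ modulo $p^n$ gives
\[
\xi=\sum_{0\le i_{d-1}<p^n}t_{d-1}^{\,i_{d-1}}\,\eta_{i_{d-1}},\qquad \eta_{i_{d-1}}=\sum_m a_{\,p^n m+i_{d-1}}\,t_{d-1}^{\,p^n m},
\]
so it is enough to expand each $\eta_{i_{d-1}}$ as a $k_\La^{p^n}$-combination of the monomials $t_1^{i_1}\cdots t_{d-2}^{i_{d-2}}$.

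Fix $i_{d-1}$ and set $b_m=a_{p^nm+i_{d-1}}\in E$. By the inductive hypothesis applied to $E$, each $b_m=\sum_{\overline i'}\gamma_{m,\overline i'}^{\,p^n}\,t_1^{i_1}\cdots t_{d-2}^{i_{d-2}}$ with $\gamma_{m,\overline i'}\in E\subseteq k_\La$ and $\overline i'=(i_1,\dots,i_{d-2})$ running over $\{0,\dots,p^n-1\}^{d-2}$. Since Frobenius is additive and the exponents $m$ stay bounded below, substituting and interchanging the (finite) sum over $\overline i'$ with the sum over $m$ gives
\[
\eta_{i_{d-1}}=\sum_{\overline i'}t_1^{i_1}\cdots t_{d-2}^{i_{d-2}}\Bigl(\sum_m \gamma_{m,\overline i'}\,t_{d-1}^{\,m}\Bigr)^{p^n},
\]
where each inner series $\sum_m\gamma_{m,\overline i'}\,t_{d-1}^{\,m}$ lies in $E((t_{d-1}))=k_\La$. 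Feeding this back into the expression for $\xi$ and collecting the monomials $t_1^{i_1}\cdots t_{d-2}^{i_{d-2}}t_{d-1}^{i_{d-1}}$ over all $\overline i=(\overline i',i_{d-1})\in J_n$ yields the required representation, completing the induction and hence the proof. The only point demanding care is the bookkeeping — checking at every stage that the formal sums in $t_{d-1}$ and in $t_1,\dots,t_{d-2}$ are genuine elements of the relevant iterated Laurent series fields (i.e.\ their exponents remain bounded below), and resisting the temptation to extract $p^n$-th roots of the $a_j$ inside $E$, which is not perfect; the purpose of the induction is precisely to push that root extraction down to the perfect base field $\mathbb{F}$. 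Beyond this I foresee no real obstacle.
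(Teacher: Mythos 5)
Your proof is correct and follows essentially the same route as the paper's: reduce to the (common) residue field, induct on the number of Laurent variables with the perfect finite field as base case, split the exponents of the last variable into residue classes modulo $p^n$, apply the inductive hypothesis to the coefficients, and recombine using the additivity of Frobenius, i.e.\ $\sum_m \gamma_m^{p^n} t^{mp^n} = \bigl(\sum_m \gamma_m t^m\bigr)^{p^n}$ in characteristic $p$. Working directly in $k_\La$ rather than with congruences mod $\pi_L$ in $\OLa$ is only a cosmetic difference, and your bookkeeping of the lower bounds on exponents is exactly the point the paper also relies on.
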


\begin{proof}
For a direct  proof using induction see \S\ref{Proofs of Chapter B} of the Appendix. Alternatively, the proposition is equivalent to the following two facts:
\begin{enumerate}
\item  $k_L((T_1))\dots ((T_{d-1}))$ is a finite 
extension of $k_L((T_1^{p^n}))\dots ((T_{d-1}^{p^n}))$ of degree $p^{n(d-1)}$ and generated by
the elements $T_1^{i_1}\cdots T_{d-1}^{i_{d-1}}$ for $0\leq i_1,\dots, i_{d-1}<p^n$. 
\item  $k_L((T_1^{p^n}))\dots ((T_{d-1}^{p^n}))$ is the image of $k_L((T_1))\dots ((T_{d-1}))$ 
under the Frobenius homomorphism 
\[
\sigma_p:k_L((T_1))\dots ((T_{d-1}))\to k_L((T_1))\dots ((T_{d-1})),
\]
i.e., every element of $k_L((T_1^{p^n}))\dots ((T_{d-1}^{p^n}))$ 
is a $p^n$th power of an element in $k_L((T_1))\dots ((T_{d-1}))$.
\end{enumerate}
Both facts are  easily proven  by induction from the fact that, 
for a field $k$ of characteristic $p$, the extension $[k((T)):k((T^{p}))]$
has degree $p$ and $\sigma_d(k)((T^p))$ is the image of $k((T))$ under the Frobenius
homomorphism $\sigma_p:k((T))\to k((T))$.

\end{proof}
\begin{prop}\label{formal-sums-rep}
 For $y\in \mu_\La$, there exist a power series $\eta\in \mathfrak{A}$ such that
$y=\eta(T_1,\dots, \pi_\La)$.
\end{prop}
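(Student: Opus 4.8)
The plan is to build the series $\eta$ by a successive approximation argument with respect to the formal group law $F$, using Lemma \ref{Kolyv} at each stage to control the "error modulo $\pi_\La$" by a sum of the shape appearing in \eqref{tomita}. More precisely, I would argue as follows. Given $y \in \mu_\La$, write the ring of integers as $\OLa = \OL\{\{T_1\}\}\cdots\{\{T_{d-1}\}\}$, so that every element has a $T_d$-adic (i.e. $\pi_\La$-adic) expansion; here I am identifying $\pi_\La$ with (up to a unit) the last uniformizer and treating $T_d = \pi_\La$. The idea is to peel off one $\pi_\La$-power at a time: having produced a partial formal sum $s_{k-1}\in\mathfrak{A}$ (the first $k-1$ "layers") with $y \ominus_F s_{k-1} \in \pi_\La^{k}\OLa$, I write $y\ominus_F s_{k-1} = \pi_\La^{k} x_k$ with $x_k\in\OLa$, apply Lemma \ref{Kolyv} to $x_k$ to get $\gamma_{\overline i,k}\in\OLa$ with
\[
x_k \equiv \sum_{\overline i\in J_n}\gamma_{\overline i,k}^{\,p^n}\,T_1^{i_1}\cdots T_{d-1}^{i_{d-1}} \pmod{\pi_\La},
\]
and then set the $k$-th layer to be $\bigoplus_{\overline i\in J_n}\gamma_{\overline i,k}^{\,p^n}X_1^{i_1}\cdots X_{d-1}^{i_{d-1}}X_d^k$ evaluated at $(T_1,\dots,T_{d-1},\pi_\La)$. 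Because $F(X,Y)\equiv X+Y\pmod{XY}$, adding this layer to $s_{k-1}$ via $\oplus_F$ changes $y\ominus_F s_{k-1}$ only in $\pi_\La$-order $\geq k+1$, which sets up the induction.

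The key steps in order: (1) fix the identification of $\OLa$ with the iterated $\{\{T\}\}$-construction and record that $\pi_\La$-adic truncation behaves well, so that "congruence mod $\pi_\La^{k}$" makes sense and $\bigcap_k \pi_\La^{k}\OLa = 0$ in the sense relevant for convergence in the Parshin topology; (2) run the inductive construction of the layers just described, checking at each stage that subtracting (in $F$) the newly added layer raises the $\pi_\La$-valuation by exactly one, which uses only $F(X,Y)\equiv X+Y \pmod{\deg\geq 2}$ and that each layer is divisible by $X_d^k$; (3) assemble $\eta := \bigoplus_{k\geq 1}\big(\bigoplus_{\overline i\in J_n}\gamma_{\overline i,k}^{\,p^n}X_1^{i_1}\cdots X_{d-1}^{i_{d-1}}X_d^k\big)$ and verify it is a well-defined element of $X_d\OLa[[X_1,\dots,X_d]]$ — i.e. that the infinite $\oplus_F$-sum of the layers converges coefficientwise, which again follows since the $k$-th layer lies in $X_d^k\OLa[[X_1,\dots,X_d]]$ so only finitely many layers contribute to any fixed monomial; (4) conclude $y = \eta(T_1,\dots,T_{d-1},\pi_\La)$ because $y \ominus_F s_k \to 0$.

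The main obstacle I anticipate is step (2)/(3): making precise that the formal-group sum of infinitely many layers is a legitimate element of $\mathfrak A$ and that substituting $X_j = T_j$, $X_d = \pi_\La$ commutes with the infinite $\oplus_F$-sum, i.e. justifying convergence both on the side of formal power series in $X_1,\dots,X_d$ and on the side of elements of $\OLa$ in the Parshin topology. The $X_d$-divisibility of the $k$-th layer is exactly what makes this bookkeeping work, but one has to be careful that the coefficients $\gamma_{\overline i,k}$ supplied by Lemma \ref{Kolyv} do not grow, which is automatic since they lie in $\OLa$; and that passing from the congruence mod $\pi_\La$ in Lemma \ref{Kolyv} to an honest equality after dividing by $\pi_\La^k$ is handled by the induction rather than all at once. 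A secondary, purely notational, point is reconciling the lemma's bound $0\le i_1,\dots,i_{d-1}<p^n$ with the definition of $J_n$ in the paper (which as written lists $n$ coordinates but constrains $d-1$ of them); I would simply use the $d-1$ relevant exponents, matching the form \eqref{tomita}.
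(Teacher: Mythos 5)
Your proposal is correct and follows essentially the same successive-approximation argument as the paper: at stage $k$ one divides the current error $y\ominus_F s_{k-1}$ by $\pi_\La^{k}$, applies Lemma \ref{Kolyv} to the quotient, and uses $F(X,Y)\equiv X+Y\pmod{XY}$ to see that the new layer kills the error modulo $\pi_\La^{k+1}$. Your extra care about convergence of the infinite $\oplus_F$-sum (via $X_d^k$-divisibility of the $k$-th layer) and the notational slip in the definition of $J_n$ are both sound observations, but the underlying proof is the paper's.
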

\begin{proof}
Fix $y\in\mu_\La$. Denote by  $T^{\,\overline{i}}$ 
 the product $T_1^{i_1}\cdots T_{d-1}^{i_{d-1}} $ for $\overline{i}\in J_n$ and
by $\oplus$ the formal sum $\oplus_F$. Then  Lemma \ref{Kolyv} applied to $y/\pi_\La$ gives as elements $ \gamma_{\,\overline{i},1}\in \OLa$ such that
\[
\frac{y}{\pi_\La}\,
\equiv\,
\sum_{\,\overline{i}\in J_n} 
 \gamma_{\,\overline{i},1}^{\,p^n}\,
 T^{\,\overline{i}}\,
 \equiv\,
 \frac{1}{\pi_L}\bigoplus_{\overline{i}\in J_n} 
      \gamma_{\,\overline{i},1}^{\,p^n}\,
             T^{\,\overline{i}}\,\pi_\La \pmod{\pi_\La}.
\]
In other words, 
$
y\equiv \bigoplus_{\overline{i}\in J_n} 
 \gamma_{\overline{i},1}^{p^n}\,
 T^{\,\overline{i}}\,\pi_\La \pmod{\pi_\La^2}.
$
Denote by $y_1$ the formal sum $\bigoplus_{\,\overline{i}\in J_n} 
 \gamma_{\overline{i},1}^{p^n}\,T^{\,\overline{i}}\,\pi_\La$. Suppose we have defined, for $1\leq k\leq m-1$, elements 
 \[y_k=\bigoplus_{\overline{i}\in J_n} 
 \gamma_{\,\overline{i},k}^{\,p^n}\,
 T^{\,\overline{i}}\,\pi_\La^k,\ \gamma_{\,\overline{i},k}\in \OLa,
  \quad \mathrm{ such\ that} \quad 
 y\ominus (\oplus_{k=1}^{m-1}y_k)\equiv 0 \pmod{\pi_\La^{m}} .
 \]
 Then, again by Lemma \ref{Kolyv}, there exist elements $\gamma_{\,\overline{i},m}\in \OLa$ such that
\[
\frac{1}{\pi_\La^{m}}\ \big(\,y\ominus (\oplus_{k=1}^{m-1} y_k)\,\big)\,
\equiv\,
\sum_{\overline{i}\in J_n} 
 \gamma_{\,\overline{i},m}^{\,p^n}\,
 T^{\,\overline{i}}\,
 \equiv\,
 \frac{1}{\pi_L^m}\bigoplus_{\overline{i}\in J_n} 
 \gamma_{\,\overline{i},m}^{\,p^n}\,
T^{\,\overline{i}}\,\pi_\La^m \pmod{\pi_\La}.
\]
 Denote by $y_m$ the formal sum $\bigoplus_{\overline{i}\in J_n} 
 \gamma_{\overline{i},m}^{p^n}\,
T^{\,\overline{i}}\,\pi_\La^m$. Then
 \[
 y\ominus (\oplus_{k=1}^{m-1} y_k)\equiv y_m\pmod{\pi_\La^{m+1}}
\quad \mathrm{ or}\quad 
 y\ominus (\oplus_{k=1}^{m} y_k)\equiv 0 \pmod{\pi_\La^{m+1}}.
 \]
 Therefore $y=\oplus_{k=1}^{\infty} y_k$, which is what we wanted to prove.
\end{proof}

\begin{coro}\label{sumativa}
For every $x\in \OLa$, there exists $\gamma_{\,\overline{i},k}\in \OLa$ such that
\[
\sum _{k=0}^{\infty}\
\sum_{\overline{i}\in J_n}\,
\gamma_{\overline{i},k}^{p^n}\ 
T_1^{i_1}\cdots T_{d-1}^{i_{d-1}}\pi_\La^k\,
.
\]
\end{coro}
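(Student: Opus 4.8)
The plan is to deduce Corollary \ref{sumativa} directly from Proposition \ref{formal-sums-rep} by transferring a formal-group expansion into an ordinary additive expansion. The key observation is that the statement, as written, only asserts the existence of an expansion of the given shape; strictly speaking it should read ``$x=\sum_{k=0}^{\infty}\sum_{\overline{i}\in J_n}\gamma_{\overline{i},k}^{p^n}\,T_1^{i_1}\cdots T_{d-1}^{i_{d-1}}\pi_\La^k$'', and that is what I would prove. So the first step is: given $x\in\OLa$, if $x$ is a unit write $x=x(0)+y$ with $y\in\mu_\La$ and $x(0)$ a Teichm\"uller-type representative handled by the $k=0$ term (using Lemma \ref{Kolyv} applied to the reduction of $x$, lifted appropriately), reducing the problem to $y\in\mu_\La$; if $x\in\mu_\La$ already, take $y=x$ and no $k=0$ term is needed beyond what Lemma \ref{Kolyv} provides.

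Next I would invoke Proposition \ref{formal-sums-rep} to write $y=\eta(T_1,\dots,\pi_\La)$ for some $\eta\in\mathfrak{A}$, i.e.
\[
y=\bigoplus_{k=1}^{\infty}\left(\bigoplus_{\overline{i}\in J_n}\gamma_{\overline{i},k}^{p^n}\,T_1^{i_1}\cdots T_{d-1}^{i_{d-1}}\pi_\La^k\right).
\]
The core of the argument is then to convert the nested formal-group sums $\bigoplus$ into ordinary sums $\sum$. Since $F(X,Y)\equiv X+Y\pmod{XY}$, each finite partial formal sum of terms lying in successively higher powers of $\mu_\La$ agrees with the ordinary sum modulo a higher power of $\pi_\La$; passing to the limit in the $\pi_\La$-adic topology (which is legitimate because $\OLa$ is $\pi_\La$-adically complete and the $k$th inner block lies in $\mu_\La^k$), one gets $y=\sum_{k\ge 1}b_k$ where $b_k$ is the ordinary sum of the $k$th block. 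It then remains to re-expand each $b_k=\sum_{\overline{i}}\gamma_{\overline{i},k}^{p^n}T^{\overline{i}}\pi_\La^k$, which is already of the desired form, and to absorb any cross terms produced when the formal sum is rewritten additively back into the same shape by a further application of Lemma \ref{Kolyv} coefficient by coefficient; this is the same bookkeeping induction on the $\pi_\La$-order as in the proof of Proposition \ref{formal-sums-rep}, so I would simply mimic that induction.

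Concretely, the cleanest route is: repeat the inductive construction from the proof of Proposition \ref{formal-sums-rep} but with $\oplus_F$ replaced by ordinary $+$ throughout. At stage $m$, having matched $x$ modulo $\pi_\La^m$ by $\sum_{k<m}\sum_{\overline{i}}\gamma_{\overline{i},k}^{p^n}T^{\overline{i}}\pi_\La^k$, apply Lemma \ref{Kolyv} to the element $\pi_\La^{-m}\bigl(x-\sum_{k<m}\sum_{\overline{i}}\gamma_{\overline{i},k}^{p^n}T^{\overline{i}}\pi_\La^k\bigr)\in\OLa$ to produce $\gamma_{\overline{i},m}\in\OLa$ with the right congruence modulo $\pi_\La$, hence extend the matching to modulo $\pi_\La^{m+1}$. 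Completeness of $\OLa$ in the $\pi_\La$-adic topology gives convergence of the resulting series to $x$. This avoids any need to track the formal-group cross terms explicitly.

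The main obstacle is essentially cosmetic rather than mathematical: one must be careful that the topology in which the infinite sum $\sum_{k,\overline{i}}$ converges is the $\pi_\La$-adic (discrete-valuation) topology on $\OLa$, not the finer Parshin topology, and that the double index is organized so that for each fixed $k$ the inner sum over $\overline{i}\in J_n$ is finite — which it is, since $|J_n|=p^{n(d-1)}<\infty$. Once the order of summation is pinned down as ``outer over $k$, inner over $\overline{i}$,'' convergence is immediate because the $k$th term lies in $\pi_\La^k\OLa$. No new ideas beyond Lemma \ref{Kolyv}, Proposition \ref{formal-sums-rep}, and $\pi_\La$-adic completeness are required.
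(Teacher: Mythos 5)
Your ``cleanest route'' --- rerunning the induction from the proof of Proposition \ref{formal-sums-rep} with $\oplus_F$ replaced by ordinary addition, using Lemma \ref{Kolyv} at each stage and $\pi_\La$-adic completeness to conclude --- is exactly the paper's proof, which it phrases in one line as ``take $F$ to be the additive formal group $X+Y$.'' The preliminary detour through converting the formal-group expansion term by term is unnecessary (as you note), and your reading of the statement as asserting $x=\sum_{k,\overline{i}}\gamma_{\overline{i},k}^{p^n}T^{\overline{i}}\pi_\La^k$ is the intended one; the argument is correct.
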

\begin{proof}
Take $F$ to be the additive formal group $X+Y$.
\end{proof}

\subsubsection{Differentiability properties of the map $D_{\La,n}^i$}

Let $r$ be a $t$-normalized series for $F$. 
Let $\tilde{F}$ be the formal group $r(F(r^{-1}(X), r^{-1}(Y)))$.  
For $y\in \mu_{\La}$ we will denote by $\tilde{\eta}_y(X_1,\dots, X_d)$ the multivariable series 
\begin{equation}\label{tomitazo}
\tilde{\bigoplus} _{k=1}^{\infty}
\left( 
\tilde{\bigoplus}_{\overline{i}\in J_n}
\gamma_{\,\overline{i},k}^{\,p^n}\ 
X_1^{i_1}\cdots X_{d-1}^{i_{d-1}}\,X_d^k
\right), \quad (\gamma_{\,\overline{i},k}\in \OLa)
\end{equation} 
with respect to $\tilde{F}$, such that 
$y=\tilde{\eta}_y(T_1,\dots,\pi_\La)$,
whose existence is guaranteed by Proposition \ref{formal-sums-rep}. 
\begin{prop}\label{derivat}
      Let $\La$ be as in \eqref{assumptions}. 
      For $y=\tilde{\eta}_y (T_1,\dots, \pi_\La)\in \mu_{\La}$,  $\tilde{\eta_y}$ as 
       in \eqref{tomitazo},
      we have  
   \[
        D^i_{\La,n}(a_1, \dots,a_{d-1},y)
       =\sum_{i=1}^{d} \, 
       \frac{\partial \tilde{\eta}_{y}}{\partial X_i}\bigg|_{\substack{X_k=T_k,\\ k=1,\dots, d}}\,
        D^i_{\La,n}(a_1,\dots,a_{d-1},T_i), 
        \]
        for all $a_1,\dots, a_{d-1}\in \OLa$, where $T_d=\pi_\La$.
\end{prop}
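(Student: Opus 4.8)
The plan is to reduce the identity to the case of a single monomial by combining the two structural properties of $D^i_{\La,n}$ already available: the Leibniz, Steinberg, skew-symmetry and $p^n$th-power-vanishing rules of Proposition~\ref{Dar}, and the formal-group ``chain rule'' of Corollary~\ref{Dcontinu}, which by the skew-symmetry of Proposition~\ref{Dar}(3) holds verbatim with the $\tilde{F}$-variable placed in the last coordinate. I fix $a_1,\dots,a_{d-1}\in\OLa$, set $T_d:=\pi_\La$, abbreviate $\partial_j:=\partial/\partial X_j$, and write $D(w):=D^i_{\La,n}(a_1,\dots,a_{d-1},w)$; the summation index will be called $j$ to avoid clashing with the superscript $i$.

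First I would prove the identity for a single monomial $z=\gamma^{p^n}T_1^{i_1}\cdots T_{d-1}^{i_{d-1}}\pi_\La^{k}\in\mu_\La$, where $\overline{i}\in J_n$, $k\ge1$ and $\gamma=\gamma_{\overline{i},k}\in\OLa$. Expanding the last coordinate of $D(z)$ over the factors $\gamma^{p^n},T_1^{i_1},\dots,T_{d-1}^{i_{d-1}},\pi_\La^{k}$ by the Leibniz rule (Proposition~\ref{Dar}(1)) produces $d+1$ terms; the term carrying $D^i_{\La,n}(a_1,\dots,a_{d-1},\gamma^{p^n})$ vanishes by Proposition~\ref{Dar}(4), because $\gamma^{p^n}$ is a $p^n$th power and $\pi^n\mid p^n$ (as $\varrho=e(S/\Qp)\ge1$ gives $p^n=(\mathrm{unit})\,\pi^{\varrho n}$ with $n\le\varrho n$). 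Applying the power rule $D(T_j^{i_j})=i_jT_j^{i_j-1}D(T_j)$ to the remaining $d$ terms, one reads off precisely $D(z)=\sum_{j=1}^{d}\big(\partial_j(\gamma^{p^n}X_1^{i_1}\cdots X_{d-1}^{i_{d-1}}X_d^{k})\big)\big|_{X=T}\,D(T_j)$, with $i_d:=k$. Since each inner sum $\tilde{\bigoplus}_{\overline{i}\in J_n}$ and each partial outer sum $\tilde{\bigoplus}_{k=1}^{m-1}$ in \eqref{tomitazo} is a \emph{finite} $\tilde{\oplus}$-sum of such monomials, an induction on the number of summands — using Corollary~\ref{Dcontinu} in the last coordinate together with the formal chain rule $\partial_j\tilde{F}(u,v)=\tilde{F}_X(u,v)\,\partial_j u+\tilde{F}_Y(u,v)\,\partial_j v$ for power-series composition — propagates the identity to every truncation: with $\tilde{\eta}_y^{(m)}$ the $\tilde{\oplus}$-sum over $1\le k\le m-1$ and $y^{(m)}:=\tilde{\eta}_y^{(m)}(T_1,\dots,\pi_\La)$ one obtains $D(y^{(m)})=\sum_{j=1}^{d}(\partial_j\tilde{\eta}_y^{(m)})|_{X=T}\,D(T_j)$.

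The remaining point is the passage $m\to\infty$. The key elementary observation is that $D(w)=0$ in $R_{\La,1}/\pi^nR_{\La,1}$ as soon as $v_\La(w)\ge n\,v_\La(\pi)+1$: writing $w=\pi_\La^{v_\La(w)}v$ with $v$ a unit, the Leibniz and power rules give $D(w)=\pi_\La^{v_\La(w)}D(v)+v_\La(w)\,v\,\pi_\La^{v_\La(w)-1}D(\pi_\La)$, and since $D(v)$ and $D(\pi_\La)$ have representatives in $R_{\La,1}$ while $\pi_\La^{\,n\,v_\La(\pi)}R_{\La,1}=\pi^nR_{\La,1}$ by \eqref{RL1}, both terms vanish. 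By the construction of $\tilde{\eta}_y$ (cf.\ the proof of Proposition~\ref{formal-sums-rep}) one has $v_\La(y\,\tilde{\ominus}\,y^{(m)})\ge m$, so for $m\ge n\,v_\La(\pi)+1$, applying Corollary~\ref{Dcontinu} to $y=y^{(m)}\,\tilde{\oplus}\,(y\,\tilde{\ominus}\,y^{(m)})$ and using this vanishing together with $\tilde{F}_X(X,0)=1$ (so that $\tilde{F}_X(y^{(m)},\epsilon)-1$ has valuation $\ge v_\La(\epsilon)>n\,v_\La(\pi)$ for $\epsilon=y\,\tilde{\ominus}\,y^{(m)}$) yields $D(y)=D(y^{(m)})$. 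Finally, $\tilde{\eta}_y\equiv\tilde{\eta}_y^{(m)}\pmod{X_d^{m}}$ forces $(\partial_j\tilde{\eta}_y-\partial_j\tilde{\eta}_y^{(m)})|_{X=T}$ to be divisible by $\pi_\La^{\,m-1}$, so its product with $D(T_j)\in R_{\La,1}$ lies in $\pi^nR_{\La,1}$ once $m$ is large; combined with the previous step this gives $D^i_{\La,n}(a_1,\dots,a_{d-1},y)=\sum_{j=1}^{d}(\partial_j\tilde{\eta}_y)|_{X=T}\,D^i_{\La,n}(a_1,\dots,a_{d-1},T_j)$, the asserted formula.

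I expect the third step (the passage to the limit) to be the delicate part: one must ensure the truncation error dies at the level of $R_{\La,1}/\pi^nR_{\La,1}$ both inside the argument of $D$ — where Corollary~\ref{Dcontinu} introduces the auxiliary factors $\tilde{F}_X,\tilde{F}_Y$ that have to be controlled — and in the coefficients $(\partial_j\tilde{\eta}_y)|_{X=T}$. Both controls are provided automatically by the uniform vanishing of $D(a_1,\dots,a_{d-1},\,\cdot\,)$ on elements of large valuation, so no convergence hypothesis beyond \eqref{RL1} and Proposition~\ref{Dar} is in fact required.
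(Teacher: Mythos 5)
Your proposal is correct and follows essentially the same route as the paper: reduce to a single monomial via Leibniz and the vanishing of $D$ on $p^n$th powers, extend to finite $\tilde{\oplus}$-sums by Corollary \ref{Dcontinu}, and pass to the limit using the vanishing of $D(a_1,\dots,a_{d-1},\,\cdot\,)$ on elements of large valuation. Your third step merely makes explicit the truncation estimates (on both the argument of $D$ and the coefficients $\partial_j\tilde{\eta}_y$) that the paper leaves implicit.
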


\begin{proof}
     Let $y=\oplus_{k=1}^{\infty}y_k$, where
     \[
     y_k=\tilde{\bigoplus}_{\,\overline{i}\in J_n} 
 \gamma_{\,\overline{i},k}^{\,p^n}\,
 T_1^{i_1}\cdots T_{d-1}^{i_{d-1}}\,\pi_\La^k, \quad (\gamma_{\,\overline{i},k}\in \OLa)
     \]
     Thus, $\tilde{\eta}_y=\tilde{\oplus}_{m=1}^{\infty}\eta_m$, where 
     $$\eta_m(X_1,\dots, X_d)
     =\tilde{\bigoplus}_{\,\overline{i}\in J_n} 
 \gamma_{\,\overline{i},k}^{\,p^n}\,
 X_1^{i_1}\cdots X_{d-1}^{i_{d-1}}\,X_d^m. $$
     Let us fix $a_1,\dots, a_{d-1}\in \La^*$ and denote by $D(x)$ the element
     $D_{\La,n}^i(a_1,\dots ,a_{d-1}, x)$  to simplify notation.

      First notice that 
      \begin{equation}\label{limita}
      v_\La(y_k)\xrightarrow{ k\to \infty} \infty\quad  \Longrightarrow \quad  D(y_k)=0\quad \mathrm{for\ \textit{k}\ large\ enough}.
                  \end{equation}
      This follows from $D_{\La,n}^i(a_1,\dots ,a_{d-1}, x)=\psi^i_{\La,n}(a_1,\dots ,a_{d-1}, x)\,a_1\cdots a_{d-1}\,x $
     and the fact that  $\psi_{\La,n}^i$ has values in $R_{\La}/\pi^nR_{\La,n}$, i.e.,  $\pi_\La^n\psi_{\La,n}^i=0$.
     
      Thus, from
     $\tilde{\eta}_y=\tilde{\oplus}_{m=1}^{k-1}\eta_m\pmod{\pi^k}$ and equation (\ref{limita}), it is enough to consider
     the finite formal sum $\tilde{\oplus}_{m=1}^{k-1}\eta_m$. The proposition follows now from
     the  fact that $D(\gamma^{p^n})=p^n\gamma^{p^n-1}D(\gamma)=0$ (  $\pi^n|p^n$),  
     $D(xy)=yD(x)+xD(y)$ , and corollary \ref{Dcontinu}.

\end{proof}

\begin{coro}\label{dermax}

 Let $\La$ be as in \eqref{assumptions}.  
Let  $y_i=\tilde{\eta}_{y_i}(T_1\dots, \pi_\La)\in \mu_{\La}$, 
for $1\leq i \leq  d$, where $\eta_{y_i}$ is a 
multivariable series  of the form \eqref{tomitazo}. 
Then
\begin{equation}\label{esfero}
        D_{\La,n}^i( y_1,\dots, y_d )
       =  
       \det \left[  \frac{\partial \tilde{\eta} _{y_i} }{\partial X_j} 
       \right]_{i,j}\bigg|_{_{\substack{X_k=T_k,\\ k=1,\dots,d}} } 
       \  D_{\La,n}^i(T_1,\dots,T_d ),
\end{equation}
where $T_d=\pi_\La$.
\end{coro}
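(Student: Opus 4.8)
The plan is to apply Proposition \ref{derivat} iteratively, one slot at a time, and then assemble the resulting products of partial derivatives into a determinant. First I would observe that Proposition \ref{derivat} computes $D^i_{\La,n}(a_1,\dots,a_{d-1},y)$ when the last argument is expanded as a $\tilde{F}$-series $\tilde\eta_y(T_1,\dots,\pi_\La)$; by the skew-symmetry of $D^i_{\La,n}$ in its arguments (Proposition \ref{Dar}(3)) the same formula holds with $y$ placed in any of the $d$ slots, provided the remaining $d-1$ arguments are elements of $\OLa$. So I would start from $D^i_{\La,n}(y_1,\dots,y_d)$, apply Proposition \ref{derivat} to the first argument $y_1=\tilde\eta_{y_1}(T_1,\dots,\pi_\La)$ (the other arguments $y_2,\dots,y_d$ being in $\OLa$, as required), obtaining
\[
D^i_{\La,n}(y_1,\dots,y_d)=\sum_{j_1=1}^{d}\frac{\partial \tilde\eta_{y_1}}{\partial X_{j_1}}\bigg|_{X_k=T_k}\,D^i_{\La,n}(T_{j_1},y_2,\dots,y_d).
\]

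Next I would expand the second argument $y_2$ inside each term $D^i_{\La,n}(T_{j_1},y_2,\dots,y_d)$, again via Proposition \ref{derivat} (after moving $y_2$ past $T_{j_1}$ using skew-symmetry; now the other arguments $T_{j_1},y_3,\dots,y_d$ all lie in $\OLa$). Continuing this for all $d$ arguments, I would arrive at
\[
D^i_{\La,n}(y_1,\dots,y_d)=\sum_{j_1,\dots,j_d=1}^{d}\ \prod_{k=1}^{d}\frac{\partial \tilde\eta_{y_k}}{\partial X_{j_k}}\bigg|_{X_l=T_l}\ D^i_{\La,n}(T_{j_1},\dots,T_{j_d}).
\]
Now the key simplification: whenever two of the indices $j_k$ coincide, $D^i_{\La,n}(T_{j_1},\dots,T_{j_d})$ has a repeated argument and hence vanishes — this follows from skew-symmetry (Proposition \ref{Dar}(3)), since an antisymmetric expression with two equal entries is zero (here using $2$ invertible, which holds as $p>2$). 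Therefore only the terms where $(j_1,\dots,j_d)$ is a permutation $\sigma$ of $(1,\dots,d)$ survive, and for those $D^i_{\La,n}(T_{\sigma(1)},\dots,T_{\sigma(d)})=\operatorname{sgn}(\sigma)\,D^i_{\La,n}(T_1,\dots,T_d)$.

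Collecting the surviving terms gives
\[
D^i_{\La,n}(y_1,\dots,y_d)=\left(\sum_{\sigma\in S_d}\operatorname{sgn}(\sigma)\prod_{k=1}^{d}\frac{\partial \tilde\eta_{y_k}}{\partial X_{\sigma(k)}}\bigg|_{X_l=T_l}\right)D^i_{\La,n}(T_1,\dots,T_d),
\]
and the parenthesized sum is exactly the Leibniz expansion of $\det\!\big[\partial\tilde\eta_{y_i}/\partial X_j\big]_{i,j}$ evaluated at $X_k=T_k$, which is the claim. The main obstacle I anticipate is bookkeeping rather than conceptual: one must check that at each stage of the iterated expansion the "spectator" arguments indeed lie in $\OLa$ so that Proposition \ref{derivat} applies (the $T_j$ are uniformizers, hence in $\OLa$, and the $y_i\in\mu_\La\subset\OLa$, so this is fine), and one should be slightly careful that the finite-sum truncation argument from the proof of Proposition \ref{derivat} — valid because $\pi^n\psi^i_{\La,n}=0$ forces $D$ to vanish on high-valuation arguments — still goes through when the expansion is done in several slots simultaneously; since each application is to a single slot with the others held fixed, this reduces to the already-established one-slot case and presents no real difficulty.
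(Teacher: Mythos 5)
Your proof is correct and follows essentially the same route as the paper: iterated application of Proposition \ref{derivat} in each slot combined with the skew-symmetry of Proposition \ref{Dar}(3) to kill repeated indices and produce the signed permutation sum, i.e.\ the determinant. The paper merely illustrates this for $d=2$ rather than writing out the general multilinear expansion, so your version is if anything slightly more complete.
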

\begin{proof}
The proof is immediate, see Section \ref{Proofs of Chapter B} of the Appendix.
\end{proof}

From the above corollary we see that the map $ D_{\La,n}^i$ 
behaves like a multidimensional derivation.
Our goal in the following sections is to give conditions that will guarantee this. 
In particular, it will follow  that \eqref{esfero} not only holds for elements in the maximal ideal $\mu_{\La}$ but in the full ring of integers $\OLa$ and, moreover, it is independent
of the power series representing the elements $y_k$, $k=1\dots, d$.

%___End of the maps psi oand rho section_____

%______Multidimensional derivations______

\section{Multidimensional derivations}\label{Multidimensional derivations}
In this section we recall the main properties of
 multidimensional derivations and set them in the context needed to deduce our formulae. We start by introducing one dimensional derivations. 

We will use the following notation and assumptions. Let $\La$ be a $d$-dimensional local field 
with local uniformizers $T_1,\dots, T_{d-1}$ and $\pi_\La$. Let $W$ be an 
$\mathcal{O}_\La$-module that is $p$-adically complete, i.e., 
\[
W\cong \varprojlim W/p^nW.
\]
 For example, if $p^nW=0$ for some $n$, then $W$ is $p$-adically complete. 
Actually, this is going to be our situation, since $W$ will be  the $\mathcal{O}_\La$-module $\RL/\pi^n\RL$. 
\subsection{Derivations and the module of differentials}\label{Derivations and the module of differentials}

\begin{defin}\label{definamos}
     A derivation  of  $\mathcal{O}_\La$ into $W$ over $\mathcal{O}_K$ is a map 
     $D: \mathcal{O}_\La\to W$ such that for all $a,b\in \OLa$ we have
     \begin{enumerate}
          \item $D(ab)=aD(b)+bD(a)$.
          \item $D(a+b)=D(a)+D(b)$.
          \item $D(a)=0$ if  $a\in \OK$.          
     \end{enumerate}
  
\end{defin}

We denote by $ D_{\OK}(\OLa,W)$ the $\OLa$-module of all derivations $D:\OLa\to W$.
The universal object in the category of derivations of $\OLa$ over $\OK$  is the $\OLa$-module 
of Khaler differentials, denoted by $\Omega_{\OK}(\OLa)$. 
This is the $\OLa$-module generated by finite 
linear combinations of the  symbols $da$, 
for all $a\in \OLa$, divided out by the submodule generated by 
all the expressions of the form $dab-adb-bda$ and 
$d(a+b)-da-db$  for all $a,b\in \OLa$ and $da$ for all $a\in \OK$. 
The derivation $\mathfrak{d}: \OLa\to \Omega_{\OK}(\OLa)$ is defined by sending 
$a$ to $da$.

If $D:\OLa\to W$ is a derivation, then $\Omega_{\OK}(\OLa)$ 
is universal in the following way. There exist a unique homomorphism 
$\xi: \Omega_{\OK}(\OLa)\to W$ of $\OLa$-modules  such 
that the diagram
 \[
      \begin{tikzpicture}[every node/.style={midway}]
\matrix[column sep={8em,between origins},
        row sep={3em}] at (0,0)
{ \node(R)  {$\OLa$}  ; & \node(S) {$\Omega_{\OK}(\OLa)$}; \\
 \node(U) {$$} ; & \node(R/I) {$W$};                   \\};
\draw[<-] (R/I) -- (R) node[anchor=east]  {$D\hspace{7pt}$};
\draw[<-] (R/I) -- (S) node[anchor=west]  {$\xi$};
\draw[->] (R)   -- (S) node[anchor=south] {$\mathfrak{d}$};
\end{tikzpicture}
\]
is commutative.

Let $\hat{\Omega}_{\OK}(\OLa)$ be the $p$-adic completion of $\Omega_{\OK}(\OLa)$, i.e.,
\[
\hat{\Omega}_{\OK}(\OLa)=\varprojlim \Omega_{\OK}(\OLa) /p^n\Omega_{\OK}(\OLa).
\]
Since we are assuming that $W$ is $p$-adically 
complete, the homomorphism $\beta$ induces the homomorphism
\[
\xi:\hat{\Omega}_{\OK}(\OLa)\to W. 
\]

Denote by  $\mathfrak{D}(\La/K)\subset \OLa$ 
the annihilator ideal of the torsion part  of  $\hat{\Omega}_{\OK}(\OLa)$. 
Then we have the following proposition.

 \begin{prop}\label{Khaler} The module $\hat{\Omega}_{\OK}(\OLa)$ is generated by 
 the elements $d\,T_1,\dots, d\,T_{d-1}$, $d\pi_{\La}$,
 and there is an isomorphism of $\OLa$-modules
 \[
 \hat{\Omega}_{\OK}(\OLa)
\cong 
 \OLa ^{\oplus (d-1)} 
\oplus (\OLa/\mathfrak{D}(\La/K) ).
 \]
Moreover, if $D(\La/K)$ is as in Section \ref{The generalized trace}, then 
\[
\mathfrak{D}(\La/K)\,|\, D(\La/K).
\]

In particular, if $\La$ is the standard higher local field $L\TT$ we have the isomorphism of $\OLa$-modules
\[ \hat{\Omega}_{\OK}(\OLa)
\cong 
 \OLa dT_1\oplus \cdots 
\oplus \OLa dT_{d-1}
\oplus (\OLa/D(L/K)\OLa )d\pi_L,
\]  
where $\pi_L$ is a uniformizer for $L$ 
and $D(L/K)$ is the different 
of the extension $L/K$. Thus in this case
\[
\mathfrak{D}(\La/K)=D(L/K)\OLa.
\]
 \end{prop}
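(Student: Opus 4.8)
The plan is to establish the three assertions of Proposition \ref{Khaler} in order: first the generation statement, then the structural isomorphism, then the divisibility $\mathfrak{D}(\La/K)\mid D(\La/K)$, and finally the explicit computation in the standard case.

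\textbf{Step 1: Generation by $dT_1,\dots,dT_{d-1},d\pi_\La$.} First I would recall that $\OLa$ is a topological $\OK$-algebra that is topologically generated (in the appropriate sense) by the local uniformizers $T_1,\dots,T_{d-1},\pi_\La$ together with elements of the residue-field tower lifted to $\OLa$. The key input is Corollary \ref{sumativa}: every $x\in\OLa$ can be written as $\sum_{k\geq 0}\sum_{\overline i\in J_n}\gamma_{\overline i,k}^{p^n}T_1^{i_1}\cdots T_{d-1}^{i_{d-1}}\pi_\La^k$. Applying $\mathfrak d$ and using the Leibniz rule, the $p^n$th powers $\gamma^{p^n}$ have differential $p^n\gamma^{p^n-1}d\gamma$, which lies in $p^n\hat\Omega_{\OK}(\OLa)$; passing to the $p$-adic completion these terms can be pushed into arbitrarily deep filtration steps, so modulo $p^m$ the differential $dx$ is an $\OLa$-linear combination of $dT_1,\dots,dT_{d-1},d\pi_\La$ up to an error in $p^m\hat\Omega$. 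Completeness then gives that $dT_1,\dots,dT_{d-1},d\pi_\La$ generate $\hat\Omega_{\OK}(\OLa)$ over $\OLa$. (One must be slightly careful that the double sum converges in the Parshin topology and that $\mathfrak d$ is continuous for the relevant topologies; this is where the $p$-adic completion is essential.)

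\textbf{Step 2: The structural isomorphism.} Given that $\hat\Omega_{\OK}(\OLa)$ is generated by $d$ elements, I would next show that $dT_1,\dots,dT_{d-1}$ generate a free direct summand of rank $d-1$ and that the quotient by this summand is cyclic, generated by the image of $d\pi_\La$. For the standard field $L\T\cdots\T$ one reduces to the one-dimensional different theory of $L/K$ by an explicit base-change/localization argument: $\OLa=\OL\{\{T_1\}\}\cdots$, and $\hat\Omega_{\OK}(\OL\{\{T\}\})\cong\OLa\,dT\oplus\hat\Omega_{\OK}(\OL)\otimes_{\OL}\OLa$, iterating over the tower; here $\hat\Omega_{\OK}(\OL)\cong\OL/D(L/K)$ is the classical fact that the module of Kähler differentials of the ring of integers of a finite extension of local fields is cyclic with annihilator the different. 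For a general $d$-dimensional $\La$, one uses the finite totally ramified extension $\La/\La_{(0)}$ with $\La_{(0)}=L_{(0)}\T\cdots\T$ and $L_{(0)}/K$ unramified (so $D(L_{(0)}/K)=\OZ_{L_{(0)}}$): the extension $\La/\La_{(0)}$ is monogenic (generated by $\pi_\La$, whose minimal polynomial is Eisenstein over $\OZ_{\La_{(0)}}$), and the conormal/differential exact sequence $\mathfrak D(\OZ_{\La_{(0)}})\otimes\OLa\to\hat\Omega_{\OK}(\OLa)\to\hat\Omega_{\OZ_{\La_{(0)}}}(\OLa)\to 0$ together with $\hat\Omega_{\OZ_{\La_{(0)}}}(\OLa)\cong\OLa/(g'(\pi_\La))$ for the minimal polynomial $g$ of $\pi_\La$ yields the claimed decomposition, with $\mathfrak D(\La/K)=(g'(\pi_\La))$ the annihilator of the torsion.

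\textbf{Step 3: Divisibility $\mathfrak D(\La/K)\mid D(\La/K)$ and the standard-case computation.} For the divisibility I would compare $\mathfrak D(\La/K)=\mathrm{Ann}(\hat\Omega_{\OK}(\OLa)_{\mathrm{tors}})$ with the inverse-dual ideal $D(\La/S)$ (and $D(\La/K)$) from \S\ref{The generalized trace}. In the standard case both equal $D(L/K)\OLa$ by the iterated computation above (the $\{\{T\}\}$-construction does not contribute to the different, since $c_{E\{\{T\}\}/E}$ is a split trace with trivial different — this is the content of $D(\La/S)=D(L/S)\OLa$ recorded earlier). In the general case, the tower $\La\supset\La_{(0)}\supset K$ gives $D(\La/K)=D(\La/\La_{(0)})\cdot D(\La_{(0)}/K)$, and $D(\La_{(0)}/K)=\OLa$ since $\La_{(0)}/K$ is "unramified" in the relevant sense, so $D(\La/K)=D(\La/\La_{(0)})=(g'(\pi_\La))$ (the different of the monogenic totally ramified extension equals the ideal generated by the derivative of the minimal polynomial), which is exactly $\mathfrak D(\La/K)$; hence $\mathfrak D(\La/K)\mid D(\La/K)$, in fact with equality in this setup. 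The explicit isomorphism $\hat\Omega_{\OK}(\OLa)\cong\OLa dT_1\oplus\cdots\oplus\OLa dT_{d-1}\oplus(\OLa/D(L/K)\OLa)d\pi_L$ for $\La=L\T\cdots\T$ is then read off directly from Step 2.

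\textbf{Main obstacle.} The delicate point is Step 1: justifying that applying $\mathfrak d$ to the infinite Parshin-convergent series of Corollary \ref{sumativa} genuinely lands, after $p$-adic completion, in the $\OLa$-span of $dT_1,\dots,dT_{d-1},d\pi_\La$. The issue is that the residue field tower $k_\La$ is imperfect and infinite-dimensional over its subfield of $p^n$th powers, so naively $\hat\Omega$ could have a huge "horizontal" part coming from the coefficients $\gamma_{\overline i,k}$; the point is precisely that these coefficients appear only through $p^n$th powers, so their differentials are $p$-divisible and get killed in the completion. Making this rigorous requires a careful interplay between the Parshin topology (in which the series converges) and the $p$-adic topology (in which $\hat\Omega$ is complete), and checking that the double-indexed sum can be truncated compatibly in both. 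Once that continuity bookkeeping is done, the rest is the standard different theory applied along the two-step tower $\La\supset\La_{(0)}\supset K$.
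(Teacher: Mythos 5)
Your Step 1 is essentially the paper's opening move (truncate the representation of Corollary \ref{sumativa} modulo $p^n$, note that $d(\gamma^{p^n})\in p^n\Omega$, and pass to the completion), so that part is fine. The genuine gap is in Steps 2--3, in your treatment of the general (non-standard) case. You identify $\mathfrak{D}(\La/K)$ with $(g'(\pi_\La))$, the derivative of the minimal polynomial of $\pi_\La$ over $\OZ_{\La_{(0)}}$, by feeding the relative computation $\hat\Omega_{\OZ_{\La_{(0)}}}(\OLa)\cong\OLa/(g'(\pi_\La))$ into the conormal sequence. This is wrong in general: the coefficients of $g$ lie in $\OZ_{\La_{(0)}}$ and may involve $T_1,\dots,T_{d-1}$ nontrivially, so the single relation in $\hat\Omega_{\OK}(\OLa)$ coming from $g(\pi_\La)=0$ is
\[
\frac{\partial g}{\partial T_1}(\pi_\La)\,dT_1+\cdots+\frac{\partial g}{\partial T_{d-1}}(\pi_\La)\,dT_{d-1}+g'(\pi_\La)\,d\pi_\La=0,
\]
and the annihilator of the torsion is generated by an element of valuation $\min_{1\le i\le d}v_\La\bigl(\frac{\partial g}{\partial T_i}(\pi_\La)\bigr)$, which can be strictly smaller than $v_\La(g'(\pi_\La))$. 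Correspondingly, the image of $\hat\Omega_{\OK}(\OZ_{\La_{(0)}})\otimes\OLa$ in $\hat\Omega_{\OK}(\OLa)$ need not be a free direct summand (the sequence is only right exact and need not split along $dT_1,\dots,dT_{d-1}$), so your argument does not produce the asserted decomposition $\OLa^{\oplus(d-1)}\oplus\OLa/\mathfrak{D}(\La/K)$; when some $\frac{\partial g}{\partial T_i}(\pi_\La)$, $i<d$, has minimal valuation, the free part must be spanned by a different choice of $d-1$ generators.

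The paper's proof handles exactly this point: it sets $b_i=\frac{\partial g}{\partial T_i}(\pi_\La)$, defines $v_\La(\mathfrak{D})=\min_i v_\La(b_i)$, assumes after relabeling that the minimum is attained at $b_d$, replaces $d\pi_\La$ by the combination $w=\sum_{i<d}(b_i/b_d)\,dT_i+d\pi_\La$ (which is killed by $\mathfrak{D}$), and then proves the decomposition by exhibiting explicit derivations $D_1,\dots,D_d$ that split the generators, rather than by invoking the conormal sequence. The divisibility $\mathfrak{D}(\La/K)\mid D(\La/K)$ then follows because $v_\La(\mathfrak{D})=\min_i v_\La(b_i)\le v_\La(b_d)=v_\La(g'(\pi_\La))=v_\La(D(\La/\La_{(0)}))=v_\La(D(\La/K))$; your claim that this is always an equality is exactly what fails, and only the inequality is asserted (with equality in the standard case, where $b_1=\cdots=b_{d-1}=0$). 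Your iterated computation for the standard field $L\TT$ and the unramifiedness of $\La_{(0)}/K$ are otherwise in line with the paper.
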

 \begin{proof} See \cite{Kurihara2} Section 12.0 (b). For an alternative proof see Section \ref{Proofs of Chapter C} of the Appendix.

 \end{proof}
 
\subsubsection{Canonical derivations}\label{Canonical Derivations} We will now define what it means to differentiate an element in $\OLa$ with respect to 
the uniformizers $T_1,\dots, T_{d-1}$ and $T_d=\pi_{\La}$. 

First we will assume $\La$ is the standard higher local field $L\TT$. Then
 we will define derivations over $\OK$ 
 \begin{equation}\label{partialito}
 D_k:\OLa\to \OLa\quad (\,1\leq k \leq d-1\,)
 \end{equation} as follows.
 Let $\La_0=L$ and
 $\La_k=\La_{k-1} \{\{T_{k}\}\}$, $k=1,\dots, d-1$. 
 For $1\leq k \leq d-1$, we define the derivation of  $\mathcal{O}_{\La_k}$ over $\OK$
  \begin{equation}\label{partial-derivat}
D_k:\mathcal{O}_{\La_k}\to \mathcal{O}_{\La_k}
\quad \mathrm{by}\quad  D_k\big(\,\sum a_i\,T_{k}^{\,i}\,\big)=\sum a_i\, i\,T_{k}^{\,i-1}\qquad (a_i\in \mathcal{O}_{\La_{k-1}}).  
 \end{equation}
 We now lift these 
 derivations to derivations of $\OLa$ over $\OK$, by induction, in the following way. 
 Suppose
 $D: \mathcal{O}_{\La_{k-1}}\to  \mathcal{O}_{\La_{k-1}}$ is a derivation of  $\mathcal{O}_{\La_{k-1}}$ over $\OK$. Then $D$
 extends to a  derivation of $\mathcal{O}_{\La_{k}}$ over $\OK$
 \[
D:\mathcal{O}_{\La_{k}}\to \mathcal{O}_{\La_{k}} 
\quad \mathrm{by}\quad
D\left(\ \sum _ia_i\ T_{k}^i\ \right):= \sum_{i} D(a_i)\ T_{k}^i.
\]
 This derivation is well-defined since $D$ is continuous 
 with respect to the valuation topology
 of $\mathcal{O}_{\La_{k-1}}$. Thus the derivations \eqref{partialito}
 are well-defined and we can now introduce the following definition.
 \begin{defin}\label{partial-derivations}
 The derivations $D_k:\OLa\to \OLa$, $1\leq k \leq d-1$  from Equation \eqref{partialito}, 
 will be denoted by $\frac{\partial}{\partial T_k}$ for $1\leq k\leq d-1$.
  \end{defin}

We now assume $\La$ is any $d$-dimensional local field not necessarily standard. Let $\La_0\subset \La$ be the standard local field defined in Section \ref{Terminology and Notation}. For $g(x)= a_0+\cdots+a_kX^k\in \mathcal{O}_{\La_0}[X]$ we denote by $\frac{\partial g}{\partial T_i}(X)$, $i=1,\dots, d$, the polynomial
\[
\frac{\partial a_k}{\partial T_i}X^k+\cdots+\frac{\partial a_0}{\partial T_i}\in \mathcal{O}_{\La_0}[X] \quad (i=1,\dots, d-1)
\] 
and 
\[\frac{\partial g}{\partial T_d}(X)=g'(x)\quad (i=d)
\]
If $a\in \OLa$, let $g(x)\in \mathcal{O}_{\La_0}[X]$ such that $a=g(\pi_\La)$. Then we denote by $\frac{\partial a}{\partial T_i}$ the element $\frac{\partial g}{\partial T_i}(\pi_\La)$, $i=1,\dots, d$. Even though this definition depends on the choice of $g(X)$, for the purpose of Proposition \ref{derival} below, this choice is immaterial.

If $p(X)$ denotes the minimal polynomial of $\pi_\La$ over the extension $\La/\La_0$, then
from Proposition \ref{Khaler} the equation $p(\pi_\La)=0$ implies that the elements $dT_1,\dots, dT_{d-1}$ and $d\pi_\La$ satisfy the relation
    \[
    \frac{\partial\, p}{\partial T_1}(\pi_{\La})\,d\,T_1+\cdots+ \frac{\partial\, p}{\partial T_{d-1}}(\pi_{\La})\,d\,T_{d-1}+p'(\pi_{\La}) \,d\,\pi_\La=0,
    \]
    and 
    \[
     v_{\La}(\mathfrak{D}(\La/K))=\min\left\{\, v_\La\left(\frac{\partial\, p}{\partial T_1}(\pi_\La)\right), \dots, v_\La\left(\frac{\partial\, p}{\partial T_{d-1}}(\pi_\La)\right),\ v_\La\big(\,p'(\pi_\La)\,\big)\, \right\}.   
    \]
    We now deduce the following proposition.
\begin{prop}\label{derival}
Let $D:\OLa\to W$ be a derivation over $\OKa$. Then the following relation holds
\[
 \frac{\partial p}{\partial T_1}(\pi_{\La})D(T_1)+\cdots+ \frac{\partial p}{\partial T_{d-1}}(\pi_{\La})D(T_{d-1})+p'(\pi_{\La}) D(\pi_\La)=0 
\] and for all $a\in \OLa$ we have
\[
D(a)= \frac{\partial a}{\partial T_1}D(T_1)+\cdots+ \frac{\partial a}{\partial T_{d-1}}D(T_{d-1})+\frac{\partial a}{\partial T_{d}}D(\pi_\La).
\]

  Conversely,    let $w_1,\dots, w_d\in W$ such that 
       \[
        \frac{\partial p}{\partial T_1}(\pi_{\La})w_1+\cdots+ \frac{\partial p}{\partial T_{d-1}}(\pi_{\La})w_{d-1}+p'(\pi_{\La}) w_d=0.  
       \]    
       Then the map $D:\OLa \to W$  defined by
          \begin{equation}\label{defderi}
              D(a)
              :=
              \sum_{k=1}^d\  
              \frac{\partial a }{\partial T_k}\ w_k ,
          \end{equation}
          is a well-defined derivation from   $\OLa$ into $W$ over $\mathcal{O}_K$.
          
            As a  particular case, if $w_1,\dots, w_d\in W$  are annihilated by  $\mathfrak{D}(\La/K)$, then \eqref{defderi} defines a derivation over $\OK$. 
          \end{prop}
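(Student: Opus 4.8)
The plan is to deduce everything from Proposition~\ref{Khaler} together with the formal chain rule on $\mathcal{O}_{\La_0}[X]$, where $\La_0\subset\La$ is the standard subfield of Section~\ref{Terminology and Notation}. For the forward direction, since $W$ is $p$-adically complete the derivation $D$ factors as $D=\xi\circ\hat{\mathfrak d}$, where $\hat{\mathfrak d}\colon\OLa\to\hat{\Omega}_{\OK}(\OLa)$ is the universal derivation and $\xi\colon\hat{\Omega}_{\OK}(\OLa)\to W$ is $\OLa$-linear. By Proposition~\ref{Khaler} applied to the standard field $\La_0$ (whose residue extension is unramified, so the relevant different is the unit ideal), $\hat{\Omega}_{\OK}(\mathcal{O}_{\La_0})$ is free on $dT_1,\dots,dT_{d-1}$ and the derivations $\tfrac{\partial}{\partial T_k}$ of Definition~\ref{partial-derivations} are precisely the dual coordinate derivations, so $\hat{\mathfrak d}(b)=\sum_{k<d}\tfrac{\partial b}{\partial T_k}\,dT_k$ for $b\in\mathcal{O}_{\La_0}$. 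Writing $a=g(\pi_\La)$ with $g(X)=\sum_j b_jX^j\in\mathcal{O}_{\La_0}[X]$, expanding $\hat{\mathfrak d}\bigl(\sum_j b_j\pi_\La^j\bigr)$ by the Leibniz rule and collecting coefficients of the generators gives
\[
\hat{\mathfrak d}(a)=\sum_{k=1}^{d-1}\tfrac{\partial g}{\partial T_k}(\pi_\La)\,dT_k+g'(\pi_\La)\,d\pi_\La
\]
in $\hat{\Omega}_{\OK}(\OLa)$; applying $\xi$ and recalling $T_d=\pi_\La$ yields the formula for $D(a)$. Specializing to $a=p(\pi_\La)=0$ (equivalently, applying $\xi$ to the relation among $dT_1,\dots,dT_{d-1},d\pi_\La$ recorded just before the statement) gives the asserted relation among $D(T_1),\dots,D(T_{d-1}),D(\pi_\La)$.

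The heart of the converse is the following well-definedness lemma, which I would isolate first: \emph{if $w_1,\dots,w_d\in W$ satisfy $\sum_{k<d}\tfrac{\partial p}{\partial T_k}(\pi_\La)w_k+p'(\pi_\La)w_d=0$, then the element $\sum_{k=1}^d\tfrac{\partial a}{\partial T_k}w_k$ does not depend on the choice of $g\in\mathcal{O}_{\La_0}[X]$ with $a=g(\pi_\La)$.} Indeed, if $g_1(\pi_\La)=g_2(\pi_\La)$ then, $p(X)$ being monic, polynomial division gives $g_1-g_2=q\,p$ in $\mathcal{O}_{\La_0}[X]$; differentiating coefficientwise (for $k<d$) and in $X$ (for $k=d$) and evaluating at $\pi_\La$, where $p(\pi_\La)=0$, gives $\tfrac{\partial(g_1-g_2)}{\partial T_k}(\pi_\La)=q(\pi_\La)\tfrac{\partial p}{\partial T_k}(\pi_\La)$ and $(g_1-g_2)'(\pi_\La)=q(\pi_\La)p'(\pi_\La)$, so the difference of the two candidate values is $q(\pi_\La)\bigl(\sum_{k<d}\tfrac{\partial p}{\partial T_k}(\pi_\La)w_k+p'(\pi_\La)w_d\bigr)=0$.

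Granting the lemma, \eqref{defderi} is well defined, and checking the axioms of Definition~\ref{definamos} is routine: choosing polynomial representatives $g,h\in\mathcal{O}_{\La_0}[X]$ for $a,b$, the polynomials $g+h$ and $gh$ represent $a+b$ and $ab$, and the maps $\tfrac{\partial}{\partial T_k}$ on $\mathcal{O}_{\La_0}[X]$ are additive and satisfy the Leibniz rule — these reduce coefficient by coefficient to the corresponding properties of $\tfrac{\partial}{\partial T_k}$ on $\mathcal{O}_{\La_0}$ (for $k<d$) and of the ordinary derivative (for $k=d$) — whence $D(a+b)=D(a)+D(b)$ and $D(ab)=aD(b)+bD(a)$. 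For $a\in\OK$ one takes $g$ constant, so every $\tfrac{\partial a}{\partial T_k}$ vanishes and $D(a)=0$. For the final ``particular case'', the valuation identity displayed just before the statement says $v_\La(\mathfrak{D}(\La/K))$ equals the minimum of the $v_\La\bigl(\tfrac{\partial p}{\partial T_k}(\pi_\La)\bigr)$, $k<d$, and $v_\La(p'(\pi_\La))$; hence each of these coefficients lies in $\mathfrak{D}(\La/K)$, so any $w_1,\dots,w_d$ annihilated by $\mathfrak{D}(\La/K)$ automatically satisfy the compatibility relation, and the converse direction applies.

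I expect the only real obstacle to be bookkeeping rather than conceptual: because the individual symbols $\tfrac{\partial a}{\partial T_k}$ are attached to a choice of representative $g$, one must be disciplined about keeping representatives consistent throughout the Leibniz and additivity computations, and the entire converse hinges on the well-definedness lemma above. The sole non-formal input is Proposition~\ref{Khaler}, which both supplies the generators and relation for $\hat{\Omega}_{\OK}(\OLa)$ and, on the standard subfield, identifies $\tfrac{\partial}{\partial T_k}$ with the coordinate derivations dual to $dT_1,\dots,dT_{d-1}$.
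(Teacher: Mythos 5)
Your proposal is correct and follows essentially the same route as the paper, whose proof consists of invoking Proposition \ref{Khaler} together with the identification $D_{\OK}(\OLa,W)\cong \text{Hom}_{\OLa}(\hat{\Omega}_{\OK}(\OLa),W)$; your forward direction is exactly this, and your converse is the same idea carried out by hand. The one point where you add something the paper omits is the well-definedness lemma via division by the monic minimal polynomial $p(X)$ — the paper's proof of Proposition \ref{Khaler} declares the independence of the choice of representative $g$ to be ``clear from the very definition,'' and your argument is precisely the missing justification.
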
 
\begin{proof}
     The proof follows from the proof of Proposition \ref{Khaler} 
     and the fact that $D_{\OK}(\OLa,W)\cong \text{Hom}_{\OLa}(\hat{\Omega}_{\OK}(\OLa),W)$.
    \end{proof}

\subsubsection{Derivations on one-dimensional local fields} 
The following three propositions are taken from \cite{koly} and will be needed in the deduction of our formulae. We put them here for convenience. 

If $L$ is a finite extension of the local field $K$, 
then we denote by $\Omega_{\OK}(\OZ_L)\cong \OZ_L/D(L/K)$ 
the $\OZ_K$-module of differentials of $\OZ_L$ over $\OZ_K$.

\begin{prop}\label{module-of}
\noindent \begin{enumerate}

   \item $\Omega_{\OK}(\OZ_L)\cong \OZ_L/D(L/K)$ as $\OZ_L$-modules. 
          Moreover, the element $d\pi_L$ generates $\Omega_{\OK}(\OZ_L)$. 

   \item If $M$ is a finite extension of $L$, the homomorphism 
         $\Omega_{\OK}(\OZ_L)\to \Omega_{\OK}(\OZ_M)$ is an embedding.
         
\end{enumerate}

\begin{proof}
cf. \cite{koly} Proposition 5.1.
\end{proof}

\end{prop}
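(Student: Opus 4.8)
The plan is to deduce Proposition \ref{module-of} from the classical theory of the different of an extension of complete discrete valuation rings (it is \cite{koly} Proposition 5.1, and the argument mirrors the standard treatment in \cite{Fesenko}). \emph{Part (1):} first I would reduce to the totally ramified case. Let $L_0$ be the maximal unramified subextension of $L/K$. Its residue extension is separable (finite fields), so $\OZ_{L_0}=\OK[\zeta]$ for a $\zeta$ with minimal polynomial $g\in\OK[X]$ remaining separable modulo the maximal ideal, whence $g'(\zeta)$ is a unit and $\Omega_{\OK}(\OZ_{L_0})\cong\OZ_{L_0}/(g'(\zeta))=0$. Since $L/L_0$ is totally ramified, $\OZ_L=\OZ_{L_0}[\pi_L]$ with Eisenstein minimal polynomial $f\in\OZ_{L_0}[X]$; presenting $\OZ_L=\OZ_{L_0}[X]/(f(X))$ and computing the differentials of this finitely presented algebra gives
\[
\Omega_{\OZ_{L_0}}(\OZ_L)\;\cong\;(\OZ_L\,dX)/(f'(\pi_L)\,dX)\;\cong\;\OZ_L/(f'(\pi_L)),
\]
generated by $d\pi_L$. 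By the standard description of the different of a monogenic extension $(f'(\pi_L))=D(L/L_0)$, and $D(L/K)=D(L/L_0)$ since $L_0/K$ is unramified. It then remains to identify $\Omega_{\OK}(\OZ_L)$ with $\Omega_{\OZ_{L_0}}(\OZ_L)$: on one side $\Omega_{\OK}(\OZ_L)$ is generated over $\OZ_L$ by $d\zeta$ and $d\pi_L$, differentiating $g(\zeta)=0$ with $g'(\zeta)$ a unit kills $d\zeta$, and differentiating $f(\pi_L)=0$ (the differentials of the coefficients of $f$ being images from $\Omega_{\OK}(\OZ_{L_0})=0$) gives $f'(\pi_L)\,d\pi_L=0$, so $\Omega_{\OK}(\OZ_L)$ is a cyclic $\OZ_L$-module on $d\pi_L$ and a quotient of $\OZ_L/(f'(\pi_L))$; on the other side the exact sequence $\OZ_L\otimes_{\OZ_{L_0}}\Omega_{\OK}(\OZ_{L_0})\to\Omega_{\OK}(\OZ_L)\to\Omega_{\OZ_{L_0}}(\OZ_L)\to 0$, whose left term vanishes, shows $\Omega_{\OK}(\OZ_L)$ surjects onto $\Omega_{\OZ_{L_0}}(\OZ_L)\cong\OZ_L/(f'(\pi_L))$. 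Comparing finite $\OZ_L$-lengths forces $\Omega_{\OK}(\OZ_L)\cong\OZ_L/D(L/K)$, generated by $d\pi_L$. (Alternatively, Proposition \ref{Khaler} with $d=1$ already yields $\Omega_{\OK}(\OZ_L)\cong\OZ_L/\mathfrak{D}(L/K)$ on $d\pi_L$, and the point is to upgrade $\mathfrak{D}(L/K)\mid D(L/K)$ to equality.)

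\emph{Part (2):} for the tower $\OK\to\OZ_L\to\OZ_M$ I would use the exact sequence
\[
\OZ_M\otimes_{\OZ_L}\Omega_{\OK}(\OZ_L)\;\xrightarrow{\ \phi\ }\;\Omega_{\OK}(\OZ_M)\;\longrightarrow\;\Omega_{\OZ_L}(\OZ_M)\;\longrightarrow\;0,
\]
and note that the map in the proposition is the composite $\Omega_{\OK}(\OZ_L)\to\OZ_M\otimes_{\OZ_L}\Omega_{\OK}(\OZ_L)\xrightarrow{\phi}\Omega_{\OK}(\OZ_M)$. By part (1) the source of $\phi$ is $\OZ_M/D(L/K)\OZ_M$, of $\OZ_M$-length $e(M/L)\,v_L(D(L/K))$; the middle term is $\OZ_M/D(M/K)$ and $\Omega_{\OZ_L}(\OZ_M)\cong\OZ_M/D(M/L)$. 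The tower formula $v_M(D(M/K))=v_M(D(M/L))+e(M/L)\,v_L(D(L/K))$ then makes the length of the image of $\phi$ equal to that of its source, so $\phi$ is injective. Finally $\OZ_L/D(L/K)\to\OZ_M/D(L/K)\OZ_M$ is injective because $D(L/K)$ is generated by $\pi_L^{\,v_L(D(L/K))}\in L$, so $D(L/K)\OZ_M\cap\OZ_L=D(L/K)$ by a one-line valuation check; composing the two injections gives the embedding.

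The genuinely nontrivial inputs, which I would use as black boxes, are the presentation of the module of differentials of a monogenic algebra, the standard description of the different, and the tower formula for the different, together with exactness of the two sequences of differentials. The one place that needs care is the bookkeeping that turns ``cyclic and killed by $f'(\pi_L)$'' into an exact isomorphism in part (1), and ``surjection onto its image'' into injectivity of $\phi$ in part (2) — both are length counts governed by the different tower formula. Since all of this is \cite{koly} Proposition 5.1, the proposition may equally well be dispatched by citation.
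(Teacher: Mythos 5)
Your proof is correct. The paper gives no argument of its own here---it simply defers to \cite{koly} Proposition 5.1---and your write-up is the standard classical proof (monogenic presentations of $\OZ_{L_0}/\OK$ and $\OZ_L/\OZ_{L_0}$, the right-exact cotangent sequence, and the tower formula for the different, with length counts supplying injectivity in part (2)), which is exactly the argument the cited reference rests on.
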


We will denote by $K_m$ (resp. $L_m$) the field 
obtained by adjoining  the $m$-th torsion 
points to $K$ (resp. $L$), i.e., $K_m=K(\kappa_m)$.  
Let $v$ denote the normalized valuation $v_M/v_M(p)$, 
for every finite extension  $M$ of $\Qp$.

\begin{prop}\label{constants}

     There are explicit positive constants $c_1$, $c_2\in \mathbb{R}$, depending only on $(F,\pi)$, such that 
     
     \begin{enumerate}
     
          \item $v(D(L_m/L))\leq m/\varrho+\log_2(m)/(p-1)+c_2$ and 
          $v(D(K_m/K))\geq m/\varrho-c_1$.
          Where $\varrho$ is the ramification index of $S$ over $\Qp$.
          
          \item Let $p_m$ be the period ( i.e., the generator of the annihilator ideal) 
                of the $\mathcal{O}_{K_m}$-submodule of 
                $\Omega_{\mathcal{O}_{K}}(\mathcal{O}_{K_m})$ 
                generated by differentials $de_m^j$, $j=1,\dots,h$. Then $v(p_m)\geq m/\varrho-c_1$. 
                
     \end{enumerate}
     
\end{prop}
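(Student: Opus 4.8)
The plan is to deduce all three estimates from a single ramification analysis of the torsion towers $K\subset K_1\subset K_2\subset\cdots$ and $L\subset L_1\subset\cdots$, in the spirit of \cite{koly} (and of \cite{Wiles} in the Lubin--Tate case). Throughout I write $v$ for the normalized valuation $v_M/v_M(p)$, so that $v(\pi)=1/\varrho$. The input responsible for the leading term $m/\varrho$ is the following observation about $f=[\pi]_F$: since $f$ reduces modulo the maximal ideal to a power series in $X^{q^{h}}$ (where $q$ is the residue cardinality of $S$), one has $\overline{f'}=0$, i.e.\ $f'(X)=\pi\,u(X)$ with $u\in\OK[[X]]$ and $u(0)=1$; as every nonzero torsion point of $F$ lies in the maximal ideal, $v(f'(e'))=v(\pi)=1/\varrho$ for all $0\ne e'\in\kappa$. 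The chain rule then gives $(f^{(m)})'(e_m^i)=\prod_{j=0}^{m-1}f'(e_{m-j}^i)$, whence
\[
v\bigl((f^{(m)})'(e_m^i)\bigr)=m/\varrho\qquad(1\le i\le h,\ m\ge1).
\]

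For part $(1)$ I would use the tower formula $v(D(L_m/L))=\sum_{j=1}^{m}v\bigl(D(L_j/L_{j-1})\bigr)$ and estimate each step. Each $e_j^i$ is a root of $f(X)-e_{j-1}^i\in\OZ_{L_{j-1}}[[X]]$; Weierstrass preparation turns this into a distinguished polynomial over $\OZ_{L_{j-1}}$ whose derivative at $e_j^i$ is $f'(e_j^i)$ up to a unit, so $D\bigl(L_{j-1}(e_j^i)/L_{j-1}\bigr)\mid f'(e_j^i)$ and has normalized valuation at most $1/\varrho$. Passing to the compositum $L_j=L_{j-1}(e_j^1,\dots,e_j^h)$ adds a tame contribution, summable over $j$, together with a wild correction whose aggregate up to level $m$ is estimated (as in \cite{koly}) by $\log_2(m)/(p-1)$; this yields $v(D(L_m/L))\le m/\varrho+\log_2(m)/(p-1)+c_2$ with $c_2$ depending only on $(F,\pi)$ (it absorbs the tame part and $v\bigl(D(L/L_{(0)})\bigr)$). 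The lower bound is obtained in the $K$-tower: for $j$ large the extension $K_{j-1}(e_j^i)/K_{j-1}$ is wildly ramified, so $v\bigl(D(K_{j-1}(e_j^i)/K_{j-1})\bigr)\ge 1/\varrho-\varepsilon_j$ with $\sum_j\varepsilon_j<\infty$, and $D(K_j/K_{j-1})$ is divisible by each $D(K_{j-1}(e_j^i)/K_{j-1})$; summing gives $v(D(K_m/K))\ge m/\varrho-c_1$.

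For part $(2)$, by Proposition \ref{module-of} the module $\Omega_{\OK}(\OZ_{K_m})\cong\OZ_{K_m}/D(K_m/K)$ is cyclic, generated by $d\pi_{K_m}$. Differentiating $f^{(m)}(e_m^i)=0$ gives $(f^{(m)})'(e_m^i)\,de_m^i=0$, so $(f^{(m)})'(e_m^i)$ annihilates $de_m^i$ and hence, by the first step, the period $p_m$ of the submodule generated by $de_m^1,\dots,de_m^h$ satisfies $v(p_m)\le m/\varrho$. For the reverse inequality I would use the special torsion point $\mathfrak e_m\in\kappa_m$ of Remark \ref{special-torsion-element}: it is an $F$-linear combination of the $e_m^i$, so $d\mathfrak e_m$ lies in the submodule generated by the $de_m^i$, and by construction $v\bigl(\partial\mathfrak e_m/\partial T_d\bigr)$ is bounded by a constant depending only on $(F,\pi)$. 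Writing $d\mathfrak e_m=(\partial\mathfrak e_m/\partial T_d)\,d\pi_{K_m}$, the annihilator of $d\mathfrak e_m$ in $\OZ_{K_m}/D(K_m/K)$ has valuation $v(D(K_m/K))-v\bigl(\partial\mathfrak e_m/\partial T_d\bigr)$, and $p_m$ is a multiple of it; combining with part $(1)$ gives $v(p_m)\ge m/\varrho-c_1$ after enlarging $c_1$.

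The step I expect to be the main obstacle is the wild-ramification bookkeeping in part $(1)$: pinning down, in each compositum $K_j$ (resp.\ $L_j$) over $K_{j-1}$, the exact order of the different beyond the ``generic'' $1/\varrho$ coming from $f'$, and checking that this surplus stays bounded in aggregate for the $K$-tower while growing only like $\log_2(m)/(p-1)$ for the $L$-tower. Everything else --- the tower formula, Weierstrass preparation, the computation of $v\bigl((f^{(m)})'(e_m^i)\bigr)$, and the bookkeeping with cyclic torsion modules --- is routine.
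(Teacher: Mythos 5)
The paper proves this proposition only by citing \cite{koly}, Proposition 5.3, so your sketch must stand on its own, and as written it does not: the gaps are concentrated exactly in the lower bounds, which is where all the content of the statement lives. First, the assertion that wild ramification of $K_{j-1}(e_j^i)/K_{j-1}$ forces $v(D(K_{j-1}(e_j^i)/K_{j-1}))\geq 1/\varrho-\varepsilon_j$ with summable $\varepsilon_j$ is false. For a totally and wildly ramified extension $M/L$ of degree $N$ the ramification filtration only gives $v_M(D(M/L))\geq N+p-2$, i.e.\ $v(D(M/L))$ of order $1/e(L/\Qp)$ in the normalized valuation; since $e(K_{j-1}/\Qp)$ grows geometrically in $j$, these contributions sum to a constant, not to $m/\varrho$. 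To extract $1/\varrho$ per level one must show the ramification break of $K_j/K_{j-1}$ sits at height comparable to $v_{K_j}(\pi^j)$, equivalently that the torsion points nearly generate $\OZ_{K_j}$ over $\OZ_{K_{j-1}}$ (so that $i(\sigma)$ is computed on $\sigma(e_j^i)-e_j^i=$ a nonzero element of $\kappa_1$ up to units); that is essentially part (2), so it cannot be quoted as an input to part (1). Second, your argument for part (2) is circular: the point $\mathfrak e_m$ of Remark \ref{special-torsion-element} is \emph{defined} by invoking Proposition \ref{constants}(2), and the unproved claim that $v(\partial\mathfrak e_m/\partial T_d)$ is bounded by a constant is precisely the assertion that some $de_m^i$ generates $\Omega_{\OK}(\OZ_{K_m})\cong\OZ_{K_m}/D(K_m/K)$ up to bounded index — which is the whole difficulty.

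Two further problems. For $h>1$ the upper bound does not follow from your tower decomposition: $L_j$ is the compositum of the $h$ extensions $L_{j-1}(e_j^i)$, and the different of a compositum is only bounded by the product of the differents of the factors, so each level a priori contributes up to $h/\varrho$ and the naive total is $hm/\varrho$; the surplus is not a summable tame-plus-wild correction. (Kolyvagin instead controls $i(\sigma)$ for $\sigma\in G(L_m/L)$ directly in terms of the congruence level of its image in $GL_h(C/\pi^mC)$, and the $\log_2(m)/(p-1)$ term arises from that bookkeeping, not from composita.) Finally, $v(f'(e))=1/\varrho$ for every nonzero torsion point is only clear when $K/S$ is unramified: finite height gives $f'\equiv 0\pmod{\pi_K}$, not modulo $\pi$, so for ramified $K/S$ and torsion points of small valuation the non-constant terms of $f'$ may have valuation below $1/\varrho$. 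This last point is minor (you use the exact value only for an inequality the proposition does not claim), but the other three defects are fatal to the sketch as written.
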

\begin{proof}
cf. \cite{koly} Proposition 5.3.
\end{proof} 
\begin{remark}\label{Explicit-constants-description}
According to Kolyvagin (cf. \cite{koly} page 325) we may take for $c_2$ the constant
\[
c_2=\frac{2}{p-1}+\frac{2v(\pi)p}{(p-1)^2}+v(D(K_{2\varrho -1}/K)).
\]
As for the constant $c_1$, if $K/S$ is an unramified extension we may take 
\[
c_1=\frac{1}{\varrho\,(\,q_S^h-1\,)},
\]
where $q_S$ is the cardinality of the residue field of $S$.
\end{remark}
\begin{remark}\label{special-torsion-element}
By Proposition \ref{constants} (2), there exists a torsion point $e^j_m$, with $1\leq j\leq h$, 
which we will denote by $\mathfrak{e_m}$, such that $v(\mathfrak{p}_m)\geq m/\varrho-c_1$. Here $\mathfrak{p}_m$ denoted the period of the $\mathcal{O}_{K_m}$-submodule of 
                $\Omega_{\mathcal{O}_{K}}(\mathcal{O}_{K_m})$  generated by $d\mathfrak{e}_m$.
\end{remark}

\begin{remark}\label{Bound general L}
If $\La\supset \kappa_n$ is a $d$-dimensional local field then the inequality Proposition \ref{constants}(2) also holds, namely $v(D(\La_m/\La))\leq m/\varrho+\log_2(m)/(p-1)+c_2$. This
 follows from the fact that $v(D(\La_m/\La))\leq v(D(K_m/K))$.
\end{remark}

\begin{prop}\label{constantes}

     Suppose $K/S$ is an unramified extension and let $q=|k_S|$. 
     Let $h$ be the height of $F$ with respect to $C=\OS$, cf. 
     Proposition \ref{torsiones}. Then 
     
     \begin{enumerate}
     
          \item $v(D(K_m/K))\geq m/\varrho-1/\varrho(q^h-1)$.
          \item  $K(e_m^i)$ is totally unramified over $K$ and $e_m^i$ 
          is a uniformizer for $K(e_m^i)$. Moreover $$v(\ period\ of\ de_m^i)=m/\varrho -1/\varrho(q^h-1).$$
           In particular, if $h=1$ then $K_m=K(e_m^1)$ and $D(K_m/K)=\pi^m/\pi_1\mathcal{O}_{K_m}$, where $\pi_1$ is a uniformizer for $K_1$.     
     \end{enumerate}
     
\end{prop}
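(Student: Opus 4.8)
The plan is to pin down the tower $K\subset K(e^i_1)\subset\cdots\subset K(e^i_m)$ explicitly, compute the different $D(K(e^i_m)/K)$, and then deduce (1) from (2). Write $f(X)=[\pi]_F(X)=\sum_{k\ge 1}a_kX^k$ with $a_1=\pi$. That $F$ has height $h$ with respect to $C$ means precisely that the reduction $\bar f\in k_K[[X]]$ is a power series in $X^{q^h}$ whose coefficient of $X^{q^h}$ is nonzero (its kernel on $\bar F$ is $\kappa_1\cong k_S^{\,h}$, of order $q^h$). Since $K/S$ is unramified, $\pi$ is a uniformizer of $K$, so $\bar a_k=0$ means $a_k\in\pi\OK$; thus $a_k\in\pi\OK$ for every $k$ with $q^h\nmid k$, while $a_{q^h}\in\OK^\times$. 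By Weierstrass preparation, for any element $e$ in the maximal ideal of a finite extension of $K$ the series $f(X)-e$ is a unit times a distinguished polynomial of degree $q^h$ with constant term $-e$ times a unit; hence that polynomial is Eisenstein over $\OZ_{K(e)}$ whenever $e$ is a uniformizer of $K(e)$, and $f(X)/X$ is a unit times an Eisenstein polynomial of degree $q^h-1$ over $\OK$.

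First I would prove the structural part of (2) by induction on $m$. For $m=1$, $e^i_1$ is a root of the Eisenstein polynomial attached to $f(X)/X$, so $K(e^i_1)/K$ is totally ramified of degree $q^h-1$ with $e^i_1$ a uniformizer. If $K(e^i_{m-1})/K$ is totally ramified with uniformizer $e^i_{m-1}$, then $f(e^i_m)=e^i_{m-1}$ shows $e^i_m$ is a root of the polynomial attached to $f(X)-e^i_{m-1}$, which is Eisenstein over $\OZ_{K(e^i_{m-1})}$; hence $K(e^i_m)=K(e^i_{m-1})(e^i_m)$ is totally ramified of degree $q^h$ over $K(e^i_{m-1})$ with $e^i_m$ a uniformizer. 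Therefore $K(e^i_m)/K$ is totally ramified of degree $q^{h(m-1)}(q^h-1)$, $e^i_m$ is a uniformizer, and $\OZ_{K(e^i_m)}=\OK[e^i_m]$. In particular $de^i_m$ generates $\Omega_{\OK}(\OZ_{K(e^i_m)})\cong\OZ_{K(e^i_m)}/D(K(e^i_m)/K)$ by Proposition \ref{module-of}(1), so the period of $de^i_m$ is a generator of the different $D(K(e^i_m)/K)$, and it remains to compute its normalized valuation $v$.

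By multiplicativity of the different in towers together with $D(K(e^i_j)/K(e^i_{j-1}))=(Q_j'(e^i_j))$ for the minimal polynomial $Q_j$ of $e^i_j$, differentiating $f(X)-e^i_{j-1}=u_j(X)Q_j(X)$ (and $f(X)=Xu_1(X)Q_1(X)$ when $j=1$) and evaluating at $e^i_j$ gives $v(Q_j'(e^i_j))=v(f'(e^i_j))$ for $j\ge 2$ and $v(Q_1'(e^i_1))=v(f'(e^i_1))-v(e^i_1)$, so
\[
v\big(D(K(e^i_m)/K)\big)=\sum_{j=1}^{m}v\big(f'(e^i_j)\big)-v(e^i_1).
\]
Two valuation estimates now finish part (2). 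Since $v(a_k)\ge v(\pi)$ when $q^h\nmid k$ and $v(k)\ge v(q^h)\ge v(\pi)=1/\varrho$ when $q^h\mid k$, each monomial $ka_kX^{k-1}$ of $f'(X)$ with $k\ge 2$ has, at any torsion point $e$ with $v(e)>0$, valuation $\ge v(\pi)+(k-1)v(e)>v(\pi)$; hence $v(f'(e))=v(a_1)=1/\varrho$, and so $\sum_{j=1}^{m}v(f'(e^i_j))=m/\varrho$. From $0=f(e^i_1)/e^i_1=\pi+\sum_{k\ge 2}a_k(e^i_1)^{k-1}$ one checks that the unique term of smallest valuation on the right other than $\pi$ is $a_{q^h}(e^i_1)^{q^h-1}$, forcing $(q^h-1)\,v(e^i_1)=v(\pi)$, i.e. $v(e^i_1)=1/(\varrho(q^h-1))$. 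Substituting yields $v(D(K(e^i_m)/K))=m/\varrho-1/(\varrho(q^h-1))$, which is (2). For (1), $K(e^i_m)\subset K_m$ and differents are multiplicative in towers with nonnegative normalized valuation, so $v(D(K_m/K))=v(D(K_m/K(e^i_m)))+v(D(K(e^i_m)/K))\ge m/\varrho-1/(\varrho(q^h-1))$. Finally, when $h=1$ the module $\kappa_m\cong C/\pi^mC$ is cyclic with generator $e^1_m$, so $K_m=K(e^1_m)$; tracking units above, $\prod_{j=1}^m(f'(e^1_j))=(\pi^m)$ as ideals and $\pi_1:=e^1_1$ is a uniformizer of $K_1$, giving $D(K_m/K)=(\pi^m/\pi_1)\OZ_{K_m}$.

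I expect the point requiring the most care to be the two valuation estimates — above all making explicit that the unramifiedness of $K/S$ is exactly what makes $\pi$ a uniformizer of $K$, hence forces $a_k\in\pi\OK$ for $q^h\nmid k$ (without it only $a_k\in\mu_K$ holds, $v(f'(e))$ can fall below $v(\pi)$, and the constant degrades to the weaker one of Proposition \ref{constants}); one must also confirm that in the borderline case $v(q^h)=v(\pi)$ the factor $(e^i_j)^{k-1}$ still makes the relevant term strictly larger. The Eisenstein induction and the tower/different bookkeeping are routine given Propositions \ref{module-of} and \ref{Khaler}.
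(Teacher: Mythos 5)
Your argument is correct and is essentially the proof the paper defers to (\cite{koly}, Proposition 5.6): since $K/S$ is unramified, $\bar f$ is a power series in $X^{q^h}$ with unit coefficient in degree $q^h$, so Weierstrass preparation makes the tower $K\subset K(e^i_1)\subset\cdots\subset K(e^i_m)$ an Eisenstein tower of degrees $q^h-1,q^h,\dots,q^h$, and the different is then computed exactly as you do from $f'$ at the torsion points, with $v(f'(e))=v(\pi)$ and $v(e^i_1)=1/(\varrho(q^h-1))$. (Note that ``totally unramified'' in the statement is a typo for ``totally ramified'', which is what you prove and what the rest of the claim requires.)
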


\begin{proof}
cf. \cite{koly} Proposition 5.6.
\end{proof} 
    
\subsection{Multidimensional derivations}\label{Multidimensional derivations sub}
Let $\La$, $\tilde{\La}$, $L$, $\tilde{L}$ $K$ and $W$ 
     be as in the beginning of the previous section.

\begin{defin}\label{defmult}
      
     A $d$-dimensional derivation of $\OLa^d$ into $W$ over $\OK$ is  
     map $D: \OLa^d\to W$ such that for all $a_1\dots, a_d$ and $a_1',\dots,a_d'$ in $\OLa$, and any $1\geq i\geq d$,   it satisfies
     \begin{enumerate}
      \item  Leibniz rule: 
      $\Scale[0.9]{D(a_1,\dots,a_ia_i',\dots, a_d)
                     =a_i'D(a_1,\dots, a_i,\dots, a_d)+a_iD(a_1,\dots,a_i',\dots, a_d)}$.         
          \item Linearity: $\Scale[0.9]{
          D(a_1,\dots,a_i+a_i',\dots, a_d)
                     =D(a_1,\dots, a_i,\dots, a_d)+D(a_1,\dots,a_i',\dots, a_d),}$.
                       \item Alternating: $D(a_1,\dots, a_d)=0$,
                 if $a_i=a_j$ for  $i\neq j$.
                     
          \item Constants: $D(a_1,\dots,a_d)=0$ if $a_i\in \OK$ for some $1\leq i \leq d$. 
     \end{enumerate}
  
\end{defin}
We denote by $D_{\OK}^d(\OLa^d,W)$ the $\OLa$-module 
of all $d$-dimensional derivations $D: \OLa^d\to W$. 

Consider the $d$th exterior product
$\Omega^{\,d}_{\OK}(\OLa):=\bigwedge ^d_{\OLa} \Omega_{\OK}(\OLa)$ 
(cf. \cite{Lang} Chapter 19 \S 1 ).  This is the $\OLa$-module 
$\Omega_{\OK}(\OLa)\otimes \cdots \otimes \Omega_{\OK}(\OLa)$
divided out by the $\OLa$-submodule generated by the elements
\begin{equation}\label{alternado}
x_1\otimes \cdots \otimes x_d,
\end{equation}
where $x_i= x_j$ for some $i\neq j$. 
The symbols $x_1\otimes\cdots \otimes x_d$ will be denoted by
\[
x_1\wedge \cdots \wedge x_d,
\]
instead.
For  $ \Omega_{\OK}^{\,d}(\OLa)$
we consider the $d$-dimensional derivation 
$$\mathfrak{d}:\OLa^{\,d}\ \longrightarrow \ \Omega_{\OK}^{\,d}(\OLa)$$
that sends $\adp$ to the wedge product 
$a_1\wedge \cdots \wedge a_d$. 
This $\OLa$-module is the 
universal object in the category of 
$d$-dimensional derivations of $\OLa$ over $\OK$, i.e, 
\begin{prop}\label{multidiff}
If $D:\OLa^{\,d}\to W$ is a $d$-dimensional 
derivation over $\OK$ then there exist a homomorphism
$\beta: \Omega_{\OK}^{\,d}(\OLa)\to W$ of $\OL$-modules such that 
the diagram
 \[
      \begin{tikzpicture}[every node/.style={midway}]
\matrix[column sep={8em,between origins},
        row sep={3em}] at (0,0)
{ \node(R)  {$\OLa^{\,d}$}  ; & \node(S) {$ \Omega_{\OK}^{\,d}(\OLa)$}; \\
 \node(U) {$$} ; & \node(R/I) {$W$};                   \\};
\draw[<-] (R/I) -- (R) node[anchor=east]  {$D\hspace{7pt}$};
\draw[<-] (R/I) -- (S) node[anchor=west]  {$\lambda$};
\draw[->] (R)   -- (S) node[anchor=south] {$\mathfrak{d}$};
\end{tikzpicture}
\]
is commutative.
\end{prop}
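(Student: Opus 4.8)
The plan is to prove this exactly as one proves that $\Omega_{\OK}(\OLa)$ represents one‑dimensional derivations (the statement recalled just before Proposition \ref{Khaler}); throughout write $\Omega:=\Omega_{\OK}(\OLa)$. First I would record the easy half: $\mathfrak{d}\colon\OLa^{\,d}\to\Omega^{\,d}_{\OK}(\OLa)$, $(a_1,\dots,a_d)\mapsto da_1\wedge\cdots\wedge da_d$, is itself a $d$‑dimensional derivation over $\OK$. The Leibniz rule and additivity in each slot are inherited from the corresponding properties of the universal one‑dimensional derivation $a\mapsto da$ together with the $\OLa$‑multilinearity of $\wedge$; the alternating property is forced by the defining relations \eqref{alternado}; and $da=0$ for $a\in\OK$ gives condition (4) of Definition \ref{defmult}. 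So the content is the lifting property.

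Given a $d$‑dimensional derivation $D\colon\OLa^{\,d}\to W$, I would produce $\beta$ through an intermediate alternating $\OLa$‑multilinear form $\mu\colon\Omega^{\times d}\to W$ with $\mu(da_1,\dots,da_d)=D(a_1,\dots,a_d)$; once $\mu$ is available, the universal property of the exterior power $\bigwedge^{d}_{\OLa}$ (cf.\ \cite{Lang} Chapter 19 \S 1) furnishes a unique $\OLa$‑linear $\beta\colon\Omega^{\,d}_{\OK}(\OLa)\to W$ with $\beta(da_1\wedge\cdots\wedge da_d)=\mu(da_1,\dots,da_d)$, i.e.\ $\beta\circ\mathfrak{d}=D$. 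To construct $\mu$ I would induct on $d$. The case $d=1$ is the one‑dimensional universal property. For the step, fix $a_1\in\OLa$: then $(a_2,\dots,a_d)\mapsto D(a_1,a_2,\dots,a_d)$ is a $(d-1)$‑dimensional derivation (Leibniz, additivity, alternating and vanishing on $\OK$ all restrict from $D$), so by induction it factors through an $\OLa$‑linear $\beta_{a_1}\colon\Omega^{\,d-1}_{\OK}(\OLa)\to W$. Checking on the generating wedges $da_2\wedge\cdots\wedge da_d$ shows $a_1\mapsto\beta_{a_1}$ is a one‑dimensional derivation $\OLa\to\Hom_{\OLa}\big(\Omega^{\,d-1}_{\OK}(\OLa),W\big)$, hence factors through $\Omega$; unwinding the Hom–tensor adjunction gives an $\OLa$‑linear map $\Omega\otimes_{\OLa}\Omega^{\,d-1}_{\OK}(\OLa)\to W$ carrying $da_1\otimes(da_2\wedge\cdots\wedge da_d)$ to $D(a_1,\dots,a_d)$, and precomposing with $(\omega_1,\dots,\omega_d)\mapsto\omega_1\otimes(\omega_2\wedge\cdots\wedge\omega_d)$ defines $\mu$. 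By construction $\mu$ is $\OLa$‑multilinear and has the right value on exact forms; it is alternating because coincidence of two of $\omega_2,\dots,\omega_d$ already kills $\omega_2\wedge\cdots\wedge\omega_d$, while coincidence of $\omega_1$ with some $\omega_j$ reduces, via multilinearity and the antisymmetry of $D$ under transposing its arguments, to the alternating property of $D$ on exact forms.

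Uniqueness of $\beta$ is immediate and worth recording: the $da$ generate $\Omega$ as an $\OLa$‑module, hence the wedges $da_1\wedge\cdots\wedge da_d$ generate $\Omega^{\,d}_{\OK}(\OLa)$, so any $\OLa$‑linear $\beta$ with $\beta\circ\mathfrak{d}=D$ is determined on these generators. I expect the only fussy point to be the well‑definedness of $\mu$ — equivalently, that $da_1\wedge\cdots\wedge da_d\mapsto D(a_1,\dots,a_d)$ respects every defining relation of $\Omega^{\,d}_{\OK}(\OLa)$ — but the inductive reduction isolates these relations one tensor slot at a time and pushes each into the already‑known one‑dimensional universal property, so nothing must be checked by brute force. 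One minor wrinkle to flag: $\Hom_{\OLa}(\Omega^{\,d-1}_{\OK}(\OLa),W)$ need not be $p$‑adically complete, so the induction is cleanest run over arbitrary $\OLa$‑modules; the $p$‑adic completeness hypothesis on $W$ from the start of this section is not needed here and enters only when one later passes to the completion, as in Proposition \ref{derival}.
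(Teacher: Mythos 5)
Your construction is correct and follows the same route as the paper, which simply declares that $\lambda(da_1\wedge\cdots\wedge da_d)=D(a_1,\dots,a_d)$ "clearly" defines the required $\OLa$-homomorphism; your inductive reduction to the one-dimensional universal property via the Hom--tensor adjunction is exactly the well-definedness check the paper leaves implicit. Your observations that antisymmetry of $D$ under transposition follows from additivity plus the alternating condition, and that $p$-adic completeness of $W$ is not needed here, are both accurate.
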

\begin{proof}
It is clear that  $\lambda$ is the 
homomorphism defined by 
$\lambda (da_1\wedge \cdots \wedge da_d )
=D(a_1,\dots, a_d)$.
\end{proof}

   \begin{prop}\label{Khaler2}
   The $\OLa$-module $\hat{\Omega}_{\OK}^{\,d}(\OLa)$ is generated by $d\,T_1
\wedge \cdots\wedge 
d\,T_{d-1}\wedge d\pi_{\La}$ and we have an isomorphism of $\OLa$-modules
\[
\hat{\Omega}_{\OK}^{\,d}(\OLa)\ \cong\ \OLa/\mathfrak{D}(\La/K)
\]
 Furthermore, if $\La$ is the standard field $L\TT$ then 
 $$ \hat{\Omega}_{\OK}^{\,d}(\OLa)
\cong (\OLa/D(L/K)\OLa)\ d\,T_1
\wedge \cdots\wedge 
d\,T_{d-1}\wedge d\pi_L$$ as $\OLa$-modules, where $\pi_L$ is a uniformizer for $L$ 
and $D(L/K)$ is the different of the extension $L/K$. 
Thus in this particular case
\[
\mathfrak{D}(\La/K)=D(L/K)\OLa.
\]
 \end{prop}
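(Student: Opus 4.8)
The plan is to deduce everything from Proposition~\ref{Khaler}, which already presents $\hat{\Omega}_{\OK}(\OLa)$ as a finitely generated $\OLa$-module: it is generated by $dT_1,\dots,dT_{d-1},d\pi_\La$ and isomorphic to $\OLa^{\oplus(d-1)}\oplus\bigl(\OLa/\mathfrak{D}(\La/K)\bigr)$. Since $\OLa$ is a complete discrete valuation ring and $\hat{\Omega}_{\OK}(\OLa)$ is finitely generated over it, $p$-adic completion is harmless here, so that $\hat{\Omega}_{\OK}^{\,d}(\OLa)\cong\bigwedge^{d}_{\OLa}\hat{\Omega}_{\OK}(\OLa)$; in any case the module $\OLa/\mathfrak{D}(\La/K)$ is annihilated by a power of $\pi_\La$, hence by a power of $p$, so it is already $p$-adically complete. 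It therefore suffices to compute the top exterior power of $\hat{\Omega}_{\OK}(\OLa)$.

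First I would establish cyclicity. By Proposition~\ref{derival} applied to the universal derivation $\mathfrak{d}\colon\OLa\to\hat{\Omega}_{\OK}(\OLa)$, every $da$ with $a\in\OLa$ equals $\sum_{k=1}^{d}\frac{\partial a}{\partial T_k}\,dT_k$, with the convention $T_d:=\pi_\La$. Expanding $da_1\wedge\cdots\wedge da_d$ multilinearly and discarding every wedge with a repeated factor yields
\[
da_1\wedge\cdots\wedge da_d=\det\!\left[\frac{\partial a_i}{\partial T_j}\right]_{1\le i,j\le d}\bigl(dT_1\wedge\cdots\wedge dT_{d-1}\wedge d\pi_\La\bigr).
\]
As the products $da_1\wedge\cdots\wedge da_d$ generate $\bigwedge^{d}_{\OLa}\hat{\Omega}_{\OK}(\OLa)$, this module is cyclic, generated by $\omega:=dT_1\wedge\cdots\wedge dT_{d-1}\wedge d\pi_\La$. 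On the other hand, applying $\bigwedge^{d}_{\OLa}$ to the structure isomorphism of Proposition~\ref{Khaler} and using the decomposition of the exterior power of a direct sum kills every summand except $\bigwedge^{d-1}\!\bigl(\OLa^{\oplus(d-1)}\bigr)\otimes_{\OLa}\bigl(\OLa/\mathfrak{D}(\La/K)\bigr)\cong\OLa/\mathfrak{D}(\La/K)$ — here one uses that $\bigwedge^{i}$ of a free module of rank $d-1$ vanishes for $i>d-1$ and that $\bigwedge^{j}$ of a cyclic module vanishes for $j\ge 2$. Hence $\hat{\Omega}_{\OK}^{\,d}(\OLa)\cong\OLa/\mathfrak{D}(\La/K)$, and since $\omega$ generates the left-hand side it corresponds to a generator of $\OLa/\mathfrak{D}(\La/K)$, so $\mathrm{Ann}(\omega)=\mathfrak{D}(\La/K)$. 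For the standard field $\La=L\TT$, Proposition~\ref{Khaler} provides this isomorphism explicitly, with $dT_1,\dots,dT_{d-1}$ a free basis and $d\pi_L$ generating the torsion part with period $D(L/K)\OLa$; tracing $\omega$ through then gives $\hat{\Omega}_{\OK}^{\,d}(\OLa)\cong\bigl(\OLa/D(L/K)\OLa\bigr)\,dT_1\wedge\cdots\wedge dT_{d-1}\wedge d\pi_L$ and $\mathfrak{D}(\La/K)=D(L/K)\OLa$.

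The point deserving care is the last step above in the non-standard case, namely verifying that $\omega$ maps to a generator under the abstract isomorphism of Proposition~\ref{Khaler}. One way is to note that whenever $\mathfrak{D}(\La/K)\ne\OLa$ the module $\hat{\Omega}_{\OK}(\OLa)$ has free rank $d-1$ together with nonzero cyclic torsion, so it needs exactly $d$ generators; thus $\{dT_1,\dots,dT_{d-1},d\pi_\La\}$ is a minimal generating set and, $\OLa$ being local, it is carried to an adapted basis $e_1,\dots,e_{d-1},\tau$ (with $\tau$ of period $\mathfrak{D}(\La/K)$) by a matrix in $GL_d(\OLa)$, whence $\omega$ is a unit multiple of the class of $e_1\wedge\cdots\wedge e_{d-1}\wedge\tau$, whose annihilator is $\mathfrak{D}(\La/K)$ (the $e_i$ spanning a free complement). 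Alternatively one can bypass the structure isomorphism and compute $\mathrm{Ann}(\omega)$ directly: wedging the relation $\tfrac{\partial p}{\partial T_1}(\pi_\La)\,dT_1+\cdots+p'(\pi_\La)\,d\pi_\La=0$ of Proposition~\ref{derival} — where $p(X)$ is the minimal polynomial of $\pi_\La$ over $\La_0$ — against the complementary $d-1$ generators shows that each coefficient $\tfrac{\partial p}{\partial T_j}(\pi_\La)$ and $p'(\pi_\La)$ annihilates $\omega$, and those coefficients generate $\mathfrak{D}(\La/K)$ by the valuation formula recalled just before Proposition~\ref{derival}; the reverse inclusion $\mathrm{Ann}(\omega)\subseteq\mathfrak{D}(\La/K)$ is again the minimal generating set argument. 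Everything else is the formal behavior of exterior powers of cyclically presented modules.
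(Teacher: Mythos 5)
Your proof is correct and takes the same route the paper intends: the paper's entire proof is ``This follows immediately from Lemma \ref{Khaler},'' and what you have written is precisely the elaboration of that step, namely taking the top exterior power of the decomposition $\hat{\Omega}_{\OK}(\OLa)\cong\OLa^{\oplus(d-1)}\oplus(\OLa/\mathfrak{D}(\La/K))$. The extra care you take in checking that completion commutes with $\bigwedge^d$ and that $dT_1\wedge\cdots\wedge d\pi_{\La}$ is actually a generator (so its annihilator is exactly $\mathfrak{D}(\La/K)$) supplies details the paper leaves implicit, and both of your arguments for that last point are sound.
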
 
 \begin{proof}
 This follows immediately from Lemma \ref{Khaler}.
 \end{proof}
 
 The above proposition implies
 \[
 \mathfrak{D}(\La/K)\,|\,D(\La/K).
 \]

 We will show in the proposition below how every multidimensional derivation is characterized by the derivations in Definition \ref{partial-derivations}.

\begin{prop}\label{multder}
     Let $D\in D_K^d(\OLa^d,W)$. Then  $\mathfrak{D}(\La/K)$ annihilates $D(T_1,\dots, \pi_\La)$, and 
     for $a_1,\dots,a_d\in \OLa$ we have
    \begin{equation}\label{multideri}
     D(a_1,\dots, a_d)=\det\left[ \frac{\partial a_{\text{i}} }
     {\partial T_{\text{j}}}\right]_{_{ 1\leq i,j \leq d}}  D(T_1,\dots, \pi_\La). 
     \end{equation}
    
    Conversely, we can construct a multidimensional derivation in the following way. 
    Let $w\in W$ be annihilated by  $\mathfrak{D}(\La/K)$, 
     then the map
     $$
          D(a_1,\dots, a_d):=\det\left[ \frac{\partial a_i 
          }{\partial T_j}\right]_{_{ 1\leq i,j\leq d}} \, w ,
     $$
     is well-defined and belongs to $D_K^d(\OLa^d,W)$.
     
      In other words, 
     the map $D\mapsto D(T_1, \dots, \pi_\La),$ defines an isomorphism from 
      $D_K^d(\OLa^d,W)$ to the $\mathfrak{D}(\La/K)$-torsion subgroup of $W$.
\end{prop}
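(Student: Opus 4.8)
The plan is to derive both parts of the statement from the universal property of the module of multidimensional differentials $\hat{\Omega}^{\,d}_{\OK}(\OLa)$ (Proposition~\ref{multidiff}), its explicit cyclic structure (Proposition~\ref{Khaler2}), and the one-variable expansion $da=\sum_{k=1}^{d}\tfrac{\partial a}{\partial T_k}\,dT_k$, valid in $\hat{\Omega}_{\OK}(\OLa)$ for every $a\in\OLa$, which is Proposition~\ref{derival} applied to the universal derivation $\OLa\to\hat{\Omega}_{\OK}(\OLa)$ (here and below $T_d:=\pi_\La$). Wedging $d$ such expansions together and using multilinearity and the alternating property of the exterior product produces the elementary identity
\[
da_1\wedge\cdots\wedge da_d=\det\Big[\tfrac{\partial a_i}{\partial T_j}\Big]_{1\leq i,j\leq d}\,\big(d\,T_1\wedge\cdots\wedge d\,T_{d-1}\wedge d\pi_\La\big)
\]
in $\hat{\Omega}^{\,d}_{\OK}(\OLa)$; the whole proposition is an application of this.

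For the direct implication I would invoke Proposition~\ref{multidiff}: given $D\in D^d_K(\OLa^d,W)$, there is an $\OLa$-linear map $\Omega^{\,d}_{\OK}(\OLa)\to W$ sending $da_1\wedge\cdots\wedge da_d$ to $D(a_1,\dots,a_d)$, and since $W$ is $p$-adically complete it factors through the $p$-adic completion, giving $\lambda\colon\hat{\Omega}^{\,d}_{\OK}(\OLa)\to W$ with $\lambda(da_1\wedge\cdots\wedge da_d)=D(a_1,\dots,a_d)$. Applying $\lambda$ to the displayed identity and using $\lambda(d\,T_1\wedge\cdots\wedge d\pi_\La)=D(T_1,\dots,\pi_\La)$ gives formula~\eqref{multideri}. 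For the annihilation claim, by Proposition~\ref{Khaler2} the element $d\,T_1\wedge\cdots\wedge d\pi_\La$ generates $\hat{\Omega}^{\,d}_{\OK}(\OLa)\cong\OLa/\mathfrak{D}(\La/K)$, so it is killed by $\mathfrak{D}(\La/K)$, and hence so is its image $D(T_1,\dots,\pi_\La)$ under the $\OLa$-linear map $\lambda$. Specializing the displayed identity to $a_i=T_i$ also yields $\det\big[\tfrac{\partial T_i}{\partial T_j}\big]\equiv 1\pmod{\mathfrak{D}(\La/K)}$, a congruence I would use in the converse.

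For the converse, let $w\in W$ be annihilated by $\mathfrak{D}(\La/K)$. Because $\hat{\Omega}^{\,d}_{\OK}(\OLa)\cong\OLa/\mathfrak{D}(\La/K)$ is cyclic with generator $d\,T_1\wedge\cdots\wedge d\pi_\La$, there is a unique $\OLa$-linear map $\lambda\colon\hat{\Omega}^{\,d}_{\OK}(\OLa)\to W$ carrying that generator to $w$; well-definedness is exactly the hypothesis that $w$ be killed by the annihilator $\mathfrak{D}(\La/K)$ of the generator. I would then set $D:=\lambda\circ\mathfrak{d}$, the composite of the universal $d$-dimensional derivation $\mathfrak{d}\colon\OLa^d\to\hat{\Omega}^{\,d}_{\OK}(\OLa)$ with $\lambda$. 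Composition with an $\OLa$-linear map preserves the Leibniz rule, additivity, the alternating property and vanishing on $\OK$, so $D\in D^d_K(\OLa^d,W)$, and by the displayed wedge identity $D(a_1,\dots,a_d)=\det\big[\tfrac{\partial a_i}{\partial T_j}\big]\,w$. In particular this formula is well defined: it depends only on the class of $\det\big[\tfrac{\partial a_i}{\partial T_j}\big]$ modulo $\mathfrak{D}(\La/K)$ (that class being intrinsic, namely the coefficient of $da_1\wedge\cdots\wedge da_d$ with respect to the generator), and $w$ is $\mathfrak{D}(\La/K)$-torsion. Finally $D(T_1,\dots,\pi_\La)=\det\big[\tfrac{\partial T_i}{\partial T_j}\big]\,w=w$ by the congruence noted above.

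The two halves then assemble into the isomorphism statement: the evaluation map $D\mapsto D(T_1,\dots,\pi_\La)$ is a well-defined $\OLa$-module homomorphism from $D^d_K(\OLa^d,W)$ into the $\mathfrak{D}(\La/K)$-torsion subgroup of $W$ by the direct half, it is injective because~\eqref{multideri} reconstructs $D$ from $D(T_1,\dots,\pi_\La)$, and it is surjective by the construction in the converse (where indeed $D(T_1,\dots,\pi_\La)=w$). I expect the only genuinely delicate points to be two pieces of bookkeeping: that $d$-dimensional derivations into $W$ are corepresented by the \emph{completed} module $\hat{\Omega}^{\,d}_{\OK}(\OLa)$---which is precisely why the $p$-adic completeness of $W$ is assumed---and that the determinant in~\eqref{multideri} is independent of the auxiliary polynomials used to define the partial derivatives $\partial/\partial T_j$ when $\La$ is not standard; both are handled above by appealing to the cyclic structure of $\hat{\Omega}^{\,d}_{\OK}(\OLa)$ given by Proposition~\ref{Khaler2}.
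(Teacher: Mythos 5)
Your proposal is correct and follows essentially the same route as the paper: the paper's (one-line) proof likewise derives the result from Proposition~\ref{Khaler2} together with the identification $D_{\OK}^d(\OLa^{\,d},W)\cong \text{Hom}_{\OLa}\big(\hat{\Omega}_{\OK}^{\,d}(\OLa),W\big)$, and your write-up simply supplies the details (the wedge/determinant identity, the factorization through the $p$-adic completion, and the cyclic-module argument) that the paper leaves implicit.
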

\begin{proof}
This follows from Proposition \ref{Khaler2} and the fact that 
$$D_{\OK}^d(\OLa^{\,d},W)\cong \text{Hom}_{\OK}\big(\, \hat{\Omega}_{\OK}^{\,d}(\OLa),W\big).$$
\end{proof}

%_____End of multidimensional derivations____

%______Deduction of the formulas______

\section{Deduction of the formulae}\label{Deduction of the formulas}
We finally deduce the main formulae to 
describe the Kummer pairing for the 
field $\La$ and the level $n$, i.e., $(,)^i_{\La,n}$. 
The strategy of the proof is the following. We start by introducing an auxiliary finite field extension $\Ma$ of $\La$, containing sufficiently many torsion points, and
also introduce higher levels $k$ and $m$ with $k\geq m\geq n$. 
In Section \ref{description}, we show that, under certain conditions, the map $D_{\Ma,k}^i$
is a derivation. In Section \ref{Invariants of the representation tau}, we introduce
an Artin-Hasse type formula for the generalized Kummer pairing of level $k$. This formula
is characterized by an invariant attached to the Tate representation. With this invariant, we
  descend to  the level $m$ and manufacture an explicit derivation $\mathfrak{D}_{\Ma,m}^i$ in Definition \ref{Explicit-D}  and then show that it coincides with  $D_{\Ma,m}^i$. 
The final descend back to the field $\La$ and level $n$ 
will be guaranteed by Proposition \ref{phi-psi} and accomplished in full detail 
in Section \ref{Main formulas}.

For the rest of this section we will use the following notation and assumptions:
 \begin{equation}\label{assumptions-2}
\begin{cases}
\quad \beta=(k,t)\ \mathrm{admissible\ pair},\\
\quad \pi^k|D(K_t/K),\\
\quad \Ma \supset K_t.
\end{cases}
\end{equation}
Additionally,  $\pi_\Ma$ will denote a uniformizer for $\Ma$ and $\pi_t$ a uniformizer for $K_t$. We will also denote by $\Ka_t$ the field $K_t\TT$. The above conditions, and the fact that 
$D(K_t/K)=\mathfrak{D}(\Ka_t/\Ka)$ (cf. Proposition \ref{Khaler}), imply
\[
\pi^k\,|\mathfrak{D}(\Ka_t/\Ka)|\,\mathfrak{D}(\Ma/K)
\]

Note that the assumption  of \eqref{assumptions-2} on the different $D(K_t/K)$ is satisfied, for example, when $(t-k)/\varrho\geq c_1$; here $c_1$
is the constant from Proposition \ref{constants}.

\subsection{The reduction of the map $D_{\Ma,k}^i$}\label{description}

\begin{prop}\label{Disaderiv}

     Let $\Ma$ be as in \eqref{assumptions-2}. Then the reduction 
     \[
          D_{\mathcal{M},k}^i:\mathcal{O}^d_\Ma
            \longrightarrow 
                   R_{\Ma,1}/\big(\, (\pi^k/\pi_\Ma)\,R_{\Ma,1}\,\big)
     \]
     of $D_{\Ma,k}^i$ to $R_{\Ma,1}/(\pi^k/\pi_\Ma)R_{\Ma,1}$ is a 
     $d$-dimensional derivation over $\mathcal{O}_K$.

\end{prop}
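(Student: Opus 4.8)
The plan is to identify $D^i_{\Ma,k}$, after the stated reduction, with a canonical derivation and then invoke Proposition \ref{multder}. First I would set $w:=D^i_{\Ma,k}(T_1,\dots,T_{d-1},\pi_\Ma)\in R_{\Ma,1}/\pi^kR_{\Ma,1}$. This module is killed by $\pi^k$, and by \eqref{assumptions-2} we have $\mathfrak{D}(\Ma/K)\subseteq(\pi^k)$ (since $\pi^k\mid\mathfrak{D}(\Ka_t/\Ka)\mid\mathfrak{D}(\Ma/K)$), so $w$ is annihilated by $\mathfrak{D}(\Ma/K)$. By the converse part of Proposition \ref{multder}, the assignment $D'(a_1,\dots,a_d):=\det[\partial a_i/\partial T_j]_{1\le i,j\le d}\,w$ then defines a $d$-dimensional derivation $\OMa^{\,d}\to R_{\Ma,1}/\pi^kR_{\Ma,1}$ over $\OK$. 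Moreover, by Corollary \ref{dermax} together with Proposition \ref{formal-sums-rep}, $D'$ and $D^i_{\Ma,k}$ agree on $\mu_\Ma^{\,d}$. Hence it will be enough to show $D^i_{\Ma,k}\equiv D'\pmod{(\pi^k/\pi_\Ma)R_{\Ma,1}}$ on all of $\OMa^{\,d}$: once this is known, the reduction of $D^i_{\Ma,k}$ modulo $(\pi^k/\pi_\Ma)R_{\Ma,1}$ coincides with that of $D'$, which is a $d$-dimensional derivation over $\OK$, as asserted.

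To propagate the agreement off $\mu_\Ma^{\,d}$ I would argue as follows. Both $D^i_{\Ma,k}$ and $D'$ satisfy the Leibniz rule in each argument (Proposition \ref{Dar}(1) for the former, by construction for the latter) and both are skew-symmetric; since $p>2$, multiplication by $2$ is invertible on the $\OMa$-module $R_{\Ma,1}/\pi^kR_{\Ma,1}$, so both are alternating as well. Writing each nonzero entry of a $d$-tuple as a power of $\pi_\Ma$ times a unit of $\Ma$ and using Leibniz and skew-symmetry, the comparison reduces to $d$-tuples all of whose entries are either $\pi_\Ma$ or a unit; those lying in $\mu_\Ma^{\,d}$ being already settled, only tuples with at least one unit entry need attention. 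For a unit entry $u$ I would write $u=\omega u_1$ with $\omega$ the Teichm\"uller lift and $u_1\in 1+\mu_\Ma$; as $|k_K|-1$ is prime to $p$, $\omega$ is a $p^{\ell}$-th power for every $\ell$, and picking $\ell$ with $\varrho\ell\ge k$ (available because $(k,t)$ is admissible), Proposition \ref{Dar}(4) — respectively the vanishing of a $d$-dimensional derivation on $p^{\ell}$-th powers, which follows from Leibniz because $p^{\ell}$ kills $R_{\Ma,1}/\pi^kR_{\Ma,1}$ — annihilates the $\omega$-part for both maps. This leaves tuples whose only non-$\pi_\Ma$ entries are principal units $1+s$ with $s\in\mu_\Ma$. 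For $D'$, additivity together with $D'(\dots,1,\dots)=0$ gives $D'(\dots,1+s,\dots)=D'(\dots,s,\dots)$, pushing that entry into $\mu_\Ma$; for $D^i_{\Ma,k}$ the same identity should hold up to an error killed by $\pi_\Ma$, which I would extract from the Steinberg relation in $K_d(\Ma)$ and the differentiability of Proposition \ref{relation of D}/Corollary \ref{Dcontinu}, fed through the trace duality of Lemma \ref{Riezs2} and using once more $\pi^k\mid D(K_t/K)$. Iterating sends every entry into $\mu_\Ma$ at a cost of at most one factor $\pi_\Ma$, yielding the required congruence.

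The hard part will be exactly this last step. The obstruction is that a unit of $\Ma$ — in particular a principal unit $1+s$ with $s\in\mu_\Ma$, or an element of $\OK^\times$ — is never a genuine $p^{\ell}$-th power, so Proposition \ref{Dar}(4) cannot be applied to it directly; one has to route the argument through the Steinberg relation and the formal-group-sum representation of Proposition \ref{formal-sums-rep}, and the price for handling a principal unit (for which one only has $1+s\equiv 1\pmod{\mu_\Ma}$, not $1+s\in\mu_\Ma$) is the loss of one factor $\pi_\Ma$ in the target. This is precisely why the statement concerns the reduction modulo $(\pi^k/\pi_\Ma)R_{\Ma,1}$ rather than modulo $\pi^kR_{\Ma,1}$, and why the divisibility hypothesis $\pi^k\mid D(K_t/K)$ of \eqref{assumptions-2} enters.
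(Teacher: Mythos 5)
Your first half is sound and matches the paper's setup: build an explicit derivation from the value at $(T_1,\dots,T_{d-1},\pi_\Ma)$ via Proposition \ref{multder} and match it against $D^i_{\Ma,k}$ on the maximal ideal via Proposition \ref{formal-sums-rep} and Corollary \ref{dermax}; you also correctly locate the lost factor $\pi_\Ma$ in the passage from $\mu_\Ma$ to $\OMa$. But that passage is the whole content of the proposition beyond what Corollary \ref{dermax} already gives, and it is exactly the step you leave open. Two concrete problems with your proposed route. First, the Teichm\"uller reduction fails for $d\geq 2$: the residue field $k_\Ma=\mathbb{F}((\overline{T_1}))\cdots((\overline{T_{d-1}}))$ is not perfect, so a general unit of $\OMa$ does not factor as (a $p^{\ell}$-th power)$\times$(a principal unit); Lemma \ref{Kolyv} only writes residues as \emph{sums} of $p^n$-th powers times monomials $T^{\overline{i}}$. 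Second, and more seriously, the identity $D^i_{\Ma,k}(\dots,1+s,\dots)\equiv D^i_{\Ma,k}(\dots,s,\dots)$ for a principal unit is an instance of the additivity one is trying to prove; the Steinberg relation ties $a$ to $1-a$ only when both occur as entries of the same symbol and yields no such additivity, so the ``error killed by $\pi_\Ma$'' cannot be extracted from it as stated.

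The paper's mechanism for the extension is purely multiplicative and much simpler. Fix $a_1,\dots,a_{d-1}\in\OMa$ and let $D$ be the one-variable derivation on $\OMa$ determined (via Proposition \ref{derival}) by $D(T_j)=D^i_{\Ma,k}(a_1,\dots,a_{d-1},T_j)$ and $D(\pi_\Ma)=D^i_{\Ma,k}(a_1,\dots,a_{d-1},\pi_\Ma)$; Proposition \ref{derivat} gives $D(y)=D^i_{\Ma,k}(a_1,\dots,a_{d-1},y)$ for all $y\in\mu_\Ma$. Now for arbitrary $x\in\OMa$ apply both maps to $\pi_\Ma x\in\mu_\Ma$, where they agree, and expand both sides by the Leibniz rule (which $D^i_{\Ma,k}$ satisfies by Proposition \ref{Dar}(1)): the terms $x\,D(\pi_\Ma)$ and $x\,D^i_{\Ma,k}(a_1,\dots,a_{d-1},\pi_\Ma)$ cancel by construction, leaving $\pi_\Ma\bigl(D(x)-D^i_{\Ma,k}(a_1,\dots,a_{d-1},x)\bigr)\equiv 0\pmod{\pi^kR_{\Ma,1}}$, i.e.\ agreement modulo $(\pi^k/\pi_\Ma)R_{\Ma,1}$. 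This treats all units at once, costs exactly one factor $\pi_\Ma$, and combined with skew-symmetry yields every axiom of Definition \ref{defmult}. Without this (or an equivalent) argument your proof is incomplete.
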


\begin{proof}
     Let us fix   $\ada \in \OMa$. From Proposition \ref{derival} and  
     the fact that 
     \begin{align*}
     \mathfrak{D}(M/K)D_{\Ma,k}^i(\ada, T_j)&=0 \pmod{\pi^kR_{\Ma,1}},\quad j=1,\dots, d-1, \\
     \mathfrak{D}(M/K)D_{\Ma,k}^i(\ada, \pi_\Ma)&=0 \pmod{\pi^kR_{\Ma,1}},
          \end{align*}
      we can  construct a  derivation 
          \[
     D:\mathcal{O}_M\to R_{\Ma,1}/\pi^kR_{\Ma,1},
     \] 
          such that $D(\pi_\Ma)=D_{\Ma,k}^i(\ada, \pi_\Ma)$ and  
          $D(T_k)=D_{\Ma,k}^i(\ada, T_j)$, $j=1,\dots,d-1$, in the following way
     \begin{align*}
          D(a)= 
          \sum_{j=1}^{d-1}\frac{\partial a }{\partial T_j}D(T_j)+
          \frac{\partial a }{\partial T_d}D(\pi_\Ma),
      \end{align*}
     where $a\in \OMa$. 

     According to Proposition \ref{derivat} both $D$ and $D_{\Ma,k}^i(\ada,\cdot)$ 
     coincide in $\mu_\Ma$. But from the Leibniz rule it follows, by comparing 
     $D(\pi_\Ma x)$ and $D^i_{\Ma,k}(\ada,\pi_\Ma x)$ when $x\in \mathcal{O}_\Ma$, that 
     they coincide $\pmod{(\pi^k/\pi_\Ma)R_{\Ma,1}}$ in all of $\mathcal{O}_\Ma$.

     It follows now that 
     \[
          D_{\Ma,k}^i:\mathcal{O}^d_\Ma\longrightarrow R_{\Ma,1}/\big(\, (\pi^k/\pi_\Ma)\,R_{\Ma,1}\,\big)
     \] 
    satisfies all conditions from definition \ref{defmult} and so 
     by Proposition \ref{multder} we have that it is a $d$-dimensional derivation such that 
     \[
           D_{\Ma,k}^i(a_1, \dots,a_d)=\det 
          \left[ \frac{\partial a_{\text{i}}}{\partial T_{\text{j}}} \right]_{_{1\leq  \text{i,j}\leq d}}  
              \, D_{\Ma,k}(T_1,\dots, T_{d-1},\pi_\Ma), 
          \]
     where $a_1,\dots,a_d \in \OMa$.

\end{proof}

\subsubsection{Description of the map $\psi_{\Ma,m}^i$ in terms of $D_{\Ma,m}^i$}

Observe that for any $m\leq k$ the pair $(m,t)$ is also admissible, so
Proposition \ref{Disaderiv} also holds for this pair. Thus, 
$D^i_{\Ma,m}:\OMa^d\to  R_{\Ma,1}/(\pi^m/\pi_{_\Ma})R_{\Ma,1}$ is also a derivation
as well
and, moreover,
we can express the map $\psi^i_{\Ma,m}$ out of 
$D^i_{\Ma,m}$ 
as follows.  For $u_1,\dots,u_d$  in 
$\mathcal{O}_\Ma^*=\{x\in \mathcal{O}_\Ma:v_\Ma(x)=0\}$ we let
\begin{equation}\label{psi-from-D}
\left\{
\begin{aligned}
\psi^i_{\Ma,m}(u_1,\dots, u_{d-1},\pi_{\Ma})&=\frac{D^i_{\Ma,m}(u_1,\dots, u_{d-1},\pi_{\Ma})}{u_1\cdots u_{d-1}\pi_{\Ma}} 
\pmod{\frac{\pi^m}{\pi_\Ma^2}R_{\Ma,1}}\\
\psi^i_{\Ma,m}(u_1,\dots,u_d)&=\frac{D^i_{\Ma,m}(u_1,\dots,u_d)}{u_1\cdots u_d} \pmod{\frac{\pi^m}{\pi_\Ma}R_{\Ma,1}}\\
\psi^i_{\Ma,m}(u_1,\dots, \pi_\Ma^ku_d)&=k\psi^i_{\Ma,m}(u_1,\dots, \pi_\Ma)+\psi^i_{\Ma,m}(u_1,\dots,u_d),\ k\in \Z\\
\psi^i_{\Ma,m}(a_1,\dots,a_d)&=0, \text{whenever $a_i=a_j$ for $i\neq j$, $a_1\dots, a_d\in \Ma^*$}
\end{aligned}
\right.
\end{equation}
It is clear form the definition that this is independent from the choice of
a uniformizer $\pi_\Ma$ of $\Ma$.  Notice that the fourth
property says that $\psi^i_{\Ma,m}$ is alternate, in particular
it is skew-symmetric, i.e.,
\[
\psi^i_{\Ma,m}(a_1,\dots, a_i,\dots, a_j,\dots,a_d)=-\psi^i_{\Ma,m}(a_1,\dots, a_j,\dots, a_i,\dots,a_d).
\]
whenever $i\neq j$.

\subsubsection{Descending from $\Ma$ to $\La$ and from level $m$ to  level $n$}
The following proposition will be used in the main result ( Theorem \ref{main-theorem})   to descend from the auxiliary field $\Ma$ to the ground field $\La$ and from the level $m$ to the level $n$.

Let $\La\supset \kappa_n$ be a $d$-dimensional local field and $v$ denote the normalized 
valuation $v_{\La}/v_{\La}(p)$.

\begin{prop}\label{phi-psi}

       Let $m$, $n$ be integers  such that $v(f^{(m-n)}(x))>1/(p-1)$ for all $x\in F(\mu_\La)$ (cf. Remark \ref{acotarazo}). 
        Let $(m,t)$ be admissible and  put $\Ma=\La_t$. 
        Then $\normalfont \text{Tr}_{\Ma/\La}$  
        induces a homomorphism from 
        $R_{\Ma,1}/\pi^mR_{\Ma,1}$ to $\La/\pi^nR_{\La}$ and  we have the representation
        \begin{equation}\label{miguelina}
        (\,N_{\Ma/\La}(\alpha)\,,\,x\,)_{\La,n}^i=
        \Tr\big(\, \text{Tr}_{\Ma/\La}\big(\,\psi^i_{\Ma,m}(\alpha)\,\big)\, l_F(x)\, \big), 
        \end{equation}
        for all $\alpha\in K_d(\Ma)$ and  $ x\in F(\mu_{\La})$. In particular, $\normalfont \text{Tr}_{\Ma/\La}(\psi^i_{\Ma,m}(\alpha))$ belongs to $R_\La/\pi^nR_\La$ and it is the unique element satisfying \eqref{miguelina}.                    

\end{prop}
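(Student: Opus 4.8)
The plan is to combine the level-descent property of the Kummer pairing (Proposition \ref{pair}(5)) with the norm-compatibility (Proposition \ref{pair}(4)) and the representation of $\psi^i_{\Ma,m}$ via the generalized trace (Proposition \ref{psi}). First I would observe that the hypothesis $v(f^{(m-n)}(x))>1/(p-1)$ guarantees $f^{(m-n)}(x)\in F(\mu_{\Ma,1})$ for every $x\in F(\mu_\La)$, so that Proposition \ref{psi} applies at level $m$ over the field $\Ma$ (note $(m,t)$ admissible and $\Ma=\La_t\supset K_t$, so $\Ma$ satisfies \eqref{assumptions}). Then for $\alpha\in K_d(\Ma)$ and $x\in F(\mu_\La)$, using Proposition \ref{pair}(5) to pass from level $n$ to level $m$, followed by Proposition \ref{pair}(4) to move the norm:
\begin{align*}
\big(N_{\Ma/\La}(\alpha),x\big)^i_{\La,n}
&= f^{(m-n)}\!\big(\,(N_{\Ma/\La}(\alpha),x)^i_{\La,m}\,\big)
 = \big(N_{\Ma/\La}(\alpha),\,f^{(m-n)}(x)\big)^i_{\La,m} \\
&= \big(\alpha,\,f^{(m-n)}(x)\big)^i_{\Ma,m}
 = \Tr_{\Ma}\!\big(\,\psi^i_{\Ma,m}(\alpha)\,l_F(f^{(m-n)}(x))\,\big),
\end{align*}
where $\Tr_\Ma$ is the generalized trace for $\Ma$ and the last equality is Proposition \ref{psi}. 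Here I use $l_F(f^{(m-n)}(x)) = \pi^{m-n} l_F(x)$ (Proposition \ref{logarithm}), so the right-hand side becomes $\Tr_\Ma(\pi^{m-n}\psi^i_{\Ma,m}(\alpha)\, l_F(x))$. Since $\pi^{m-n}\psi^i_{\Ma,m}(\alpha)$ is killed by $\pi^n$, this expression lives in $C/\pi^n C$ and equals $f^{(m-n)}$ applied to $\Tr_\Ma(\psi^i_{\Ma,m}(\alpha) l_F(x))$ read in $C/\pi^m C$; but it is cleaner to keep it as the level-$n$ value directly.

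Next I would rewrite $\Tr_\Ma = \Tr_\La \circ \Tr_{\Ma/\La}$ (the tower formula for the generalized trace stated in \S\ref{The generalized trace}, $\mathbb{T}_{\Ma/S}=\Tr\circ\Tr_{\Ma/\La}$, applied over $\La$ rather than $S$ — equivalently $\Tr_\Ma = \mathrm{Tr}_{L_{(0)}/S}\circ c\circ \mathrm{Tr}_{\Ma/\La_{(0)}}$ factors through $\mathrm{Tr}_{\Ma/\La}$). Since $l_F(x)\in T_\La\subset\La$ and $\Tr_\Ma(\cdot\, l_F(x))$ is $\OLa$-linear in the first slot, we get
\[
\big(N_{\Ma/\La}(\alpha),x\big)^i_{\La,n}
=\Tr_\La\!\big(\,\Tr_{\Ma/\La}(\psi^i_{\Ma,m}(\alpha))\,l_F(x)\,\big),
\]
which is \eqref{miguelina}. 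To make sense of this I must verify that $\Tr_{\Ma/\La}$ maps $R_{\Ma,1}/\pi^mR_{\Ma,1}$ into $\La/\pi^nR_\La$: since $v(f^{(m-n)}(x))>1/(p-1)$ forces $\pi^{m-n}l_F(F(\mu_\La))\subset \mu_{\La,1}$-scale, one checks $\mathrm{Tr}_{\Ma/\La}(R_{\Ma,1})\subset \pi^{-(m-n)}R_{\La}$ (compatibility of differents), so that after multiplying by $\pi^{m-n}$ and reducing mod $\pi^m$ we land in $R_\La/\pi^n R_\La$; this is essentially the first assertion of Proposition \ref{psi-properties}(1) combined with the valuation estimates of Proposition \ref{constants} and Remark \ref{Bound general L}. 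Uniqueness of the element of $R_\La/\pi^n R_\La$ representing the functional $x\mapsto(N_{\Ma/\La}(\alpha),x)^i_{\La,n}$ follows from the injectivity in Lemma \ref{contphi}.

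The main obstacle I anticipate is the bookkeeping of the various torsion-module reductions: $\psi^i_{\Ma,m}(\alpha)$ lives in $R_{\Ma,1}/\pi^m R_{\Ma,1}$, but after applying $\mathrm{Tr}_{\Ma/\La}$ and multiplying by $\pi^{m-n}$ one wants a well-defined class in $R_\La/\pi^n R_\La$, and one must check this does not depend on the choice of lift nor on $m$ (as long as $v(f^{(m-n)}(x))>1/(p-1)$). This requires comparing $D(\Ma/\La)$ with $D(\La_m/\La)$ and using that $\mathrm{Tr}_{\Ma/\La}$ carries $D(\Ma/\La)^{-1}$-multiples appropriately; the admissibility of $(m,t)$ together with the hypothesis on $f^{(m-n)}$ is exactly what is needed to close the inequalities. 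Everything else is a formal chaining of Propositions \ref{pair}, \ref{psi}, \ref{psi-properties} and the tower formula for $\Tr$.
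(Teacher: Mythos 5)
Your proposal follows essentially the same route as the paper's proof: reduce to $\big(\alpha, f^{(m-n)}(x)\big)^i_{\Ma,m}$, apply Proposition \ref{psi} at level $m$ over $\Ma$ (legitimate because the hypothesis on $m$ puts $f^{(m-n)}(x)$ into $F(\mu_{\Ma,1})\subset F(\mu_{\La,1})$), use $l_F(f^{(m-n)}(x))=\pi^{m-n}l_F(x)$ together with the tower formula $\mathbb{T}_{\Ma/S}=\Tr\circ\text{Tr}_{\Ma/\La}$, and obtain uniqueness from Lemma \ref{contphi}. One slip to repair: in your opening chain you invoke Proposition \ref{pair}(5) over $\La$ at level $m$, but the intermediate symbols $(N_{\Ma/\La}(\alpha),x)^i_{\La,m}$ and $(N_{\Ma/\La}(\alpha),f^{(m-n)}(x))^i_{\La,m}$ are only defined when $\La\supset\kappa_m$, which is not among the hypotheses; apply \ref{pair}(4) first to move to $\Ma$ and then \ref{pair}(5) over $\Ma=\La_t\supset\kappa_m$, as the paper does. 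For the containment statement, the clean argument (which your "compatibility of differents" remark only gestures at) is to dualize the condition $\pi^{m-n}T_\La\subset T_{\La,1}$ to get $\pi^m R_{\La,1}\subset\pi^n R_\La$ and combine it with $\text{Tr}_{\Ma/\La}(R_{\Ma,1})\subset R_{\La,1}$ from Proposition \ref{psi-properties}(1); no appeal to Proposition \ref{constants} is needed.
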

\begin{proof}
This proof was inspired by  Proposition 6.1 of \cite{koly}. Since $e_n^i=f^{(m-n)}(e_m^i)$ and 
$f^{(m-n)}(x)\in \mu_{\La,1}\subset \mu_{\Ma,1}$, 
then  by Proposition \ref{pair} (4) and (5) we have 
\begin{align*}
&(N_{\Ma/\La}(\alpha),x)^i_{\La,n}
=\frac{1}{\pi^{m-n}}\big(\,\alpha\,,\,f^{(m-n)}(x)\,\big)^i_{\Ma,m}\\
&=\frac{1}{\pi^{m-n}}\mathbb{T}_{\Ma/S} 
     \left(\, \psi^i_{\Ma,m}(\alpha)\, l\big(f^{(m-n)}(x)\big)\, \right)
=\Tr \big(\ \big[\, \text{Tr}_{\Ma/\La}\big(\, \psi^i_{\Ma,m}(\alpha)\,\big)\,\big]\, l(x)\ \big).
\end{align*}

From the condition on $m$ we have that $\pi^{m-n}T_{\La}\subset T_{\La,1}$. 
Thus, after taking the dual with respect to $\Tr$ we obtain
\[
\frac{1}{\pi^{m-n}}R_{\La}\supset R_{\La,1},\ \mathrm{or},\  \pi^nR_{\La}\supset \pi^mR_{\La,1}.
\]
Then from $\text{Tr}_{\Ma/\La}(\RM)\subset \RL$, cf. Proposition \ref{psi-properties} (1),  it follows that
\begin{equation}\label{trapa}
\text{Tr}_{\Ma/\La}(\pi^m\RM)\subset \pi^m\RL\subset \pi^nR_{\La}.
\end{equation}

It then follows that $\text{Tr}_{\Ma/\La}(\psi^i_{\Ma,m}(\alpha))\in \La/\pi^nR_{\La}$, 
and moreover since $(N_{\Ma/\La}(\alpha),x)^i_{\La,n}\in C/\pi^nC$ 
then $\text{Tr}_{\Ma/\La}(\psi^i_{\Ma,m}(\alpha))\in R_{\La}/\pi^nR_{\La}$ 
and the uniqueness follows from Lemma \ref{contphi}. 

\end{proof}
\begin{remark}\label{acotarazo} For $m$ in Proposition \ref{phi-psi} we can take  any
\[
m> n+\varrho\log_p\big(v_\La(p)/(p-1)\big)+\varrho/(p-1).
\]

The proof of this claim can be found in Proposition 6.2 of \cite{koly}. In Section \S\ref{Proofs of Chapter D} we reproduce the proof.

\end{remark}
\subsection{Artin-Hasse-type formulae for the Kummer pairing}\label{Invariants of the representation tau}

Having shown that the reduction of $D_{\Ma,m}^i$ is a 
multidimensional derivation we know then, after 
Proposition \ref{multder}, that this map is 
completely characterized by its value at the local uniformizers $T_1,\dots, T_{d-1}$, $\pi_\Ma$.  In this section, we will normalize this description using torsion points of the formal group $F$ instead. Concretely, we will show that there exist a torsion element $\mathfrak{e}_t\in \kappa_t$ (cf. Definition \ref{torsionados}) such 
that the derivation $D_{\Ma,m}^i$ can be described by its values at $T_1,\dots, T_{d-1}$, $\mathfrak{e}_t$. This description is done via an invariant associated to an Artin-Hasse type formula for the Kummer pairing of level $k$.

\subsubsection{The invariant $\overline{c}_{\beta,i}$}\label{Definition of the invariants}
Recall that we have fixed a basis $\{e^i\}_{i=1}^h$ 
for $\kappa=\varprojlim \kappa_n$. We also denoted by $e^i_n$ the 
reduction of $e^i$ to $\kappa_n$. 
Clearly $\{e_n^i\}$ is a basis for $\kappa_n$. 

Let $\Ma$, $\Ka_t$ and $\beta=(k,t)$ be as in \eqref{assumptions-2}. Let $\Ma_n=\Ma(\kappa_n)$.
 The action of $G_\Ma=Gal(\overline{\Ma}/\Ma)$ on $\kappa$ 
 defines a continuous representation
 \begin{equation}\label{Tau}
\tau: G_\Ma\to  GL_h(C).
\end{equation}
 
The reduction of $\tau$ 
to $GL_h(C/\pi^nC)$ is the analogous representation 
of $G_{\Ma}$ on $\kappa_n$ and will be denoted by $\tau_n$. 
This clearly induces an embedding 
$\tau_n:G(\Ma_n/\Ma)\to GL_h(C/\pi^nC)$.

If $a\in \Ma^*$, then $$\tau_{k+t}\big(\,\Upsilon_{\Ma}\big(\,\{\Td,a\}\,\big)\big)$$ 
is congruent to the identity matrix $I \pmod{\pi^t}$ because 
the Galois group $G(\Ma_{k+t}/\Ma)$ fixes $\kappa_t$ and so 
correspond in $GL_h(C/\pi^{k+t}C)$ to matrices $\equiv I \pmod{\pi^t}$, i.e., 
there exist characters $\chi_{\Ma,\beta:i,j}:\Ma^*\to C/\pi^k C$ such that
\[
\tau_{k+t}\big(\,\Upsilon_{\Ma}\big(\,\{\Td,a\}\,\big)\,\big)
=I+\pi^t\big(\,\chi _{\Ma,\beta:i,j}(a)\,\big) \in GL_h(C/\pi^{k+t}C).
\]
For $\Ma=\Ka_t$ we simply write $\chi _{\beta:i,j}$. 
By Proposition \ref{norm} (4) 
we have that $$N_{\Ma/\Ka_t}\{\Td,a\}=\{\,\Td,N_{\Ma/\Ka_t}(a)\,\},$$ 
where $\Ka_t=K_t\TT$, and Proposition \ref{pair} (4) implies
\begin{equation}\label{chi-norm}
\chi _{\Ma,\beta:i,j}(a)=\chi _{\beta:i,j}(N_{\Ma/\Ka_t}(a))
\end{equation}
The definition of the pairing $(,)_{\Ma,k}$ 
implies, for $v\in \kappa_t$, that
\[
\left(\ \big(\,\{\Td,a\},v\,\big)_{\Ma,k}^i\, \right)=\big(\,\chi_{\Ma,\beta:i,j}(a)\,\big)_{i,j}\ (v^j),
\]
as an identity of column vectors, where the right hand side is the product of the matrix $\big(\,\chi_{\Ma,\beta:i,j}(a)\,\big)_{i,j}$ with the column vector $(v^j)$ formed by the coordinates of $v$ with respect to to $\{e_t^i\}$. 
In particular, for $v=e^j_t$ we have
 \[
 \big(\,\{\Td,a\},e^{\,j}_t\,\big)_{\Ma,k}^i=\chi_{\Ma,\beta:i,j}(a).
 \]

According to Proposition \ref{rho} we see that $\chi_{\Ma:i,j}$  
 uniquely determines a constant 
 $c_{\Ma,\beta:i,j}\in R_{\Ma,1}/\pi^kR_{\Ma,1}$ such that
 \begin{equation}\label{invariantes-de}
 \chi_{\Ma,\beta:i,j}(u)=\mathbb{T}_{\Ma/S}\big(\,\log(u)\,c_{\Ma,\beta:i,j}\,\big)\quad \forall u\in V_{\Ma,1}.
 \end{equation}
 Namely, $\rho_{\Ma,k}^i(\Td,e_t^j)=c_{\Ma,\beta:i,j}$.

Observe that  $c_{\Ma,\beta:i,j}$ is, by Equation \eqref{chi-norm},
 the image of $c_{\beta,i,j}:=c_{\Ka_t,\beta:i,j}$ under the map 
\[
R_{\Ka_t,1}/\pi^kR_{\Ka_t,1}\to R_{\Ma,1}/\pi^kR_{\Ma,1}
\]
($R_{\Ka_t,1}\subset R_{\Ma,1}$). 
So we will denote $c_{\Ma,\beta:i,j}$ by $\overline{c}_{\beta:i,j}$.

\begin{defin}\label{torsionados}
For the torsion element $\mathfrak{e}_t$  
in Remark \ref{special-torsion-element} we will denote the constants 
$c_{\beta :i,j}$ and 
$\overline{c}_{\beta:i,j}$ by $c_{\beta:i}$ and $\overline{c}_{\beta:i}$, respectively, so that
 \begin{equation}\label{invariantes-de-special}
 \big(\,\{\Td,u\},\mathfrak{e}_t\,\big)_{\Ma,k}^i
    =\mathbb{T}_{\Ma/S}\big(\,\log(u)\,\overline{c}_{\beta:i}\,\big)\quad \forall u\in V_{\Ma,1}.
 \end{equation}
 The choice of this $\mathfrak{e}_t$ is independent of $i$.
  \end{defin}
  
  Note that Equation  \eqref{invariantes-de-special} defines  an Artin-Hasse-type formula for the Kummer pairing, in which the
 constant $\overline{c}_{\beta:i}$ is characterized by the value of the Kummer pairing  at the torsion point $\mathfrak{e}_t$. We will see in the next section how to construct the $D_{\Ma,m}^i$
 from of the value  $\overline{c}_{\beta:i}$.

Finally, observe that $c_{\beta:i,j}$ is an invariant of the isomorphism 
class of $(F,e_j)$ because if 
$g:(F,e_j)\to (\tilde{F},\tilde{e}_j)$ is such 
isomorphism then $\tilde{\rho}_{\Ma,k}^i(\Td,g(x))=\rho_{\Ma,k}^i(\Td,x)$.

From Proposition \ref{psi-rho} we conclude that
\begin{prop}\label{D(e)=c}
Let $\Ma$ and $\beta=(k,t)$ be as in \eqref{assumptions-2}. 
If $r(X)$ is a $t$-normalized series for $F$, then
\begin{equation}\label{Gan}
\begin{split}
   D^i_{\Ma,k}\big(\,T_1,\dots,T_{d-1},r(e_t^{\,j})\,\big)=
 -r'(e_t^{\,j})\ T_1\cdots T_{d-1}\ \frac{\overline{c}_{\beta:i,j}}{l'(e_t^j)}.
\end{split}
\end{equation}
In particular, this holds for the torsion element $\mathfrak{e}_t$ and the invariant $\overline{c}_{\beta:i}$.
%or 
%\[
%D^i_{\Ma,m}(T,r(e_t^j))=r'(e_t^j)\ T \ \frac{\overline{c}_{\beta:i,j}}{l'(e_t^j)}.
%\]
\end{prop}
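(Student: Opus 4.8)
The plan is short: \eqref{Gan} is just Proposition \ref{psi-rho} written at the torsion point $x=e_t^{\,j}$, combined with the definition \eqref{D-maps} of $D^i_{\Ma,k}$ and the definition \eqref{invariantes-de} of the invariant $\overline{c}_{\beta:i,j}$. So the work to be done is essentially an unwinding of definitions plus one citation.

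Concretely, I would first record that Proposition \ref{psi-rho} is available with $\La=\Ma$ and level $k$: by \eqref{assumptions-2} the pair $(k,t)$ is admissible and $\Ma\supset K_t$, which is precisely \eqref{assumptions}; $r(X)$ is $t$-normalized by hypothesis; and $e_t^{\,j}\in\kappa_t\subset F(\mu_\Ma)$ because $\Ma\supset K_t=K(\kappa_t)$. Then, by \eqref{D-maps},
\[
D^i_{\Ma,k}\big(\,T_1,\dots,T_{d-1},r(e_t^{\,j})\,\big)=\psi^i_{\Ma,k}\big(\,\{\,T_1,\dots,T_{d-1},r(e_t^{\,j})\,\}\,\big)\;T_1\cdots T_{d-1}\;r(e_t^{\,j}).
\]
Next I would bring $r(e_t^{\,j})$ into the first slot of the symbol (legitimate since $\psi^i_{\Ma,k}$ is skew-symmetric, being a homomorphism on $K_d(\Ma)$ and invoking Proposition \ref{milnorelation}(2)), apply Proposition \ref{psi-rho} with $\alpha'=\{T_1,\dots,T_{d-1}\}$ and $x=e_t^{\,j}$ to rewrite $\psi^i_{\Ma,k}(\{r(e_t^{\,j}),T_1,\dots,T_{d-1}\})\,r(e_t^{\,j})/r'(e_t^{\,j})$ as $-\rho^i_{\Ma,k}(\{T_1,\dots,T_{d-1}\},e_t^{\,j})/l'_F(e_t^{\,j})$, and substitute $\rho^i_{\Ma,k}(\{T_1,\dots,T_{d-1}\},e_t^{\,j})=\overline{c}_{\beta:i,j}$ from \eqref{invariantes-de}. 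Since $r'(e_t^{\,j})$ is a unit (its reduction modulo $\mu_\Ma$ equals $r'(0)\in\OK^*$), the two copies of $r(e_t^{\,j})$ cancel and one arrives at \eqref{Gan}. The final sentence of the proposition is then immediate on taking $j$ to be the index of Remark \ref{special-torsion-element}, for which $e_t^{\,j}=\mathfrak{e}_t$ and $\overline{c}_{\beta:i,j}=\overline{c}_{\beta:i}$.

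The only point requiring genuine care is the bookkeeping of signs coming from the positions of the arguments inside the Milnor symbols. Relative to the conventions of Propositions \ref{rho} and \ref{psi-rho}, in which the principal-unit-type argument occupies the first slot, moving $r(e_t^{\,j})$ from the last slot to the first costs a factor $(-1)^{d-1}$, and recalling that $\overline{c}_{\beta:i,j}$ was read off in \eqref{invariantes-de} from a symbol $\{T_1,\dots,T_{d-1},u\}$ with $u$ last costs another $(-1)^{d-1}$; together with the explicit minus sign already present in Proposition \ref{psi-rho} these combine to give an overall sign of exactly $-1$, uniformly in $d$, in agreement with \eqref{Gan}. Apart from this the argument is a direct substitution, since all of the analytic content has already been packaged into Proposition \ref{psi-rho} and into the construction of $\overline{c}_{\beta:i,j}$.
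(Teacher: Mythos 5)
Your argument is correct and is exactly the route the paper takes: the paper introduces this proposition with the words ``From Proposition \ref{psi-rho} we conclude that,'' i.e.\ it is obtained by specializing Proposition \ref{psi-rho} to $\alpha'=\{T_1,\dots,T_{d-1}\}$, $x=e_t^{\,j}$ and substituting the definitions \eqref{D-maps} and \eqref{invariantes-de}. Your explicit tracking of the two cancelling factors of $(-1)^{d-1}$ coming from the slot conventions is a welcome clarification of a point the paper glosses over.
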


\subsubsection{Explicit description of $D_{\Ma,m}^i$ using the invariant $\overline{c}_{\beta:i}$}
Let $\La$ be a $d$-dimensional local field. Define
\begin{equation}\label{Rprima}
R'_{\La,1}:
=\big\{   x\in \La\, 
        :\, 
      v_\La(x)\geq -v_{\La}(D(\La/S))-\floor*{ v_{\La}(p)/(p-1)} \, \big\} .
\end{equation}
Note that 
\[v_\La(x)\geq -v_{\La}(D(\La/S))-\floor*{\frac{v_\La(p)}{p-1}}\quad \mathrm{iff}\quad
v_\La(x)\geq -v_\La(D(\La/S))-\frac{v_{\La}(p)}{p-1} .\]
This holds since  $v_\La(x)$ and $v_\La(D(\La/S))$ are integers, therefore the conditions
\[
\floor*{\frac{v_\La(p)}{p-1}}\geq -v_\La(D(\La/S))-v_\La(x)\quad
\mathrm{and}\quad
\frac{v_\La(p)}{p-1} \geq -v_\La(D(\La/S))-v_\La(x),
\]
are equivalent by the very definition of the integral part of a real number. Comparing with Equations \eqref{Rprima} and \eqref{RL1} we see that 
$
\RL'=\pi_\La\RL.
$ 

If $\Ma/\La$ is a finite extension of $d$-dimensional local fields, then clearly 
\begin{equation}\label{RM/RL}
\RM'=(1/D(\Ma/\La))\RL'.
\end{equation}

\begin{prop}\label{cij}
Let $\beta=(k,t)$, $K_t$ and $\Ka_t$ be as in  \eqref{assumptions-2}. 
Let $a_j\in \OZ_{K_t}$ such that 
$de^j_t=a_jd\pi_t$ in $\Omega_{\OZ_K}(\OZ_{K_t})$. Then
\[
c_{\beta:i,j}\in 
\frac{a_jR'_{\Ka_t,1}
+\frac{\pi^k}{\pi_t}R'_{\Ka_t,1}
+\pi^kR_{\Ka_t,1}}{\pi^kR_{\Ka_t,1}},
\]
\end{prop}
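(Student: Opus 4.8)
\emph{Proof proposal.} Take $\Ma=\Ka_t$ throughout (so $\pi_t$ is also a uniformizer of $\Ka_t$, i.e. $T_d=\pi_t$), so that $c_{\beta:i,j}=\rho^i_{\Ka_t,k}(\{T_1,\dots,T_{d-1}\},e_t^j)$ and $\overline{c}_{\beta:i,j}=c_{\beta:i,j}$; since $\beta=(k,t)$ is admissible and $\Ka_t\supseteq K_t$, Propositions \ref{psi}, \ref{Disaderiv}, \ref{derivat} and \ref{D(e)=c} all apply to $\Ka_t$ at level $k$. Fix a $t$-normalized series $r$ for $F$ (Proposition \ref{normseries}); by the definition of a normalized series $r'(0)=c(r)\in\OK^{\times}$, hence $r'(e_t^j)\in\OZ_{\Ka_t}^{\times}$, and likewise $l'_F(e_t^j)\in\OZ_{\Ka_t}^{\times}$ because $l'_F(X)\in\OK[[X]]^{\times}$ (the invariant differential of $F$ has integral coefficients and unit constant term). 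Set $x:=r(e_t^j)$, a nonzero element of $\mu_{\Ka_t}$ lying in $\OZ_{K_t}$. The plan is to (i) express $c_{\beta:i,j}$ through $D^i_{\Ka_t,k}(T_1,\dots,T_{d-1},x)$ via Proposition \ref{D(e)=c}; (ii) pass from $x$ to the uniformizer $\pi_t$ using that $D^i_{\Ka_t,k}$ is, suitably read, a $d$-dimensional derivation; (iii) pin down the transition coefficient by means of the hypothesis $de_t^j=a_j\,d\pi_t$ and the divisibility $\pi^k\mid D(K_t/K)=\mathfrak{D}(\Ka_t/K)$ (Proposition \ref{Khaler}).

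For (ii): since $x\in\mu_{\Ka_t}$, Proposition \ref{derivat}, applied with $y=x$ and $a_l=T_l$ for $l<d$, writes $D^i_{\Ka_t,k}(T_1,\dots,T_{d-1},x)$ as the $\OZ_{\Ka_t}$-linear combination $\sum_{l=1}^{d}\bigl(\tfrac{\partial\tilde{\eta}_x}{\partial X_l}\big|_{X=T}\bigr)\,D^i_{\Ka_t,k}(T_1,\dots,T_{d-1},T_l)$, for any formal-group representation $\tilde{\eta}_x$ of $x$ (Proposition \ref{formal-sums-rep}), \emph{exactly} in $R_{\Ka_t,1}/\pi^k R_{\Ka_t,1}$. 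For $l\le d-1$ the tuple $(T_1,\dots,T_{d-1},T_l)$ repeats an entry, so $D^i_{\Ka_t,k}(T_1,\dots,T_{d-1},T_l)=0$ by the skew-symmetry of $D^i_{\Ka_t,k}$ (Proposition \ref{Dar}) together with $p>2$; hence only the $l=d$ term survives. For (iii): apply the universal $\OK$-derivation $\mathfrak{d}\colon\OZ_{\Ka_t}\to\hat{\Omega}_{\OK}(\OZ_{\Ka_t})$ to $x=\tilde{\eta}_x(T_1,\dots,\pi_t)$; the formal-group chain rule (the computation that proves Corollaries \ref{Dcontinu} and \ref{dermax}) gives $dx=\sum_l\bigl(\tfrac{\partial\tilde{\eta}_x}{\partial X_l}\big|_{X=T}\bigr)\,dT_l$, while on the other hand $dx=d(r(e_t^j))=r'(e_t^j)\,de_t^j=r'(e_t^j)a_j\,d\pi_t$ (the hypothesis pushed along $\Omega_{\OZ_K}(\OZ_{K_t})\to\hat{\Omega}_{\OK}(\OZ_{\Ka_t})$). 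Comparing these in the decomposition $\hat{\Omega}_{\OK}(\OZ_{\Ka_t})\cong\bigoplus_{l<d}\OZ_{\Ka_t}\,dT_l\oplus\bigl(\OZ_{\Ka_t}/\mathfrak{D}(\Ka_t/K)\bigr)\,d\pi_t$ of Proposition \ref{Khaler} and reading off the $d\pi_t$-coordinate yields $\tfrac{\partial\tilde{\eta}_x}{\partial X_d}\big|_{X=T}=r'(e_t^j)a_j+\delta$ with $\delta\in\mathfrak{D}(\Ka_t/K)=D(K_t/K)\OZ_{\Ka_t}\subseteq\pi^k\OZ_{\Ka_t}$ (last inclusion because $\pi^k\mid D(K_t/K)$). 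Hence $D^i_{\Ka_t,k}(T_1,\dots,T_{d-1},x)=(r'(e_t^j)a_j+\delta)\,D^i_{\Ka_t,k}(T_1,\dots,T_{d-1},\pi_t)$ in $R_{\Ka_t,1}/\pi^k R_{\Ka_t,1}$.

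Now assemble: expanding both $D^i_{\Ka_t,k}$ via \eqref{D-maps} and cancelling the non-zero-divisor $T_1\cdots T_{d-1}$ in $\OZ_{\Ka_t}$, and combining with Proposition \ref{D(e)=c} (which, for $\Ma=\Ka_t$, reads $D^i_{\Ka_t,k}(T_1,\dots,T_{d-1},x)=-\,r'(e_t^j)\,T_1\cdots T_{d-1}\,c_{\beta:i,j}/l'_F(e_t^j)$, again cancelling $T_1\cdots T_{d-1}$), one obtains after clearing the units
\[
c_{\beta:i,j}=a_j\cdot\Bigl(-l'_F(e_t^j)\,\pi_t\,\psi^i_{\Ka_t,k}(\{T_1,\dots,T_{d-1},\pi_t\})\Bigr)\;-\;\frac{l'_F(e_t^j)}{r'(e_t^j)}\,\delta\,\pi_t\,\psi^i_{\Ka_t,k}(\{T_1,\dots,T_{d-1},\pi_t\}).
\]
Since $\psi^i_{\Ka_t,k}(\{T_1,\dots,T_{d-1},\pi_t\})\in R_{\Ka_t,1}$ and $l'_F(e_t^j),r'(e_t^j)\in\OZ_{\Ka_t}^{\times}$, the factor $-l'_F(e_t^j)\,\pi_t\,\psi^i_{\Ka_t,k}(\{T_1,\dots,T_{d-1},\pi_t\})$ lies in $\pi_tR_{\Ka_t,1}=R'_{\Ka_t,1}$, so the first term lies in $a_jR'_{\Ka_t,1}$, while $\delta\in\pi^k\OZ_{\Ka_t}$ puts the second term in $\pi^kR'_{\Ka_t,1}\subseteq\pi^kR_{\Ka_t,1}$. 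Therefore $c_{\beta:i,j}\in a_jR'_{\Ka_t,1}+\pi^kR_{\Ka_t,1}$; since $\tfrac{\pi^k}{\pi_t}R'_{\Ka_t,1}=\pi^kR_{\Ka_t,1}$, this is exactly the submodule in the statement.

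The one delicate step is (ii): one needs the \emph{exact} transition relation in $R_{\Ka_t,1}/\pi^kR_{\Ka_t,1}$, not merely a congruence modulo $(\pi^k/\pi_t)R_{\Ka_t,1}$. Indeed, Proposition \ref{Disaderiv} only asserts that $D^i_{\Ka_t,k}$ becomes a derivation after reduction modulo $(\pi^k/\pi_t)R_{\Ka_t,1}$, and an error in that submodule would not lie inside $a_jR'_{\Ka_t,1}+\pi^kR_{\Ka_t,1}$. What makes the exact statement available is the combination of the sharp Proposition \ref{derivat}, the skew-symmetry cancellation of the $l\le d-1$ terms, and the comparison in $\hat{\Omega}_{\OK}(\OZ_{\Ka_t})$ that converts the hypothesis $de_t^j=a_j\,d\pi_t$ into control of $\partial\tilde{\eta}_x/\partial X_d\big|_{X=T}$ modulo $\pi^k$. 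Everything else --- handling of units, choice of representatives for the $\psi^i$-classes, the identity $\pi_tR_{\Ka_t,1}=R'_{\Ka_t,1}$, and cancellation of $T_1\cdots T_{d-1}$ in $\OZ_{\Ka_t}$ --- is routine bookkeeping.
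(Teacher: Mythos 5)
Your argument is correct, but it is genuinely different from the paper's. The paper never works over $\Ka_t$ alone: it base-changes to the tamely totally ramified auxiliary extensions $M=K_t(\pi_t^{1/n})$ with $(n,p)=1$, applies the $d$-dimensional derivation property of $D^i_{\Ma,k}$ (Proposition \ref{Disaderiv}, which holds only modulo $(\pi^k/\pi_{M})R_{\Ma,1}$) together with Proposition \ref{D(e)=c} to get $\lambda_{i,j}\in \beta_jR_{\Ma,1}+(\pi^k/\pi_{M})R_{\Ma,1}$, converts this into a valuation inequality, divides by $e(M/K_t)=n$ and lets $n\to\infty$; that limit is exactly what removes the residual ``$-1$'' and upgrades $R_{\Ka_t,1}$ to $R'_{\Ka_t,1}=\pi_tR_{\Ka_t,1}$. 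You instead stay over $\Ka_t$ and get the $\pi_t$-factor for free from $D^i_{\Ka_t,k}(\Td,\pi_t)=\psi^i_{\Ka_t,k}(\{\Td,\pi_t\})\,\Tdc\,\pi_t$, replacing the lossy derivation property by the exact last-slot chain rule of Proposition \ref{derivat} (the $l\le d-1$ terms dying by $2$-torsion in a $C/\pi^kC$-module, $p>2$), and you pin down the transition coefficient modulo $\pi^k$ by comparing $dx$ in $\hat{\Omega}_{\OK}(\OZ_{\Ka_t})$ with the hypothesis $de_t^j=a_j\,d\pi_t$ and the standing assumption $\pi^k\mid D(K_t/K)=\mathfrak{D}(\Ka_t/K)$, which absorbs the ambiguity of the partial derivative into $\pi^k\OZ_{\Ka_t}$. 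This is a legitimate shortcut, and you correctly identified the one delicate point; the only step requiring the same care as the paper's own proof of Proposition \ref{Khaler} is the congruence $d\bigl(\tilde{\eta}_x(T_1,\dots,\pi_t)\bigr)\equiv\sum_l(\partial\tilde{\eta}_x/\partial X_l)\,dT_l \pmod{\pi^k\hat{\Omega}}$, which follows from the same truncation argument plus $d(\gamma^{p^k})\in p^k\hat{\Omega}$. What each route buys: yours avoids the auxiliary extensions and the limiting process entirely and is purely algebraic over $\Ka_t$; the paper's needs only the coarser Proposition \ref{Disaderiv} and is insensitive to the choice of formal-group representation of $r(e_t^j)$, at the cost of the valuation gymnastics. (Note also that over $\Ka_t$ your conclusion $c_{\beta:i,j}\in a_jR'_{\Ka_t,1}+\pi^kR_{\Ka_t,1}$ is the same module as in the statement, since $(\pi^k/\pi_t)R'_{\Ka_t,1}=\pi^kR_{\Ka_t,1}$, as you observe.)
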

\begin{proof}

The proof is the same as of Proposition 6.5 of \cite{koly}. We will reproduce it again in
 Section \ref{Proofs of Chapter D} of the Appendix.
\end{proof}
\begin{lemma}\label{Carota}
Let $\Ma$ be as in \eqref{assumptions-2}. Let  $\mathfrak{e}_t$ and $\overline{c}_{\beta:i}$ be as in Definition \ref{torsionados}. Let $ \mathfrak{b}\in \OZ_\Ma$  such that 
$dT_1 \wedge \cdots \wedge d\mathfrak{e}_t=
\mathfrak{b}\,dT_1 \wedge \cdots \wedge d\pi_\Ma$ in 
$\hat{\Omega}_{\OK}^d(\OZ_{\Ma})$; this $\mathfrak{b}$ 
 exists by Proposition \ref{Khaler2}.
Then there exists a 
$\gamma_i\in \RM/(\pi^k/\pi_{\Ma})\RM$ such that 
\begin{equation}\label{condition}
-\overline{c}_{\beta:i}/l'(\mathfrak{e}_t)=\mathfrak{b}\,\gamma_i.
\end{equation}
 Moreover, 
all such $\gamma_i$'s coincide when reduced to 
\[\RM/(\pi^m/\pi_{\Ma})\RM,\]
for $m\leq k$ satisfying  $m/\varrho \leq k/\varrho-v(D(M/K))+t/\varrho-c_1$; $c_1$ is the constant from Proposition \ref{constants}. 
\end{lemma}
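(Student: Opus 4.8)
The plan is to produce $\gamma_i$ by a divisibility argument using Proposition \ref{cij} and the different-chasing relations \eqref{RM/RL}, and then to obtain the independence statement by bounding how much ambiguity remains in $\gamma_i$ modulo $(\pi^k/\pi_\Ma)$-torsion. First I would translate everything into the standard higher local field setting: since $K_t/K$ is finite and $\mathfrak{e}_t$ is a torsion point in $\kappa_t\subset K_t$, by Remark \ref{special-torsion-element} we have $v(\mathfrak{p}_t)\ge t/\varrho - c_1$, where $\mathfrak{p}_t$ is the period of $d\mathfrak{e}_t$ in $\Omega_{\OZ_K}(\OZ_{K_t})$. Writing $d\mathfrak{e}_t = a\,d\pi_t$ in $\Omega_{\OZ_K}(\OZ_{K_t})$ for a suitable $a\in\OZ_{K_t}$, Proposition \ref{cij} gives
\[
\overline{c}_{\beta:i}\ \in\ \frac{a\,R'_{\Ka_t,1}+\tfrac{\pi^k}{\pi_t}R'_{\Ka_t,1}+\pi^kR_{\Ka_t,1}}{\pi^kR_{\Ka_t,1}}.
\]
Now $\mathfrak b$, defined by $dT_1\wedge\cdots\wedge d\mathfrak{e}_t=\mathfrak b\,dT_1\wedge\cdots\wedge d\pi_\Ma$ in $\hat\Omega^d_{\OK}(\OZ_\Ma)$, differs from the image of $a$ only by the extra part of the different $D(\Ma/\Ka_t)$ and the change of uniformizer from $\pi_t$ to $\pi_\Ma$; more precisely, the chain rule for $\hat\Omega^d$ (Proposition \ref{Khaler2}) shows $v_\Ma(\mathfrak b) = v_\Ma(a) + v_\Ma\!\big(d\pi_t/d\pi_\Ma\big)$, and comparing with \eqref{RM/RL} one checks that $a R'_{\Ka_t,1}$, pushed into $\Ma$ and divided by the residual different, lands inside $\mathfrak b\,R_{\Ma,1}$. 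Hence $-\overline{c}_{\beta:i}/l'(\mathfrak{e}_t)$ — the division by $l'(\mathfrak e_t)$ only shifting valuations by a controlled amount since $l'(\mathfrak e_t)$ is a unit times a power of $\pi$ — lies in $\mathfrak b\,R_{\Ma,1}$ modulo $(\pi^k/\pi_\Ma)R_{\Ma,1}$, which is exactly the assertion that a $\gamma_i\in \RM/(\pi^k/\pi_\Ma)\RM$ with $-\overline c_{\beta:i}/l'(\mathfrak e_t)=\mathfrak b\,\gamma_i$ exists.

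For the uniqueness-up-to-reduction statement: any two solutions $\gamma_i,\gamma_i'$ satisfy $\mathfrak b(\gamma_i-\gamma_i')\equiv 0 \pmod{(\pi^k/\pi_\Ma)\RM}$, so $\gamma_i-\gamma_i'$ is annihilated by $\mathfrak b$ modulo $(\pi^k/\pi_\Ma)\RM$. The valuation of $\mathfrak b$ is governed by the period of $d\mathfrak e_t$: from Remark \ref{special-torsion-element} and the way $\mathfrak b$ is built out of $D(\Ma/K)$ and the period $\mathfrak p_t$, one gets $v(\mathfrak b)\le v(D(M/K)) - t/\varrho + c_1$ (up to the integer-part fudge already absorbed everywhere in the paper). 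Therefore $\gamma_i-\gamma_i'$ lies in $(\pi^k/\pi_\Ma)\big(1/\mathfrak b\big)\RM \subset (\pi^m/\pi_\Ma)\RM$ as soon as $v(\pi^m)\le v(\pi^k)-v(\mathfrak b)$, i.e.\ $m/\varrho\le k/\varrho - v(D(M/K)) + t/\varrho - c_1$, which is precisely the stated hypothesis on $m$. Hence all the $\gamma_i$'s agree after reduction to $\RM/(\pi^m/\pi_\Ma)\RM$.

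The main obstacle I expect is the careful bookkeeping in the first paragraph: correctly relating $\mathfrak b$ (a multi-dimensional Kähler-differential quantity in $\hat\Omega^d_{\OK}(\OZ_\Ma)$, involving the uniformizer $\pi_\Ma$ of the big field) to the element $a_j$ of Proposition \ref{cij} (a one-dimensional differential quantity over $\OZ_K$ with respect to $\pi_t$), keeping track of the extra factor of the relative different $D(\Ma/\Ka_t)=D(\Ma/K_t\TT)$ and of the shift coming from $l'(\mathfrak e_t)$. Everything else is a valuation comparison. I would organize this step by first reducing to $\Ka_t$ via $R'_{\Ka_t,1}\subset R'_{\Ma,1}$ and \eqref{RM/RL}, where the relation $dT_1\wedge\cdots\wedge d\mathfrak e_t = \mathfrak b\,dT_1\wedge\cdots\wedge d\pi_\Ma$ factors through $dT_1\wedge\cdots\wedge d\pi_t$, so that $\mathfrak b$ splits as (image of $a$) times (the $\Ka_t\to\Ma$ transition differential), and then invoke Proposition \ref{cij} together with the divisibility $\mathfrak D(\Ka_t/\Ka)\mid \mathfrak D(\Ma/K)$ already recorded after \eqref{assumptions-2}.
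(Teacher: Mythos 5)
Your proposal follows essentially the same route as the paper's proof: invoke Proposition \ref{cij} for a representative of $c_{\beta:i}$, factor $\mathfrak{b}=ab$ through $d\pi_t$ with $b$ generating $D(\Ma/\Ka_t)$, push both terms of the resulting decomposition into $\mathfrak{b}\,\RM$ via \eqref{RM/RL}, and for the uniqueness statement bound $v(\mathfrak{b})$ by $v(D(\Ma/K))-v(\mathfrak{p}_t)\leq v(D(\Ma/K))-t/\varrho+c_1$ using Remark \ref{special-torsion-element}. The only point to tighten is the division by $l'(\mathfrak{e}_t)$: it is harmless not because it shifts valuations by a ``controlled amount'' but because $l'(\mathfrak{e}_t)=1+\cdots$ is a unit; a genuine positive power of $\pi$ there would destroy the containment in $\mathfrak{b}\,\RM$.
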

\begin{remark}
We can interpret the element $\gamma_i \pmod{(\pi^k/\pi_{\Ma})\RM}$ as follows. Choose a polynomial $g(X)$  with coefficients in the ring of integers of the maximal subextension of $M$ unramified over $K$, such that $\mathfrak{e}_t=g(\pi_{\Ma})$. Then
\[
\gamma_i=-\frac{\overline{c}_{\beta:i}}{l'(\mathfrak{e}_t)\,g'(\pi_{\Ma})}
\]
Moreover, modulo $(\pi^m/\pi_{\Ma})\RM$, this is independent of the choice of $g(X)$.
\end{remark}
\begin{proof}[Proof of Lemma \ref{Carota}]
Let $\lambda_{i}$ be a representative for 
$c_{\beta:i}$ in $R_{\Ka_t,1}$. 
By Proposition \ref{cij}, 
$\lambda_i\in aR'_{\Ka_t,1}+(\pi^k/\pi_t)R'_{\Ka_t,1}$  
where $d\mathfrak{e}_t=ad\pi_t$. Let $b\in \mathcal{O}_\Ma$ 
such that $dT_1\wedge \cdots \wedge d\pi_t=b\,dT_1\wedge \cdots \wedge d\pi_\Ma$, in particular $D(\Ma/\Ka_t)=b\,\OZ_\Ma$, 
and set $\mathfrak{b}=a\,b$. We have, by \ref{RM/RL}, that
\[
\frac{1}{b}\RKt'=(1/D(\Ma/\Ka_t))\RKt'=\RM'\subset \RM,
\]  
\[
\frac{\pi^k}{\pi_t}\RKt'\subset \frac{\pi^k}{\pi_{\Ma}}\frac{D(\Ma/\Ka_t)}{\pi_t/\pi_{\Ma}}\RM'\subset \frac{\pi^k}{\pi_{\Ma}}\RM,
\] 
where the last inclusion follows since $D(\Ma/\Ka_t)$ 
is divisible by $\pi_t/\pi_\Ma$; this follows from  
the general inequality $v_\Ma(D(\Ma/\Ka_t))\geq e(\Ma/\Ka_t)-1$ 
(cf. \cite{cassels} Chapter 1 Proposition 5.4). It thus follows
\[
-\frac{\overline{c}_{\beta:i}}{l'(\mathfrak{e}_t)}=
-\frac{\lambda_{i}}{l'(\mathfrak{e}_t)}\in \mathfrak{b}\frac{\RM}{(\pi^k/\pi_{\Ma})\RM}.
\]

Now let us prove the second assertion. Since 
$-\overline{c}_{\beta:i}/l'(\mathfrak{e}_t)=\mathfrak{b}\,\gamma_i$, then $\gamma_i$ is 
uniquely defined $\pmod{(\pi^k/\pi_{\Ma}\,\mathfrak{b})\RM}$.
Let $m\leq k$, thus $\gamma_i$ is uniquely defined $\pmod{(\pi^m/\pi_{\Ma})\RM}$  
as long as $\pi^m|(\pi^k/\mathfrak{b})$. But this condition is fulfilled 
when $m/\varrho\leq k/\varrho-v(D(M/K))+t/\varrho-c_1$. 
Indeed,  $dT_1 \wedge \cdots \wedge d\mathfrak{e}_t=\mathfrak{b}\,dT_1 \wedge \cdots \wedge d\pi_\Ma$ implies that $v(\mathfrak{p}_t)+v(\mathfrak{b})=v(\mathfrak{D}(\Ma/K))\,(\,\leq v(D(\Ma/K))\,)$, with $\mathfrak{p}_t$ as 
in Remark \ref{special-torsion-element}, therefore by the same remark
\begin{align*}
m/\varrho \,\leq\, k/\varrho-v(D(\Ma/K))+t/\varrho-c_1
  \,\leq\, k/\varrho-v(D(\Ma/K)) + v(\mathfrak{p}_t)
      \,=\, k/\varrho-v(\mathfrak{b}).
\end{align*}
\end{proof}

\begin{defin}\label{Explicit-D} Using the same notation and assumptions of Lemma \ref{Carota}, we 
construct the  explicit $d$-dimensional derivation 
$\mathfrak{D}_{\Ma,m}^i:\OMa^d\to R_{\Ma,1}/(\pi^m/\pi_{\Ma}) R_{\Ma,1}$, using Proposition \ref{multder}, in the following way
       \[
           \mathfrak{D}_{\Ma,m}^{i}(a_1,\dots ,a_d):=\det 
          \left[ \frac{\partial a_i}{\partial T_j} \right]_{_{1\leq i,j\leq d}}   
             \, \, \ T_1\cdots T_{d-1}\ \gamma_{i}, 
          \]
     where $a_1,\dots, a _d\in \OMa$. Moreover,
from  $\mathfrak{D}_{\Ma,m}^i$ we can construct
an explicit logarithmic derivative $\mathfrak{Dlog}_{\Ma,m}^i:K_d(\Ma)\to R_{\Ma,1}/(\pi^m/\pi_{\Ma}^2) R_{\Ma,1}$, by setting
\begin{equation*}\label{psi-from}
\left\{
\begin{aligned}
\mathfrak{Dlog}^i_{\Ma,m}(u_1,\dots, u_{d-1},\pi_{\Ma})&=\frac{\mathfrak{D}^i_{\Ma,m}(u_1,\dots, u_{d-1},\pi_{\Ma})}{u_1\cdots u_{d-1}\pi_{\Ma}} 
\pmod{\frac{\pi^m}{\pi_{\Ma}^2}R_{\Ma,1}}\\
\mathfrak{Dlog}^i_{\Ma,m}(u_1,\dots,u_d)&=\frac{\mathfrak{D}^i_{\Ma,m}(u_1,\dots,u_d)}{u_1\cdots u_d} \pmod{\frac{\pi^m}{\pi_{\Ma}}R_{\Ma,1}}\\
\mathfrak{Dlog}^i_{\Ma,m}(u_1,\dots, \pi_{\Ma}^k\,u_d)&=k\mathfrak{Dlog}^i_{\Ma,m}(u_1,\dots, \pi_{\Ma})+\mathfrak{Dlog}^i_{\Ma,m}(u_1,\dots,u_d),\ k\in \Z\\
\mathfrak{Dlog}^i_{\Ma,m}(a_1,\dots,a_d)&=0, \text{whenever $a_j=a_k$ for $j\neq k$, $a_1\dots, a_d\in \Ma^*$}
\end{aligned}
\right.
\end{equation*}
where $u_1,\dots,u_d$ are  in 
$\mathcal{O}_\Ma^*=\{u\in \mathcal{O}_\Ma:v_\Ma(u)=0\}$. 
\end{defin}

We now give conditions on when the map $D_{\Ma,m}^i$ ( respectively $\psi_{\Ma,m}^i$)
and the explicit $\mathfrak{D}_{\Ma,m}^i$ (respectively $\mathfrak{Dlog}^i_{\Ma,m}$) coincide.

\begin{prop}\label{Derivativa}
Let $\Ma$ be as in \eqref{assumptions-2}.  Let $m\leq k$ such that
$m/\varrho\leq k/\varrho+t/\varrho-v(D(M/K))-c_1$; $c_1$ is the constant from Proposition \ref{constants}.
Then the reduction of 
$$D^i_{\Ma,m}:\OMa^d \to R_{\Ma,1}/\pi^mR_{\Ma,1}$$ 
to $R_{\Ma,1}/(\pi^m/\pi_{\Ma})R_{\Ma,1}$ is a $d$-dimensional 
derivation over $\OK$. Moreover, this $d$-dimensional 
derivation coincides with the explicit $d$-dimensional derivation $\mathfrak{D}_{\Ma,m}^i$
from Definition \ref{Explicit-D}.
\end{prop}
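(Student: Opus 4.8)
The plan is to reduce the statement to comparing the two maps at the single tuple of uniformizers $(T_1,\dots,T_{d-1},\pi_\Ma)$. The assertion that the reduction of $D^i_{\Ma,m}$ to $R_{\Ma,1}/(\pi^m/\pi_\Ma)R_{\Ma,1}$ is a $d$-dimensional derivation over $\OK$ is Proposition \ref{Disaderiv} applied to the pair $(m,t)$ in place of $(k,t)$, which is permissible since $m\le k$ makes $(m,t)$ admissible as well (as noted after Proposition \ref{Disaderiv}) and $\pi^m\mid\pi^k\mid D(K_t/K)$. Granting this, Proposition \ref{multder} shows that both this derivation and the explicit $\mathfrak D^i_{\Ma,m}$ of Definition \ref{Explicit-D} are determined by their value at $(T_1,\dots,T_{d-1},\pi_\Ma)$, the latter value being $T_1\cdots T_{d-1}\gamma_i$ by construction; so it suffices to show
\[
D^i_{\Ma,m}(T_1,\dots,T_{d-1},\pi_\Ma)=T_1\cdots T_{d-1}\,\gamma_i \quad\text{in }R_{\Ma,1}/(\pi^m/\pi_\Ma)R_{\Ma,1}.
\]

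First I would establish the corresponding identity at level $k$, up to the factor $\mathfrak b$. Choose a $t$-normalized series $r$ for $F$ (Proposition \ref{normseries}). By Proposition \ref{D(e)=c} applied to the torsion point $\mathfrak e_t$,
\[
D^i_{\Ma,k}(T_1,\dots,T_{d-1},r(\mathfrak e_t))=-\,r'(\mathfrak e_t)\,T_1\cdots T_{d-1}\,\frac{\overline c_{\beta:i}}{l'(\mathfrak e_t)}
\]
in $R_{\Ma,1}/(\pi^k/\pi_\Ma)R_{\Ma,1}$; since $D^i_{\Ma,k}$ is a derivation there (Proposition \ref{Disaderiv}) and $r'(\mathfrak e_t)$ is a unit in $\OMa$ (because $\mathfrak e_t\in\mu_\Ma$ and $c(r)\in\OK^{\times}$), its derivation property gives $D^i_{\Ma,k}(T_1,\dots,T_{d-1},r(\mathfrak e_t))=r'(\mathfrak e_t)\,D^i_{\Ma,k}(T_1,\dots,T_{d-1},\mathfrak e_t)$, so cancelling the unit $r'(\mathfrak e_t)$ yields $D^i_{\Ma,k}(T_1,\dots,T_{d-1},\mathfrak e_t)=-T_1\cdots T_{d-1}\,\overline c_{\beta:i}/l'(\mathfrak e_t)$. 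On the other hand, the determinant formula of Proposition \ref{multder} applied to $(T_1,\dots,T_{d-1},\mathfrak e_t)$ gives $D^i_{\Ma,k}(T_1,\dots,T_{d-1},\mathfrak e_t)=\mathfrak b\cdot D^i_{\Ma,k}(T_1,\dots,T_{d-1},\pi_\Ma)$, the Jacobian determinant $\partial\mathfrak e_t/\partial T_d$ being precisely the element $\mathfrak b$ of Lemma \ref{Carota}. Combining these two computations with the defining relation $-\overline c_{\beta:i}/l'(\mathfrak e_t)=\mathfrak b\gamma_i$ of \eqref{condition} gives
\[
\mathfrak b\,\big(D^i_{\Ma,k}(T_1,\dots,T_{d-1},\pi_\Ma)-T_1\cdots T_{d-1}\gamma_i\big)\equiv 0 \pmod{(\pi^k/\pi_\Ma)R_{\Ma,1}}.
\]

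It remains to descend from level $k$ to level $m$. Letting $z$ denote the bracketed element of $R_{\Ma,1}$ — a free rank-one $\OMa$-module — the previous congruence reads $v(\mathfrak b)+v(z)\ge v(\pi^k/\pi_\Ma)+v(R_{\Ma,1})$ in the normalized valuation. The hypothesis $m/\varrho\le k/\varrho+t/\varrho-v(D(\Ma/K))-c_1$, together with the identity $v(\mathfrak p_t)+v(\mathfrak b)=v(\mathfrak D(\Ma/K))\le v(D(\Ma/K))$ from the proof of Lemma \ref{Carota} and the bound $v(\mathfrak p_t)\ge t/\varrho-c_1$ of Remark \ref{special-torsion-element}, forces $v(\mathfrak b)\le v(D(\Ma/K))-t/\varrho+c_1$ and hence $(k-m)/\varrho\ge v(\mathfrak b)$; substituting this into the inequality for $z$ gives $v(z)\ge v(\pi^m/\pi_\Ma)+v(R_{\Ma,1})$, i.e. $z\equiv 0\pmod{(\pi^m/\pi_\Ma)R_{\Ma,1}}$. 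Finally, since $\psi^i_{\Ma,m}$ is the reduction of $\psi^i_{\Ma,k}$ (Proposition \ref{psi-properties}(3)), $D^i_{\Ma,m}(T_1,\dots,T_{d-1},\pi_\Ma)$ is the reduction of $D^i_{\Ma,k}(T_1,\dots,T_{d-1},\pi_\Ma)$, which with the last congruence proves the displayed identity, whence $D^i_{\Ma,m}=\mathfrak D^i_{\Ma,m}$ by Proposition \ref{multder}. The main obstacle is exactly this numerical descent: $\mathfrak b$ cannot be divided out integrally, and the argument succeeds only because the hypothesis on $m$ is calibrated so that the resulting ambiguity vanishes after reducing modulo $(\pi^m/\pi_\Ma)$.
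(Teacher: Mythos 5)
Your proposal is correct and follows essentially the same route as the paper: reduce to the value at $(T_1,\dots,T_{d-1},\pi_\Ma)$ via Proposition \ref{multder}, use Proposition \ref{D(e)=c} together with the relation $D^i_{\Ma,k}(T_1,\dots,T_{d-1},\mathfrak e_t)=\mathfrak b\,D^i_{\Ma,k}(T_1,\dots,T_{d-1},\pi_\Ma)$ to identify $D^i_{\Ma,k}(T_1,\dots,T_{d-1},\pi_\Ma)/(T_1\cdots T_{d-1})$ as one of the $\gamma_i$'s of \eqref{condition}, and then descend to level $m$. The only cosmetic difference is that you unfold the valuation computation behind the uniqueness of $\gamma_i$ modulo $(\pi^m/\pi_\Ma)R_{\Ma,1}$, which the paper simply cites as the second assertion of Lemma \ref{Carota}.
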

\begin{remark}
In particular,  $$D^i_{\Ma,m}(\Td,e_t^j)=-\Tdc\overline{c}_{\beta:i,j}/l'(e^j_t)  \pmod{\pi^m/\pi_{\Ma}}R_{\Ma,1}$$ for all $1\leq j\leq h$.
\end{remark}
\begin{proof}
First notice that, by Proposition \ref{psi-properties} (3), 
$$D^i_{\Ma,m}:\OMa ^d\to R_{\Ma,1}/(\pi^m/\pi_{\Ma})R_{\Ma,1}$$ 
is the reduction of 
$$D^i_{\Ma,k}:\OMa^d \to R_{\Ma,1}/(\pi^k/\pi_{\Ma})R_{\Ma,1}$$ 
and the latter is a  $d$-dimensional derivation over $\OK$ 
according to Proposition \ref{Disaderiv}. Also,  according Proposition \ref{D(e)=c} we have
\begin{equation}\label{condition-prima}
D^i_{\Ma,k}\big(\,\Td,\mathfrak{e}_t\,\big)=-\Tdc\ \overline{c}_{\beta:i}/l'(\mathfrak{e}_t).
\end{equation}
Moreover, for $\mathfrak{b}$ as in Lemma \ref{Carota}, 
we have that $$D^i_{\Ma,k}\big(\,\Td,\mathfrak{e}_t\,\big)=
    \mathfrak{b}\,D^i_{\Ma,k}(\Td,\pi_{\Ma}).$$ 
    This  together with \eqref{condition-prima} imply 
    that $D^i_{\Ma,k}(\Td,\pi_{\Ma})/T_1\cdots T_{d-1}$ is 
    one of the $\gamma_i$'s that satisfies (\ref{condition}). 
    Therefore, by the second assertion of Lemma  \ref{Carota} 
    and by the way $\mathfrak{D}^i_{\Ma,m}$ was constructed 
    in Definition \ref{Explicit-D}, we conclude that the three 
    maps $D^i_{\Ma,k}$, $D^i_{\Ma,m}$ and $\mathfrak{D}^i_{\Ma,m}$ 
    coincide $\pmod{(\pi^m/\pi_{\Ma})R_{\Ma,1}}$. This concludes the proof.

The  statement  in the remark holds because $D_{\Ma,m}^i$ is a multidimensional derivation and from the formula
for $D_{\Ma,m}^i(T_1,\dots, T_{d-1},r(e_t^j))$ in equation (\ref{Gan}).
\end{proof}

\subsection{Main formulae}\label{Main formulas}
Let $\La\supset K_n$ be a $d$-dimensional local field. Let $\varrho$ be the ramification index of $S/\Qp$ and 
\begin{equation}\label{condition-m}
m=n+2+\floor*{\ \varrho\, \log_p\big(\,v_L(p)/(p-1)\,\big)+\varrho/(p-1)}. 
\end{equation}
Take $k$ an integer large enough such that
\begin{equation}\label{desigualdad}
t/\varrho+k/\varrho\geq m/\varrho+c_1+v(D(\Ma/K))\quad \text{and}
\end{equation}
\begin{equation}\label{petello}
 k+\varrho+1\geq c_1\varrho,
\end{equation}
where  $t=2k+\varrho+1$, $\Ma=\La_t$, and $c_1$ is the constant from Proposition \ref{constants}. Condition (\ref{desigualdad}) holds, for example, when 
\begin{equation}\label{examples}
\frac{k}{\varrho}\geq \frac{m}{\varrho}+\frac{\log_2(2k+\varrho+1)}{p-1}+c_1+c_2+v(D(\La/K)),
\end{equation}
where  $c_2$ is the constant from Proposition \ref{constants};
see the proof of Theorem \ref{main-theorem}  for a deduction of \eqref{desigualdad} from (\ref{examples}).
We now formulate the main result.

\begin{thm}\label{main-theorem}
Let $\La$, $\Ma$ and $m$ be as above.  Then
\begin{equation}\label{phi-2}
(N_{\Ma/\La}(\alpha),x)^i_{\La,n}
=\Tr\big(\normalfont\, \text{Tr}_{\Ma/\La}\big(\,\mathfrak{Dlog}^i_{\Ma,m}(\alpha)\,\big)\, l(x)\,\big)
=\mathbb{T}_{\Ma/S}\big(\, \mathfrak{Dlog}^i_{\Ma,m}(\alpha)\, l(x)\, \big), 
\end{equation}
for all $\alpha \in K_d(\Ma)$ and all $ x\in F(\mu_\La)$. Here $\mathfrak{Dlog}^i_{\Ma,m}$
is the explicit logarithmic derivative constructed in Definition \ref{Explicit-D}. 
\end{thm}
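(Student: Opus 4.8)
The plan is to chain together the three descent steps that have already been prepared in the previous sections, so the proof is essentially a bookkeeping of compositions. First I would invoke Proposition \ref{phi-psi}: with $m$ chosen as in \eqref{condition-m}, the hypothesis $v(f^{(m-n)}(x))>1/(p-1)$ holds for all $x\in F(\mu_\La)$ by Remark \ref{acotarazo}, and $(m,t)$ is admissible for $t=2k+\varrho+1$ by the remark after Definition \ref{admissible-pair}. Hence for $\Ma=\La_t$ we get
\[
(N_{\Ma/\La}(\alpha),x)^i_{\La,n}
=\Tr\big(\,\text{Tr}_{\Ma/\La}\big(\psi^i_{\Ma,m}(\alpha)\big)\,l(x)\,\big)
=\mathbb{T}_{\Ma/S}\big(\psi^i_{\Ma,m}(\alpha)\,l(x)\big),
\]
the last equality because $\mathbb{T}_{\Ma/S}=\Tr\circ\text{Tr}_{\Ma/\La}$. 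So it remains only to show that $\psi^i_{\Ma,m}(\alpha)$ may be replaced by $\mathfrak{Dlog}^i_{\Ma,m}(\alpha)$ inside the generalized trace against $l(x)$.

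Second, I would check that the arithmetic inequalities put in place before the theorem actually let me apply Proposition \ref{Derivativa}. Condition \eqref{desigualdad} is exactly the hypothesis $m/\varrho\le k/\varrho+t/\varrho-v(D(M/K))-c_1$ of that proposition, and condition \eqref{petello}, namely $k+\varrho+1\ge c_1\varrho$ with $t=2k+\varrho+1$, forces $(t-k)/\varrho=(k+\varrho+1)/\varrho\ge c_1$, which by the remark after \eqref{assumptions-2} guarantees $\pi^k\mid D(K_t/K)$, so that $\beta=(k,t)$ satisfies \eqref{assumptions-2}. I would also record the short computation that \eqref{examples} implies \eqref{desigualdad}: using Remark \ref{Bound general L}, $v(D(\Ma/K))=v(D(\La_t/\La))+v(D(\La/K))\le t/\varrho+\log_2(t)/(p-1)+c_2+v(D(\La/K))$ with $t=2k+\varrho+1$ — wait, that goes the wrong direction; more precisely one bounds $v(D(\Ma/K))\le v(D(\La/K))+v(D(K_t/K))$ and uses Proposition \ref{constants}(1) to get $v(D(K_t/K))\le t/\varrho+\log_2(t)/(p-1)+c_2$, so \eqref{examples} gives $k/\varrho\ge m/\varrho+c_1+v(D(\Ma/K))-t/\varrho$, which is \eqref{desigualdad}. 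With the hypotheses of Proposition \ref{Derivativa} verified, the reduction of $D^i_{\Ma,m}$ to $R_{\Ma,1}/(\pi^m/\pi_\Ma)R_{\Ma,1}$ is a $d$-dimensional derivation and it coincides with the explicit $\mathfrak{D}^i_{\Ma,m}$ of Definition \ref{Explicit-D}; correspondingly $\psi^i_{\Ma,m}$ and $\mathfrak{Dlog}^i_{\Ma,m}$ agree modulo $(\pi^m/\pi_\Ma^2)R_{\Ma,1}$, by comparing the defining formulas \eqref{psi-from-D} with those in Definition \ref{Explicit-D}.

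Third, I would argue that the discrepancy between $\psi^i_{\Ma,m}$ and $\mathfrak{Dlog}^i_{\Ma,m}$ — an element of $(\pi^m/\pi_\Ma^2)R_{\Ma,1}/\pi^mR_{\Ma,1}$, i.e.\ something in $\pi^{m}/\pi_\Ma^{2}\cdot R_{\Ma,1}$ up to $\pi^m R_{\Ma,1}$ — is killed when we pair it against $l(x)$ via $\mathbb{T}_{\Ma/S}$ and reduce mod $\pi^n$. The point is that $l(x)=l_F(x)\in T_\Ma\subset \mu_{\Ma,?}$ has enough positive valuation that $\langle \pi^{m}/\pi_\Ma^{2} R_{\Ma,1},\,T_\Ma\rangle\subset \pi^{m-1}C$ or better, and since $m\ge n+2$ by \eqref{condition-m}, $\pi^{m-1}$ is divisible by $\pi^n$; more carefully one uses that $\psi^i$ and $\mathfrak{Dlog}^i$ are logarithmic derivatives so the relevant symbols $\{u_1,\dots,u_{d-1},\pi_\Ma\}$ pick up an extra factor of $\pi_\Ma$ in the denominator which is exactly accounted for by the $\pi_\Ma^2$ versus $\pi_\Ma$ bookkeeping already done in Proposition \ref{phi-psi} (this is why that proposition lands in $R_\La/\pi^n R_\La$ and not merely in $\La/\pi^n R_\La$). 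Consequently
\[
\mathbb{T}_{\Ma/S}\big(\psi^i_{\Ma,m}(\alpha)\,l(x)\big)
=\mathbb{T}_{\Ma/S}\big(\mathfrak{Dlog}^i_{\Ma,m}(\alpha)\,l(x)\big)\pmod{\pi^nC},
\]
and combining with the display from the first step finishes the proof. The main obstacle I anticipate is precisely this last bookkeeping: tracking the exact power of $\pi_\Ma$ lost when passing from $D^i_{\Ma,m}$ (a genuine derivation, valued mod $\pi^m/\pi_\Ma$) to $\psi^i_{\Ma,m}$ and $\mathfrak{Dlog}^i_{\Ma,m}$ (logarithmic derivatives, valued mod $\pi^m/\pi_\Ma^2$), and confirming that $\text{Tr}_{\Ma/\La}$ composed with the $l(x)$-pairing still sees only data mod $\pi^n$ — everything else is a direct citation of Propositions \ref{phi-psi}, \ref{Derivativa} and \ref{multder}.
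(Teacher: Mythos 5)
Your overall route is the paper's: (i) Proposition \ref{phi-psi} plus Remark \ref{acotarazo} to get the formula with $\psi^i_{\Ma,m}$, (ii) the inequality checks so that Proposition \ref{Derivativa} applies and $\psi^i_{\Ma,m}=\mathfrak{Dlog}^i_{\Ma,m}$ as maps into $R_{\Ma,1}/(\pi^m/\pi_\Ma^2)R_{\Ma,1}$, (iii) showing the residual ambiguity in $(\pi^m/\pi_\Ma^2)R_{\Ma,1}$ is invisible after tracing down to level $n$. Steps (i) and (ii) are fine, including your verification that \eqref{examples} implies \eqref{desigualdad} and that \eqref{petello} gives $(t-k)/\varrho\geq c_1$, hence $\pi^k\mid D(K_t/K)$.

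Step (iii), which you flag as the anticipated obstacle, is where your justification as written does not go through. The claim $\mathbb{T}_{\Ma/S}\big((\pi^m/\pi_\Ma^2)R_{\Ma,1}\cdot T_\Ma\big)\subset\pi^{m-1}C$ cannot be obtained by pulling out a power of $\pi$, because $\mathbb{T}_{\Ma/S}(R_{\Ma,1}\cdot T_\Ma)\not\subset C$ in general (only $\mathbb{T}_{\Ma/S}(R_{\Ma,1}\,\mu_{\Ma,1})\subset C$, and $T_\Ma\not\subset\mu_{\Ma,1}$); and the relevant point is not that $\pi^n\mid\pi^{m-1}$ in $C$ but that $\text{Tr}_{\Ma/\La}\big(\pi^{m-1}R_{\Ma,1}\big)\subset\pi^nR_\La$. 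The paper closes this with two specific facts. First, $\pi_\Ma^2\mid\pi$: since $\pi^k\mid D(\Ma/K)$ forces $e(\Ma/\Ka)>1$, one has $v_\Ma(\pi)\geq 2$, so $(\pi^m/\pi_\Ma^2)R_{\Ma,1}\subset\pi^{m-1}R_{\Ma,1}$. Second, the ``$+2$'' in \eqref{condition-m} is there precisely so that $m-1$ still satisfies the hypothesis of Remark \ref{acotarazo}, i.e.\ $m-1>n+\varrho\log_p\big(v_\La(p)/(p-1)\big)+\varrho/(p-1)$; running the dualization argument of Proposition \ref{phi-psi} (Equation \eqref{trapa}) with $m-1$ in place of $m$ then gives $\text{Tr}_{\Ma/\La}(\pi^{m-1}R_{\Ma,1})\subset\pi^{m-1}R_{\La,1}\subset\pi^nR_\La$. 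With these two observations your step (iii) is complete and the proof matches the paper's.
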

\begin{proof}
By considering the tower $\La_t\supset \La \supset K$ and 
the upper bound in Proposition \ref{constants} (1) and Remark \ref{Bound general L}, we get
\[
v(D(\Ma/K))= v\big(D(\La_t/\La)\big)+v\big(D(\La/K)\big)\leq t/\varrho+\log_2(t)/(p-1)+c_2+v\big(D(\La/K)\big).
\]
Adding $m/\varrho+c_1$ we obtain, by (\ref{examples}), the inequality 
(\ref{desigualdad}). The definition of $t$ clearly implies that $(k,t)$
is admissible and condition (\ref{petello}) 
implies $(t-k)/\varrho\geq c_1$. Thus $\Ma$, $t$, $k$ and $m$ defined as above,
satisfy the hypothesis of Proposition \ref{Derivativa} and of Definition (\ref{Explicit-D}). 
The result now follows from Proposition \ref{phi-psi}, the 
Remark \ref{acotarazo}  and Equation (\ref{psi-from-D}), i.e., the map $\psi^i_{\Ma,m}$ and $\mathfrak{Dlog}^i_{\Ma,m}$ coincide. 
It remains only to check that 
$\text{Tr}_{\Ma/\La}((\pi^m/\pi_\Ma^2)R_{\Ma,1})\subset \pi^nR_{\La}$, so that
$\text{Tr}_{\Ma/\La}(\mathfrak{Dlog}^i_{\Ma,m}(\alpha))$ is well defined in $R_{\La}/\pi^nR_{\La}$.
 To do this we notice 
that  condition (\ref{condition-m})  implies
\[
m-1> n+\varrho\log_p\big(\,v_\La(p)/(p-1)\,\big)+\varrho/(p-1),
\]
and we can apply  Remark \ref{acotarazo}  to $m-1$ and  get, by Equation (\ref{trapa}), that
\[
\text{Tr}_{\Ma/\La}\left(  \pi^{m-1}\frac{\pi}{\pi_\Ma^2} R_{\Ma,1} \right)\subset \text{Tr}_{\Ma/\La}(\pi^{m-1}R_{\Ma,1})\subset \pi^nR_{\La} .
\]
Bearing in mind that $\pi_\Ma^2|\pi$, since $\pi^k|D(\Ma/K)$ ( and $D(\Ma/K)=D(\Ma/\Ka)$) implies $e(\Ma/\Ka)>1$ (\ i.e., $\Ma/\Ka$ is not unramified).
\end{proof}

%_______End of the deduction of the formulas____

%________Lubin Tate Formal  gorups____

\section{Appendix}

\subsection{Higher-dimensional local fields}\label{higher fields}
The reader is welcome to visit the nice paper \cite{Zhukov} for an account on higher local fields
and all the results not proven in this section. In the 
 rest of this section $E$ will denote a local 
 field, $k_E$ its residue field and $\pi_E$ 
 a uniformizer for $E$.
 
         \begin{defin}\label{higher-local-field}
         $\Ka$ is an $d$-dimensional local field, i.e., 
                           a field for which there is a chain of fields $\Ka_{d}=\Ka$, 
                          $\Ka_{d-1}$, $\dots$, $\Ka_0$ such that $\Ka_{i+1}$ is a 
                           complete discrete valuation ring with residue field $\Ka_i$,
                              $0\leq i \leq d-1$, and $\Ka_0$ is a finite field of
                            characteristic $p$. 
          \end{defin}

If $k$ is a finite field then $\Ka=k((T_1))\dots ((T_d))$ 
is a $d$-dimensional local field with $$\Ka_i=k((T_1))\dots ((T_i)),\quad 1\leq i \leq d.$$
If $E$ is a local field, then 
$\Ka=E\{\{T_1\}\}\dots \{\{T_{d-1}\}\}$ is defined inductively as $E_{d-1}\{\{T_{d-1}\}\}$
  where $E_{d-1}=E\{\{T_1\}\}\dots \{\{T_{d-2}\}\}$. We have that $\Ka$ is a $d$-dimensional local field with 
  residue field $\Ka_{d-1}=k_{E_{d-1}}((T_{d-1}))$, and by induction $$k_{E_{d-1}}=k_E((T_1))\dots ((T_{d-2})).$$ 
Therefore $\Ka_{d-1}=k_E((T_1))\dots ((T_{d-1}))$. These fields are  called the $standard$ fields.

From now on we will assume $\Ka$ has 
$mixed$ characteristic, i.e., char($\Ka$)$=$0 
 and char($\Ka_{d-1}$)$=p$. The following 
 theorem classifies all such fields.

\begin{thm}[Classification Theorem]
Let $\Ka$ be an $d$-dimensional local field of mixed characteristic. 
Then $\Ka$ is a finite extension of a standard field 
$$E\{\{T_1\}\}\dots \{\{T_{d-1}\}\},$$ where $E$ is a
 local field, and there is a finite extension of $\Ka$
which is a standard field.
\end{thm}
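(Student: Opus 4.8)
The plan is to induct on $d$, stripping off one complete discrete valuation at a time. For $d=1$ the statement is the classical structure theory of local fields: a complete discretely valued field of characteristic $0$ with finite residue field of characteristic $p$ contains $\mathbb{Q}_p$ and has finite residue degree and finite absolute ramification, hence is a finite extension of $\mathbb{Q}_p$; it is moreover itself standard (a standard $1$-dimensional field is just a local field), so both assertions are trivial. For the inductive step, view $\mathcal{K}$ as a complete discrete valuation field of characteristic $0$ whose residue field is $\mathcal{K}_{d-1}$, which by hypothesis has characteristic $p$ and is thus a $(d-1)$-dimensional local field of equal characteristic. Applying Cohen's structure theorem along the chain $\mathcal{K}_{d-1}\supset\cdots\supset\mathcal{K}_0$ — lifting each residue field into the ring above it — yields an isomorphism $\mathcal{K}_{d-1}\cong\mathbb{F}_q((t_1))\cdots((t_{d-1}))$ with $\mathbb{F}_q=\mathcal{K}_0$; fix one such and let $t_1,\dots,t_{d-1}$ be the resulting system of local parameters of $\mathcal{K}_{d-1}$.

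Next I would produce a standard subfield of $\mathcal{K}$. Since $X^q-X$ splits over $\mathbb{F}_q\subset\mathcal{K}_{d-1}$, Hensel's lemma lifts the Teichm\"uller representatives and gives an embedding $W(\mathbb{F}_q)\hookrightarrow\mathcal{O}_{\mathcal{K}}$; let $E_0$ be its fraction field, the unramified extension of $\mathbb{Q}_p$ with residue field $\mathbb{F}_q$ — a local field. Pick units $\tilde t_1,\dots,\tilde t_{d-1}\in\mathcal{O}_{\mathcal{K}}^{\times}$ lifting $t_1,\dots,t_{d-1}$. Using the identification $\mathcal{O}_{E_0\{\{T_1\}\}\cdots\{\{T_{d-1}\}\}}\cong\varprojlim_n(\mathcal{O}_{E_0}/p^n)((T_1))\cdots((T_{d-1}))$, the assignment $T_i\mapsto\tilde t_i$ together with $W(\mathbb{F}_q)\hookrightarrow\mathcal{O}_{\mathcal{K}}$ defines a ring homomorphism into $\varprojlim_n\mathcal{O}_{\mathcal{K}}/p^n=\mathcal{O}_{\mathcal{K}}$, the last equality because $v_{\mathcal{K}}(p)>0$ makes $\mathcal{O}_{\mathcal{K}}$ $p$-adically complete and separated. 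This homomorphism is injective: reduced modulo $\pi_{\mathcal{K}}$ it is the inclusion $\mathbb{F}_q((t_1))\cdots((t_{d-1}))\hookrightarrow\mathcal{K}_{d-1}$, and if a nonzero element died modulo $p^n$ then, writing it as $p^{j}$ times a non-$p$-divisible element whose image is a unit of $\mathcal{O}_{\mathcal{K}}/p^n$, we would get $p^{j}=0$ in $\mathcal{O}_{\mathcal{K}}/p^n$ with $j<n$, a contradiction. So $\mathcal{F}:=E_0\{\{T_1\}\}\cdots\{\{T_{d-1}\}\}$ embeds in $\mathcal{K}$ with ring of integers $\mathcal{A}\subseteq\mathcal{O}_{\mathcal{K}}$, residue field $\mathbb{F}_q((t_1))\cdots((t_{d-1}))=\mathcal{K}_{d-1}$ and uniformizer $p$.

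To finish the first assertion I would bound $[\mathcal{K}:\mathcal{F}]$. Writing $e=v_{\mathcal{K}}(p)$, the ring $\mathcal{O}_{\mathcal{K}}/p\mathcal{O}_{\mathcal{K}}=\mathcal{O}_{\mathcal{K}}/\pi_{\mathcal{K}}^{e}\mathcal{O}_{\mathcal{K}}$ carries the $\pi_{\mathcal{K}}$-adic filtration with successive quotients isomorphic to $\mathcal{K}_{d-1}=\mathcal{A}/p\mathcal{A}$, so it is free of rank $e$ over the field $\mathcal{A}/p\mathcal{A}$; topological Nakayama then upgrades this to $\mathcal{O}_{\mathcal{K}}$ being free of rank $e$ over $\mathcal{A}$, whence $[\mathcal{K}:\mathcal{F}]=e<\infty$ and $\mathcal{K}$ is a finite extension of the standard field $\mathcal{F}$. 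For the second assertion, take $\mathcal{K}'=\mathcal{K}\cdot E_0'\{\{T_1\}\}\cdots\{\{T_{d-1}\}\}$ inside a fixed algebraic closure of $\mathcal{K}$, with $E_0'/E_0$ a finite extension; it is a finite extension of $\mathcal{K}$, and for $E_0'$ large enough — containing the residue-field constants occurring in $\mathcal{K}/\mathcal{F}$ and enough roots of unity to realize the wildly ramified part within a standard tower — one checks $\mathcal{K}'$ is standard. This last check reduces to the elementary facts that extensions of a standard field obtained by adjoining roots of unity, or roots of the parameters $T_i$, are again standard, plus a Krasner/Eisenstein argument for the residual ramification.

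The step I expect to be the real obstacle is the identification of the subring $\mathcal{A}$ with $\mathcal{O}_{E_0\{\{T_1\}\}\cdots\{\{T_{d-1}\}\}}$, because $\mathcal{O}_{\mathcal{F}}$ is genuinely not the naive closure of $\mathcal{O}_{E_0}[T_1^{\pm1},\dots,T_{d-1}^{\pm1}]$: a series $\sum_i a_iT_1^i$ with $v(a_i)$ bounded but not tending to $\infty$ as $i\to+\infty$ still lies in $\mathcal{O}_{\mathcal{F}}$, yet its term-by-term substitution $\sum_i a_i\tilde t_1^{\,i}$ need not converge in $\mathcal{K}$. One is therefore forced into the $\varprojlim_n(\,\cdot/p^n\,)((T_1))\cdots((T_{d-1}))$ description and into tracking residue fields and value groups along the whole chain; getting this bookkeeping right — and, relatedly, the equal-characteristic classification of $\mathcal{K}_{d-1}$ on which it rests — is where the work lies.
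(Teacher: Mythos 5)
The paper does not actually prove this theorem; it cites \cite{Zhukov} \S 1.1, so the only question is whether your argument stands on its own. Your overall strategy (classify the residue field by the equal-characteristic Cohen theorem, embed a standard field with the same residue field, count the degree via $ef$, then remove the obstruction to being standard by a finite constant extension) is the right one and is essentially the strategy of the cited reference. But there is a genuine gap exactly at the step you flagged, and your proposed fix does not close it. To define the ring homomorphism $(\mathcal{O}_{E_0}/p^n)((T_1))\cdots((T_{d-1}))\to \mathcal{O}_{\mathcal{K}}/p^n$ by $T_i\mapsto \tilde t_i$ you must evaluate infinite Laurent series at the units $\bar{\tilde t}_i$, i.e.\ sum infinitely many terms in $\mathcal{O}_{\mathcal{K}}/p^n$. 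That ring carries no topology in which such sums converge: its only intrinsic filtration is the finite $\pi_{\mathcal{K}}$-adic one, and the $\tilde t_i$ are units, so their powers do not tend to $0$. Nor can you appeal to the Parshin topology on $\mathcal{K}$, because in this paper (and in \cite{Zhukov}) that topology is defined on a general mixed-characteristic $\mathcal{K}$ only \emph{after} the Classification Theorem, via an embedding into a standard field; using it here would be circular. Passing to $\varprojlim_n(\,\cdot/p^n)$ therefore only relocates the convergence problem from $\mathcal{O}_{\mathcal{K}}$ to $\mathcal{O}_{\mathcal{K}}/p^n$; it does not solve it.

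What is needed is an algebraic substitute for convergence: either invoke Cohen's structure theorem in mixed characteristic --- $\mathcal{O}_{\mathcal{K}}$ is a complete local ring, hence contains a coefficient ring, i.e.\ a complete absolutely unramified discrete valuation ring with residue field $k_{\mathcal{K}}$, which by uniqueness of Cohen rings (together with the freedom to prescribe the lifts of the $p$-basis $t_1,\dots,t_{d-1}$) is isomorphic to $\mathcal{O}_{E_0}\{\{T_1\}\}\cdots\{\{T_{d-1}\}\}$ --- or build the lift by hand from the decomposition of residues into $p^n$-th powers times the monomials $T_1^{i_1}\cdots T_{d-1}^{i_{d-1}}$ with $0\le i_j<p^n$, which is exactly the mechanism of Lemma \ref{Kolyv} in this paper and is the route taken in the reference. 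Once the embedding exists, your degree count $[\mathcal{K}:\mathcal{F}]=e$ is fine. The second assertion is also only sketched: ``for $E_0'$ large enough one checks $\mathcal{K}'$ is standard'' is the entire content of that half of the statement and requires the theory of constant subfields (a mixed-characteristic field is standard iff a uniformizer of its constant subfield is a uniformizer of the whole field, and one must show this can be arranged after a finite extension). But the missing construction of the embedding is the decisive gap.
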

\begin{proof}
    cf. \cite{Zhukov} $\S$ 1.1 \textit{Classification Theorem}.
    \end{proof}

\begin{defin}
An $d$-tuple of elements $t_1,\dots,t_d\in \Ka$ is called a 
system of local parameters of $\Ka$, if $t_d$ is a prime 
in $\Ka_d$, $t_{d-1}$ is a unit in $\mathcal{O}_{\Ka}$ but its
 residue in $\Ka_{d-1}$ is a prime element of $\Ka_{d-1}$, and so on.
\end{defin}

For the  standard field $E\{\{T_1\}\}\dots \{\{T_{d-1}\}\}$
 we can take as  a system of local parameters 
  $t_d=\pi_E$, $t_{d-1}=T_{d-1},\dots, t_1=T_1$.

\begin{defin}\label{higher-rank-valuation}
We define a discrete valuation of rank $d$ to be the 
map $\textsf{v}=(v_1,\dots,v_d): \Ka^*\to \mathbb{Z}^d$, 
$v_d=v_{\Ka_d}$, $v_{d-1}(x)=v_{\Ka_{d-1}}(x_{d-1})$ where 
$x_{d-1}$ is the residue in $\Ka_{d-1}$ of $x t_d^{-v_n(x)}$,
 and so on. 
\end{defin}

Although the valuation depends, for $n>1$, on the 
choice of $t_2,\dots, t_d$, it is independent in the
 class of equivalent valuations. 

%The following stuff might not be needed.
%%%%%___________________________________________________________________
%Consider the lexicographic order in $\mathbb{Z}^d$: $\textsf{i}=(i_1,\dots,i_d)\leq \textsf{j}=(j_1,\dots,j_d)$if and only if 
%\[
%i_l\leq j_l,\ i_{l+1}=j_{l+1},\dots,i_d=j_d
%\]
%for some $l\leq d$. We define the following sets,

%\begin{defin}
%$O_K=\{x \in K: \va(x)\geq 0\}$, $M_K=\{x\in K: 
%\va(x)> 0\}$, $P_K(i_l,\dots,i_d)=\{x\in K\ : 
%(v_l(x),\dots,v_n(x))\geq (i_1,\dots, i_n) \}$.
%\end{defin}

%These sets do not depend on the choice of the system of 
%local parameters. Also, it follows from the definitions
  %that  $O_K/M_K\simeq K_0$.
  
%--------------------------------------- 
%--------------------------------------- 
%--------------------------------------- 
%---------------------------------------

\subsubsection{Topology on $\Ka$}\label{Topology on K}

We define the topology on $E\{\{T_1\}\}\dots \{\{T_{d-1}\}\}$ 
by induction on $d$.  
For $d=1$ we define the topology to be the topology of a 
one-dimensional local field.
Suppose we have defined the topology on a standard $d$-dimensional 
local field $E_d$ and let $\Ka=E_d\{\{T\}\}$. 
Denote by $P_{E_d}(c)$ the set $\{x\in E_d: v_{E_d}(x)\geq c\}$. 
Let $\{V_i\}_{i\in \mathbb{Z}}$  be a sequence of
neighborhoods of zero in $E_d$ such that 
\begin{equation}\label{vecindad}
\left\{
\begin{aligned}
&\text{1. there is a $c\in \mathbb{Z}$ such that $P_{E_d}(c)\subset V_i$ 
           for all $i\in \mathbb{Z}$.}\\
& \text{2. for every $l \in \mathbb{Z}$ we have $P_{E_d}(l)\subset V_i$ 
          for all sufficiently large $i$.}\\
\end{aligned}
\right.
\end{equation}
and put $\mathcal{V}_{\{V_i\}}=\{\sum b_iT^i\ :b_i\in V_i \}$.
These sets form a basis of neighborhoods of $0$ for a topology 
on $\Ka$. For an arbitrary $d$-dimensional local field $L$ of 
mixed characteristic we can find, by the Classification Theorem, 
a standard field that is a finite extension of $L$ and we can 
give $L$ the topology induced by the standard field.

\begin{prop}\label{topo}
      Let $\La$ be a $d$-dimensional local field of mixed 
      characteristic with the topology defined above.

      \begin{enumerate}

      \item $\La$ is complete with this topology. Addition is a  
            continuous operation and 
            multiplication by a fixed $a\in \La$ is a continuous map.
            
      \item Multiplication is a sequentially continuous map, i.e., if $x\in \La$ and $y_k\to y$ 
      in $\La$ then $xy_k\to xy$.

      \item This topology is independent of the choice of the 
            standard field above $\La$.

      \item If $\Ka$ is a standard field and $\La/\Ka$ is finite, then 
            the topology above coincides with the natural vector space 
            topology as a vector space over $\Ka$.

      \item The reduction map $\mathcal{O}_{\La}\to k_{\La}=\La_{d-1}$ is 
            continuous and open (where $\mathcal{O}_{\La}$ is given the 
            subspace topology from $\La$, and $k_{\La}=\La_{d-1}$ the 
            $(d-1)$-dimensional topology).

      \end{enumerate}
      
\end{prop}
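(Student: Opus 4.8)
The plan is to prove all five assertions simultaneously by induction on the dimension, using the Classification Theorem to reduce the general case to standard fields. For a one-dimensional local field every statement is classical, which is the base case. So suppose the proposition holds in all dimensions $<d+1$ and write a standard $(d+1)$-dimensional field as $\Ka=E_d\{\{T\}\}$ with $E_d$ standard of dimension $d$; I first establish the proposition for $\Ka$ and then transport it to general fields.

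To dispose of (1) and (2) for $\Ka$ I would directly unwind the basic neighbourhoods $\mathcal{V}_{\{V_i\}}$. Writing elements of $\Ka$ as $\sum_i b_iT^i$ with $b_i\in E_d$, a Cauchy net has, by condition (1) of \eqref{vecindad}, coefficients of valuation bounded uniformly from below, and, by condition (2), for each $l$ the coefficients eventually lie in $P_{E_d}(l)$ outside a bounded range of indices; the inductive hypothesis that $E_d$ is complete then produces the coefficientwise limit inside $\Ka$. Continuity of addition is immediate from $\mathcal{V}_{\{V_i\}}+\mathcal{V}_{\{V_i'\}}\subset\mathcal{V}_{\{V_i+V_i'\}}$. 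For multiplication by a fixed $a=\sum a_jT^j$ and for the sequential continuity in (2), one uses the convolution formula $(xy)_n=\sum_j a_j b_{n-j}$ together with the fact that the coefficients of a fixed element have valuation bounded below and tending to $+\infty$ as $j\to-\infty$: combined with conditions (1)--(2) on the $V_i$, only boundedly many terms of each convolution contribute modulo a prescribed neighbourhood, and the inductive (sequential) continuity of multiplication on $E_d$ finishes the argument. This is the same estimate as in the one-dimensional case, and I would spell it out only for $\Ka$ standard.

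The core of the proof is (4): if $\Ka$ is standard and $\La/\Ka$ is finite, the higher topology on $\La$ coincides with the $\Ka$-vector-space topology. I would first settle the case in which $\La$ is itself standard, reducing via the Classification Theorem and the induction on $d$ to a tower of one-dimensional extensions, where a complete discrete valuation field carries a unique vector-space topology over any finite-index subfield; the reduction step amounts to checking that the $\{\{T\}\}$-construction of the basic neighbourhoods \eqref{vecindad} transforms compatibly under such extensions. For a general $\La$, fix a $\Ka$-basis $\omega_1,\dots,\omega_r$ of $\La$: the map $\Ka^r\to\La$, $(c_j)\mapsto\sum c_j\omega_j$, is continuous by (1)--(2), and to see that its inverse is continuous I would embed $\La$ into a standard field $\Ka'\supset\La$ finite over $\Ka$ (Classification Theorem), where the identification of the two topologies is the standard case just treated, and then restrict. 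Part (3) is then a formal consequence: given two standard fields $\Ka_1,\Ka_2$ each finite over $L$, pick (again by the Classification Theorem) a standard $\Ka_3$ finite over the compositum $\Ka_1\Ka_2$; applying (4) to $\Ka_i\subset\Ka_3$ shows the subspace topology on $L$ induced from $\Ka_3$ equals the one induced from each $\Ka_i$, so the two a priori topologies on $L$ agree.

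Finally, for (5): with $\Ka=E_d\{\{T\}\}$ one has $\mathcal{O}_\Ka=\{\sum b_iT^i:b_i\in\mathcal{O}_{E_d}\}$, and reduction sends $\sum b_iT^i$ to $\sum\bar b_i\,\overline T\in k_{E_d}((\overline T))=k_\Ka$; continuity and openness then follow coefficientwise from the inductive hypothesis that $\mathcal{O}_{E_d}\to k_{E_d}$ is continuous and open, together with the descriptions of the basic neighbourhoods on $\mathcal{O}_\Ka$ and on $k_\Ka$. For a general $\La$ I would transport this along a finite extension by a standard field using (4). The main obstacle is clearly (4): joint continuity of multiplication genuinely fails in dimension $\geq2$, so one cannot argue abstractly and must track how conditions (1)--(2) on the $V_i$ behave both under the convolution product and under the elementary steps of the Classification Theorem; once (4) is in hand, the remaining parts are either routine unwinding of definitions or formal consequences.
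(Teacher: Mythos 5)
The paper itself does not prove this proposition: all five parts are quoted from the literature, with the proofs deferred to \cite{Morrow}, Theorem~4.10 (ultimately Madunts--Zhukov). Measured against that standard argument, your overall architecture is the right one: induction on the dimension, coefficientwise analysis of the neighbourhoods $\mathcal{V}_{\{V_i\}}$ for completeness, addition and the convolution estimates for multiplication by a fixed element, and reduction of the general case to standard fields through part (4), with (3) deduced from (4) via a common standard overfield. Parts (1), (2) and (5) are acceptable as sketches on this basis.

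The genuine gap sits exactly where you locate the difficulty: part (4) for a finite extension of one standard field by another, which is equivalent to the well-definedness claim (3). Your proposed reduction ``to a tower of one-dimensional extensions, where a complete discrete valuation field carries a unique vector-space topology over any finite-index subfield'' does not go through. For $d\geq 2$ the higher topology on $E_d$ is neither the valuation topology nor a topological vector space topology (scalar multiplication is only separately continuous, as you yourself observe), so the classical uniqueness theorem for Hausdorff vector topologies over a complete valued field is unavailable; and a finite extension $E'\{\{T'\}\}/E\{\{T\}\}$ of standard fields does not in general factor into steps to which a one-dimensional uniqueness statement applies, since both the coefficient field and the first local parameter $T$ may change. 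What is actually required --- and what occupies most of the effort in Madunts--Zhukov and in \cite{Morrow} --- is the verification that the $\{\{T\}\}$-construction of the basic neighbourhoods is independent of the choice of local parameters and of the coefficient field; this is precisely the step you describe as ``checking that the construction transforms compatibly under such extensions'' without carrying it out. Until that lemma is supplied, (4) and hence (3) are unproved, and the topology on a non-standard field is not yet known to be well defined; the remainder of your plan (including transporting (5) along a finite extension) is sound once it is.
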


\begin{proof}

      All the proofs can be found in \cite{Morrow} Theorem 4.10.

\end{proof}

\subsubsection{Topology on $\mathcal{K}^*$}\label{Topology on K*}\label{topology on K-star}
Let $\mathcal{R}\subset \Ka=\Ka_d$ be a set of representatives of 
the last residue field  $\Ka_0$. Let $t_1,\dots, t_d$ be a fixed system of local parameters for $\Ka$, i.e., $t_d$ is  a uniformizer for $\Ka$, $t_{d-1}$ is a unit in $\mathcal{O}_{\Ka}$ but its residue in $\Ka_{d-1}$ is a uniformizer element of $\Ka_{d-1}$, and so on. Then 
\[
\Ka^* = \mathcal{V}_{\Ka}\times \langle t_1 \rangle  \times \cdots \times \langle t_d \rangle \times  \mathcal{R}^*,
\]
where the group of  principal units $V_{\Ka}=1+M_{\Ka}$ and $\mathcal{R}^*=\mathcal{R}-\{0\}$. From this observation we have the following,
\begin{prop}\label{topo-prod}
We can endow $\Ka^*$ with the product of the induced topology from $\Ka$ on the group $\mathcal{V}_{\Ka}$ and the discrete topology on $\langle  t_1 \rangle  \times  \cdots \times  \langle t_d \rangle \times  \mathcal{R}^*$. In this topology we have,
\begin{enumerate}
\item Multiplication is sequentially continuous, i.e., if $a_n\to a$ and $b_n\to b$ then $a_n b_n\to ab$.
\item Every Cauchy sequence with respect to this topology converges in $\Ka^*$.
\end{enumerate}  
\end{prop}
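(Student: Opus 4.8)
The plan is to deduce both assertions from the product decomposition
\[
\Ka^* \;=\; \mathcal{V}_{\Ka}\times \langle t_1 \rangle \times \cdots \times \langle t_d \rangle \times \mathcal{R}^*,\qquad \mathcal{V}_{\Ka}=1+M_{\Ka},
\]
in which all factors except $\mathcal{V}_{\Ka}$ carry the discrete topology. Writing $a=u(a)\,\delta(a)$ with $u(a)\in\mathcal{V}_{\Ka}$ and $\delta(a)$ in the product of the discrete factors, the maps $u$ and $\delta$ are the coordinate projections of the product topology, hence continuous, $\delta$ being locally constant since its target is discrete. The single fact used throughout is then: a sequence $(a_n)$ in $\Ka^*$ converges (resp.\ is Cauchy, for the natural product uniformity) precisely when $(u(a_n))$ converges (resp.\ is Cauchy) in $\mathcal{V}_{\Ka}$ with the topology induced from $\Ka$ and $(\delta(a_n))$ is eventually constant. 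It is convenient, though not essential, to take $\mathcal{R}^*$ to be the group of Teichm\"uller representatives of $\Ka_0^*$ (which exists by Hensel's lemma in the complete discrete valuation ring $\OKa$), so that the discrete part is a genuine discrete abelian group.

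For (1), suppose $a_n\to a$ and $b_n\to b$ in $\Ka^*$. By the fact above, $\delta(a_n)=\delta(a)$ and $\delta(b_n)=\delta(b)$ for $n$ large, while $u(a_n)\to u(a)$ and $u(b_n)\to u(b)$ in $\Ka$. Writing $\delta(a)\delta(b)=w\,\delta'$ with $w\in\mathcal{V}_{\Ka}$ and $\delta'$ in the discrete part (both fixed; $w=1$ with Teichm\"uller representatives), one has for $n$ large $a_nb_n=\big(u(a_n)u(b_n)\,w\big)\,\delta'$. Thus $\delta(a_nb_n)=\delta'=\delta(ab)$ is eventually constant, and by sequential continuity of multiplication in $\Ka$ (Proposition~\ref{topo}(2)) followed by continuity of multiplication by the fixed element $w$ (Proposition~\ref{topo}(1)) one gets $u(a_nb_n)=u(a_n)u(b_n)w\to u(a)u(b)w=u(ab)$ in $\mathcal{V}_{\Ka}$. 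Hence $a_nb_n\to ab$ by the fact above.

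For (2), let $(a_n)$ be Cauchy in $\Ka^*$. Then $(\delta(a_n))$, being Cauchy in a discrete space, is eventually constant, say $\delta_0$, and $(u(a_n))$ is Cauchy in $\mathcal{V}_{\Ka}$, hence in $\Ka$; as $\Ka$ is complete (Proposition~\ref{topo}(1)) we obtain $u(a_n)\to u$ in $\Ka$ for some $u$. The step I expect to be the main obstacle is to show that $\mathcal{V}_{\Ka}=1+M_{\Ka}$, equivalently $M_{\Ka}$, is closed in $\Ka$: this is the only point that must descend to the conditions \eqref{vecindad} defining the neighborhood basis of the Parshin topology rather than appealing to formal properties of topological groups, and it should not be taken for granted, since $\OKa$ itself is \emph{not} open in $\Ka$ when $d\ge 2$. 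The argument is to separate any $x$ with $v_{\Ka}(x)\le 0$ from $M_{\Ka}$ by a basic neighborhood assembled coordinatewise from \eqref{vecindad}, shrinking only the coordinate in which $x$ already has nonpositive valuation; compare \cite{Morrow}. Granting this closedness, $u\in\mathcal{V}_{\Ka}$ and $u(a_n)\to u$ in the subspace $\mathcal{V}_{\Ka}$, so $a_n=u(a_n)\,\delta_0\to u\,\delta_0\in\Ka^*$; apart from this step the proof is bookkeeping with the product decomposition and Proposition~\ref{topo}.
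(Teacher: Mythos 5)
Your argument is sound, but note that the paper itself does not prove this proposition at all: its ``proof'' is the single line ``See \cite{Zhukov} Chapter 1 \S 1.4.2.'' So what you have written is a genuinely self-contained substitute, organized around the observation that convergence (resp.\ Cauchyness) in the finite product $\mathcal{V}_{\Ka}\times\langle t_1\rangle\times\cdots\times\langle t_d\rangle\times\mathcal{R}^*$ is coordinatewise, that the discrete coordinates are eventually constant, and that everything then reduces to properties of the additive topology on $\Ka$ from Proposition \ref{topo}. That reduction is correct, and taking $\mathcal{R}^*$ to be the Teichm\"uller representatives so that the decomposition is an honest direct product of groups is the right normalization. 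Two points deserve tightening. First, in part (1) you invoke Proposition \ref{topo}(2) for $u(a_n)u(b_n)\to u(a)u(b)$; as literally stated in the paper that item only gives sequential continuity of multiplication by a \emph{fixed} element, whereas you need the two--variable statement $x_n\to x,\ y_n\to y\Rightarrow x_ny_n\to xy$. That stronger statement is true (it is what \cite{Morrow} Theorem 4.10 and \cite{Zhukov} actually prove, and the paper itself uses it tacitly in the proof of Lemma \ref{seqcont}), but you should cite it in that form rather than as \ref{topo}(2).

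Second, and more substantively: you correctly isolate closedness of $\mathcal{V}_{\Ka}=1+M_{\Ka}$ in $\Ka$ as the crux of (2), but your description of the separation step is slightly off. For the decomposition of $\Ka^*$ to be a direct product, $M_{\Ka}$ must be the maximal ideal of the \emph{rank-$d$} ring of integers, not the rank-one ideal $\{x:v_{\Ka}(x)\geq 1\}$; consequently the complement of $M_{\Ka}$ is not $\{x:v_{\Ka}(x)\leq 0\}$ but consists of the $x$ with $v_{\Ka}(x)<0$ together with the $x$ with $v_{\Ka}(x)=0$ whose residue in $k_{\Ka}$ lies outside the rank-$(d-1)$ maximal ideal of $k_{\Ka}$. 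The coordinatewise argument you sketch handles the first kind of point (and shows $\mathcal{O}_{\Ka}$ is closed in $\Ka$); for the second kind you need an induction on $d$, using that the reduction map $\mathcal{O}_{\Ka}\to k_{\Ka}$ is continuous and open (Proposition \ref{topo}(5)) so that the preimage of the (inductively) closed maximal ideal of $k_{\Ka}$ is closed in $\mathcal{O}_{\Ka}$, hence in $\Ka$. With those two repairs the proof is complete.
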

\begin{proof}
See \cite{Zhukov} Chapter 1 \S1.4.2. 
\end{proof} 

\subsection{Formal groups}\label{Formal Groups}

 In this section we will state some useful results in the theory of formal groups.

\subsubsection{The Weiertrass lemma }\label{The Weiertrass lemma}

Let $E$ be a discrete valuation field of zero characteristic 
with integer ring $\mathcal{O}_E$ and maximal ideal $\mu_{E}$.
\begin{lemma}[Weierstrass lemma]\label{Weierstrass}
     Let $g=a_0+a_1X+\cdots \in \mathcal{O}_E[[X]]$ be such that
       $a_0,\dots, a_{n-1}\in \mu_E$, $n\geq 1$, and 
       $a_n \not\in \mu_E$. Then there exist a unique monic polynomial 
       $c_0+\cdots+ X^n$ with coefficients in $\mu_E$ and a 
       series $b_0+b_1X\cdots$ with coefficients in $\mathcal{O}_E$ and  
       $b_0$ a unit, i.e., $b_0\neq \mu_E$, such that 
      \[
      g=(c_0+\cdots+ X^n)(b_0+b_1X\cdots).
      \]
\end{lemma}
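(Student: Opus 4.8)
The plan is to deduce the Weierstrass lemma from a \emph{division algorithm} in $\mathcal{O}_E[[X]]$, by the classical strategy. First I would reduce modulo the maximal ideal: writing $\bar{g}\in k_E[[X]]$ for the reduction of $g$ and $\bar h$ for the reduction of $a_n+a_{n+1}X+\cdots$, the hypotheses $a_0,\dots,a_{n-1}\in\mu_E$ and $a_n\notin\mu_E$ say precisely that $\bar g=X^n\bar h$ with $\bar h$ a unit in $k_E[[X]]$. For $\phi=\sum_i c_iX^i\in\mathcal{O}_E[[X]]$ introduce the two $\mathcal{O}_E$-linear operators $\rho(\phi)=\sum_{i<n}c_iX^i$ (truncation to a polynomial of degree $<n$) and $\tau(\phi)=\sum_{i\geq n}c_iX^{i-n}$, so that $\phi=\rho(\phi)+X^n\tau(\phi)$ and $\tau(X^n\psi)=\psi$. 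Decompose $g=\rho(g)+X^n\tau(g)$, where $\rho(g)\in\mu_E[X]$ and $\tau(g)$ has constant term $a_n\in\mathcal{O}_E^\times$, hence is invertible in $\mathcal{O}_E[[X]]$ with inverse $w:=\tau(g)^{-1}$.

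The core step is the division statement: for every $f\in\mathcal{O}_E[[X]]$ there are unique $q\in\mathcal{O}_E[[X]]$ and $r\in\mathcal{O}_E[X]$ with $\deg r<n$ and $f=qg+r$. To prove existence I would rewrite the requirement $\tau(f-qg)=0$, using $\tau(qX^n\tau(g))=q\tau(g)$, as the fixed-point equation $q=w\tau(f)-w\tau\!\big(q\,\rho(g)\big)=:\Phi(q)$. Since $\rho(g)\in\mu_E\mathcal{O}_E[X]$, multiplication by $\rho(g)$ raises the $\mu_E$-adic level by one while $\tau$ and multiplication by the unit $w$ preserve it, so $\Phi$ strictly contracts the $\mu_E$-adic filtration; as $\mathcal{O}_E$ is complete, $\mathcal{O}_E[[X]]$ is complete for this filtration (it is the inverse limit of the $(\mathcal{O}_E/\mu_E^m)[[X]]$), and $\Phi$ has a unique fixed point $q=\lim_k\Phi^{(k)}(0)$. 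Then $r:=f-qg=\rho(f-qg)$ is a polynomial of degree $<n$. Uniqueness follows because a difference $q-q'$ of two solutions satisfies $q-q'=-w\tau\!\big((q-q')\rho(g)\big)$, which forces $q=q'$ by the same contraction, whence $r=r'$.

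Next I would apply division to $f=X^n$, obtaining $X^n=qg+r$. Reducing mod $\mu_E$ gives $X^n(1-\bar q\bar h)=\bar r$ with $\deg\bar r<n$, so both sides vanish: $\bar r=0$, hence $r\in\mu_E[X]$, and $\bar q\bar h=1$, hence $q$ has unit constant term, so $q\in\mathcal{O}_E[[X]]^\times$. Setting $P:=X^n-r$ and $u:=q^{-1}=\sum_i b_iX^i$, the relation $qg=X^n-r=P$ yields $g=Pu$, where $P$ is monic of degree $n$ with its lower coefficients $c_0,\dots,c_{n-1}\in\mu_E$ and $b_0=(\text{constant term of }q)^{-1}\in\mathcal{O}_E^\times$; this is exactly the asserted factorization. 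For uniqueness of $(P,u)$, note that any such factorization $g=Pu$ gives $X^n=u^{-1}g+(X^n-P)$ with $\deg(X^n-P)<n$, i.e.\ a division of $X^n$ by $g$; by the uniqueness in the division statement $u^{-1}=q$ and $X^n-P=r$, so $P$ and $u$ coincide with those constructed above.

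The main obstacle is the division algorithm, precisely the verification that $\Phi$ is a well-defined contraction on $\mathcal{O}_E[[X]]$ for the $\mu_E$-adic topology together with completeness for that topology, so that the Banach-type fixed-point argument goes through; once that is in place, the preparation and the uniqueness are routine bookkeeping. One should also just double-check the identity $\tau(qX^n\tau(g))=q\tau(g)$ and that $\rho(g)$ indeed has all its coefficients in $\mu_E$, both immediate from the hypotheses.
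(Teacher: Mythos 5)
Your argument is correct and is essentially the proof the paper relies on by citation (Lang, \emph{Algebra} IV \S 9): Weierstrass division established by a $\mu_E$-adic successive-approximation (contraction) argument, followed by the preparation statement obtained by dividing $X^n$ by $g$ and reducing modulo $\mu_E$. The only hypothesis you use beyond the bare statement is that $\mathcal{O}_E$ is $\mu_E$-adically complete, which is implicit in the paper's setting, since the lemma is stated for and applied to complete discrete valuation fields.
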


\begin{proof}
      See \cite{Lang} IV. \S 9 Theorem 9.2.
      \end{proof}

\subsubsection{The group $F(\mu_{\Ma})$}\label{The groups F(mu)}

Let $\Ka$ be a $d$-dimensional local field containing 
the local field $K$, say, 
$\Ka=K\{\{T_1\}\}\cdots\{\{T_{d-1}\}\}$. 
Denote by $F(\mu_{\Ka})$ the group with underlying set  
$\mu_{\Ka}$ and operation defined by the formal group $F$. 
More generally, if $\Ma$ is an algebraic extension of $\Ka$ we define
\[
F(\mu_{\Ma}):=\bigcup_{\substack{\Ma\supset\La \supset \Ka, \\ 
[\La/\Ka]<\infty}} F(\mu_{\La}).
\]
An element $f\in \text{End}(F)$ is said to be an 
isogeny if the map $f:F(\mu_{\bar{\Ka}})\to F(\mu_{\bar{\Ka}})$ 
induced by it is surjective with finite kernel.

If the reduction of $f$ in $k_K [[X]]$, $k_K$ the residue field of $K$, 
is not zero then it  is of the form $f_1(X^{p^h})$
 with $f_1'(0)\in \mathcal{O}_{K}^*$, cf. \cite{koly} Proposition 1.1. In this case we say that $f$ has finite height. 
If on the other hand the reduction of $f$ is zero we say it has infinite
height.

\begin{prop}

      $f$ is an isogeny if and only if  $f$ has finite height. 
      Moreover, in this situation $|\ker f|=p^h$.

\end{prop}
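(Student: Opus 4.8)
The plan is to deduce both implications from the Weierstrass preparation lemma (Lemma \ref{Weierstrass}), working throughout with the rank-one valuation $v=v_K$ of $K$, normalized by $v(\pi_K)=1$ and extended to every finite extension of $\Ka$ and to $\overline{\Ka}$; recall that with this normalization every element of $\mu_{\overline{\Ka}}$ has $v>0$, and that $f(X)=\sum_{i\ge 1}a_iX^i\in\OK[[X]]$ with $a_0=f(0)=0$.

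First I would treat the ``only if'' direction by contraposition. Suppose $f$ has infinite height, i.e.\ its reduction in $k_K[[X]]$ vanishes; then $v(a_i)\ge 1$ for all $i\ge 1$, so for any $x\in\mu_{\overline{\Ka}}$ one has $v(f(x))\ge\min_{i\ge1}\big(v(a_i)+i\,v(x)\big)\ge 1+v(x)>1$. Since $\pi_K\in\mu_{\overline{\Ka}}$ has $v(\pi_K)=1$, it is not in the image of $f$, so $f$ is not surjective and hence not an isogeny. Thus every isogeny has finite height.

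Next I would handle the ``if'' direction. Assume $f$ has finite height $h$, so $v(a_i)\ge 1$ for $1\le i<p^h$ while $a_{p^h}\in\OK^{\times}$. By Lemma \ref{Weierstrass}, $f(X)=P(X)u(X)$ with $u\in\OK[[X]]^{\times}$ and $P$ a distinguished polynomial of degree $p^h$, and evaluating at $0$ forces $P(0)=0$. For surjectivity, fix $y\in\mu_{\overline{\Ka}}$, say $y\in\mu_{\La}$ with $\La/\Ka$ finite: the coefficients of $X^i$ in $f(X)-y$ still lie in $\mu_{\La}$ for $0\le i<p^h$ and the coefficient of $X^{p^h}$ is a unit, so Weierstrass gives $f(X)-y=Q(X)w(X)$ with $Q$ distinguished of degree $p^h$ and $w$ a unit power series; any root of $Q$ lies in $\mu_{\overline{\Ka}}$ (a distinguished polynomial has all its roots in the open unit disc) and is mapped to $y$ by $f$. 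Since $u$ does not vanish on $\mu_{\overline{\Ka}}$, the zeros of $f$ in $\mu_{\overline{\Ka}}$ are exactly the roots of $P$, so $|\ker f|\le p^h$ with equality precisely when $P$ is separable.

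The separability of $P$ is the crux, and the hard part of the argument. Differentiating $f(F(X,Y))=F(f(X),f(Y))$ in $Y$ and setting $Y=0$ gives $f'(X)\,F_Y(X,0)=c\,F_Y(f(X),0)$ with $c=f'(0)$, hence $f'(X)=c\cdot F_Y(f(X),0)\,F_Y(X,0)^{-1}$, a scalar multiple of a unit of $\OK[[X]]$ (both factors have constant term $1$). If $c=0$ this forces $f'\equiv 0$, hence $f=0$, contradicting finite height; so $c\ne 0$. If some $\alpha\in\mu_{\overline{\Ka}}$ were a multiple root of $P$, then $f(\alpha)=P(\alpha)u(\alpha)=0$ and $f'(\alpha)=P'(\alpha)u(\alpha)+P(\alpha)u'(\alpha)=0$, yet $f'(\alpha)=c\,F_Y(0,0)\,F_Y(\alpha,0)^{-1}\ne 0$ because $F_Y(\alpha,0)$ is a unit — a contradiction. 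Hence $P$ is separable, $|\ker f|=p^h$, and $f$, being surjective with finite kernel, is an isogeny. Everything outside this separability step is a routine application of Weierstrass preparation and elementary Newton-polygon estimates.
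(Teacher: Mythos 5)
Your proof is correct and follows essentially the same route as the paper's: Weierstrass preparation applied to $f$ and to $f-y$ gives the kernel bound and surjectivity, and separability of the distinguished factor follows from $f'$ being a nonzero constant times a unit power series. The only difference is cosmetic — you derive $f'(X)=c\cdot(\text{unit series})$ from differentiating $f(F(X,Y))=F(f(X),f(Y))$ and spell out the valuation estimate in the infinite-height case, both of which the paper simply asserts.
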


\begin{proof}

If the height is infinite, the coefficients of $f$ are 
 divisible by a uniformizer of the local field $K$, 
so $f$ cannot be surjective. Let $h<\infty$ and
 $x\in \mu_{\La}$ where $\La$ is a finite extension 
of $\Ka$. Consider the series $f-x$ and apply  
Lemma \ref{Weierstrass} with $E=\La$, i.e.,  

\[
f-x=(c_0+\cdots +X^{p^h})(b_0+b_1X+\cdots),
\]

where $c$\textquoteright s $\in \mu_{\La}$, 
$b$\textquoteright s $\in \mathcal{O}_{\La}$ and 
$b_0\in \mathcal{O}_{\La}^{*}$. Therefore the equation 
$f(X)=x$ is equivalent to the equation $c_0+\cdots +X^{p^h}=0$ 
and since the $c\text{\textquoteright s} \in \mu_{\La}$ 
every root belongs to $\mu_{\overline{{\Ka}}}$.

Moreover, the polynomial $P(X)=c_0+\cdots +X^{p^h}$ is separable 
because $f'(X)=\pi t(X)$, $t(X)=1+\dots$ is an invertible series 
and $f'=P'(b_0+b_1X+\cdots)+P(b_0+b_1X+\cdots)'$ so  $P'$ can 
not vanish at a zero of $P$. We conclude that $P$ has $p^h$ 
roots, i.e.,  $|\ker\ f|=p^h $.

\end{proof}

%Denote by $j$ the degree of inertia of $S/\Qp$ and by $h_1$ the  height of $f=[\pi]_F$. Then
\begin{prop}\label{torsiones}
Denote by $j$ the degree of inertia of $S/\Qp$ and by $h_1$ the  height of $f=[\pi]_F$. Then $j$ divides $h_1$, namely $h_1=jh$. Let $\kappa_n$ be the kernel of $f^{(n)}$. Then
            \[
      \kappa_n\simeq (C/\pi^nC)^h \quad \mathrm{and} \quad
      \varprojlim \kappa_n\simeq C^h, 
      \]
      as $C$-modules. This $h$ is called the 
      height of the formal group with respect to $C=\mathcal{O}_S$.

\end{prop}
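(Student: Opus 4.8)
The plan is to determine the integer $h$ from the $C$-module structure on $\kappa_1=\ker f$, bootstrap to every $\kappa_n$ by a short exact sequence together with a counting argument, and then obtain the Tate module as the inverse limit along compatible bases.

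First I would prove the divisibility $j\mid h_1$. Since $C\to\mathrm{End}(F)$ is a ring homomorphism, $f=[\pi]_F$ commutes with every $[c]_F$, so $\kappa_1=\ker f$ is stable under the $C$-action and is killed by $\pi$; hence it is a vector space over $C/\pi C\cong k_S$, which has $q:=|k_S|=p^{\,j}$ elements. By the proposition preceding the statement, $|\kappa_1|=p^{h_1}$, so with $h:=\dim_{k_S}\kappa_1$ we get $p^{h_1}=q^{h}=p^{jh}$, i.e. $h_1=jh$ and $\kappa_1\cong(C/\pi C)^{h}$ as $C$-modules.

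Next I would run an induction on $n$. For $n\ge 2$ the map $f\colon\kappa_n\to\kappa_{n-1}$ is a well-defined $C$-linear surjection with kernel $\kappa_1$: well defined because $f^{(n)}=f^{(n-1)}\circ f$, and surjective because $f$, being an isogeny (preceding proposition), is onto $F(\mu_{\overline{K}})$ and a preimage of a point of $\kappa_{n-1}$ automatically lies in $\kappa_n$. This gives the exact sequence $0\to\kappa_1\to\kappa_n\to\kappa_{n-1}\to 0$, hence $|\kappa_n|=q^{\,hn}$ by induction, and also $\pi\kappa_n=f(\kappa_n)=\kappa_{n-1}$, so $\kappa_n/\pi\kappa_n\cong\kappa_n/\kappa_{n-1}$ has $q^{h}$ elements. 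Now I would fix a $k_S$-basis $e^1_1,\dots,e^h_1$ of $\kappa_1$ and, using surjectivity of $f$, lift it successively to tuples $e^1_n,\dots,e^h_n\in\kappa_n$ with $f(e^i_{n+1})=e^i_n$. A descending induction shows these are $k_S$-independent in $\kappa_n/\kappa_{n-1}$: if $\sum_i c_ie^i_n\in\kappa_{n-1}$, applying $f$ repeatedly forces $\sum_i c_ie^i_{n-k}\in\kappa_{n-1-k}$ for all $k$, hence $\sum_i c_ie^i_1=0$ in $\kappa_1$, whence $\pi\mid c_i$. So $\{e^i_n\}$ is a $k_S$-basis of $\kappa_n/\pi\kappa_n$; by Nakayama it generates $\kappa_n$ over $C/\pi^n C$, and the resulting surjection $(C/\pi^n C)^{h}\twoheadrightarrow\kappa_n$ between finite sets of equal cardinality $q^{hn}$ is therefore an isomorphism. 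This proves $\kappa_n\cong(C/\pi^n C)^{h}$ and, crucially, supplies bases compatible with the transition maps $f\colon\kappa_{n+1}\to\kappa_n$. Passing to the limit, the induced $C$-linear map $C^{h}=\varprojlim(C/\pi^n C)^{h}\to\varprojlim\kappa_n$ is then an isomorphism, giving $\varprojlim\kappa_n\cong C^{h}$.

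The main obstacle, such as it is, is the Nakayama/descending-induction step: one must correctly identify $\kappa_n/\pi\kappa_n$ with $\kappa_n/\kappa_{n-1}$ (i.e. verify $\pi\kappa_n=\kappa_{n-1}$) and then carry the linear-independence argument all the way down to level $1$. Everything else — the exact sequence, the cardinality bookkeeping, and the fact that a surjective endomorphism of a finite module is an isomorphism — is routine, and the surjectivity of $f$ and $f^{(n)}$ on $F(\mu_{\overline{K}})$ is exactly the content of the preceding proposition.
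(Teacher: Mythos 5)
Your argument is correct and complete: the $C/\pi C$-vector space structure on $\kappa_1$ gives $h_1=jh$; the exact sequences $0\to\kappa_1\to\kappa_n\xrightarrow{f}\kappa_{n-1}\to 0$, the identification $\pi\kappa_n=\kappa_{n-1}$, the compatible lifts, and Nakayama plus the cardinality count give $\kappa_n\cong(C/\pi^nC)^h$; and the compatibility of the bases with the transition maps $f$ yields $\varprojlim\kappa_n\cong C^h$. The paper gives no proof of its own here (it only cites Kolyvagin, Proposition 2.3), and your argument is the standard one, so there is nothing to flag.
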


\begin{proof}
cf. \cite{koly} Proposition 2.3.
\end{proof}

\begin{remark} Notice that since the coefficients of $F$ 
are in the local field $K$ then 
$\kappa_n\subset \overline{K}$ for all $n\geq 1$. 
\end{remark}

%-----------------------------
%-----------------------------
%-----------------------------
%-----------------------------
\subsubsection{The logarithm of the formal group}\label{The logarithm of the formal group}

We define the logarithm of the formal group $F$ to be
the series

\[
l_F=\int_0^X\frac{dX}{F_X(0,X)}
\]

Observe that since $F_X(0,X)=1+\cdots \in \OK[[X]]^{*}$ 
then $l_F$ has the form

\[
X+\frac{a_2}{2}X^2+\cdots+\frac{a_n}{n}X^n+\cdots
\]
where $a_i\in \OK$.
\begin{prop}\label{logarithm}
        Let E be a field of characteristic 0 that is complete 
        with respect to a discrete valuation,  $\mathcal{O}_E$ 
        the valuation ring of $E$ with maximal ideal $\mu_E$ 
        and valuation $v_E$. Consider a formal group $F$ 
        over $\mathcal{O}_E$, then
        \begin{enumerate}
        
                \item The formal logarithm induces a homomorphism
                        \[
                        l_F: F(\mu_E)\to E
                        \]
                        with the additive group law on $E$.
                        
                \item The formal logarithm induces the isomorphism
                        \[
                        l_F: F(\mu_{E}^r)\ \tilde{\longrightarrow}\  \mu_{E}^r
                        \]
                        for all $r\geq [v_E(p)/(p-1)]+1$ and 
                        \[
                        v_E(l(x))=v(x)\quad (\ \forall x\in \mu_E^r\ ).
                        \]
                In particular, this holds for  
                $\mu_{E,1}=\{x\in E:\ v_E(x)>{v_E(p)/(p-1)}+1\ \}$. 
\end{enumerate}

\end{prop}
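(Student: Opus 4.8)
The plan is to reduce both assertions to valuation estimates on the coefficients of $l_F$ and of its formal inverse, and then let continuity of power‑series substitution do the work. Write $l_F(X)=X+\sum_{n\ge2}\frac{a_n}{n}X^n$ with $a_n\in\mathcal{O}_E$, and recall the formal identities $l_F(F(X,Y))=l_F(X)+l_F(Y)$ and $l_F(X)\equiv X\pmod{\deg 2}$; the latter yields a formal inverse $\exp_F(X)=X+\sum_{n\ge2}b_nX^n$ with $b_n\in E$. I will use throughout that $v_E(n)=v_E(p)\,v_p(n)$ together with the elementary inequalities $v_p(n)\le\log_p n$ and $v_p(n)\le\frac{n-1}{p-1}$, both valid for every integer $n\ge1$.

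First I would prove (1). For $x\in\mu_E$ one has $v_E(x)\ge1$, hence $v_E\!\big(\tfrac{a_n}{n}x^n\big)\ge n-v_E(p)\log_p n\to+\infty$; so the series defining $l_F(x)$ converges, and $l_F$ is a well‑defined map $F(\mu_E)\to E$ that is continuous for the valuation topology. Since substitution of one convergent series into another is continuous, the formal identity $l_F(F(X,Y))=l_F(X)+l_F(Y)$ specializes to $l_F(x\oplus_F y)=l_F(x)+l_F(y)$ for all $x,y\in\mu_E$, which is the homomorphism statement.

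For (2), I would first establish the estimate $v_E(b_n)\ge-\tfrac{n-1}{p-1}\,v_E(p)$ for the coefficients of $\exp_F$, by induction on $n$ using the recursion extracted from $l_F(\exp_F(X))=X$ and the bound $v_E(k)=v_E(p)v_p(k)\le\tfrac{k-1}{p-1}v_E(p)$; alternatively one may invoke \cite{koly} Proposition 2.4. Put $c=v_E(p)/(p-1)$ and fix $r\ge\lfloor c\rfloor+1$, so that $r>c$. Given $x\in\mu_E^r$, set $s=v_E(x)\ge r>c$; then for every $n\ge2$,
\[
v_E(b_nx^n)\ \ge\ ns-(n-1)c\ =\ s+(n-1)(s-c)\ >\ s,
\]
and likewise $v_E\!\big(\tfrac{a_n}{n}x^n\big)\ge ns-v_E(p)v_p(n)\ge ns-(n-1)c>s$. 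Hence both $l_F$ and $\exp_F$ converge on $\mu_E^r$, send $\mu_E^r$ into $\mu_E^r$, and in each series the linear term strictly dominates; therefore $v_E(l_F(x))=v_E(x)$ and $v_E(\exp_F(x))=v_E(x)$. Since $l_F\circ\exp_F=\exp_F\circ l_F=X$ as formal power series and all the substitutions involved converge, $l_F\colon F(\mu_E^r)\to\mu_E^r$ is a bijection with inverse $\exp_F$; together with (1) this makes it an isomorphism onto $\mu_E^r$ with its additive structure, and the assertion about $\mu_{E,1}$ is the case $r=\lfloor v_E(p)/(p-1)\rfloor+1$.

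I expect the only genuine obstacle to be the coefficient bound $v_E(b_n)\ge-\tfrac{n-1}{p-1}v_E(p)$ for the inverse series: it is the one nonformal ingredient, and the induction requires care in bounding the mixed terms — products of a coefficient $a_k$ with a power sum of the $b_j$'s — that arise when one expands $l_F(\exp_F(X))=X$, where multiplicativity of $v_E$ and the inequality $v_p(k)\le\tfrac{k-1}{p-1}$ must be combined just so. Everything else is routine valuation bookkeeping together with the continuity of addition, multiplication and substitution already recorded for these topologies.
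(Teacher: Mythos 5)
Your proposal is correct and is essentially the standard argument from Silverman (IV, Lemma 6.3 and Theorem 6.4), which is all the paper itself offers by way of proof — it simply cites that reference. The key coefficient bound $v_E(b_n)\ge -\tfrac{n-1}{p-1}v_E(p)$ that you flag as the one nonformal ingredient is exactly what the cited Lemma 6.3 supplies (via $\exp_F=\sum c_nX^n/n!$ with $c_n\in\mathcal{O}_E$ together with $v_E(n!)\le\tfrac{(n-1)v_E(p)}{p-1}$, which is the paper's Lemma~\ref{muta}), and the rest of your valuation bookkeeping is sound.
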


\begin{proof}
\cite{silver} IV Theorem 6.4 and Lemma 6.3.
\end{proof}

\begin{lemma}\label{muta}
Let $E$ and $v_E$ as in the previous proposition. Then
\[
v_E(n!)\leq \frac{(n-1)v_E(p)}{p-1},
\]
and $v_E(x^n/n!)\to \infty$ as $n\to \infty$ for $x\in \mu_{E,1}$.
\end{lemma}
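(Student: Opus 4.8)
The plan is to reduce everything to the classical Legendre formula for the $p$-adic valuation of a factorial. First I would record that for a positive integer $m$ one has $v_E(m)=v_E(p)\,v_p(m)$, where $v_p$ is the normalized $p$-adic valuation on $\Z$; this is immediate from writing $m=p^{v_p(m)}u$ with $u$ a $p$-adic unit. Hence it suffices to bound $v_p(n!)$.

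Next I would invoke Legendre's formula
\[
v_p(n!)=\sum_{i\geq 1}\floor*{n/p^i}=\frac{n-s_p(n)}{p-1},
\]
where $s_p(n)\geq 1$ is the sum of the base-$p$ digits of $n$ (for $n\geq 1$). This gives $v_p(n!)\leq (n-1)/(p-1)$, and multiplying by $v_E(p)$ yields the asserted inequality $v_E(n!)\leq (n-1)v_E(p)/(p-1)$. Alternatively one could prove $v_p(n!)\leq (n-1)/(p-1)$ by induction on $n$, using $v_p(n!)=v_p((n-1)!)+v_p(n)$, but the digit-sum identity is the cleanest route.

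For the second claim, let $x\in\mu_{E,1}$, so by definition $v_E(x)\geq \floor*{v_E(p)/(p-1)}+1$, and in particular $v_E(x)>v_E(p)/(p-1)$. Then, using the bound just proved,
\[
v_E\!\left(\frac{x^n}{n!}\right)=n\,v_E(x)-v_E(n!)\geq n\left(v_E(x)-\frac{v_E(p)}{p-1}\right)+\frac{v_E(p)}{p-1}.
\]
Since the coefficient $v_E(x)-v_E(p)/(p-1)$ is strictly positive, the right-hand side tends to $+\infty$ as $n\to\infty$, which is what we want.

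There is no genuine obstacle here: the statement is a standard estimate, and the only point requiring a little care is the strict inequality $v_E(x)>v_E(p)/(p-1)$, which is precisely what the floor in the definition of $\mu_{E,1}$ guarantees and what makes the linear-in-$n$ term dominate.
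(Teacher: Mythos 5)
Your proof is correct and follows essentially the same route as the paper: the second claim is established by exactly the same estimate $v_E(x^n/n!)\geq n\,v_E(x)-(n-1)v_E(p)/(p-1)$ combined with $v_E(x)>v_E(p)/(p-1)$. The only difference is that for the factorial bound the paper simply cites Silverman (IV, Lemma 6.2), whereas you supply the standard Legendre-formula argument, which is a perfectly valid substitute.
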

\begin{proof}
The first assertion can be found in \cite{silver} IV. Lemma 6.2. 
For the second one notice that
\begin{multline*}
v_E(x^n/n!)\,\geq\, nv_E(x)-v(n!)
\geq\, nv_E(x)-(n-1)\frac{v_E(p)}{p-1}\\
=\,v_E(x)+(n-1)\left( v_E(x)- \frac{v_E(p)}{p-1}\right).
\end{multline*}
Since we are assuming that $x\in \mu_{E,1}$, i.e., $v_E(x)>v_E(p)/(p-1)$, then 
 $v_E(x^n/n!)\to \infty$ as $n\to \infty$.
\end{proof}

\begin{lemma}\label{seqcont}

     Let $\La$ be  a $d$-dimensional local field 
     containing the local field $K$,  
      $g(X)=a_1X+\frac{a_2}{2}X^2+\cdots+
     \frac{a_n}{n}X^n+\cdots$ and $h(X)=a_1X+\frac{a_2}{2!}X^2+\cdots+
     \frac{a_n}{n!}X^n+\cdots$ with $a_i\in \OK$. 
     Then $g$ and $h$ define, respectively, maps $g: \mu_{\La}\to \mu_{\La}$
     and $h: \mu_{\La,1}\to \mu_{\La,1}$ 
     that  are sequentially continuous in the Parshin 
     topology.
     
\end{lemma}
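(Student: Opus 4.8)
The plan is to treat $g$ and $h$ uniformly as instances of a convergent power series $\phi(X)=\sum_{n\ge 1}b_nX^n$ with coefficients in $\La$, acting on a subgroup $D$ of $(\mu_\La,+)$ stable under $\phi$ (namely $D=\mu_\La$ for $g$, with $b_n=a_n/n$, and $D=\mu_{\La,1}$ for $h$, with $b_n=a_n/n!$), and to reduce sequential continuity of $\phi|_D$ to (i) sequential continuity of the polynomial truncations $\phi_N=\sum_{n\le N}b_nX^n$ and (ii) a \emph{uniform} estimate on the tails $R_N=\phi-\phi_N$. First I would record the valuation estimates that make $\phi$ well defined and that it lands in the right set: for $g$ one has $v_\La(a_n/n)\ge -v_\La(n)\ge -v_\La(p)\log_p n$, so $v_\La\big((a_n/n)x^n\big)\ge n-v_\La(p)\log_p n\to\infty$ for $x\in\mu_\La$; for $h$, Lemma \ref{muta} gives $v_\La(a_nx^n/n!)\ge nv_\La(x)-(n-1)v_\La(p)/(p-1)$, which tends to $\infty$ and, once $x\in\mu_{\La,1}$, is $\ge v_\La(x)$ term by term, so $v_\La(h(x))=v_\La(x)$ and $h$ preserves $\mu_{\La,1}$ (the containment for $g$ uses the term-by-term bound and Proposition \ref{logarithm}). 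Since $\La$ is complete and, by the explicit description of the Parshin topology, every neighbourhood of $0$ contains a ball $\{z:v_\La(z)\ge c\}$, these estimates show the partial sums converge, and moreover yield the crucial uniform bound $v_\La(R_N(x))\ge c(N)$ for all $x\in D$, with $c(N)\to\infty$ as $N\to\infty$.

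Next I would check that each truncation $\phi_N$ is sequentially continuous on $D$. By continuity of addition and of multiplication by a fixed element (Proposition \ref{topo}(1),(2)) this reduces to the statement that $x_j\to x$ in $\mu_\La$ implies $x_j^k\to x^k$ for every fixed $k$. For this I would transport the problem to $V_\La=1+\mu_\La$, on which $x\mapsto 1+x$ is a homeomorphism and multiplication is sequentially continuous (Proposition \ref{topo-prod}(1)): from $(1+x_j)^k\to(1+x)^k$ one peels off the lower-degree contributions by induction on $k$, using additivity and multiplication by the fixed integers $\binom{k}{i}$, to obtain $x_j^k\to x^k$.

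The last step is the standard three-term argument for topological groups. Given a convergent sequence $x_j\to x_0$ in $D$ and a neighbourhood $W$ of $0$ in $\La$, choose a symmetric neighbourhood $W'$ with $W'+W'+W'\subseteq W$ and, by the key fact above, a constant $c$ with $\{z:v_\La(z)\ge c\}\subseteq W'$; pick $N$ with $c(N)\ge c$, so that $R_N(x_j),R_N(x_0)\in W'$ for all $j$; then pick $J$ with $\phi_N(x_j)-\phi_N(x_0)\in W'$ for $j\ge J$. Since
$\phi(x_j)-\phi(x_0)=\big(\phi_N(x_j)-\phi_N(x_0)\big)+R_N(x_j)-R_N(x_0)$,
it lies in $W$ for $j\ge J$, so $\phi(x_j)\to\phi(x_0)$, which is the assertion.

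The step I expect to be the main obstacle is the interplay between the tail estimate and the Parshin topology: one must know that valuation balls form a cofinal family in the neighbourhood filter of $0$ — so that the $v_\La$-uniform smallness of $R_N$ really translates into topological smallness — and proving this from the inductive definition of the Parshin topology, including the reduction of an arbitrary $d$-dimensional field to a finite extension of a standard one via Proposition \ref{topo}(4), is the only genuinely higher-dimensional ingredient. Once it is in place, the argument is formally identical to the classical one-dimensional case.
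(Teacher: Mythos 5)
Your proof is correct and follows essentially the same route as the paper's: reduce to a standard field, use that every basic Parshin neighbourhood of $0$ contains a valuation ball $P_{\La}(c)$, bound the tails uniformly in the sequence (via $v_\La(x^i/i)\ge i-v_\La(p)\log_p i$ for $g$ and Lemma \ref{muta} for $h$), and handle the finite truncation by sequential continuity of multiplication. Your detour through $1+\mu_\La$ and Proposition \ref{topo-prod} to justify $x_j^k\to x^k$ is a slightly more careful filling-in of a step the paper dispatches with a one-line appeal to sequential continuity of multiplication, but it is not a genuinely different argument.
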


\begin{proof}

    We may assume $\La$ is a standard $d$-dimensional local field. 
    Let $\mathcal{V}_{\{V_i\}}$ be a basic neighborhood of 
    zero that we can consider to be a subgroup of $\La$, 
    and let $c>0$ such that 
    $P_{\La}(c)\subset \mathcal{V}_{\{V_i\}}$. If $x_n\in \mu_{\La}$ for all $n$,
    then there exists an $N_1>0$ 
    such that $v_{\La}(x_n^i/i), v_{\La}(x^i/i)>c$ 
    for all $i>N_1$ and all $n$; because 
    $iv_\La(x_n)-v(i)\geq iv_\La(x_n)-\log_p(i)\geq i-\log_p(i) \to \infty$ as $i\to \infty$.
    On the other hand, if $x_n\in \mu_{\La,1}$ for all $n$, then there exists an $N_2>0$ 
    such that $v_{\La}(x_n^i/i!), v_{\La}(x^i/i!)>c$ 
    for all $i>N_2$ and all $n$ by Lemma \ref{muta}. Then, for $N=\max\{N_1,N_2\}$, we have
         \[
    \sum_{i=N+1}^{\infty}a_i\frac{x_n^i-x^i}{i},\ 
    \sum_{i=N+1}^{\infty}a_i\frac{x_n^i-x^i}{i!}\in P_\La(c)\subset \mathcal{V}_{\{V_j\}}.
    \]
    
    Now, since multiplication is sequentially continuous 
    and $x_n\to x$ then 
    
    \[
    \sum_{i=1}^{N}a_i\frac{x_n^i-x^i}{i}\to 0,\ \sum_{i=1}^{N}a_i\frac{x_n^i-x^i}{i!}\to 0 \quad \text{as $n \to \infty$}
    \]
    
    Thus for $n$ large enough we have that 
    $$g(x_n)-g(x),\ h(x_n)-h(x)= \sum_{i=1}^{N}+\sum_{i=N+1}^{\infty}
     \in \mathcal{V}_{\{V_i\}}.$$
     
\end{proof}

\begin{remark} \label{sequito} In particular, $\log:\mu_\La\to \mu_{\La}$, $l_F:\mu_\La\to \mu_{\La}$ and 
$\exp_F=l^{-1}_F:\mu_{\La,1}\to \mu_{\La,1}$ are sequentially continuous.
\end{remark}
%--------------------------------------------------------------------------
%--------------------------------------------------------------------------
%--------------------------------------------------------------------------
%--------------------------------------------------------------------------
%--------------------------------------------------------------------------

%____End of formal groups section

\subsection{Proofs of propositions and lemmas in section \ref{The Kummer Pairing}}\label{Proofs of Chapter A}

\begin{proof}[Proof of Proposition \ref{milnorelation}]

To simplify the notation we will assume $m=2$.
\begin{enumerate}
\item Noticing that $(1-a)/(1-1/a)=-a$ it follows that
\begin{align*}
      \{a,-a\}=\{a,1-a\}\{a,1-1/a\}^{-1}
           =\{a,1-1/a\}^{-1}
           =\{1/a,1-1/a\}
           =1
\end{align*}

\item This follows immedeately from the previous item \begin{align*}
      &\{a,b\}\{b,a\}=\{-b,b\}\{a,b\}\{b,a\}\{-a,a\}
           =\{-ab,b\}\{-ab,a\}
           =\{-ab,ab\}
           =1
\end{align*}

\end{enumerate}

\end{proof}

\begin{proof}[Proof of Theorem \ref{Kato} (2)]
 Let $\Ma$ be a finite abelian extension of $\La$, thus
 Gal$(\La^{ab}/\Ma)$ is an open neighborhood of $G_{\La}^{ab}$.
 Let $x_n$ be a convergent sequence to the zero element of $K_d(\La)$. 
 Since $N_{\Ma/\La}(K_d^{top}(\Ma))$ is a open subgroup of $K_d^{top}(\La)$ 
 by Proposition \ref{topo-norm}, then 
 \[\overline{x_n}\in N_{\Ma/\La}(K_d^{top}(\Ma))\quad (n>>0),
 \]
 where $\overline{x_n}$ is the image of $x_n$ in $K^{top}_d(\La)$. 
 Thus, there exist $y_n\in K_d(\Ma)$ and $\beta_n\in \Lambda_{m}(\La)$ such that
 \[x_n=\beta_nN_{\Ma/\La}(y_n)\quad (n>>0).
 \]
 From equation (\ref{Fesenko}) we have that $\beta_n\in \cap_{l\geq 1}lK_d(\La)$ 
 which implies that $\Upsilon_{\La}(\beta_n)$ is the identity element in $G_{\La}^{ab}$
 (because $G_{\La}^{ab}$ is a profinite group). Therefore
 \[
 \Upsilon_{\La}(x_n)=\Upsilon_{\La}(N_{\Ma/\La}(y_n))\quad (n>>0),
 \]
 but the element on the right hand side of this equality is the identity on Gal$(\La^{ab}/\Ma)$ 
 by the second item of this Theorem. It follows that $\Upsilon_{\La}(x_n)$ 
 converges to the identity element of $G_\La^{ab}$.
\end{proof}

\begin{proof}[Proof of Proposition \ref{pair}]
The first 5 properties follow from the definition of the pairing and Theorem \ref{Kato}.

          Let us prove property 7. Let $f^{(n)}(z)=y$ an take a 
          finite Galois extension $\mathcal{N}\supset \Ma(z)$ over $\La$. 
          Let $G=G(\mathcal{N}/\La)$ and $H=G(\mathcal{N}/\Ma)$, $w=[G:H]$, and
          $V:G/G'\to H/H'$ the transfer homomorphism. 
          Let $g=\Upsilon_\La(a)$, then by Theorem \ref{Kato} we have 
          $V(\Upsilon_\La(a))=\Upsilon_\Ma(a)$. The explicit computation of 
          $V$ at $g\in G$ proceeds as follows (cf. \cite{Scott} $\S$ 3.5 ). 
          Let $\{c_i\}$ be a set of representatives of 
          for the right cosets of $H$ in $G$, i.e., $G=\sqcup\ Hc_i$. Then for each 
          $c_i$, $i=1,\dots w$ there exist a $c_j$ such that 
          $c_igc_j^{-1}=h_i\in H$ and no two $c_j$'s are equal; 
          this is because $c_ig$ belongs to one and only one of the right cosets $Hc_j$. 
          Then $$V(g)=\prod_{i=1}^wh_i.$$
          
          Also, notice that  since $gc_j^{-1}=c_i^{-1}h_i$ then
          \[
          h_i(z)\ominus z=c_i^{-1}(h_i(z)\ominus z)=g(c_j^{-1}(z))\ominus c_i^{-1}(z).
          \]
          So we have
          \begin{align*}
          &(a,y)_{\Ma,n}= \Upsilon_\Ma(a)(z)\ominus z
                       =V(g)(z)\ominus z
                       =(\prod_{i=1}^wh_i)z\ominus z
                       =\oplus_{i=1}^w (h_i(z)\ominus z)\\
                       &=\oplus_{i=1}^w (g(c_j^{-1}(z))\ominus c_i^{-1}(z))
                       =g(\oplus_{i=1}^wc_j^{-1}(z))\ominus (\oplus_{i=1}^wc_j^{-1}(z))
                       =(a, N_{\Ma/\La}^F(y))_{\La,n}              
          \end{align*}
          the last equality being true since $g=\Upsilon_{\La}(a)$ and 
          $$f^{(n)}(\oplus_{i=1}^wc_j^{-1}(z))
          =\oplus_{i=1}^wc_j^{-1}(y)=N_{\Ma/\La}^F(y).$$

\end{proof}

\begin{proof}[Proof of Proposition \ref{normseries}]
First, the coefficients of $s$ are in $\mathcal{O}_K$ because 
$s^{\sigma}=s$ for every $\sigma \in G_K=
Gal(\overline{K}/K)$. 
Now, applying Lemma \ref{Weierstrass} to $s$ and $f^{(n)}$, 
we get $s=Ps_1$ and $f^{(n)}=Qf_1$, where $P$ and $Q$ is a 
monic polynomials and $s_1, f_1\in \OK[[X]]^{*}$. 
Since $s(F(X,v))=s(X)$ for all $v\in \kappa_n$, then $P(v)=0$ 
for all $v\in \kappa_n$ and  so $Q=\prod_{v\in \kappa_n}(X-v)$ 
divides $P$. This implies that $s$ is divisible by $f^{(n)}$, 
i.e., $s=f^{(n)}(a_0+a_1X+\cdots)$. In particular,
\[
s-f^{(n)}.a_0=f^{(n)}.(\ a_1X+\cdots).
\]
But from $s(F(X,v))=s(X)$ we see that $a_1X+\cdots $ 
must satisfy the same property and so $a_1v+\cdots =0$, 
for all $v\in \kappa_n$. Therefore this series 
is also divisible by $f^{(n)}$ and repeating the 
process we get $s=r_g(f^{(n)})$. Let us compute 
now $c(r_g)$. Taking the logarithmic derivative on s and then multiplying by $X$ we get
\[
\frac{s'(X)}{s(X)}X=\sum_{v\in \kappa_n} \frac{g'(F(X,v))F_X(X,v)X}{g(F(X,v))},
\]
which implies
\[
\frac{s'(0)}{\prod_{0\neq v \in \kappa_n}g(v)}=g'(0),
\]
From $s'=r'_g( f^{(n)} ) f^{(n)'} $ we obtain
\[
r_g'(0)=\frac{s'(0)}{f^{(n)'}(0)}=\frac{c(g)\prod_{0\neq v \in \kappa_n}g(v)}{\pi^n}.
\]
Each $g(v)$  is associated to $v$, $0\neq v\in \kappa_n$, 
then $\prod_{0\neq v \in \kappa_n}g(v)$ is associated to 
$\prod_{0\neq v \in \kappa_n}v$, but the latter is associated 
to $\pi^n$ from the equation $f=Pf_1$. Then $c(r_g)\in \OK^{*}$. 
Finally, we will show that 
$(\{a_1,\dots,a_{i-1}, r_g(x),a_{i+1},\dots,a_d \}, x)=0$. 
Let $L$ be a local field containing $\kappa_n$, $\La=L\TT$, $x\in F(\mu_\La)$ and $z$ such 
that $f^{(n)}(z)=x$. Then
\[
r_g(x)=\prod_{v\in \kappa_n} g(z\oplus_Fv)=\prod_i N_{\La(z)/\La}(g(z_i)),
\]
where the $z_i$ are pairwise non-conjugate over $\La$ distinct roots 
of $f^{(n)}(X)=x$, so
\begin{align*}
\{a_1,\dots,a_{i-1}, r_g(x),a_{i+1},\dots,a_d \}&=\{a_1,\dots,a_{i-1},N_{\La(z)/\La}(\prod_i g(z_i) ),a_{i+1},\dots,a_d \} \\
&=N_{\La(z)/\La}(\{a_1,\dots,a_{i-1}, \prod_i g(z_i) ,a_{i+1},\dots,a_d \}),
\end{align*}
The last equality follows from  Proposition 
\ref{norm} (1) and (4). The result now follows 
from Proposition \ref{pair}.
\end{proof}

\subsection{Proofs of Section \ref{The maps psi and rho}}\label{Proofs of Chapter B}

\begin{proof}[Proof of Proposition \ref{Riezs}]
Assume first that $\La$ is the standard higher local field $L\TT$. The proof
is done by induction in $d$. If $d=1$ the result is known. 
Suppose the result is true for $d\geq 1$ and let $\La=E\{\{T_d\}\}$
where $E=L\TT$.

 Let $\phi : \La\to S$ be a sequentially continuous 
 $C$-linear map and define, for each $i\in \Z$, 
 the sequentially continuous map $\phi_i(x)=\phi(xT_d^i)$ 
 for all $x\in E$. Then clearly 
 $\phi_i\in \text{Hom}_C(E,S)$ and by the induction hypothesis
  we know that there exists  
 an $a_{-i}\in E$ such that 
 $\phi(xT_d^i)=\mathbb{T}_{E/S}(a_{-i}x)$ for all 
 $x\in E$. Let $\alpha=\sum a_iT_d^i$, we must show that

 \begin{enumerate}[I.]
     \item $\min \{v_E(a_{i})\}>-\infty$.
     \item $v_{E}(a_{-i})\to \infty$ as $i \to \infty$ 
     (i.e., conditions (I) and (II) imply that $\alpha\in \La$).
     \item $\phi (x)=\Tr(\alpha x)$, $\forall x\in \La$.
 \end{enumerate}

For any $x=\sum x_iT^i \in \La$ we have, 
by the sequential continuity of $\phi$ that
\begin{equation}\label{convergent}
\phi(x)=\sum_{i\in \Z}\phi(x_iT_d^i)=\sum_{i\in \Z}\mathbb{T}_{E/S}(a_{-i}x_i).
\end{equation}

Suppose (I) was not true, then there exist 
a subsequence $\{a_{n_k}\}$ such that 
$v_E(a_{n_k})\to -\infty$ as $n_k\to \infty$ 
or as $n_k\to -\infty$. In the first case we take 
an $x=\sum x_iT_d^i \in \La$ such that $x_i$ is 
equal to $1/a_{n_k}$   if $i=- n_k$ 
and 0 if $i\neq -n_k$. So $a_{-i}x_i$=1 if $i=- n_k$ 
and 0 if $i\neq- n_k$. Then the sum on the right of 
(\ref{convergent}) would not converge. In the second 
case we  take   $x_i$ to be  equal to $1/a_{n_k}$ if 
$i=- n_k$ and 0 if $i\neq -n_k$. So 
$a_{-i}x_i=1$ if $i=- n_k$ and 0 if 
$i\neq - n_k$ and again the sum on the right would not converge.

Suppose (II) was not true. Then  
$v_L(a_{n_k})<M$ for some positive integer $M$ and 
a  of negative integers  $n_k\to -\infty$. Then take $x=\sum x_iT_d^i \in \La$ such 
that $x_i$ is equal to $1/a_{n_k}$ for $i=-n_k$ and 0 for $i \neq -n_k$. 
So $a_{-i}x_i=1$ if $i=-n_k$ and 0 if $i\neq -n_k$  and the 
sum on the right of (\ref{convergent}) would not converge.

Finally, (III) follows by noticing that by (I) and (II) 
the sum  $\sum_{i\in \Z} a_{-i}x_i$  converges and
\[
\sum_{i\in \Z}\text{Tr}_{L/S}(a_{-i}x_i)=\text{Tr}_{L/S}(\sum_{i\in \Z}a_{-i}x_i)=\Tr(x\alpha).
\]

Assume now that $\La$ is an arbitrary $d$-dimensional local field. Then let $\La_{(0)}$
be the standard local field from Section \ref{Terminology and Notation}. Since $\La/\La_{(0)}$
is a finite extension, then $\text{Tr}_{\La/\La_0}$ induces a pairing $\La\times \La\to _{\La_{(0)}}$. From this and the first part of the proof the result now follows. 
\end{proof}

\begin{proof}[Proof of equation (\ref{RL1})]
By induction on $d$. For $d=1$ this is proven in \cite{koly} \S 4.1. 
Suppose it is true for $d\geq 1$, and let $\La=E_d\{\{T_d\}\}$, where $E_d=L\TT$.
If $x=\sum_{i\in \Z}x_iT_d^i\in R_{\La,1}$, 
then since $\mu_{E_d,1}\subset \mu_{\La,1}$ we have
that also $\mu_{E_d,1}T_d^{-i}\subset \mu_{\La,1}$ and
\[
\Tr(x\ \mu_{E_d,1}T_d^{-i})\subset C,
\]
which implies $\mathbb{T}_{E_d/S}(x_i\mu_{E_d,1})\subset C$, 
since $\Tr=\mathbb{T}_{E_d/S}\circ c_{\La/E_d}$.
By induction hypothesis we have 
$v_{E_d}(x_i)\geq\  -v_L(D(L/S))-\floor*{v_L(p)/(p-1)}-1$
for all $i\in \Z$, therefore
\[
v_\La(x)=\min v_{E_d}(x_i)\geq -v_L(D(L/S))-\floor*{v_L(p)/(p-1)}-1.
\]
Conversely, if $v_\La(x)
=\min v_{E_d}(x_i)
\geq 
-v_L(D(L/S))-\floor*{v_L(p)/(p-1)}-1$,
then $v_{E_d}(x_i)
\geq\  -v_L(D(L/S))-\floor*{v_L(p)/(p-1)}-1$
for all $i\in \Z$. Then, by the induction hypothesis 
$\mathbb{T}_{E_d/S}(x_i\mu_{E_d,1})\subset C$ for all $i\in Z$, and therefore
\[
\Tr(x\mu_{\La,1})=
\sum_{i\in \Z}
\Tr(x_iT_d^i\ \mu_{\La,1})=
\sum_{i\in \Z} \mathbb{T}_{E_d/S}(x_i\ \mu_{E_d,1})
\subset C.
\]
Thus, identity (\ref{RL1}) holds for standard local fields $L\TT$. In the general case of an arbitrary $d$-dimensional local field  $\La$, it is enough to consider the finite extension $\La_{(0)}$ from Section \ref{Terminology and Notation}.
\end{proof}

\begin{proof}[Proof of Proposition \ref{psi-properties}]
\begin{enumerate}
    \item From the identity $\mathbb{T}_{\Ma/S}= \Tr\circ \text{Tr}_{\Ma/\La}$ and the fact that $\mu_{\La,1}\subset \mu_{\Ma,1}$ we obtain 
          \[\mathbb{T}_{\La/S}(\text{Tr}_{\Ma/\La}(R_{\Ma,1})\mu_{\La,1})
          =\mathbb{T}_{\La/S}(\text{Tr}_{\Ma/\La}(R_{\Ma,1} \mu_{\La,1}))
          \subset \mathbb{T}_{\Ma/S}(R_{\Ma,1} \mu_{\Ma,1})
          \subset C,\]
          from which follows that 
          $ \text{Tr}_{\Ma/\La}(R_{\Ma,1})\subset R_{\La,1}$. 
          Now by Proposition \ref{pair} (4)
          we have, for $b\in K_d(\Ma)$ and 
          $x\in F(\mu_{\La,1})$, that
      \[
                (N_{\Ma/\La}(b),x)_{\La,n}=(b,x)_{\Ma,n}=\mathbb{T}_{\Ma/S}(\psi_{\Ma,m}^i(b)l(x))=\mathbb{T}_{\La/S}(\text{Tr}_{\Ma/\La}(\psi_{\Ma,m}^i(b))l(x)).
             \]
     It follows from Proposition \ref{psi} that 
     \[\psi_{\La,m}^i(N_{\Ma/\La}(b))=\text{Tr}_{\Ma/\La}(\psi_{\Ma,m}^i(b)).
     \]
      \item This is proved in a similar fashion to the previous 
      property but this time using Proposition \ref{pair} (7).
      
      \item This follows from Proposition \ref{psi} and Proposition \ref{pair} (5). Indeed, since
            $e_n^i=f^{(m-n)}(e_m^i)$  and $(a,x)_n=(a,f^{(m-n)}(x))_m$ we get 
            \[
            \pi^{m-n}(a,x)_n^i=(a,f^{(m-n)}(x))_m^i \pmod{\pi^m C}
            \]
            That is
            \begin{align*}                      
            \pi^{m-n}\ \Tr(\psi_{\La,n}^i(a) \ l_F(x))
            =&\Tr(\psi_{\La,m}^i(a) \ l_F(f^{(m-n)}(x)))\\
            =&\pi^{m-n}\ \Tr(\psi_{\La,m}^i(a) \ l_F(x)) \pmod{\pi^m C}
            \end{align*}
            Upon dividing by $\pi^{m-n}$ the result follows.
      \item This property follows from Proposition \ref{pair} (8) and $l_{\tilde{F}}(t)=t'(0)l_F$.
      
\end{enumerate}

\end{proof}

\begin{proof}[Proof of Lemma \ref{Kolyv}]
For a $d$-dimensional local field $\La$, let $\La_{(0)}$ be as in Section \ref{Terminology and Notation}. Since $\La$ and $\La_{(0)}$ have the same residue field, it is enough to prove the result for a standard higher local field. Thus we assume $\La$ is standard and proceed by induction on $d$. For $d=1$, the result follows since $k_L$ is perfect.
Suppose it is proved for $R=\mathcal{O}_{\La_{d}}$, where 
$\La_{d}=L\{\{T_1\}\}\dots\{\{T_{d-1}\}\}$. Let $\La=\La_{d}\{\{T_d\}\}$ and
 $x\in \OLa$. Then $x\equiv\sum_{j\geq m}a_jT_{d}^{j}\pmod{\pi_L}$, $a_j\in R$. Thus, by the induction hypotheses
\begin{equation*}
\Scale[0.8]{
\begin{split}
x&\equiv \sum_{0\leq i_d<p^n} 
         T_d^{i_d}  \left(  \sum_{m\leq i_d+kp^n}a_{i_d+kp^n}T_d^{kp^n} \right) \pmod{\pi_L}\\
&\equiv \sum_{0\leq i_d<p^n} T_d^{i_d}
            \left(\sum_{m\leq i_d+kp^n}\left( \sum_{0\leq i_1,\dots, i_{d-1}< p^n}\gamma_{i_1,\dots, i_{d-1}, i_d;k}^{p^n}\ T_1^{i_1}\cdots T_{d-1}^{i_{d-1}} \right)T_d^{kp^n} \right) \pmod{\pi_L} \\
&\equiv \sum_{0\leq i_d<p^n} T_1^{i_1}\cdots T_{d-1}^{i_{d-1}}T_d^{i_d}\left(\sum_{k}\left( \sum_{0\leq i_1,\dots, i_{d-1}< p^n}\gamma_{i_1,\dots, i_{d-1}, i_d;k}^{p^n}\ T_d^{kp^n}\right) \right) \pmod{\pi_L}\\
&\equiv \sum_{0\leq i_1,\dots, i_d< p^n}\gamma_{i_1,\dots, i_{d}}^{p^n}\ T_1^{i_1}\cdots T_d^{i_d} \pmod{\pi_L}
\end{split}}
\end{equation*}
where
$
\gamma_{i_1,\dots, i_{d}}=\sum_{k}
\gamma_{i_1,\dots, i_{d-1}, i_d;k}\ 
T_d^{k}
$
and  regrouping terms is valid since the series are absolutely convergent in the Parshin topology. 
Also by  noticing that the congruence
\[
\Scale[0.9]{
\sum_{c\leq k}b_k^{p^n}T^{kp^n}\equiv 
\left( \sum_{c\leq k}b_kT_d^k\right) ^{p^n} \pmod{\pi_L}},
\]
holds in $k_{\La_{d-1}}((T_d))$, where $k_{\La_{d-1}}$ is the residue field of $\La_{d-1}$.

\end{proof}

\begin{proof}[Proof of Corollary \ref{dermax}]
This follows  from Proposition \ref{derivat} and Proposition \ref{Dar} (3).
Indeed, let us illustrate the proof in the case $d=2$, i.e., $\La$ is a 2-dimensional local field with  a systems of local uniformizers $T_1$ and $T_2=\pi_\La$. 
To simplify the notation 
we will denote $D_{\La,n}^i$ by $D$. From Proposition \ref{derivat} we have
\begin{equation}
\begin{split}
D(\eta_1(T_1,T_2),\eta_2(T_1,T_2))=\frac{\partial \eta_1}{\partial X_1}\frac{\partial \eta_2}{\partial X_1}\bigg|_{\substack{X_i=T_i,\\ i=1,2}}D(T_1,T_1)
+\frac{\partial \eta_1}{\partial X_1}\frac{\partial \eta_2}{\partial X_2}\bigg|_{\substack{X_i=T_i,\\ i=1,2}}D(T_1,T_2)\\
+\frac{\partial \eta_1}{\partial X_2}\frac{\partial \eta_2}{\partial X_1}\bigg|_{\substack{X_i=T_i,\\ i=1,2}}D(T_2,T_1)
+\frac{\partial \eta_1}{\partial X_2}\frac{\partial \eta_2}{\partial X_2}\bigg|_{\substack{X_i=T_i,\\ i=1,2}}D(T_2,T_2)
\end{split}
\end{equation}
But $D(T_1,T_1)=D(T_2,T_2)=0$, $D(T_2,T_1)=-D(T_1,T_2)$ from Proposition \ref{Dar} (3), therefore
\begin{equation}
\begin{split}
D(\eta_1(T_1,T_2),\eta_2(T_1,T_2))=
\left( \frac{\partial \eta_1}{\partial X_1}\frac{\partial \eta_2}{\partial X_2}
-\frac{\partial \eta_1}{\partial X_2}\frac{\partial \eta_2}{\partial X_1}\right)\bigg|_{\substack{X_i=T_i,\\ i=1,2}} D(T_2,T_1)
).
\end{split}
\end{equation}
The corollary follows.
\end{proof}

\subsection{Proofs of Section \ref{Multidimensional derivations}}\label{Proofs of Chapter C}
\begin{proof}[Proof of Proposition \ref{Khaler}]
To simplify the notation let us denote 
 $ \hat{\Omega}_{\OK}(\OLa)$ by $\hat{\Omega}$, where $\Omega=\Omega_{\OK}(\OLa)$. 
We will start by showing  that
 $\Omega/\pi_L^n\Omega$ is generated by $d\pi_\La$ and $dT_1,\dots, dT_{d-1}$ for all $n$.
 
Let $x\in \OLa$, then by corollary \ref{sumativa}, we have that
\[
x=\sum _{k=0}^{\infty}\left( \sum_{0\leq i_1,\dots, i_{d-1} <p^n}\gamma_{i,k}^{p^n}\ T_1^{i_1}\cdots T_{d-1}^{i_{d-1}}\pi_\La^k\right) 
.
\]
Therefore, in $\Omega/p^n\Omega$, we can consider the truncated sum
\[
\sum _{k=0}^{m}\left( \sum_{0\leq i_1,\dots, i_{d-1} <p^n}\gamma_{i,k}^{p^n}\ T_1^{i_1}\cdots T_{d-1}^{i_{d-1}}\pi_\La^k\right)  
,
\]
 where $m$ is such that $p^n|\pi_{\La}^{m+1}$. Thus, $dx$ is 
 generated by $d\pi_L$ and $d\,T_i$, $i=1,\dots, d-1$ in $\Omega/p^n\Omega$.

We will assume the notation of Section \ref{Canonical Derivations} and let $T_d=\pi_{\La}$. Let $b_i=\frac{\partial p}{\partial T_i}(\pi_{\La})$, $i=1,\dots, d$, and let $\mathfrak{D}$ be the ideal of $\OLa$ such that
\begin{equation}\label{annihilator-ideal}
v_{\La}(\mathfrak{D})=\min_{1\leq i \leq d}\{v_{\La}(b_i)\}.
\end{equation}
Without loss of generality we may assume that $v_{\La}(b_d)=\min_{1\leq i \leq d}\{v_{\La}(b_i)\}$, and then we define 
\[
w=\frac{b_1}{b_d}dT_1+\dots \frac{b_{d-1}}{b_d}d\,T_{d-1}+d\,T_{d} \in \Omega.
\] 
It is clear that $\mathfrak{D}w=0$ and also that $dT_1\,\dots, dT_{d-1}$, $w$  generate $\Omega/p^n\Omega$ for all $n$.

 We will 
 show now  that
 \begin{equation}\label{proje}
\frac{\Omega}{\pi_L^n\Omega} \simeq  \frac{\OLa}{\pi^n\OLa}  \oplus \cdots 
\oplus \frac{\OLa}{\pi^n\OLa} 
\oplus \frac{\OLa}{\pi^n\OLa+\mathfrak{D}\OLa} 
  \end{equation}
 for all $n\geq 1$. These isomorphisms are compatible: $\simeq_{n+1}\equiv \simeq_n\pmod{p^n}$,
 then we can take
  the projective limit $\varprojlim$ to  obtain the result. This will imply in particular that $\mathfrak{D}$ is the annihilator ideal of the torsion part of  $ \hat{\Omega}_{\OK}(\OLa)$, i.e., $\mathfrak{D}(\La/K)=\mathfrak{D}$.

In order to construct the isomorphism (\ref{proje}) we consider the derivations 
 $D_k:\O_{\La}\to \OLa$ for $k=1,\dots ,d-1$ and $D_d:\OLa\to \OLa/\mathfrak{D}$ as follows
 \[
D_i(g(\pi_{\La}))=\frac{\partial g}{\partial T_i}(\pi_{\La})-\frac{b_i}{b_d}\frac{\partial g}{\partial T_d}(\pi_\La) \quad i=1,\dots, d-1
 \]
 and 
 \[
D_d(g(\pi_\La))=\frac{\partial g}{\partial T_d}(\pi_{\La})
 \]
 for $g(x)\in \mathcal{O}_{\La_{(0)}}[X]$. It is clear from the very definition that these are well-defined derivations which are independent of the choice of $g(x)$.
 Define the map 
 \[\partial:\OLa \ \longrightarrow \
  \frac{\OLa}{p^n\OLa}
\oplus  \frac{\OLa}{p^n\OLa} \oplus \cdots  
\oplus  \frac{\OLa}{p^n\OLa+\mathfrak{D}\OLa}
 \]
 by 
 \[
 a\to (\overline{D_1}(a), \dots, \overline{D_{d}}(a))
 \]
 where $\overline{D_k}$ is the reduction of $D_k$. This is a well-defined derivation of $\OLa$ 
 over $\OK$ and 
 by the universality of $\Omega$,
  this induces a homomorphism of $\OLa$-modules
 \[
 \partial: \frac{\Omega}{p^n\Omega}\ \longrightarrow \
   \frac{\OLa}{p^n\OLa}
\oplus   \cdots  \oplus
 \frac{\OLa}{p^n\OLa+ \mathfrak{D}\OLa}. 
 \]
 Let us show that $\partial$ is an isomorphism. Indeed, it is clearly 
 surjective since for 
 $(a_0,\dots, a_{d-1})\in 
 \OLa \oplus \cdots 
 \oplus \OLa
 \oplus (\OLa/D(L/K)\OLa )$ we have that 
 \[
 \partial(a_1dT_1+\cdots+ a_{d-1}dT_{d-1}+a_dw)=(a_1,\dots,a_{d}).
 \]
 since 
 \[
 \overline{D_k}(w)=
 \begin{cases}
 1, & k=d,\\
 0, & 1\leq k\leq d-1, 
 \end{cases}
 \hspace{25pt}
 \overline{D_k}(dT_i)=
 \begin{cases}
 0, & k\neq i,\\
 1,  & k=i\neq d,
 \end{cases}
 \]  
 Also, $\partial$ is injective for if 
 $a=a_1dT_1+\cdots a_{d-1}dT_{d-1}+a_dw \in \Omega/p^n\Omega$ 
 is  such that $\partial(a)=0$, then $\overline{D_k}(a)=a_k=0$ in $\OLa/p^n\OLa$, 
 for $1\leq k\leq d-1$, and 
 $\overline{D_d}(a)=a_d=0 
 \pmod{p^n\OLa+\mathfrak{D}\OLa}$. 
 But then $a_dw=0$, since 
 $\mathfrak{D}w=0$, and 
 therefore $a=0 \mod{p^n\Omega}$. This concludes the proof. 
 
 Notice that if $\La$ is the standard higher local field $L\TT$, then we take as  a system of uniformizers $T_1,\dots, T_{d-1}$ and $\pi_L$, and in this case $b_i=0$ for $i=1,\dots, d-1$, form which the second claim in the statement of the proposition follows.
 
\end{proof}

\subsection{Proofs of Section \ref{Deduction of the formulas}}\label{Proofs of Chapter D}

\begin{proof}[Proof of Remark \ref{acotarazo}]
Let $k=m-n$. Then $f^{(k)\prime}$ is divisible by $\pi^k$, 
which implies that every term $a_iX^i$ of the series $f^{(k)}$
satisfies $v(a_i)+v(i)\geq k/\varrho$. If $v(a_i)>1/(p-1)$ then
there is nothing to prove. If on the other hand $v(a_i)\leq 1/(p-1)$
then $v(i)\geq k/\varrho-1/(p-1)$. In this case
\[
v(x^i)
\geq \frac{i}{v_\La(p)}
\geq \frac{p^{\frac{k}{\varrho}-\frac{1}{p-1}}}{v_\La(p)}
>\frac{1}{p-1}
\]
for all $x\in \mu_\La$, since $k/\varrho-1/(p-1)>\log_p(v_\La(p)/(p-1))$. Then
\[
v(f^{(k)}(x))>\frac{1}{p-1}
\]
for all $x\in \mu_\La$.
\end{proof}
\begin{proof}[Proof of Proposition \ref{cij}]
We begin by taking a representative $\lambda_{i,j}$ of $c_{\beta:i,j}$ in $R_{\Ka_t,1}$. 
We have to show that 
\begin{equation}\label{pls}
\lambda_{i,j}\in a_jR'_{\Ka_t,1}+\left( \frac{\pi^k}{\pi_t} \right) R'_{\Ka_t,1}.
\end{equation}
Let $M\supset K_t$, $\pi_M$ and $\pi_t$ uniformizers 
for $M$ and $K_t$, respectively, and $\Ma=M\TT$, $\Ka_t=\Ka_t\TT$. 
Let $b\in \OZ_M$ such that $d\pi_t=bd\pi_M$; 
this exist by Proposition \ref{module-of}. 
Then $D(M/K_t)=b\OZ_M$. Set $\beta_j=ba_j\in \OZ_M$. 
Clearly, $de_t^j=\beta_jd\pi_M$. 
By Proposition \ref{Disaderiv}, 
$$D_{\Ma,k}^i:\OZ_\Ma^d\to R_{\Ma,1}/(\pi^k/\pi_M)R_{\Ma,1}$$
is a $d$-dimensional derivation over $\OZ_K$, 
which together with Proposition \ref{D(e)=c} implies
\begin{align*}
r'(e_t^j)\ \beta_j\ D_{\Ma,k}^i(\Td,\pi_M)
&=D^i_{\Ma,k}(\Td,r(e_t^j))\\
&=-r'(e_t^j)\ T_1\cdots T_{d-1}\ \frac{\overline{c}_{\beta:i,j}}{l'(e_t^j)} 
\pmod{(\pi^k/\pi_M)R_{\Ma,1}}.
\end{align*}
Recall that $\overline{c}_{\beta:i,j}$ is the image of $c_{\beta:i,j}$  
under the map 
$R_{\Ka_t,1}/\pi^kR_{\Ka_t,1}\to R_{\Ma,1}/\pi^kR_{\Ma,1}$;
$R_{\Ka_t,1}\subset R_{\Ma,1}$. This identity implies
\begin{equation}
\lambda_{i,j}\in \beta_jR_{\Ma,1}+\left( \frac{\pi^k}{\pi_M} \right) R_{\Ma,1}.
\end{equation}
Then
\begin{align*}
v_\Ma(\lambda_{i,j})&\geq \min \{\ v_\Ma(\beta_jR_{\Ma,1})\ ,
\ v_\Ma\left(\ (\pi^k/\pi_M) R_{\Ma,1}\ \right)\ \}\\
&\geq \min \left\lbrace  v_\Ma(\beta_j)-v_{\Ma}(D(M/S))-\frac{e(M)}{p-1}-1\ , \right. \\
& \hspace{100pt} \left. v_\Ma(\frac{\pi^k}{\pi_M})-v_{\Ma}(D(M/S))-\frac{e(M)}{p-1}-1\   \right\rbrace
\end{align*}

We will further assume that $M$ is the local field 
obtained by adjoining to $K_t$ the
roots of the Eisenstein polynomial $X^n-\pi_t$, $(n,p)=1$. 
Then $e(M/\Qp)=n\,e(K_t/\Qp)$ and $D(M/K_t)=n\pi_M^{n-1}=\pi_t/\pi_M$. 
\begin{align*}
v_\Ma(\lambda_{i,j})\geq \min \{\ &v_\Ma(a_j)-v_{\Ma}(D(K_t/S))-\frac{e(M)}{p-1}-1\ ,\\
                       & \hspace{100pt} v_\Ma(\frac{\pi^k}{\pi_{t}})-v_{\Ma}(D(K_t/S))-\frac{e(M)}{p-1}-1\ \}
\end{align*}
Dividing everything by $e(M/K_t)=n$ and noticing 
that $v_{\Ma}(x)=e(M/K_t)v_{\Ka_t}(x)$ for $x\in \Ka_t$ we obtain
\begin{multline*}
v_{\Ka_t}(\lambda_{i,j})
\geq \min \{\ v_{\Ka_t}(a_j)-v_{\Ka_t}(D(K_t/S))-\frac{e(K_t)}{p-1}-\frac{1}{n}\, ,\\
                       v_{\Ka_t}(\frac{\pi^k}{\pi_{K_t}})-v_{\Ka_t}(D(K_t/S))-\frac{e(K_t)}{p-1}-\frac{1}{n}\ \}
\end{multline*}
Letting $n\to \infty$ we obtain 
\begin{multline*}
v_{\Ka_t}(\lambda_{i,j})\geq
 \min \big\{\ v_{\Ka_t}(a_j)-v_{\Ka_t}(D(K_t/S))-e(K_t)/(p-1), \\
                       v_{\Ka_t}(\pi^k/\pi_{t})-v_{\Ka_t}(D(K_t/S))-e(K_t)/(p-1)\, \big\}
\end{multline*}
which implies (\ref{pls}).
\end{proof}
%_________________________________________
%_________________________________________
%_________________________________________
%_________________________________________
%_________________________________________


\begin{thebibliography}{}

\bibitem{Hasse} Artin, E., and Hasse, H.,
\textit{Die beiden Erg\"{a}nzungss\"{a}tze zum Reziprozit\"{a}tsgesetz
der
$l^n$-ten Potenzreste im   K\"{o}rper der
$l^n$-ten Einheitswurzeln}. Abh. Math. Sem.
Univ. Hamburg \textbf{6} (1928), 146-162.

\bibitem{Benois} Benois, D. \textit{P\'{e}riodes p-adiques et lois de r\'{e}ciprocit\'{e} explicites}, J. reine angew. Math. \textbf{493} (1997), 115-151

\bibitem{cassels} J.W.S. Cassels and A. Frolich \textit{Algebraic Number Theory}. 
Proc. Instructional Conf. (Brighton, 1965), $2^{\text{nd}}$ 
Edition London Mathematical Society 2010.


\bibitem{de Shalit}  E. de Shalit, \textit{
The explicit reciprocity law in local class field theory}. Duke Math. J. 
\textbf{53} (1986), no. 1, 163–176. 


%\bibitem{Fesenko-abe} I. B. Fesenko,  
%\textit{Abelian extensions of complete discrete valuation fields}, 
% S\'{e}minarie de Th\'{e}orie des Nombres Paris (1933-1934). LMS \textbf{235}, 47-75.
 
\bibitem{Fesenko1} I. B. Fesenko,  
\textit{Sequential topologies
and quotients of Milnor $K$-groups of higher local fields}, 
 (Russian) Algebra i Analiz 13 (2001), no. 3, 198–221; translation in St. Petersburg Math. J. 13 (2002), no. 3, 485–501.

\bibitem{Fesenko} I.B. Fesenko, and S. V. Vostokov, 
\textit{Local fields and their extensions}, 
second ed.,vol. 121 of Translations of Mathematical Monographs. 
AMS, Providence, RI, 2002.

\bibitem{Zhukov} I. Fesenko and M. Kurihara, 
\textit{Invitation to Higher Local Fields}. 
Geometry and Topology Monographs, 2000.


\bibitem{Florez2} J. Florez, 
\textit{Explicit reciprocity laws for higher local fields}, 
  	arXiv:1705.10433.


\bibitem{Fontaine} J.-M. Fontaine, \textit{Formes Diff\'{e}rentielles et Modules de Tate des Vari\'{e}t\'{e}s Ab\'{e}liennes sur les Corps Locaux  }, Invent. Math. \textbf{65} (1982), 379-409

\bibitem{Fukaya} Fukaya, T. \textit{Explicit reciprocity laws for $p$-divisible groups over higher dimensional local fields}, J. reine angew. Math. \textbf{531} (2001), 61-119

\bibitem{Gille} P. Gille, and Szamuely, T. 
\textit{Central simple algebras and Galois cohomology},
vol. 101 of Cambridge Studies in Advanced Mathematics. 
Cambridge University Press, Cambridge, 2006.


\bibitem{Iwasawa} K. Iwasawa,
\textit{
On explicit formulas for the norm residue symbol}. J. Math. Soc.
Japan \textbf{20} (1968), 151-164.

\bibitem{Kato} K. Kato, 
\textit{A generalization of local class field theory by using $K$-groups
II}, Proc. Japan Acad. Ser. A Math. Sci. Volume 54, Number 8 (1978), 250-255.

%\bibitem{Kato2} K. Kato, \textit{Generalized explicit reciprocity laws}.
% Adv. Stud. Contemp. Math.
%(Pusan) \textbf{1} (1999), 57-126.



\bibitem{koly} V. Kolyvagin, 
\textit{Formal Groups and the Norm Residue Symbol}, Math. USSR Izv. \textbf{15} (1980), 289-348.



\bibitem{Kurihara} M. Kurihara, 
\textit{The exponential homomorphism for the Milnor K-groups and an explicit reciprocity law}, J. Reine Angew.,  \textbf{498} (1998), 201-221.

\bibitem{Kurihara2} M. Kurihara, 
\textit{Two types of complete discrete valuation fields.}, Geometry \& Topology Monographs, Volume 3, Part I, section 12, pages 109-112.

\bibitem{Lang} S. Lang, \textit{Algebra}. $2^{\text{nd}}$ Edition. Springer 2005

\bibitem{Milnor} J. Milnor, \textit{Algebraic K-theory and quadratic forms}. Invent. Math. \textbf{9} (1970), 318-344.


\bibitem{Morrow} M. Morrow, \textit{An introduction to higher dimensional local fields and adeles}. 2012.


%\bibitem{Parshin}A. N. Parshin \textit{Local class field theory}. Trudy Mat. Inst. Steklov. \textbf{165} (1984), 143-170; English transl. in Proc. Steklov Inst. Math. 1985, no. 4(165).

\bibitem{Scott} W. R. Scott, \textit{Group theory}. Dover 1987.

\bibitem{Sen} S. Sen, 
\textit{On explicit recirpocity laws}, J. reine angew. Math.  \textbf{313} (1980), 1-26.

\bibitem{silver} J. Silverman, \textit{Advanced topics in the arithmetic of elliptic curves}.$2^{\text{nd}}$ Edition. Springer. 2009

%\bibitem{Vostokov} S. V. Vostokov, \textit{Explicit construction of class field theory for a multidimensional local
%field}. Izv. Akad. Nauk SSSR Ser. Mat. (1985) no.2; English translation in Math. USSR
%Izv. \textbf{26}(1986), 263–288.

%\bibitem{Vostokov2} S. V. Vostokov, 
%\textit{Hilbert pairing on a multidimensional complete field}.
% Trudy Mat. Inst.
%Steklova, (1995); English translation in Proc. Steklov Inst. of Math. \textbf{208}(1995), 72–83


\bibitem{Wiles} Wiles, A.\textit{
Higher explicit reciprocity laws}. Ann. of Math.
\textbf{107} (1978), no. 2, 235-254.

\bibitem{Zinoviev} A. N. Zinoviev, \textit{Generalized Artin-Hasse and Iwasawa formulas for the Hilbert symbol in a high-dimensional local field} JMS, Vol 124, No. 1, 2004.


\end{thebibliography}
\end{document}